\documentclass{compositio}

\usepackage[latin1]{inputenc}
\usepackage[all]{xy}
\usepackage{graphics}
\usepackage{amsmath}
\usepackage{amscd}
\usepackage{amssymb}
\usepackage{amsthm}
\usepackage{enumitem}
\usepackage{comment}
\pagestyle{plain}


%

\newtheorem{thm}{Theorem}[section]
\newtheorem{hyp}[thm]{Hypothesis}
\newtheorem{pro}[thm]{Proposition}
\newtheorem{lem}[thm]{Lemma}
\newtheorem{cor}[thm]{Corollary}
\newtheorem{defn}[thm]{Definition}

\newtheorem*{rem*}{Remarks}
\newtheorem{rems}[thm]{Remark}
\newtheorem*{conj*}{Conjecture}



\newcommand{\lra}{\longrightarrow}

\newcommand{\mrm}{\mathrm}
\newcommand{\mbb}{\mathbb}

\newcommand{\g}{\mathrm}


\begin{document}

\title{On higher regulators of Siegel threefolds II: the connection to the special value}
\author{Francesco Lemma}
\email{francesco.lemma@imj-prg.fr}
\address{Institut math\'ematique de Jussieu-Paris Rive Gauche, UMR 7586, B\^atiment Sophie Germain, Case 7012, 75205 Paris Cedex 13}

\classification{11G40 (primary), 11F67, 19F27 (secondary).}
\keywords{Beilinson conjecture, Siegel threefold, spinor $L$-function.}
\thanks{}

\begin{abstract}
We establish a connection between motivic cohomology classes over the Siegel threefold and non-critical special values of the degree four $L$-function of some cuspidal automorphic representations of $\mathrm{GSp}(4)$. Our computation relies on our previous work \cite{lemma} and on an integral representation of the $L$-function due to Piatetski-Shapiro.
\end{abstract}

\maketitle

\section{Introduction}

The analytic class number formula of Dedekind and Dirichlet, proved in the middle of the nineteenth century, is a mysterious relationship between an analytic invariant and an arithmetic invariant of a number field $K$. It relates the leading Taylor coefficient at $0$ of the zeta function of $K$ to the units $\mathcal{O}_K^\times$ of $K$ through the regulator map
$$
\begin{CD}
\mathcal{O}_K^\times @>>> \mbb{R}^{r_1+r_2}
\end{CD}
$$
where $r_1$ and $r_2$ denote the number of real and complex places of $K$ respectively. According to Beilinson's conjectures on special values of motivic $L$-functions \cite{beilinson1}, such relationships should exist in great generality. To generalize the analytic class number formula, Beilinson replaces units by motivic cohomology classes and the classical regulator by the higher regulator
$$
\begin{CD}
H^i_\mathcal{M}(X, \mbb{Q}(n)) @>>>H^i_\mathcal{H}(X/\mbb{R}, \mbb{R}(n))
\end{CD}
$$
from motivic cohomology to absolute Hodge cohomology. Here $X$ denotes a smooth projective scheme over $\mbb{Q}$. For an introduction to Beilinson's conjectures, to the motivic formalism underlying them, and for a survey of known results, the interested reader might consult the article \cite{nekovar}. In this paper, we establish a connection between elements in the motivic cohomology of the Siegel threefold and special values of the degree four $L$-function of some cuspidal automorphic representations of the symplectic group $\mathrm{GSp}(4)$. The author hopes that the present work may not be useless to discover more general phenomena explaining the connection between special values of automorphic $L$-functions and mixed motives.\\


Like in the previous approaches, the motivic cohomology classes that we work with are constructed using Beilinson's Eisenstein symbol and the functorial properties of motivic cohomology. For any non-negative integer $n$ and level $K$, the Eisenstein symbol (\cite{beilinson} \S 3) is a $\mbb{Q}$-linear map
\begin{equation} \label{symbole-eisenstein}
\begin{CD}
Eis^n_\mathcal{M}: \mathcal{B}_{n} @>>> H^{1}_\mathcal{M}(M_K, \mathrm{Sym}^n V_2(1))
\end{CD}
\end{equation}
whose target is the motivic cohomology of the Shimura variety $M_K$ of $\mrm{GL}(2)$, or modular curve, of level $K$, with coefficients in the $n$-th symmetric power of the motivic sheaf $V_2$ associated to the standard representation of $\mrm{GL}(2)$. The definition of $\mathcal{B}_n$ will be given later, but we would like to mention that for $\phi_f \in \mathcal{B}_n$, the image of $Eis_\mathcal{M}^n(\phi_f)$ under the regulator
$$
\begin{CD}
r_\mathcal{D}: H^{1}_\mathcal{M}(M_K, \mathrm{Sym}^n V_2(1)) @>>> H^{1}_\mathcal{D}(M_K/\mbb{R}, \mathrm{Sym}^n V_2(1))
\end{CD}
$$
in Deligne-Beilinson cohomology can be explicitly described by real analytic Eisenstein series. At present, most of the results relating special values of $L$-functions to regulators rely on the Eisenstein symbol (\cite{beilinson}, \cite{deninger1}, \cite{deninger2}, \cite{kings}, \cite{kato}...).\\

To explain the construction of the motivic cohomology classes that we shall study, let us introduce some notation. Let $I_2$ be the identity matrix of size two and let $\psi$ be the symplectic form whose matrix is
$$
\psi=
\begin{pmatrix}
& I_2\\
-I_2& \\
\end{pmatrix}.
$$
The symplectic group $\mathrm{GSp}(4)$ is defined as
$$
\mrm{GSp}(4)=\{g \in \mathrm{GL}_{4 / \mathbb{Q}} \,|\, ^t\!g \psi g = \nu(g) \psi, \, \nu(g) \in \mathbb{G}_m\}.
$$
It is a reductive linear algebraic group over $\mbb{Q}$ and contains the group $\mathrm{GL}(2) \times_{\mathbb{G}_m} \mathrm{GL}(2)$, where the fiber product is over the determinant, via the embedding
$$
\begin{CD}
\mathrm{GL}(2) \times_{\mathbb{G}_m} \mathrm{GL}(2) @>\iota>> \mathrm{GSp}(4)\\
\end{CD}
$$
defined by
$$
\iota\left( \begin{pmatrix}
a & b\\
c & d\\
\end{pmatrix}, \begin{pmatrix}
a' & b'\\
c' & d'\\
\end{pmatrix} \right) =
\begin{pmatrix}
a & & b & \\
 & a' &  & b'\\
c &  & d & \\
 & c' &  & d'\\
\end{pmatrix}.
$$
For a fixed level $L$, the Shimura variety $S_L$ associated to $\mrm{GSp}(4)$ is a  smooth quasi-projective threefold defined over $\mbb{Q}$. Moreover, for any level $K$, the morphism $\iota$ induces a closed embedding
$$
\begin{CD}
M_K \times M_K @>\iota>> S_L
\end{CD}
$$
for some $L$. The basic idea is to map the external cup-product $Eis^p_\mathcal{M} \sqcup Eis^q_\mathcal{M}$ to the motivic cohomology of $S_L$ via this embedding. To be more precise, let  $p$ and $q$ be non-negative integers, and let $(Sym^p V_2 \boxtimes Sym^q V_2)(3)$ denote the irreducible algebraic representation $(Sym^p V_2 \boxtimes Sym^q V_2) \otimes \mrm{det}^{\otimes 3}$ of $\mathrm{GL}(2) \times_{\mathbb{G}_m} \mathrm{GL}(2)$, where $\mrm{det}$ is the determinant character. Let $W$ be an irreducible representation of $\mrm{GSp}(4)$ such that, as representations of $\mathrm{GL}(2) \times_{\mathbb{G}_m} \mathrm{GL}(2)$, we have
\begin{equation} \label{branching}
(Sym^p V_2 \boxtimes Sym^q V_2)(3) \subset \iota^*W.
\end{equation}
 In what follows, we shall take the liberty to denote by the same symbol $W$ the motivic sheaf on $S_L$ corresponding to $W$ (see section \ref{sect-relatives-motives}). Taking the external cup-product of $Eis^p_\mathcal{M}$ and $Eis^q_\mathcal{M}$, we obtain the map
$$
\begin{CD}
Eis^p_\mathcal{M} \sqcup Eis^q_\mathcal{M}: \mathcal{B}_{p} \otimes_\mbb{Q} \mathcal{B}_{q} @>>> H^2_\mathcal{M}(M_K \times M_K, (Sym^p V_2 \boxtimes Sym^q V_2)(2)).
\end{CD}
$$
Composing with the map induced by the inclusion $(Sym^p V_2 \boxtimes Sym^q V_2)(2) \subset \iota^* W(-1)$ and with the Gysin morphism corresponding to $\iota$, we obtain the map
\begin{equation} \label{eisM}
\begin{CD}
Eis^{p, q, W}_\mathcal{M}: \mathcal{B}_{p} \otimes_\mbb{Q} \mathcal{B}_{q} @>>> H^4_\mathcal{M}(S_L, W).
\end{CD}
\end{equation}
To go further, let us recall the main result of \cite{lemma}, which will be crucial for us. Irreducible algebraic representations of $\mrm{GSp}(4)$ are classified by their highest weight, i.e. by two integers $k \geq k' \geq 0$. Let $k$ and $k'$ be two such integers. Assume that $k+k' \equiv p+q \pmod 2$ and let $W$ be an irreducible algebraic representation of $\mathrm{GSp}(4)$ with highest weight $\lambda(k, k', c)$, with the notation of section 2,  where $c=p+q+6$. Then, condition ($\ref{branching}$) above is equivalent to the following inequalities:
\begin{itemize}
\item we have $p \leq k$,
\item if $0 \leq p < k'$ and $p<k-k'$ then $k-k'-p \leq q \leq k-k'+p$,
\item if $0 \leq p < k'$ and $k-k' \leq p$ then $p-k+k' \leq q \leq p+k-k'$,
\item if $k' \leq p \leq k$ and $k'<k-p$ then $k-k'-p \leq q \leq k+k'-p$,
\item if $k' \leq p \leq k$ and $k-p \leq k'$ then $p-k+k' \leq q \leq k+k'-p$.
\end{itemize}
Let $H^3_{B, !}(S_L, W)$ be the image of the Betti cohomology with compact support in the cohomology without support, in the middle degree. By the theory of mixed Hodge modules, there is a pure real $\mbb{Q}$-Hodge structure of weight $3-c=-p-q-3$ on $H^3_{B, !}(S_L, W)$. "Real" means that the vector space $H^3_{B, !}(S_L, W)$ is endowed with an involution whose $\mbb{C}$-antilinear complexification stabilizes the Hodge filtration. For $H^3_{B, !}(S_L, W)$, this involution is just the map induced by complex conjugation on the complex points of $S$ and on $W$. Let us also denote by $Eis_\mathcal{H}^{p, q, W}$ the composite of $Eis_\mathcal{M}^{p, q, W}$ and of the regulator
$$
\begin{CD}
r_\mathcal{H}: H^4_\mathcal{M}(S_L, W) @>>> H^4_\mathcal{H}(S_L/\mbb{R}, W)
\end{CD}
$$
in absolute Hodge cohomology.

\begin{thm} \label{lemma1} \cite{lemma} Thm. 6.8. Assume $k>k'>0$, $k+k' \neq p+q$, $k-p-q-1 \neq 0$, $k-k'-p-q-2 \neq 0$, $k-p-q-2 \neq 0$. Assume that $(k-k'-p-q-2)/2$ and $(k-k'+p+q)/2$ are even and that the cusps in the boundary of the Baily-Borel compactifications of $M_K \times M_K$ and of $S_L$ are totally real. Then $Eis^{p, q, W}_\mathcal{H}$ factors through the inclusion
$$
\mathrm{Ext}^1_{\mathrm{MHS}_\mathbb{R}^+}(\mathbb{R}(0), H^3_{B, !}(S_L, W)_\mbb{R}) \subset H^4_\mathcal{H}(S_L/\mathbb{R}, W)
$$
where $\mathrm{MHS}_\mathbb{R}^+$ denotes the abelian category of mixed real $\mbb{R}$-Hodge structures and $H^3_{B, !}(S_L, W)_\mbb{R}$ denotes $H^3_{B, !}(S_L, W) \otimes_\mbb{Q} \mbb{R}$.
\end{thm}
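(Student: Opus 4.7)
The plan is to follow the strategy of Theorem 6.8 of \cite{lemma}. The basic tool is the short exact sequence
$$
0 \to \mrm{Ext}^1_{\mrm{MHS}_\mbb{R}^+}(\mbb{R}(0), H^3_B(S_L, W)_\mbb{R}) \to H^4_\mc{H}(S_L/\mbb{R}, W) \to \mrm{Hom}_{\mrm{MHS}_\mbb{R}^+}(\mbb{R}(0), H^4_B(S_L, W)_\mbb{R}) \to 0
$$
coming from the very definition of absolute Hodge cohomology as an extension group. The goal is then to show that $Eis^{p, q, W}_\mc{H}$ takes values in the subspace $\mrm{Ext}^1_{\mrm{MHS}_\mbb{R}^+}(\mbb{R}(0), H^3_{B, !}(S_L, W)_\mbb{R})$.

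First, I would show that the image of $Eis^{p, q, W}_\mc{H}$ in $\mrm{Hom}_{\mrm{MHS}_\mbb{R}^+}(\mbb{R}(0), H^4_B(S_L, W)_\mbb{R})$ vanishes. Using the explicit description of the image of the Eisenstein symbol in Deligne--Beilinson cohomology by real-analytic Eisenstein series, the external cup-product and the Gysin pushforward along $\iota$ yield a concrete differential form on $S_L$ representing the underlying Betti class. Weight and parity arguments based on the branching condition ($\ref{branching}$), combined with the non-vanishing assumption $k + k' \neq p + q$, then force this class to be cohomologically trivial.

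Second, I would show that the resulting extension class actually comes from the interior cohomology $H^3_{B, !}$. Using the exact sequence
$$
H^3_{B, c}(S_L, W) \to H^3_B(S_L, W) \to H^3_B(\partial_{BS}, W)
$$
relating compactly-supported cohomology, ordinary cohomology, and the cohomology of the Borel--Serre boundary $\partial_{BS}$, it suffices to show that the pullback of the extension to the boundary splits. I would decompose $\partial_{BS}$ according to the $\mrm{GSp}(4)$-conjugacy classes of rational parabolics (Siegel and Klingen) and for each face compute the restriction of $Eis^{p, q, W}_\mc{H}$ by branching $W$ to the corresponding Levi subgroup. The totally-real-cusps hypothesis guarantees that all these restrictions are defined over $\mbb{R}$, keeping us inside $\mrm{MHS}_\mbb{R}^+$, while the parity conditions on $(k - k' - p - q - 2)/2$ and $(k - k' + p + q)/2$, together with the non-vanishing conditions $k - p - q - 1 \neq 0$, $k - k' - p - q - 2 \neq 0$, $k - p - q - 2 \neq 0$, are precisely what is needed to force the archimedean integrals and Gamma factors coming from the boundary Eisenstein series to vanish.

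The main obstacle is the second step: the branching of $W$ to the two conjugacy classes of Levi subgroups and the corresponding archimedean computations have to be done case by case, and one must check that the stated numerical conditions cover every face that could contribute a non-trivial boundary extension. This is the technical heart of \cite{lemma}.
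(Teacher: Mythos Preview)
The present paper does not contain a proof of this statement: Theorem~\ref{lemma1} is quoted verbatim from \cite{lemma} (Thm.~6.8) and used as a black box, so there is no ``paper's own proof'' to compare your proposal against. Your sketch is a plausible outline of the strategy one expects from the title of \cite{lemma} (``the vanishing on the boundary''), and you correctly identify that the core work is a parabolic-by-parabolic boundary analysis; but since the argument lives entirely in the companion paper, any detailed assessment of whether your numerical conditions match up with the individual boundary strata would have to be made against \cite{lemma} itself, not the present article.
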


Let $\pi = \bigotimes_v' \pi_v$ be a cuspidal automorphic representation of $\mrm{GSp}(4)$. As $\mrm{GSp}(4)$ is its own Langlands dual group, we can associate to $\pi$ the partial Euler product
$$
L_V(s, \pi, r)=\prod_{v \notin V} L(s, \pi_v, r)
$$
where $V$ denotes the set of places of $\mbb{Q}$ where $\pi$ is ramified together with the archimedean place and where $r: \mrm{GSp}(4) \lra \mrm{GL}(4)$ is the natural inclusion. This $L$-function is called the spinor, or degree four, $L$-function in the literature. Assume that $p=k-1$, $q=k'-1$ and that the non-archimedean component of $\pi$ occurs in the middle degree cohomology of $S$, with coefficients in $W$. This means that the central character of $\pi$ has infinity type $-k-k'-4$ and that the archimedean component of $\pi$ is a discrete series of Harish-Chandra parameter $(k+2, k'+1)$. Then, the non-archimedean part $\pi_f$ of $\pi$ is defined over its rationality field, which is a number field $E(\pi_f)$ \cite{bhr} and that will be enlarged if necessary.  As a consequence, we can look at the $L$-function $L_V(s, \pi, r)$ as an $E(\pi_f) \otimes_\mbb{Q} \mbb{C}$-valued function. In general, the automorphic representation $\pi$ will be stable at infinity (Def. \ref{stable} and Rem. \ref{abondance}) and we assume here that it is the case. Then, the $\mbb{Q}$-vector space
$$
M_B(\pi_f, W)=\mathrm{Hom}_{\mathbb{Q}[G(\mathbb{A}_f)]}(Res_{E(\pi_f)/\mathbb{Q}}\pi_f, H^3_{B, !}(S, W))
$$
underlies a pure real $\mathbb{Q}$-Hodge structure with coefficients in $E(\pi_f)$, which is of rank four and weight $-k-k'-1$. Let $\mbb{A}_f$ be the finite adeles of $\mbb{Q}$ and let
$$
\mathrm{Ext}^1_{\mathrm{MHS}_\mathbb{R}^+}(\mathbb{R}(0), H^3_{B, !}(S, W)_\mbb{R})= \varinjlim_L \mathrm{Ext}^1_{\mathrm{MHS}_\mathbb{R}^+}(\mathbb{R}(0), H^3_{B, !}(S_L, W)_\mbb{R})
$$
where the limit is taken over all levels $L \subset \mrm{GSp}(4)(\mbb{A}_f)$, which is a $\mbb{R}[\mrm{GSp}(4)(\mbb{A}_f)]$-module. The sub $\mbb{Q}[\mrm{GSp}(4)(\mbb{A}_f)]$-module generated by the images of the $Eis^{p, q, W}_\mathcal{H}$ for varying levels will be denoted by $\mathcal{K}(p, q, W)$. As $\pi$ is assumed to be stable, the $\pi_f$-isotypical component  $\mathrm{Ext}^1_{\mathrm{MHS}_\mathbb{R}^+}(\mathbb{R}(0),  M_{B}(\pi_f, W)_\mathbb{R})$ of $\mathrm{Ext}^1_{\mathrm{MHS}_\mathbb{R}^+}(\mathbb{R}(0), H^3_{B, !}(S, W))_\mbb{R})$ is a rank one $E(\pi_f) \otimes_\mbb{Q} \mbb{R}$-module and is endowed with its Deligne $E(\pi_f)$-structure $\mathcal{D}(\pi_f, W)$. Let $\mathcal{K}(\pi_f, W)$ denote the projection of $\mathcal{K}(p, q, W)$ on $\mathrm{Ext}^1_{\mathrm{MHS}_\mathbb{R}^+}(\mathbb{R}(0),  M_{B}(\pi_f, W)_\mathbb{R})$. To state our main result, we need to consider the Deligne period $c^-(\pi_f, W)$ associated to $\pi_f$ and $W$ and Harris' occult period invariant $a(\pi, \nu_1, \nu_2)$, first introduced in \cite{occult}. Roughly speaking, the invariant $a(\pi, \nu_1, \nu_2)$ measures the difference between the rational structure on $\pi_f$ coming from de Rham cohomology and the one coming from the Fourier expansion of cusp forms along the Siegel parabolic. A precise definition will be given in the body of the paper but note that the notation $a(\pi, \nu_1, \nu_2)$ is slightly abusive as the occult period invariant can not be defined merely by reference to the abstract representation $\pi$ but depends on its realization in cohomology. Here is our main result:

\begin{thm}  \label{emy<3} Let $\pi$ be as above and let $\check{\pi}$ the contragredient representation. Let $\nu_1^0$, respectively $\nu_2^0$, be a finite order Hecke character of sign $(-1)^{k-1}$, respectively of sign $(-1)^{k'-1}$, and let $\nu_1=|\,\,|^{1-k'}\nu_1^0$, respectively $\nu_2=|\,\,|^{1-k} \nu_2^0$. Let $V$ be the finite set of places where $\pi, \nu_1$ or $\nu_2$ is ramified together with the archimedean place. Assume that:
\begin{itemize}
\item we have $k>k'>0$,
\item we have $k+1 \equiv k' \equiv 0 \pmod 2$,
\item we have $k \neq 3, k' \neq 2$,
\item the automorphic representation $\pi$ is stable at infinity.
\end{itemize}
Then
$$
\mathcal{K}(\pi_f, W)= \pi^{-2}a(\pi, \nu_1, \nu_2) c^-(\pi_f, W)L_V(k+k'-1/2, \check{\pi})\mathcal{D}(\pi_f, W).
$$
\end{thm}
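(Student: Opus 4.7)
The plan is to evaluate $\mathcal{K}(\pi_f,W)$ by pairing the Eisenstein-symbol cohomology class against a rational cuspidal test class in $\check\pi_f$-isotypic de Rham cohomology and recognizing the resulting adelic integral as a zeta integral for the spinor $L$-function of $\check\pi$. Concretely, I would first exploit Beilinson's description of the Deligne $E(\pi_f)$-structure $\mathcal{D}(\pi_f,W)$ as a perfect pairing between $\mathrm{Ext}^1_{\mathrm{MHS}_\mathbb{R}^+}(\mathbb{R}(0),M_B(\pi_f,W)_\mathbb{R})$ and a rational generator $\omega_{\check\pi_f}$ of the $\check\pi_f$-isotypic line in $H^3_{\mathrm{dR},!}(S_L,W)/F^{k+k'+2}$. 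By Theorem \ref{lemma1} the class $Eis^{p,q,W}_\mathcal{H}(\phi_f\otimes\phi'_f)$ already lies in that Ext-space, so the statement reduces to computing a single complex pairing, up to the $E(\pi_f)$-structure.

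Using the functorial description of $Eis^{p,q,W}_\mathcal{H}$ as an external cup-product followed by the Gysin pushforward along $\iota$, together with the explicit realization of $Eis^n_\mathcal{D}(\phi_f)$ as a real-analytic Eisenstein series on $M_K$, this pairing reshapes into an adelic integral
\[
\int_{Z(\mathbb{A})\mathrm{GL}(2,\mathbb{Q})^2\backslash\mathrm{GL}(2,\mathbb{A})^2}(\iota^*F_{\check\pi})(g_1,g_2)\,E^p(g_1,\phi_f;\nu_1)\,E^q(g_2,\phi'_f;\nu_2)\,dg_1\,dg_2,
\]
where $F_{\check\pi}$ is the cusp form attached to $\omega_{\check\pi_f}$. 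I would then unfold one of the two Eisenstein series against a non-degenerate Fourier coefficient of $F_{\check\pi}$ along the Siegel parabolic, in the style of Novodvorsky's integral representation for the spinor $L$-function. The prescribed twists $\nu_1=|\cdot|^{1-k'}\nu_1^0$ and $\nu_2=|\cdot|^{1-k}\nu_2^0$, combined with the weight normalization of the Eisenstein symbol, pin the argument to the critical point $s_0=k+k'-1/2$ and produce the factor $L_V(k+k'-1/2,\check\pi)$ as an Euler product of the unramified local factors.

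The third step is the archimedean calculation and the identification of the transcendental periods. Stability at infinity ensures that the $(\mathfrak{g},K_\infty)$-cohomology line contributing to the archimedean integral is one-dimensional, so the archimedean zeta integral is a well-defined non-zero scalar; evaluating it via the branching law (\ref{branching}) and the Hodge decomposition of $W$, and tracking every $(2\pi i)$-twist coming from the two copies of $\mathbb{Q}(1)$ inside the Eisenstein symbols, from the codimension-two Gysin map and from the Deligne--Beilinson normalization, produces the overall factor $\pi^{-2}$. Comparing the Betti rational structure on $M_B(\pi_f,W)$ with its de Rham rational structure accounts for the Deligne period $c^-(\pi_f,W)$, the minus-sign being forced by the signs $(-1)^{k-1}$ and $(-1)^{k'-1}$ prescribed for $\nu_1^0,\nu_2^0$; the further comparison between the de Rham rational structure and the rational structure on $\pi_f$ coming from Fourier coefficients along the Siegel parabolic, implicit in the unfolding of step two, is by definition Harris' occult period $a(\pi,\nu_1,\nu_2)$.

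The main obstacle is precisely the archimedean matching: one must explicitly pair the harmonic representative of $\omega_{\check\pi_\infty}$ in middle-degree cohomology against the product of the two pulled-back Eisenstein differential forms, while keeping every factor of $2\pi i$ and every choice of generator rational. The excluded weights $k=3$ and $k'=2$, the parity condition $k+1\equiv k'\equiv 0\pmod 2$, and stability at infinity are exactly what is needed to guarantee that this archimedean pairing is non-degenerate, that the relevant branching multiplicity is one, and that no archimedean $L$-factor in the denominator vanishes, so that the resulting scalar is a well-defined element of $E(\pi_f)^\times\otimes\mathbb{R}^\times$.
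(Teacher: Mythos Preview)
Your overall architecture is right and matches the paper: reduce to a Poincar\'e-duality pairing (Lem.~\ref{dualite}), represent the numerator as an adelic integral over $G'(\mathbb{Q})Z'(\mathbb{A})\backslash G'(\mathbb{A})$ of a cusp form times a product of two $\mathrm{GL}_2$-Eisenstein series (Cor.~\ref{resume}), identify this with Piatetski-Shapiro's integral (Prop.~\ref{comparaison}), expand in an Euler product via the Bessel model, and compute the denominator as $(2\pi i)^2 c^-(\pi_f,W)^{-1}$ (Prop.~\ref{calcul-periode}).

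Two points, however, are misidentified. First, the unfolding is Piatetski-Shapiro's, based on the \emph{split Bessel model} (the Fourier coefficient along the Siegel parabolic against the character $\alpha_\nu$ of $R(\mathbb{A})$), not Novodvorsky's Whittaker-model integral. In the paper Novodvorsky's integral appears only incidentally, in Prop.~\ref{valeur-centrale}, to \emph{prove} that the split Bessel model exists. This distinction matters because the occult period $a(\pi,\nu_1,\nu_2)$ is by definition the comparison between the de Rham rational structure and the \emph{Bessel}-model rational structure (Prop.~\ref{invariant-occulte}), so your ``Fourier coefficients along the Siegel parabolic'' had better be Bessel, not Whittaker.

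Second, your explanation of the hypothesis $k\neq 3,\ k'\neq 2$ is wrong. It has nothing to do with non-degeneracy of the archimedean pairing or branching multiplicities. In the paper (Prop.~\ref{valeur-centrale}) these exclusions ensure that the central value of Novodvorsky's integral $\mathcal{Z}(1/2,\Psi\otimes\nu_1^{-1})$ lies in the region of absolute convergence: the infinity type is $\omega_\infty=k+2k'-3$, and one needs $1/2>(5-\omega_\infty)/2$, i.e.\ $k+2k'>7$, which under the parity constraints excludes exactly $(k,k')=(3,2)$. This is what guarantees the existence of the split Bessel model needed to run the Euler-product computation at all. Likewise, the factor $\pi^{-2}$ does not come from tracking $(2\pi i)$-twists in the Gysin/Eisenstein normalizations on the numerator side; it is simply the $(2\pi i)^{-2}$ from the denominator $\langle\omega,\widetilde v_{\mathcal D}\rangle_B\sim(2\pi i)^2 c^-(\pi_f,W)^{-1}$, the archimedean zeta integral itself being $\sim 1$ after Moriyama's explicit Bessel-function formula and the gamma-factor bookkeeping in Prop.~\ref{integrale-archi}.
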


Three remarks are in order.
\begin{itemize}
\item The author announced a similar result some time ago (\cite{lemma1} Thm. 4) but the proof he thought to have found contained an error. The present work shows that a slight variant of \cite{lemma1} Thm. 4 is true and goes significantly further (see Cor. \ref{non-nullite1} below).
\item The hypothesis $k+1 \equiv k' \equiv 0 \pmod 2$ implies that we can apply Thm. \ref{lemma1} to $p=k-1$ and $q=k'-1$. This limitation on $p$ and $q$ can be called technical and we would be able to get a similar result for arbitrary $p$ and $q$ satisfying the assumption of Thm. \ref{lemma1} at the cost of very long (but elementary) calculations. See Rem. \ref{molev} for more details.
\item The term $\pi^{-2} a(\pi, \nu_1, \nu_2) c^-(\pi_f, W)$ is expected to be an algebraic number according to Beilinson's conjecture. This expectation is coherent with the main result the paper \cite{occult}, where the transcendental part of critical values of the spinor $L$-function are computed in terms of $a(\pi, \nu_1, \nu_2)$. By looking closely at the arguments of \cite{occult}, it is not too difficult to see that the transcendental part of the uncomputed constants appearing in the main result of \cite{occult} are powers of $\pi$. As a consequence, according to Deligne conjecture on critical values \cite{valeurs-deligne}, as stated for example in \cite{occult} 2.6, we should have $a(\pi, \nu_1, \nu_2)c^-(\pi_f, W) \in \pi^{\mathbb{Z}} \overline{\mbb{Q}}^\times$. The explicit computation of these constants and of the archimedean integrals in \cite{occult} should give $a(\pi, \nu_1, \nu_2)c^-(\pi_f, W) \in \pi^{2} \overline{\mbb{Q}}^\times$. For more details on the compatibility between the main result of \cite{occult} and Deligne conjecture see the Remarks following \cite{occult} Thm. 3.5.5.
\end{itemize}

\begin{cor} \label{non-nullite1} Let $n \geq 0$ be an integer. Let $A \lra S$ be the universal abelian surface of infinite level over the Siegel threefold and let $A^n$ be the $n$-th fold fiber product over $S$. If $n$ is  odd and $n \geq 7$, then motivic cohomology space $H^{n+4}_\mathcal{M}(A^n, \mbb{Q}(n+2))$ is non-zero.
\end{cor}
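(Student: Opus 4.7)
My plan is to deduce the non-vanishing of $H^{n+4}_\mathcal{M}(A^n, \mbb{Q}(n+2))$ from Theorem \ref{emy<3} by realizing $H^4_\mathcal{M}(S_L, W)$ as a direct summand for a well-chosen irreducible $\mrm{GSp}(4)$-representation $W$, and by producing a non-zero motivic class in $H^4_\mathcal{M}(S_L, W)$ via the Eisenstein symbol.

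First, fix some auxiliary level $L$ and let $\rho : A^n \to S_L$ be the smooth projection. The first relative motive of $A \to S_L$ is the standard representation $V_4$ of $\mrm{GSp}(4)$, so the K\"unneth formula exhibits $V_4^{\otimes n}$ as a summand of $R^n\rho_*\mbb{Q}$. The Leray spectral sequence then provides, for every irreducible $\mrm{GSp}(4)$-subrepresentation $W$ of $R^n\rho_*\mbb{Q}(n+2)$, a direct summand inclusion
$$
H^4_\mathcal{M}(S_L, W) \hookrightarrow H^{n+4}_\mathcal{M}(A^n, \mbb{Q}(n+2)).
$$
Decomposing $V_4^{\otimes n}$ into irreducibles of highest weight $\lambda(k, k', c)$, and allowing further K\"unneth contributions from $\wedge^{\geq 2} V_4$, every $W$ normalized so that $c = k+k'+4$ (as in Theorem \ref{lemma1}) appears as such a summand, for $(k,k')$ in a range determined by $n$.

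Second, given $n \geq 7$ odd, I would choose $(k,k')$ in this range satisfying the hypotheses of Theorem \ref{emy<3} --- $k > k' > 0$ with $k$ odd, $k'$ even, $k \neq 3$, $k' \neq 2$ --- together with a stable-at-infinity cuspidal automorphic representation $\pi$ whose archimedean component is the discrete series of Harish-Chandra parameter $(k+2, k'+1)$, and finite order Hecke characters $\nu_1^0, \nu_2^0$ of the prescribed signs. Theorem \ref{emy<3} then identifies the $\pi_f$-component of the image of the motivic Eisenstein classes in absolute Hodge cohomology as
$$
\mathcal{K}(\pi_f, W) = \pi^{-2} a(\pi, \nu_1, \nu_2) c^-(\pi_f, W) L_V(k+k'-1/2, \check\pi)\, \mathcal{D}(\pi_f, W).
$$
If the right-hand scalar is non-zero, the motivic lift producing $\mathcal{K}(\pi_f, W)$ is automatically non-zero, and its image in $H^{n+4}_\mathcal{M}(A^n, \mbb{Q}(n+2))$ under the inclusion above is the required non-zero class.

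The main obstacle is the non-vanishing of the $L$-value $L_V(k+k'-1/2, \check\pi)$ for at least one admissible triple $(\pi, \nu_1^0, \nu_2^0)$. The period factors $a(\pi, \nu_1, \nu_2)$ and $c^-(\pi_f, W)$ are non-zero complex numbers by their very construction as periods of non-trivial rational cohomology classes, so the scalar vanishes if and only if the $L$-value does. Since $s = k+k'-1/2$ lies off the center of symmetry of the spin functional equation, vanishing is not forced by any sign condition, and one can either appeal to known non-vanishing results for spin $L$-functions of $\mrm{GSp}(4)$, or argue by a family-of-twists: fixing $\pi$ and letting $\nu_1^0, \nu_2^0$ range over finite order Hecke characters, the $L$-values at a single non-central critical point cannot all vanish. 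Once a single admissible triple with non-vanishing scalar is exhibited, the proof is complete; the existence of stable-at-infinity $\pi$ of the required archimedean type is a general abundance statement.
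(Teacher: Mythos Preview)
Your overall strategy matches the paper's: embed $H^4_\mathcal{M}(S, W)$ into $H^{n+4}_\mathcal{M}(A^n, \mbb{Q}(n+2))$ and then use Theorem~\ref{emy<3} to force the regulator image to be non-zero. One point you leave vague should be made precise: by Proposition~\ref{motifdeW}, $W$ of highest weight $\lambda(k,k',c)$ with $c=k+k'+4$ sits in $R(A^{k+k'})(t)$ with $t=(k+k'+c)/2=k+k'+2$, so to land in $H^{n+4}_\mathcal{M}(A^n,\mbb{Q}(n+2))$ you must take $k+k'=n$ exactly, not merely ``in a range determined by $n$''.

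The genuine gap is your non-vanishing argument for $L_V(k+k'-\tfrac{1}{2}, \check{\pi})$. Your proposed family-of-twists argument cannot work: the $L$-value in Theorem~\ref{emy<3} is the spinor $L$-function of $\check{\pi}$ itself, not a twist of it. The characters $\nu_1^0,\nu_2^0$ enter only through the occult period $a(\pi,\nu_1,\nu_2)$ and through the finite set $V$ of removed Euler factors. Since each local Euler factor is a non-vanishing rational function of $p^{-s}$, enlarging $V$ cannot turn a zero of the partial $L$-function into a non-zero value. So varying the characters gains nothing. (Incidentally, this value is non-critical in the sense relevant to Beilinson's conjecture, so invoking ``non-vanishing at a non-central critical point'' is also misplaced.)

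The argument the paper actually uses is much simpler and is the idea you are missing: one checks, via \cite{moriyama-ajm}~Prop.~4, that $s=k+k'-\tfrac{1}{2}$ lies in the region of \emph{absolute convergence} of the spinor Euler product. An absolutely convergent Euler product of non-vanishing local factors is automatically non-zero, so $L_V(k+k'-\tfrac{1}{2},\check{\pi})\neq 0$ for every $\pi$ of the required type, with no further input needed.
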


The proof of Thm. \ref{emy<3} relies on three main ingredients. The first is Thm. \ref{lemma1} above. The second is the analytic description of the composite $Eis^{p, q, W}_\mathcal{D}=r_\mathcal{D} \circ Eis^{p, q, W}_\mathcal{M}$ where
$$
\begin{CD}
r_\mathcal{D}: H^4_\mathcal{M}(S, W) @>>> H^4_\mathcal{D}(S/\mbb{R}, W)
\end{CD}
$$
is the regulator in real Deligne-Beilinson cohomology. This follows from Beilinson's explicit description of the image of the Eisenstein symbol in Deligne-Beilinson cohomology, the functoriality of the regulator and the explicit description of the cup-product and the Gysin morphism in Deligne-Beilinson cohomology. The third ingredient is an integral representation of the spinor $L$-function whose study was initiated in \cite{piatetski-shapiro} and carried on in \cite{bump-friedberg-furusawa}, \cite{moriyama}, \cite{takloo-bighash}. The contribution of the present work is to explain why these three ingredients, which might seem of quite different natures, are in fact closely related.\\

Let us give an overview of the different sections of the article. In section \ref{notations} we collect conventions and notations that will be important in the sequel. We would like to draw the reader's attention to section 2.4 where the normalizations of the measures on adelic groups are explained and to section 2.5 where the convention on the weight of variation of Hodge structures is adopted, as this last point differs from one author to the other. Section \ref{Hodge-deRham} recalls the connection between discrete series $L$-packets for $\mrm{GSp}(4)$ and the Hodge decomposition of $H^3_!$. In section \ref{sect-calc-reg}, we provide the basis for the computation of the regulator. First, the explicit description of the map $Eis^{p, q, W}_\mathcal{D}$ is given (Prop. \ref{eisenstein-deligne}). Then, we adapt to our setting an idea of Beilinson which permits to reduce the computation of the regulator to the computation of a Poincar\'e duality pairing (Lem. \ref{dualite}). Finally, in a series of lemmas, the computation of the pairing, hence of the regulator, is reduced to the computation of an adelic integral, where the integrand is the product of a cusp form by an Eisenstein series (Cor. \ref{resume}). Section \ref{calcul-de-l-integrale} is devoted to the computation of this integral. First, we need to compare very precisely the Eisenstein series appearing in our integral to the one defined by Piatetski-Shapiro in \cite{piatetski-shapiro}. This is done in Prop. \ref{comparaison}. Then, we have to study an integral as defined by Piatetski-Shapiro. The constructions of \cite{piatetski-shapiro} are based on the Fourier expansion of cusp forms along the Siegel parabolic and more precisely, on the existence of Bessel models for automorphic representations of $\mrm{GSp}(4)$. The integrals of \cite{piatetski-shapiro} expand into Euler products, whose factors need to be computed. In the case we are interested in, the unramified non-archimedean local integrals are computed in Prop. \ref{integrale-nonram}. The ramified non-archimedean integrals and the archimedean integral are analyzed in Prop. \ref{integrales-ram} and Prop. \ref{integrale-archi} respectively. In section \ref{section-periodes}, we introduce Deligne periods, Harris' occult period invariant and perform a period computation which explains the contribution of $\mathcal{D}(\pi_f, W)$ to Thm. \ref{emy<3}. In section \ref{derniere-section}, we explain how to deduce the existence of split Bessel models from results of Moriyama \cite{moriyama-ajm} and Takloo-Bighash \cite{takloo-bighash}. We finally prove Thm. \ref{emy<3} and Cor. \ref{non-nullite1} as an easy consequence of the previous results.

\section{Notations and conventions} \label{notations}

In this section, we collect some notations, conventions and basic results that will be used in the rest of the article. The reader might prefer to look at this section only according to his needs, following the references given in the paper.\\

2.1. Given a ring $A$, a $A$-algebra $A \longrightarrow B$ and a $A$-module $M$, we will denote by $M_B$ the $B$-module $B \otimes_A M$ when the base ring $A$ is clear from the context. Similarly for any $A$-scheme $X$, we will denote by $X_B$ the $B$-scheme obtained by extension of scalars to $B$.\\

2.2. Let $\mathbb{A}_f=\mathbb{Q} \otimes_{\mathbb{Z}} \widehat{\mathbb{Z}}$, respectively $\mathbb{A}=\mathbb{R} \times \mathbb{A}_f$, denote the topological rings of finite adeles, respectively of adeles, of $\mathbb{Q}$.  The field $\mathbb{R}$ is endowed with its usual absolute value. For every prime number $p$, we normalize the non-archimedean absolute value on $\mathbb{Q}_p$ by $|p|=p^{-1}$ as usual. Hence the map $\mathbb{A}^\times \longrightarrow \mathbb{C}^\times$ defined by $(x_v)_v \longmapsto \prod_v |x_v|$ induces a continuous character $|\,\,|: \mathbb{Q}^\times \backslash \mathbb{A}^\times \longrightarrow \mathbb{C}^\times$. Every continuous character $\nu= \bigotimes'_v \nu_v: \mathbb{Q}^\times \backslash \mathbb{A}^\times \longrightarrow \mathbb{C}^\times$ can be written uniquely $\nu=|\,\,|^s \nu^0$ for some complex number $s$ and some finite order character $\nu^0$ (\cite{bump} Prop. 3.1.2 (ii)). The infinity type of $\nu$ is by definition the complex number $s$ and the sign of $\nu$,  is by definition $\nu_\infty(-1)$.\\


2.3. Let $I_2$ be the identity matrix of size $2$ and let
$$
\psi=
\begin{pmatrix}
 & I_2\\
-I_2 & \\
\end{pmatrix}.
$$
The symplectic group $G=\mathrm{GSp}(4)$ is the reductive linear algebraic group over $\mathbb{Q} $ defined as
$$
G=\{g \in \mathrm{GL}_{4 / \mathbb{Q}} \,|\, ^t\!g \psi g = \nu(g) \psi, \, \nu(g) \in \mathbb{G}_m\}.
$$
Then $\nu: G \longrightarrow \mathbb{G}_m$ is a character and the derived group of $G$ is $\mathrm{Sp}(4)= \mathrm{Ker} \, \nu$. We denote by $T \subset G$ the diagonal maximal torus defined as
$$
T=\{ \mathrm{diag}(\alpha_1, \alpha_2, \alpha_1^{-1} \nu, \alpha_2^{-1} \nu)| \, \alpha_1, \alpha_2, \nu \in \mathbb{G}_m \}
$$
and by $B=TU$ the standard Borel subgroup of upper triangular matrices in $G$. We identify the group $X^*(T)$ of algebraic characters (we will also, as usual, say "weights") of $T$ to the subgroup of $\mathbb{Z}^2 \oplus \mathbb{Z}$ of triples $(k, k', c)$ such that $k+k' \equiv c \pmod 2$ via
$$
\lambda(k, k', c): \mathrm{diag}(\alpha_1, \alpha_2, \alpha_1^{-1} \nu, \alpha_2^{-1} \nu) \longmapsto \alpha_1^k \alpha_2^{k'} \nu^{\frac{c-k-k'}{2}}.
$$
Let $\rho_1=\lambda(1, -1, 0)$ be the short simple root and $\rho_2=\lambda(0, 2, 0)$ be the long simple root. Then the set $R \subset X^*(T)$ of roots of $T$ in $G$ is
$$
R=\{ \pm \rho_1, \pm \rho_2, \pm (\rho_1 + \rho_2), \pm (2 \rho_1+\rho_2)\}
$$
and the subset $R^+ \subset R$ of positive roots with respect to $B$ is
$$
R^+=\{ \rho_1, \rho_2, \rho_1+ \rho_2, 2 \rho_1+ \rho_2 \}.
$$
Then, the set of dominant weights is the set of $\lambda(k, k', c)$ such that $k \geq k' \geq 0$. For any dominant weight $\lambda$, there is an irreducible algebraic representation $V_\lambda$ of $G$ of highest weight $\lambda$, unique up to isomorphism, and all isomorphism classes of irreducible algebraic representations of $G$ are obtained in this way. If $V_\lambda$ is irreducible with highest weight $\lambda(k, k', c)$, the contragredient of $V_\lambda$ has highest weight $\lambda(k, k', -c)$.\\

The Weyl group $W$ of $(G, T)$ is defined as the normalizer of $T$ in $G$ modulo its centralizer. It is a group of order eight such that the images in $W$ of the elements
$$
s_1=
\begin{pmatrix}
  & 1 & &\\
1 &   & &\\
   &   & & 1\\
   &   & 1 & \\
\end{pmatrix},
s_2=\begin{pmatrix}
1  &  & &\\
    &   & & 1\\
    &   & 1 & \\
    & -1  &  & \\
\end{pmatrix}
$$
generate $W$. Then $W$ acts on $X^*(T)$ according to the rule
$$
(w.\lambda)(t)=\lambda(w^{-1}tw)
$$
and we have $s_1. \lambda(k, k', c)=\lambda(k', k, c)$ and $s_2. \lambda(k, k', c)=\lambda(k, -k', c)$ which means that $s_1$ corresponds to the reflection associated to the short simple root $\rho_1$ and $s_2$ to the one associated to the long simple root $\rho_2$.\\

We shall also denote by $G'$ the reductive linear algebraic group $\mathrm{GL}_2 \times_{\mathbb{G}_m} \mathrm{GL}_2$ over $\mathbb{Q}$, where the fiber product is over the determinant. As mentioned in the introduction, we have the embedding $\iota: G' \longrightarrow G$ defined by
$$
\iota\left( \begin{pmatrix}
a & b\\
c & d\\
\end{pmatrix}, \begin{pmatrix}
a' & b'\\
c' & d'\\
\end{pmatrix} \right) =
\begin{pmatrix}
a & & b & \\
 & a' &  & b'\\
c &  & d & \\
 & c' &  & d'\\
\end{pmatrix}.
$$
The subgroup of upper triangular matrices in $G'$ is denoted by $B'$.\\

2.4. Let us explain our normalizations of the Haar measures on adeles groups, in the case of $G(\mbb{A})$. A similar discussion can be done for $G'(\mbb{A})$. Consider the unitary group $\mathrm{U}(2)=\{g \in \mathrm{GL}(2, \mathbb{C}) \,|\, ^t\!\overline{g}g= I_2 \}$ where $\overline{g}$ denotes the complex conjugate of $g$. The map $\kappa: \mathrm{U}(2) \longrightarrow \g{Sp}(4, \mathbb{R})$ defined by $$g=A+iB \longmapsto \begin{pmatrix}
A & B\\
-B & A\\
\end{pmatrix},$$
where $A$ and $B$ denote the real and imaginary parts of $g$, identifies $\g{U}(2)$ with a maximal compact subgroup $K$ of  $\g{Sp}(4, \mathbb{R})$. Let $A_G=\mathbb{R}_+^\times$ be the identity component of the center of $G$, write $K_G$ for the subgroup $A_G K$ of $G(\mbb{R})$, which is maximal compact modulo the center, and let $\xi: A_G \longrightarrow \mathbb{C}^\times$ be a continuous character. Denote again by $\xi: G(\mathbb{A}) \longrightarrow \mathbb{C}^\times$ the extension of $\xi$ to $G(\mbb{A})$ given by $\xi((g_v)_v)=|\nu(g_\infty)|^\frac{1}{2}$. The choice of a generator $\textbf{1}_G$ of the highest exterior power of $\g{Lie}\,G_\mathbb{R}/ \g{Lie}(A_G K)$ determines a left translation invariant measure on $G(\mathbb{R})/(A_G K)$. Together with the Haar measure on $K$ whose total mass is one, this datum determines a left translation invariant measure $dg_\infty$ on $G(\mbb{R})/A_G$. For every prime number $p$, we endow $G(\mbb{Q}_p)$ with the Haar measure $dg_p$ for which $G(\mathbb{Z}_p)$ has volume one. Then, we have the translation invariant measure $dg=\prod_{v \leq \infty} dg_v$ on $G(\mathbb{A})$. To introduce automorphic representations, let $L^2(G(\mathbb{Q})\backslash G(\mathbb{A}), \xi)$ be the space of functions $f: G(\mathbb{Q})\backslash G(\mathbb{A}) \longrightarrow \mathbb{C}$ such that:
\begin{itemize}
\item $\forall z \in A_G, \forall g \in G(\mbb{A}), f(zg)=\xi(z)f(g),$
\item the function $\xi^{-1}f$ is square-integrable on $A_G G(\mathbb{Q}) \backslash G(\mathbb{A})$.
\end{itemize}

The subspace $^0L^2(G(\mathbb{Q})\backslash G(\mathbb{A}), \xi)$ of admissible cuspidal functions is a discrete sum with finite multiplicities of closed irreducible $((\mathfrak{g}, K_G) \times G(\mathbb{A}_f))$-invariant subspaces, which are cuspidal automorphic representations of $G$ by definition. Here, we denote by $\mathfrak{g}$ the Lie algebra of $G(\mbb{R})$. Let $\mathcal{C}^\infty(G(\mbb{Q})\backslash G(\mbb{A}), \xi)$, respectively $\mathcal{C}^\infty_c(G(\mbb{Q})\backslash G(\mbb{A}), \xi)$, be the space of functions $f: G(\mathbb{Q})\backslash G(\mathbb{A}) \longrightarrow \mathbb{C}$ such that:
\begin{itemize}
\item $\forall z \in A_G, \forall g \in G(\mbb{A}), f(zg)=\xi(z)f(g),$
\item the restriction of $f$ to $G(\mbb{R})$ is $\mathcal{C}^\infty$, respectively $\mathcal{C}^\infty$ and compactly supported modulo $A_G$,
\item the restriction of $f$ to $G(\mbb{A}_f)$ is locally constant and compactly supported.
\end{itemize}

Let $\mathcal{C}^\infty_{(2)}(G(\mbb{Q})\backslash G(\mbb{A}), \xi)$ denote the space $L^2(G(\mathbb{Q})\backslash G(\mathbb{A}), \xi) \cap \mathcal{C}^\infty(G(\mbb{Q})\backslash G(\mbb{A}), \xi)$. We have natural inclusions of $((\mathfrak{g}, K_G) \times G(\mathbb{A}_f))$-modules
\begin{equation} \label{inclusion-fonctions}
\mathcal{C}^\infty_c(G(\mbb{Q})\backslash G(\mbb{A}), \xi) \subset \mathcal{C}^\infty_{(2)}(G(\mbb{Q})\backslash G(\mbb{A}), \xi) \subset \mathcal{C}^\infty(G(\mbb{Q})\backslash G(\mbb{A}), \xi).
\end{equation}
Finally, let $\mathcal{C}^\infty_{cusp}(G(\mbb{Q})\backslash G(\mbb{A}), \xi)$ denote $^0L^2(G(\mathbb{Q})\backslash G(\mathbb{A}), \xi) \cap \mathcal{C}^\infty(G(\mbb{Q})\backslash G(\mbb{A}), \xi)$. Smooth truncation to a large compact modulo the center subset induces a map
\begin{equation} \label{troncation}
\mathcal{C}^\infty_{cusp}(G(\mbb{Q})\backslash G(\mbb{A}), \xi) \lra \mathcal{C}^\infty_{c}(G(\mbb{Q})\backslash G(\mbb{A}), \xi).
\end{equation}

2.5. Let $\mathbb{S}=Res_{\mathbb{C}/\mathbb{R}} \mathbb{G}_{m, \mathbb{C}}$ be the Deligne torus. Following Deligne and Pink, our convention for the equivalence of categories between algebraic representations of $\mbb{S}$ in finite dimensional $\mbb{R}$-vector spaces and (semisimple) mixed $\mbb{R}$-Hodge structures is as follows. Let $(\rho, V)$ be such a representation of $\mbb{S}$. Then the summand $V^{p, q}$ of $V_\mbb{C}$ of type $(p, q)$ is the summand on which $\rho(z_1, z_2)$ acts by multiplication by $z_1^{-p}z_2^{-q}$ for any $(z_1, z_2) \in \mbb{S}(\mbb{C})$. In particular, any algebraic representation $V$ of $\mbb{S}$ with central character $c$ corresponds to a pure Hodge structure of weight $-c$. This convention disagrees with the one adopted in \cite{occult}, \cite{taylor} and \cite{weissauer} but agrees with the one adopted in \cite{kings}, \cite{lemma1} and \cite{pink}.

\section{Motives for $\mathrm{GSp}(4)$} \label{Hodge-deRham}

In this section, we are mainly interested in reviewing the connection between the Hodge decomposition of the middle degree cohomology of Siegel threefolds and the discrete series $L$-packets for $G$. Let us start by recalling the classification of discrete series for $G$.

\subsection{Discrete series $L$-packets} \label{discrete_series_classification} In this section and in several other places of the present article, the reader will have to be familiar with the representation theory of compact Lie groups as exposed, for example, in chapter IV of \cite{knapp}.

In section 2.4, we identified the unitary group $\mathrm{U}(2)$ to a maximal compact subgroup $K$ of $\mathrm{Sp}(4, \mbb{R})$ via the isomorphism $\kappa: \mathrm{U}(2) \simeq K$. Let $\mathfrak{k}$ denote the Lie algebra of $K$ and by $\mathfrak{k}_\mathbb{C}$ its complexification. The differential of $\kappa$ induces an isomorphism of Lie algebras $d\kappa: \mathfrak{gl}_{2, \mathbb{C}} \simeq \mathfrak{k}_\mathbb{C}$. Let $\mathfrak{sp}_4$ denote the Lie algebra of $\g{Sp}(4, \mathbb{R})$ and by $\mathfrak{sp}_{4, \mathbb{C}}$ its complexification. A compact Cartan subalgebra of $\mathfrak{sp}_4$ is defined as $\mathfrak{h}=\mathbb{R}T_1 \oplus \mathbb{R}T_2$ where
\begin{eqnarray*}
T_1 &=& d\kappa \left(  \begin{pmatrix}
i &  \\
 &  \\
\end{pmatrix}\right)=\begin{pmatrix}
 &   & 1 & \\
 &  &  & \\
-1 &  &  & \\
  &  &  & \\
\end{pmatrix},\\
 T_2 &=& d\kappa \left(  \begin{pmatrix}
 &  \\
 &  i\\
\end{pmatrix}\right)=\begin{pmatrix}
 &  &  & \\
 &  &  & 1\\
 &  &  & \\
 & -1 &  & \\
\end{pmatrix}.
\end{eqnarray*}
Define a $\mathbb{C}$-basis of $\mathfrak{h}_\mathbb{C}^*$ by $e_1(T_1)=i, e_1(T_2)=0, e_2(T_1)=0, e_2(T_2)=i$. The root system $\Delta$ of the pair $(\mathfrak{sp}_{4, \mathbb{C}}, \mathfrak{h}_\mathbb{C})$ is $
\Delta=\{ \pm 2 e_1, \pm 2 e_2, \pm(e_1 \pm e_2)\}.$
We denote by $\Delta_c$, respectively $\Delta_{nc}$, the set of compact, respectively non-compact roots in $\Delta$. We have $\Delta_c=\{ \pm (e_1-e_2)\}$ and $\Delta_{nc}=\Delta - \Delta_c$. We choose the set of positive roots as $\Delta^+=\{e_1-e_2, 2e_1, e_1+e_2, 2e_2\}$. Then, the set of compact, respectively non-compact, positive roots is $\Delta^+_c=\Delta_c \cap \Delta^+$, respectively $\Delta_{nc}^+=\Delta_{nc} \cap \Delta^+$. For each symmetric matrix $Z \in \mathfrak{gl}_{2, \mathbb{C}}$, define the element $p_\pm(Z)$ of $\mathfrak{sp}_{4, \mathbb{C}}$ by
$$
p_\pm(Z)=\begin{pmatrix}
Z & \pm iZ\\
\pm iZ & -Z\\
\end{pmatrix}.
$$
Let $X_{(\alpha_1, \alpha_2)} \in \mathfrak{sp}_{4, \mathbb{C}}$ be defined as
$$
X_{\pm(2, 0)}=p_\pm \left(  \begin{pmatrix}
1 & \\
 &  \\
\end{pmatrix}\right), X_{\pm(1, 1)}=p_\pm \left(  \begin{pmatrix}
 & 1\\
1 &  \\
\end{pmatrix}\right), X_{\pm(0, 2)}=p_\pm \left(  \begin{pmatrix}
 & \\
 &  1\\
\end{pmatrix}\right).
$$
It follows from an easy computation that $X_{(\alpha_1, \alpha_2)}$ is a root vector corresponding to the non-compact root $(\alpha_1, \alpha_2)=\alpha_1 e_1+ \alpha_2 e_2$. If we set
\begin{equation} \label{pplus-pmoins}
\mathfrak{p}^\pm= \bigoplus_{\alpha \in \Delta_{nc}^+} \mathbb{C} X_{\pm \alpha},
\end{equation}
we have the Cartan decomposition $\mathfrak{sp}_{4, \mathbb{C}}= \mathfrak{k}_\mathbb{C} \oplus \mathfrak{p}^+ \oplus \mathfrak{p}^-$.\\

Integral weights are defined as the $(k, k')=ke_1+k'e_2 \in \mathfrak{h}_\mathbb{C}^*$ with $k, k' \in \mathbb{Z}$ and an integral weight is dominant for $\Delta_c^+$ if $k \geq k'$. Assigning its highest weight to a finite dimensional irreducible complex representation $\tau$ of $K$, we define a bijection between isomorphism classes of finite-dimensional irreducible complex representations of $K$ and dominant integral weights, whose inverse will be denoted by $(k ,k') \longmapsto \tau_{(k, k')}$. Let $(k, k')$ be a dominant integral weight and let $d=k-k'$. Then $\dim_\mbb{C} \tau_{(k, k')}=d+1$. More precisely, there exists a basis $(v_s)_{0 \leq s \leq d}$ of $\tau_{(k, k')}$, such that
\begin{eqnarray*}
\tau_{(k, k')} \left( d\kappa\begin{pmatrix}
1 &  \\
 &  \\
\end{pmatrix} \right) v_s &=& (s+k') v_s,\\
\tau_{(k, k')} \left( d\kappa\begin{pmatrix}
 &  \\
 & 1 \\
\end{pmatrix} \right) v_s &=& (-s+k) v_s,\\
\tau_{(k, k')} \left( d\kappa\begin{pmatrix}
 & 1 \\
 &  \\
\end{pmatrix} \right) v_s &=& (s+1) v_{s+1},\\
\tau_{(k, k')} \left( d\kappa\begin{pmatrix}
 &  \\
1 &  \\
\end{pmatrix} \right) v_s &=& (d-s+1) v_{s-1}\\
\end{eqnarray*}
which we call a standard basis of $\tau_{(k, k')}$. In the identities above, we agree to use the convention $v_{-1}=v_{d+1}=0.$ We will denote by $W_K$ be the Weyl group of $(\mathfrak{k}_\mathbb{C}, \mathfrak{h}_\mathbb{C})$. According to the classification theorem \cite{knapp} Thm. 9.20, we have:

\begin{pro} \label{lpaquet} Let $G(\mbb{R})^+$ be the identity component of $G(\mbb{R})$, let $\xi$ be a character of $A_G$ and let $(k, k') \in \mathfrak{h}_\mathbb{C}^*$  be an integral weight. Assume $k \geq k' \geq 0$. Then, there exist four isomorphism classes $\pi_\infty^H$, $\pi_\infty^W$, $\overline{\pi}_\infty^W$, $\overline{\pi}_\infty^H$ of irreducible discrete series representations of $G(\mathbb{R})^+$ with Harish-Chandra parameter $(k+2, k'+1)$ and central character $\xi$. Furthermore, the restrictions of these representations to $K$ contain as minimal $K$-types the representations $\tau_{(k+3, k'+3)}$, $\tau_{(k+3, -k'-1)}$, $\tau_{(k'+1, -k-3)}$, $\tau_{(-k'-3, -k-3)}$ respectively.
\end{pro}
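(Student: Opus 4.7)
The plan is to apply Harish-Chandra's classification of discrete series (Knapp, Thm.~9.20) together with the Hecht--Schmid formula for the minimal $K$-type. Under the assumption $k \geq k' \geq 0$ one has $k+2 > k'+1 > 0$, so $\Lambda := (k+2)e_1 + (k'+1)e_2 \in \mathfrak{h}_{\mathbb{C}}^*$ pairs nontrivially with every root in $\Delta$ and is therefore a regular, hence valid, Harish-Chandra parameter. Since $\mathrm{Sp}(4, \mathbb{R})$ is connected and $A_G \cap \mathrm{Sp}(4, \mathbb{R}) = \{I\}$, there is a direct product decomposition $G(\mathbb{R})^+ = A_G \times \mathrm{Sp}(4, \mathbb{R})$, so prescribing the central character $\xi$ on $A_G$ is an independent datum: discrete series of $G(\mathbb{R})^+$ with infinitesimal character $\chi_\Lambda$ and central character $\xi$ correspond bijectively to $W_K$-orbits on the $W$-orbit $W \cdot \Lambda$. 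Since $|W| = 8$ and $|W_K| = 2$ (generated by the reflection in $e_1 - e_2$), there are exactly four such orbits, producing the four classes $\pi_\infty^H, \pi_\infty^W, \overline{\pi}_\infty^W, \overline{\pi}_\infty^H$.

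Next I would pick, in each orbit, the unique compact-dominant representative (i.e., with first coordinate $\geq$ second), obtaining
$$
\Lambda^H = (k+2, k'+1),\ \Lambda^W = (k+2, -k'-1),\ \overline{\Lambda}^W = (k'+1, -k-2),\ \overline{\Lambda}^H = (-k'-1, -k-2).
$$
Each determines the unique positive system $\Delta_j^+ \subset \Delta$ in which it is strictly dominant; the corresponding non-compact positive subsystems are $\{e_1+e_2, 2e_1, 2e_2\}$, $\{e_1+e_2, 2e_1, -2e_2\}$, $\{-(e_1+e_2), 2e_1, -2e_2\}$, $\{-(e_1+e_2), -2e_1, -2e_2\}$ respectively. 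Writing $\delta_n^j$ for the half-sum of non-compact positive roots in $\Delta_j^+$, and noting that the half-sum of compact positive roots is $\tfrac{1}{2}(e_1 - e_2)$ in every chamber, the Hecht--Schmid formula asserts that the minimal $K$-type of the discrete series attached to $\Lambda_j$ has highest weight $\Lambda_j + \delta_n^j - \tfrac{1}{2}(e_1 - e_2)$.

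A direct case-by-case computation yields the four increments $e_1 + 2e_2$, $e_1$, $-e_2$, $-(2e_1 + e_2)$; adding these to the respective $\Lambda_j$ produces exactly the four highest weights $(k+3, k'+3)$, $(k+3, -k'-1)$, $(k'+1, -k-3)$, $(-k'-3, -k-3)$ of the statement.

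The proof is essentially a direct application of standard machinery, so I do not anticipate a serious obstacle. The only real care required is to match each $W_K$-orbit with its chamber coherently with the naming convention: $\pi_\infty^H$ corresponds to the holomorphic chamber in which all of $\{2e_1, 2e_2, e_1+e_2\}$ are positive, $\pi_\infty^W$ to the intermediate large/Whittaker chamber where only $2e_2$ is flipped, and the barred classes arise from these by applying the longest element of $W$.
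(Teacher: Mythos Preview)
Your argument is correct and follows precisely the route the paper takes: the paper simply cites Knapp, Thm.~9.20, and you have filled in the details of that citation (regularity of $\Lambda$, the count $|W|/|W_K|=4$, and the Blattner/Hecht--Schmid computation of the four minimal $K$-types). The four increments $e_1+2e_2,\ e_1,\ -e_2,\ -(2e_1+e_2)$ and the resulting highest weights all check out.
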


\begin{rems} \label{conj-ds} For the cohomological considerations that follow, we need to be more precise and explain the specific representatives of the isomorphism classes of discrete series that we choose. Let $N$ be the element of $G(\mbb{R})$ defined as
\begin{eqnarray*}
N &=& \begin{pmatrix}
  & -1 & &\\
-1 &   & &\\
   &   & & 1\\
   &   & 1 & \\
\end{pmatrix}.
\end{eqnarray*}
We shall see that $N$ is related to the action of complex conjugation on the set of complex points of Siegel threefolds (Prop. \ref{milne-shih}). For the adjoint action $\mrm{Ad}: G(\mbb{R}) \lra \mrm{GL}(\mathfrak{h})$, we have
\begin{eqnarray*} \label{Naction}
\mrm{Ad}_{N}(T_1) &=& -T_2,\\
\mrm{Ad}_{N}(T_2) &=& -T_1.
\end{eqnarray*}
Furthermore, as $\nu(N)=-1$, the matrix $N$ normalizes $G(\mbb{R})^+$. It follows from the identities above and from \cite{knapp} Thm. 9.20 that the representation of $G(\mbb{R})^+$ obtained by conjugating $\pi_\infty^H$, respectively $\pi_\infty^W$, by $N$ is isomorphic to $\overline{\pi}_\infty^H$, respectively $\overline{\pi}_\infty^W$. If we fix such isomorphisms, given a vector $\Psi_\infty$  belonging to the space underlying $\pi_\infty^W$, of weight $(u, v)$, we will denote by $\overline{\Psi}_\infty$ the same vector regarded as a vector of the space underlying $\overline{\pi}_\infty^W$. It has weight $(-v, -u)$.
\end{rems}

For the arithmetic applications we aim at, we will need the following result:

\begin{pro} \label{bhr} \cite{bhr} Thm. 3.2.2. Let $\pi=\pi_\infty \otimes \pi_f$ be a cuspidal automorphic representation of $G$ such that $\pi_\infty|_{G(\mbb{R})^+}$ is a discrete series. Then $\pi_f$ is defined over its rationality field, which is a number field $E(\pi_f)$.
\end{pro}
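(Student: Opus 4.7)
The plan is to realize $\pi$ cohomologically and then transfer a rational structure from cohomology to the representation $\pi_f$ itself.

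First, because $\pi_\infty|_{G(\mbb{R})^+}$ is a discrete series of Harish-Chandra parameter $(k+2,k'+1)$, the Vogan-Zuckerman classification together with the computation of relative Lie algebra cohomology singles out an irreducible algebraic representation $W$ of $G$, of highest weight $\lambda(k,k',c)$ for a suitable central character parameter $c$, such that $H^*(\mfr{g},K_G;\pi_\infty\otimes W)$ is non-zero and concentrated in the middle degree $3$. Matsushima's formula for cuspidal cohomology then embeds $\pi_f$ as a $G(\mbb{A}_f)$-subrepresentation of the interior Betti cohomology $H^3_{B,!}(S,W)$.

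Second, the local system on $S$ attached to the $\mbb{Q}$-rational algebraic representation $W$ is itself defined over $\mbb{Q}$, so $H^3_{B,!}(S,W)$ carries a canonical $\mbb{Q}$-structure, compatible with the action of the rational Hecke algebra $\mbb{Q}[G(\mbb{A}_f)/\!\!/L]$ at each level $L\subset G(\mbb{A}_f)$. The group $\mrm{Aut}(\mbb{C}/\mbb{Q})$ therefore acts naturally on each finite-dimensional space $H^3_{B,!}(S,W)^L$, permuting its $\pi_f$-isotypic decomposition. For $\sigma\in\mrm{Aut}(\mbb{C}/\mbb{Q})$, let $\pi_f^\sigma$ denote the $\sigma$-twist of $\pi_f$ (well-defined since the Hecke eigenvalues of $\pi_f$ are algebraic, as they are eigenvalues of $\mbb{Q}$-rational operators on a finite-dimensional $\mbb{Q}$-vector space). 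Then $\pi_f^\sigma$ again occurs in $H^3_{B,!}(S,W)^L$, and because $H^3_{B,!}(S,W)^L$ is finite-dimensional, the Galois orbit $\{\pi_f^\sigma\}$ is finite. Hence the stabilizer of $\pi_f$ in $\mrm{Aut}(\mbb{C}/\mbb{Q})$ is open of finite index, and its fixed field $E(\pi_f)$ is a number field.

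The main obstacle is the final descent: one must upgrade the finiteness of the Galois orbit to an $E(\pi_f)$-rational model of $\pi_f$ as a $G(\mbb{A}_f)$-module, not merely algebraicity of its Hecke eigenvalues. For this one argues as Clozel did for $\mrm{GL}(n)$ and as extended to $\mrm{GSp}(4)$ in \cite{bhr}: for any level $L$ with $\pi_f^L\neq 0$, the $\pi_f$-isotypic subspace of $H^3_{B,!}(S,W)^L\otimes_\mbb{Q} E(\pi_f)$ is stable under the Hecke algebra and under $\mrm{Gal}(\overline{\mbb{Q}}/E(\pi_f))$ by construction, so it descends to an $E(\pi_f)$-subspace; strong multiplicity one for cuspidal representations of $\mrm{GSp}(4)$ characterizes $\pi_f$ by almost all of its unramified Hecke eigenvalues (which lie in $E(\pi_f)$), and the resulting descended Hecke module, when tensored back up to $\mbb{C}$, recovers a direct sum of copies of $\pi_f$. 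Factoring out the multiplicity space $M_B(\pi_f,W)$ then yields an $E(\pi_f)$-rational form of $\pi_f$, proving the proposition.
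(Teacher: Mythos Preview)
The paper does not give its own proof of this proposition; it is stated as a direct citation of \cite{bhr}, Thm.~3.2.2. Your sketch captures the right overall architecture---cohomological realization, the $\mbb{Q}$-structure on interior cohomology coming from the $\mbb{Q}$-model of the Shimura datum and of $W$, the resulting $\mrm{Aut}(\mbb{C}/\mbb{Q})$-action permuting isotypic components, and descent to the rationality field---and this is indeed the shape of the argument in \cite{bhr}, though they work primarily with coherent cohomology of automorphic vector bundles rather than Betti cohomology.

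Two points deserve comment. First, your appeal to ``strong multiplicity one for cuspidal representations of $\mrm{GSp}(4)$'' is both unnecessary and incorrect: strong multiplicity one fails for $\mrm{GSp}(4)$ (CAP representations of Saito--Kurokawa type give counterexamples). Fortunately you do not need it. Once $E(\pi_f)$ is defined as the fixed field of the stabilizer of the isomorphism class of $\pi_f$, the $\pi_f$-isotypic subspace of $H^3_{B,!}(S,W)^L\otimes_\mbb{Q}\overline{\mbb{Q}}$ is $\mrm{Gal}(\overline{\mbb{Q}}/E(\pi_f))$-stable by definition, and Galois descent for vector spaces immediately gives an $E(\pi_f)$-form whose base change to $\mbb{C}$ is the isotypic component; no multiplicity-one statement enters.

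Second, and more substantively, the sentence ``factoring out the multiplicity space $M_B(\pi_f,W)$ then yields an $E(\pi_f)$-rational form of $\pi_f$'' is precisely where the real work lies, and you have not done it. Having an $E(\pi_f)$-form of the isotypic component $M_B(\pi_f,W)\otimes\pi_f$ does not formally yield an $E(\pi_f)$-form of $\pi_f$: there is an a priori obstruction in the Brauer group $\mrm{Br}(E(\pi_f))$ (equivalently, the Schur index of $\pi_f$ over $E(\pi_f)$ may exceed $1$). Trivializing this obstruction is the main content of \cite{bhr}. Their method is to pass to coherent cohomology, where for a suitable automorphic vector bundle attached to a holomorphic member of the discrete series $L$-packet the relative Lie algebra cohomology is one-dimensional; this forces the multiplicity space in that realization to be a line, and a one-dimensional multiplicity space kills the Brauer obstruction outright. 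Your Betti-cohomology sketch, with its four-dimensional multiplicity space $M_B(\pi_f,W)$, does not have access to this shortcut, so as written the descent step is a genuine gap.
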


\subsection{Cohomology of Siegel threefolds} \label{coho-siegel} Siegel threefolds are the Shimura varieties associated to the group $G$. Let us briefly recall their definition. Let $\mathbb{S}=Res_{\mathbb{C}/\mathbb{R}} \mathbb{G}_{m, \mathbb{C}}$ be the Deligne torus and let $\mathcal{H}$ be the $G(\mathbb{R})$-conjugacy class of the morphism $h: \mathbb{S} \longrightarrow G_\mathbb{R}$ given on $\mathbb{R}$-points by
$$
x+iy \longmapsto \begin{pmatrix}
x & & y & \\
 & x &  & y\\
-y &  & x & \\
 &  -y &  & x\\
\end{pmatrix}.
$$
The pair $(G, \mathcal{H})$ is a pure Shimura datum in the sense of \cite{pink} 2.1. The cocharacter $\mu$ of $G_\mbb{C}$ associated to the morphism $h$ as in \cite{pink} 1.3 induces
$$
z \longmapsto \begin{pmatrix}
z & &  & \\
 & z &  & \\
 &  & 1 & \\
 &   &  & 1\\
\end{pmatrix}
$$
on complex points, hence it is defined over $\mbb{Q}$. In other terms, the reflex field of $(G, \mathcal{H})$ is $\mbb{Q}$. For any neat compact open subgroup $L$ of $G(\mathbb{A}_f)$, we denote by $S_L$ the Siegel threefold of level $L$. This is a smooth quasi-projective $\mathbb{Q}$-scheme such that, as complex analytic varieties, we have
$$
S_{L, \mathbb{C}}^{an}=G(\mathbb{Q} )\backslash ( \mathcal{H} \times G(\mathbb{A}_f)/L),
$$
where $S_{L, \mathbb{C}}^{an}$ denotes the analytification of the base change of $S_L$ to $\mathbb{C}$. For $g \in G(\mathbb{A}_f)$ and $L$, $L'$ two neat compact open subgroups of $G(\mathbb{A}_f)$ such that $g^{-1}L'g \subset L$, right multiplication by $g$ on $S_{L, \mathbb{C}}^{an}$ descends to a morphism $[g]: S_{L'} \longrightarrow S_L$ of $\mathbb{Q}$-schemes, which is finite \'etale. This implies that there is an action of $G(\mathbb{A}_f)$ on the projective system $(S_L)_L$ indexed by neat compact open subgroups of $G(\mathbb{A}_f)$. In what follows, all compact open subgroups of $G(\mathbb{A}_f)$ will be assumed to be neat and we will not mention it anymore. Because the $S_L$ are Shimura varieties associated to $(G, \mathcal{H})$, for any algebraic representation $E$ of $G$ in a finite dimensional $\mathbb{Q}$-vector space, we have a polarizable variation of $\mathbb{Q}$-Hodge structure, abusively denoted again by $E$, on $S_L$. We take the liberty not to mention the level $L$ in the notation because these variation of Hodge structures are compatible under the pull-back maps induced by the above morphisms $[g]$. We will also denote by $E$ the local system underlying the variation of Hodge structure $E$. This should not lead to confusion.\\

Let $E$ an irreducible algebraic representation of $G$ in a finite dimensional $\mathbb{C}$-vector space, let $\xi$ be the inverse of its central character  and let $L$ be a compact open subgroup of $G(\mathbb{A}_f)$. Let $\mathcal{A}^*_c(S_L, E)$ be the de Rham complex of $\mathcal{C}^\infty$ differential forms with compact support on $S_{L, \mathbb{C}}^{an}$ with values in the local system $E$, let $\mathcal{A}^*_{(2)}(S_L, E)$ be the complex of square integrable differential forms and let $\mathcal{A}^*(S_L, E)$ be the complex of usual differential forms. If $\circ$ is the symbol $c, (2)$ or the empty symbol define
\begin{eqnarray*}
\mathcal{A}_\circ^*(S, E) &=& \varinjlim_L \mathcal{A}_\circ^*(S_L, E).
\end{eqnarray*}
When $\circ$ is $c$ or $(2)$, this definition is legitimate because the transition morphisms $[g]$ are finite \'etale.  Moreover, these complexes carry an action of $G(\mathbb{A}_f)$ induced by the action on $(S_L)_L$ described above. In section 2.4, we introduced the Lie algebra $\mathfrak{g}$ of $G(\mbb{R})$ and the subgroup $K_G=A_G K$ of $G(\mbb{R})$, which is maximal compact modulo the center. For any $(\mathfrak{g}_\mathbb{C}, K_G)$-module $V$, let $C^*(\mathfrak{g}_\mathbb{C}, K_G, V)$ be the $(\mathfrak{g}_\mathbb{C}, K_G)$-complex of $V$ as defined in \cite{borel-wallach} I. According to \cite{borel-wallach} VII $\S$ 2, we have $G(\mbb{A}_f)$-equivariant isomorphisms of complexes
\begin{eqnarray*}
\mathcal{A}_c^*(S, E) &\simeq& C^*(\mathfrak{g}_\mathbb{C}, K_G, E \otimes_\mathbb{C} \mathcal{C}^\infty_c(G(\mbb{Q})\backslash G(\mbb{A}), \xi)),\\
\mathcal{A}^*(S, E) &\simeq& C^*(\mathfrak{g}_\mathbb{C}, K_G, E  \otimes_\mathbb{C} \mathcal{C}^\infty(G(\mbb{Q})\backslash G(\mbb{A}), \xi))
\end{eqnarray*}
which are compatible with the inclusions $\mathcal{A}_c^*(S, E) \subset \mathcal{A}^*(S, E)$ and
$$
C^*(\mathfrak{g}_\mathbb{C}, K_G, E \otimes_\mathbb{C} \mathcal{C}^\infty_c(G(\mbb{Q})\backslash G(\mbb{A}), \xi)) \subset C^*(\mathfrak{g}_\mathbb{C}, K_G, E \otimes_\mathbb{C} \mathcal{C}^\infty(G(\mbb{Q})\backslash G(\mbb{A}), \xi)).
$$
Taking cohomology, we obtain $G(\mbb{A}_f)$-equivariant isomorphisms
\begin{eqnarray*}
H_{dR, c}^*(S, E) &\simeq& H^*(\mathfrak{g}_\mathbb{C}, K_G,  E \otimes_\mathbb{C} \mathcal{C}^\infty_c(G(\mbb{Q})\backslash G(\mbb{A}), \xi)),\\
H^*_{dR}(S, E) &\simeq& H^*(\mathfrak{g}_\mathbb{C}, K_G,  E \otimes_\mathbb{C} \mathcal{C}^\infty(G(\mbb{Q})\backslash G(\mbb{A}), \xi))
\end{eqnarray*}
which are compatible with the maps from cohomology with compact support to cohomology without support. Let $H^*_{(2)}(S, E)$ denote the $L^2$ cohomology of $S$ with coefficients in $E$, i.e. the cohomology of the complex $ \mathcal{A}_{(2)}^*(S, E)$. According to \cite{borel}, we have a $G(\mbb{A}_f)$-equivariant isomorphism
$$
H^*_{(2)}(S, E)=H^*(\mathfrak{g}_\mathbb{C}, K_G,  E \otimes_\mathbb{C} \mathcal{C}^\infty_{(2)}(G(\mbb{Q})\backslash G(\mbb{A}), \xi))
$$
Applying the $(\mathfrak{g}_\mbb{C}, K_G)$-cohomology functor to the maps appearing in (\ref{inclusion-fonctions}) and (\ref{troncation}) of section 2.3, we obtain the maps
\begin{equation} \label{cohomologies}
H^*_{cusp}(S, E) \rightarrow H^*_{dR, c}(S, E) \rightarrow H^*_{(2)}(S, E) \rightarrow H^*_{dR}(S, E)
\end{equation}
where $H^*_{cusp}(S, E)$ denotes $H^*(\mathfrak{g}_\mathbb{C}, K_G,  E \otimes_\mathbb{C} \mathcal{C}^\infty_{cusp}(G(\mbb{Q})\backslash G(\mbb{A}), \xi))$. Let $$H^*_{dR, !}(S, E)=\g{Im}(H_{dR, c}^*(S, E) \longrightarrow H^*_{dR}(S, E)).$$

\begin{pro} \label{comparaisons} \cite{mokrane-tilouine} Prop. 1. The maps (\ref{cohomologies}) induce $G(\mbb{A}_f)$-equivariant isomorphisms
$$
H^*_{cusp}(S, E)=H^*_{(2)}(S, E)=H^*_{dR, !}(S, E)=H^3_{dR, !}(S, E).
$$
\end{pro}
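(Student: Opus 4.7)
The strategy is to chain three classical inputs: Zucker's conjecture linking $L^2$-cohomology to intersection cohomology, the Langlands decomposition of the discrete $L^2$-spectrum, and the Vogan--Zuckerman description of cohomological representations.

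For the middle equality $H^*_{(2)}(S, E) = H^*_{dR,!}(S, E)$, I would invoke Zucker's conjecture, proved independently by Looijenga and Saper--Stern, which identifies $H^*_{(2)}(S_L, E)$ with the middle-perversity intersection cohomology of the Baily--Borel compactification of $S_L$ with coefficients in the natural extension of $E$. Passing to the limit over neat compact open subgroups $L$ is harmless because the transition maps $[g]$ are finite \'etale. Together with the standard fact that intersection cohomology of the Baily--Borel compactification is computed as the image of $H^*_{dR, c}(S, E)$ in $H^*_{dR}(S, E)$, this gives the claim in a $G(\mbb{A}_f)$-equivariant way.

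Next, to obtain $H^*_{cusp}(S, E) = H^*_{(2)}(S, E)$, I would apply Langlands' decomposition $L^2_{disc}(G(\mbb{Q})\bs G(\mbb{A}), \xi) = L^2_{cusp} \oplus L^2_{res}$ together with the vanishing of the contribution of the continuous spectrum to $L^2$-cohomology, and then take $(\mathfrak{g}_\mbb{C}, K_G)$-cohomology with coefficients in $E$. By Wigner's lemma, only constituents whose infinitesimal character matches that of the contragredient of $E$ can contribute. The residual spectrum of $\mrm{GSp}(4)$ has been explicitly classified (notably by Kim), and its archimedean components are non-tempered Langlands quotients induced from the Siegel and Klingen parabolics. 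A case-by-case inspection shows that, under the irreducibility hypothesis on $E$, none of them carries a non-vanishing contribution to $(\mathfrak{g}_\mbb{C}, K_G)$-cohomology with coefficients in $E$, so only the cuspidal part survives.

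Finally, for the concentration in degree $3$, I would appeal to Vogan--Zuckerman: any irreducible $\pi_\infty$ contributing to $H^q(\mathfrak{g}_\mbb{C}, K_G, E \otimes \pi_\infty)$ is cohomological, hence of the form $A_\mfr{q}(\lambda)$ for a $\theta$-stable parabolic $\mfr{q}$. For a cuspidal $\pi$ with $E$ irreducible, Wallach's estimates on the exponents of cusp forms force $\pi_\infty$ to be tempered, which restricts $\mfr{q}$ to the Borel case and places $\pi_\infty$ in the discrete series $L$-packet described in Prop.~\ref{lpaquet}. Discrete series have $(\mathfrak{g}_\mbb{C}, K_G)$-cohomology concentrated in the middle degree equal to the complex dimension of $\mc{H}$, namely $3$, concluding the proof. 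The main technical obstacle is the second step: one must import the classification of the residual spectrum of $\mrm{GSp}(4)$ and verify the vanishing of its $(\mathfrak{g}_\mbb{C}, K_G)$-cohomology with coefficients in $E$; granted this input, the other two equalities follow from formal consequences of the classical theory.
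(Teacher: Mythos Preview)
Your ingredients are the right ones and broadly match the argument in \cite{mokrane-tilouine}, but the first step as written contains a genuine gap. You assert as a ``standard fact'' that intersection cohomology of the Baily--Borel compactification coincides with $H^*_{dR,!}$; this is not standard and is false in general. Zucker's conjecture gives $H^*_{(2)} \cong IH^*$, and the factorisation $H^*_c \to IH^* \to H^*$ together with purity of $IH^*$ shows that $IH^*$ \emph{surjects} onto $H^*_!$, but the kernel of $IH^* \to H^*_!$ is exactly the contribution of the non-cuspidal discrete spectrum to $L^2$-cohomology. Thus your step~1 silently presupposes the conclusion of your step~2.

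The repair is to reorder the argument: first establish $H^*_{cusp} \cong H^*_{(2)}$ via the residual-spectrum analysis (your step~2) and the concentration in degree~$3$ (your step~3); then combine the surjection $H^*_{(2)} \twoheadrightarrow H^*_!$ noted above with Borel's result that the composite $H^*_{cusp} \to H^*_{(2)} \to H^*$ is injective with image in $H^*_!$ to conclude that $H^*_{(2)} \to H^*_!$ is an isomorphism. This is essentially how the proof in \cite{mokrane-tilouine} is organised. One further caveat on step~2: irreducibility of $E$ alone does not kill the residual contribution (for the trivial coefficient the one-dimensional residual representation contributes in even degrees), so a regularity hypothesis on the highest weight, implicit in \cite{mokrane-tilouine} and satisfied in all applications here since $k>k'>0$, is what is actually being used.
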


The $((\mathfrak{g}_\mathbb{C}, K_G) \times G(\mbb{A}_f))$-module $\mathcal{C}^\infty_{cusp}(G(\mbb{Q})\backslash G(\mbb{A}), \xi)$ decomposes into a direct sum
$$
\mathcal{C}^\infty_{cusp}(G(\mbb{Q})\backslash G(\mbb{A}), \xi)=\bigoplus_{\pi} m(\pi) \pi
$$
indexed by irreducible cuspidal automorphic representations of $G$ , with finite multiplicities. This induces a decomposition
$$
H^3_{dR, !}(S, E)=\bigoplus_{\pi=\pi_\infty \otimes \pi_f} m(\pi) H^3(\mathfrak{g}_\mathbb{C}, K_G, E \otimes_{\mbb{C}} \pi_\infty) \otimes \pi_f
$$
into irreducible $\mbb{C}[G(\mbb{A}_f)]$-modules.

\begin{defn} \label{p(e)} Let $E$ be an irreducible algebraic representation of $G$. The discrete series $L$-packet $P(E)$ associated to $E$ is the set of isomorphism classes of discrete series of $G(\mbb{R})^+$ whose Harish-Chandra parameter and central character are opposed to the ones of $E$.
\end{defn}

\begin{lem} \label{dpp} Assume that $E$ has highest weight $\lambda(k, k', c)$. Then,
$$
P(E)=\{ \pi_\infty^H, \pi_\infty^W, \overline{\pi}_\infty^W, \overline{\pi}_\infty^H \}
$$
and the restrictions of $\pi_\infty^H, \pi_\infty^W, \overline{\pi}_\infty^W$ and $\overline{\pi}_\infty^H $ to $K$ contain as minimal $K$ types the representations $\tau_{(k+3, k'+3)}$, $\tau_{(k+3, -k'-1)}$, $\tau_{(k'+1, -k-3)}$, $\tau_{(-k'-3, -k-3)}$ respectively.
\end{lem}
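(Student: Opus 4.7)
The plan is to extract from the highest weight $\lambda(k, k', c)$ the data that Def.~\ref{p(e)} imposes on members of $P(E)$ — namely a Harish-Chandra parameter and a central character on $A_G$ — and then read off the four discrete series directly from Prop.~\ref{lpaquet}.

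First I would compute the central character of $E$ on $A_G = \mbb{R}_+^\times$. The center of $G$ embeds in the diagonal torus $T$ via $z \longmapsto \mrm{diag}(z, z, z, z)$, which in the parametrization of section 2.3 corresponds to $(\alpha_1, \alpha_2, \nu) = (z, z, z^2)$. Plugging this into the formula for $\lambda(k, k', c)$ gives $z^k \cdot z^{k'} \cdot (z^2)^{(c-k-k')/2} = z^c$. Hence the central character of $E$ restricted to $A_G$ is $z \mapsto z^c$, and the character opposed to it is $\xi \colon z \mapsto z^{-c}$.

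Next I would compute the infinitesimal character of $E$. Restricting the weight $\lambda(k, k', c)$ to the compact Cartan $\mathfrak{h} = \mbb{R}T_1 \oplus \mbb{R}T_2$ of section 3.1 yields the weight $k e_1 + k' e_2$ in the $(e_1, e_2)$-coordinates. Adding the half-sum $\rho = 2 e_1 + e_2$ of the positive roots $\Delta^+ = \{e_1 - e_2, 2 e_1, e_1 + e_2, 2 e_2\}$ produces the Harish-Chandra parameter $(k+2, k'+1)$ of $E$, which is regular and $\Delta^+$-dominant by the hypothesis $k \geq k' \geq 0$. Since $-1$ lies in the Weyl group $W(\mathfrak{sp}_{4, \mbb{C}}, \mathfrak{h}_\mbb{C})$, this parameter equals the one of the contragredient $E^\vee$, so it is also the parameter required by Def.~\ref{p(e)} of the members of $P(E)$.

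It remains to apply Prop.~\ref{lpaquet} to the integral weight $(k, k')$ and the character $\xi \colon z \mapsto z^{-c}$: this exhibits the four discrete series classes $\pi_\infty^H$, $\pi_\infty^W$, $\overline{\pi}_\infty^W$, $\overline{\pi}_\infty^H$ of $G(\mbb{R})^+$ with Harish-Chandra parameter $(k+2, k'+1)$, central character $\xi$ on $A_G$ and minimal $K$-types $\tau_{(k+3, k'+3)}$, $\tau_{(k+3, -k'-1)}$, $\tau_{(k'+1, -k-3)}$, $\tau_{(-k'-3, -k-3)}$ respectively. The classification theorem \cite{knapp} Thm.~9.20 underlying Prop.~\ref{lpaquet} guarantees that there are no other discrete series with these parameters, so $P(E)$ is exactly this four-element set. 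The only step that requires genuine care is keeping the conventions of sections 2.3, 2.5 and 3.1 aligned so that the computations above really produce $(k+2, k'+1)$ and $z^{-c}$ — once that bookkeeping is verified the lemma is an immediate consequence of Prop.~\ref{lpaquet}.
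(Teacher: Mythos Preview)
Your proposal is correct and follows essentially the same approach as the paper: identify the infinitesimal character of $E$ as $(k+2,k'+1)$ via the $\rho$-shift and then invoke Prop.~\ref{lpaquet}. The paper's proof is a terse two-line version of exactly this; you simply make explicit the central-character and $\rho$-shift bookkeeping that the paper leaves to the reader.
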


\begin{proof} If $E$ is irreducible, with highest weight $\lambda(k, k', c)$, then it has infinitesimal character $(k+2, k'+1)$. So, the statement is a direct consequence of Prop. \ref{lpaquet}.
\end{proof}

The main result of \cite{vogan-zuckermann} implies that the $\pi$ contributing to the above sum, i.e. those for which $H^3(\mathfrak{g}_\mathbb{C}, K_G,  \pi_\infty \otimes_\mathbb{C} E)$ is non-zero, are the ones such that $\pi_\infty|_{G(\mbb{R})^+} \in P(E)$ (see the proof of \cite{mokrane-tilouine} Prop. 1 for more details). As a consequence, we have
\begin{equation} \label{semisimple}
H^3_{dR, !}(S, E)= \bigoplus_{\pi=\pi_\infty \otimes \pi_f\,|\,\pi_\infty \in P(E)} H^3_{dR, !}(S, E)(\pi_f) \otimes \pi_f
\end{equation}
where the sum is indexed by irreducible cuspidal automorphic representations of $G$ whose archimedean component belongs to $P(E)$ and where
\begin{equation} \label{isotypes}
H^3_{dR, !}(S, E)(\pi_f)=\bigoplus_{\pi_\infty \in P(E)} m(\pi_\infty \otimes \pi_f) H^3(\mathfrak{g}_\mathbb{C}, K_G,  E \otimes_{\mbb{C}} \pi_\infty).
\end{equation}
According to \cite{borel-wallach} II. \S 3, Prop. 3.1, for any $\pi_\infty \in P(E)$, the $(\mathfrak{g}_\mbb{C}, K_G)$-complex of $\pi_\infty \otimes_\mbb{C} E$ has zero differential. Hence, we have
$$
H^3(\mathfrak{g}_\mathbb{C}, K_G, E \otimes_{\mbb{C}} \pi_\infty)=\mathrm{Hom}_{K_G}\left( \bigwedge^3  \mathfrak{sp}_{4, \mbb{C}}/\mathfrak{k}_\mbb{C}, E \otimes_{\mbb{C}} \pi_\infty \right).
$$
In this last equality, we are using the fact that the inclusion $\mathfrak{sp}_{4, \mbb{C}} \subset \mathfrak{g}_\mbb{C}$ induces an isomorphism $\mathfrak{sp}_{4, \mbb{C}}/\mathfrak{k}_\mbb{C} \simeq \mathfrak{g}_\mbb{C}/(\mathrm{Lie} K_G)_\mbb{C}$. By the Cartan decomposition $\mathfrak{sp}_{4, \mbb{C}}=\mathfrak{k}_\mbb{C} \oplus \mathfrak{p}^+ \oplus \mathfrak{p}^-$, where $\mathfrak{p}^\pm$ are defined by ($\ref{pplus-pmoins}$), we have
$
\bigwedge^3 \mathfrak{sp}_{4, \mbb{C}}/\mathfrak{k}_\mbb{C}= \bigoplus_{p+q=3} \bigwedge^p \mathfrak{p}^+ \otimes_\mbb{C} \bigwedge^q \mathfrak{p}^-.
$
As the weights for the adjoint representation of $\mathfrak{h}_\mbb{C}$ on $ \bigwedge^p \mathfrak{p}^+ \otimes_\mbb{C} \bigwedge^q \mathfrak{p}^-$ are the sums of $p$ distinct weights of $\mathfrak{p}^+$ and of $q$ distinct weights of $\mathfrak{p}^-$, the reader will easily deduce the following decompositions
\begin{eqnarray*}
\bigwedge^3 \mathfrak{p}^+ &=& \tau_{(3, 3)},\\
\bigwedge^2 \mathfrak{p}^+ \otimes_\mbb{C} \mathfrak{p}^- &=& \tau_{(3, -1)} \oplus \tau_{(2, 0)} \oplus \tau_{(1, 1)}\\
\mathfrak{p}^+ \otimes_\mbb{C} \bigwedge^2 \mathfrak{p}^- &=& \tau_{(1, -3)} \oplus \tau_{(0, -2)} \oplus \tau_{(-1, -1)},\\
\bigwedge^3 \mathfrak{p}^- &=& \tau_{(-3, -3)}\\
\end{eqnarray*}
into irreducible $\mbb{C}[K]$-modules  from the basic facts on irreducible representations of $K$ reviewed in section \ref{discrete_series_classification}.  If $E$ has highest weight $\lambda(k, k', c)$, its infinitesimal character is $(k+2, k'+1)$.

\begin{pro} \label{dimension} The $\mbb{C}$-vector spaces
\begin{eqnarray*}
H^3(\mathfrak{g}_\mathbb{C}, K_G,  \pi_\infty^H \otimes_\mathbb{C} E) &=& \mathrm{Hom}_{K_G}\left( \bigwedge^3  \mathfrak{sp}_{4, \mbb{C}}/\mathfrak{k}_\mbb{C}, E \otimes_{\mbb{C}} \pi_\infty^H \right),\\
H^3(\mathfrak{g}_\mathbb{C}, K_G,  \pi_\infty^W \otimes_\mathbb{C} E) &=& \mathrm{Hom}_{K_G}\left( \bigwedge^3  \mathfrak{sp}_{4, \mbb{C}}/\mathfrak{k}_\mbb{C}, E \otimes_{\mbb{C}} \pi_\infty^W \right),\\
H^3(\mathfrak{g}_\mathbb{C}, K_G,  \overline{\pi}_\infty^W \otimes_\mathbb{C} E) &=& \mathrm{Hom}_{K_G}\left( \bigwedge^3  \mathfrak{sp}_{4, \mbb{C}}/\mathfrak{k}_\mbb{C}, E \otimes_{\mbb{C}} \overline{\pi}_\infty^W \right),\\
H^3(\mathfrak{g}_\mathbb{C}, K_G,  \overline{\pi}_\infty^H \otimes_\mathbb{C} E) &=& \mathrm{Hom}_{K_G}\left( \bigwedge^3  \mathfrak{sp}_{4, \mbb{C}}/\mathfrak{k}_\mbb{C}, E \otimes_{\mbb{C}} \overline{\pi}_\infty^H \right)
\end{eqnarray*}
have dimension one.
\end{pro}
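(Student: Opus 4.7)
The plan is to combine the Hodge-type decomposition
\[
\bigwedge^3 \mathfrak{sp}_{4,\mbb{C}}/\mathfrak{k}_\mbb{C} = \bigoplus_{p+q=3} \bigwedge^p \mathfrak{p}^+ \otimes_\mbb{C} \bigwedge^q \mathfrak{p}^-
\]
recalled just before the statement with the multiplicity-one property of the minimal $K$-type of a discrete series. Each of the four Hom spaces then splits into four summands indexed by $(p,q)$ with $p+q=3$, and I would like to show that for every $\pi_\infty \in P(E)$ exactly one summand is non-zero and that this summand is one-dimensional.

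First, for each $\pi_\infty \in P(E)$ I would determine which $(p,q)$-summand can contribute. After adjunction the summand becomes $\Hom_{K_G}(E^\vee \otimes \bigwedge^p \mathfrak{p}^+ \otimes \bigwedge^q \mathfrak{p}^-, \pi_\infty)$, which is a sum over irreducible $K$-types $\tau$ of the product of the multiplicity of $\tau$ in $E^\vee \otimes \bigwedge^p \mathfrak{p}^+ \otimes \bigwedge^q \mathfrak{p}^-$ with the multiplicity of $\tau$ in $\pi_\infty|_K$. Using the decompositions of $\bigwedge^p \mathfrak{p}^+ \otimes \bigwedge^q \mathfrak{p}^-$ already displayed, the Clebsch--Gordan rule for $\mrm{U}(2)$, and the branching of $E$ from $G$ to $K$, together with Blattner's formula describing the $K$-types of a discrete series as a translate of a specific cone with apex at the minimal $K$-type, one should find exactly one $(p,q)$ for which the two sets of $K$-types meet. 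The expected matching, consistent with the Hodge types of the four discrete series, is $(3,0)$ for $\pi_\infty^H$, $(2,1)$ for $\pi_\infty^W$, $(1,2)$ for $\overline{\pi}_\infty^W$, and $(0,3)$ for $\overline{\pi}_\infty^H$.

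Second, in the unique non-zero case I would check that the matching occurs only at the minimal $K$-type $\tau_\mrm{min}(\pi_\infty)$ listed in Lemma \ref{dpp}, and that $\tau_\mrm{min}(\pi_\infty)$ appears exactly once in $E^\vee \otimes \bigwedge^p \mathfrak{p}^+ \otimes \bigwedge^q \mathfrak{p}^-$; combined with the multiplicity one of the minimal $K$-type in $\pi_\infty$, this yields the desired dimension. The main obstacle is the $K$-type bookkeeping, which is elementary but has to be carried out case by case and requires a careful use of Blattner's formula to exclude the higher $K$-types of $\pi_\infty$. A conceptual shortcut is to invoke Vogan--Zuckerman directly: the $(\mathfrak{g}, K_G)$-cohomology of a discrete series whose infinitesimal character matches that of $E^\vee$ is one-dimensional in its unique non-vanishing degree, and for the four discrete series in $P(E)$ this degree equals $3$.
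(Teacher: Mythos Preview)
Your proposal is correct. The paper's own proof is a one-line citation of Borel--Wallach \cite{borel-wallach} II, Prop.~3.1 and Thm.~5.3, which is precisely the ``conceptual shortcut'' you mention at the end (Borel--Wallach rather than Vogan--Zuckerman, but the same content: for a discrete series with infinitesimal character matching $E^\vee$, the $(\mathfrak{g},K_G)$-cohomology is one-dimensional and concentrated in the middle degree).

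Your first approach---the explicit $K$-type bookkeeping via the $(p,q)$-decomposition, Blattner's formula, and multiplicity one of the minimal $K$-type---is more laborious than what the paper does for this proposition, but it is not wasted effort: the paper relies on exactly this matching later. Proposition~\ref{dec-hodge} identifies each of the four discrete series with a specific Hodge type, and Lemma~\ref{def.omegaw} explicitly constructs a generator of $\Hom_{K_G}\bigl(\bigwedge^2\mathfrak{p}^+\otimes\mathfrak{p}^-,\,W(-p-q-3)\otimes\pi_\infty^W\bigr)$ by writing down the image of the highest weight vector of $\tau_{(3,-1)}$, which is precisely the minimal-$K$-type analysis you outline for the case $(p,q)=(2,1)$. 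So the computational route recovers information the paper needs elsewhere, whereas the citation gives the dimension count with no work but no further insight.
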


\begin{proof}
This is a particular case of \cite{borel-wallach} II. Prop. 3.1 and Thm. 5.3.
\end{proof}

From now on, let us assume that the irreducible algebraic representation $E$ takes values in a finite dimensional $\mbb{Q}$-vector space. In particular, the results explained above can be applied to the complexification $E_\mbb{C}$ of $E$. For any cuspidal automorphic representation $\pi =\pi_\infty \otimes \pi_f$ of $G$ such that $\pi_\infty \in P(E)$, denote again by $\pi_f$ the model of $\pi_f$ defined over its rationality field, which is the number field $E(\pi_f)$ (see Prop. \ref{bhr}). Following \cite{harris2} 2.6.2 and 2.6.3, define
\begin{equation*}
M_{dR}(\pi_f, E)=\mathrm{Hom}_{\mbb{Q}[G(\mbb{A}_f)]}(Res_{E(\pi_f)/\mbb{Q}}\pi_f, H^3_{dR, !}(S, E)).
\end{equation*}
and similarly
\begin{equation*}
M_{B}(\pi_f, E)=\mathrm{Hom}_{\mbb{Q}[G(\mbb{A}_f)]}(Res_{E(\pi_f)/\mbb{Q}}\pi_f, H^3_{B, !}(S, E))
\end{equation*}
where $H^3_{B, !}(S, E)$ denotes the Betti cohomology with coefficients in $E$. These are $\mbb{Q}$-vector spaces endowed with a $\mbb{Q}$-linear action of $E(\pi_f)$. Extending scalars from $\mbb{Q}$ to $\mbb{C}$, according to the identities (\ref{semisimple}) and (\ref{isotypes}), we have
\begin{equation*}
M_{dR}(\pi_f, E)_\mbb{C}= \bigoplus_{\sigma: E(\pi_f) \rightarrow \mbb{C}} \bigoplus_{\pi_\infty \in P(E)} m(\pi_\infty \otimes \pi_f) H^3(\mathfrak{g}_\mathbb{C}, K_G,  E_\mbb{C} \otimes_\mathbb{C}  \pi_\infty ).
\end{equation*}
For any $\pi_f$, the $\mbb{C}$-vector space
$$
\bigoplus_{\pi_\infty \in P(E)} m(\pi_\infty \otimes \pi_f) H^3(\mathfrak{g}_\mathbb{C}, K_G, E_\mbb{C} \otimes_\mathbb{C}  \pi_\infty),
$$
has finite dimension $$m(\pi_\infty^H \otimes \pi_f)+m(\pi_\infty^W \otimes \pi_f)+m(\overline{\pi}_\infty^W \otimes \pi_f)+m(\overline{\pi}_\infty^H \otimes \pi_f)$$
according to Prop. \ref{dimension}. As a consequence, the dimension of $M_{dR}(\pi_f, E)$ as an $E(\pi_f)$-vector space is
$$
m(\pi_\infty^H \otimes \pi_f)+m(\pi_\infty^W \otimes \pi_f)+m(\overline{\pi}_\infty^W \otimes \pi_f)+m(\overline{\pi}_\infty^H \otimes \pi_f).
$$
The same holds for $M_{B}(\pi_f, E)$ because of the comparison isomorphism
$$
\begin{CD}
I_\infty: M_{B}(\pi_f, E)_\mbb{C} @>\sim>>  M_{dR}(\pi_f, E)_\mbb{C}.
\end{CD}
$$

It follows from Saito's formalism of mixed Hodge modules that for any open compact subgroup $L$ of $G(\mbb{A}_f)$, the interior cohomology $H^3_!(S_L, E)$ underlies a $\mbb{Q}$-Hodge structure. With the convention adopted here, and explained in section 2.4, this $\mbb{Q}$-Hodge structure is in fact pure of weight $3-c$, where $x \longmapsto x^c$ is the central character of $E$. Hence, we obtain a pure $\mbb{Q}$-Hodge structure on $M_B(\pi_f, E)$:

\begin{pro} \label{dec-hodge} Let $t=\frac{c-k-k'}{2}$. The Hodge decomposition of $M_B(\pi_f, E)$ is
$$
M_B(\pi_f, E)_\mbb{C}=M_B^{3-t, -k-k'-t} \oplus M_B^{2-k'-t, 1-k-t} \oplus M_B^{1-k-t, 2-k'-t} \oplus M_B^{-k-k'-t, 3-t}
$$
where
\begin{eqnarray*}
M_B^{3-t, -k-k'-t} &=& \bigoplus_{\sigma: E(\pi_f) \rightarrow \mbb{C}} m(\pi_\infty^H \otimes \pi_f) H^3(\mathfrak{g}_\mathbb{C}, K_G,  E \otimes_{\mbb{C}} \pi_\infty^H ),\\
M_B^{2-k'-t, 1-k-t} &=& \bigoplus_{\sigma: E(\pi_f) \rightarrow \mbb{C}} m(\pi_\infty^W \otimes \pi_f) H^3(\mathfrak{g}_\mathbb{C}, K_G,  E \otimes_{\mbb{C}} \pi_\infty^W ),\\
M_B^{1-k-t, 2-k'-t} &=& \bigoplus_{\sigma: E(\pi_f) \rightarrow \mbb{C}} m(\overline{\pi}_\infty^W \otimes \pi_f) H^3(\mathfrak{g}_\mathbb{C}, K_G,  E \otimes_{\mbb{C}} \overline{\pi}_\infty^W ),\\
M_B^{-k-k'-t, 3-t} &=& \bigoplus_{\sigma: E(\pi_f) \rightarrow \mbb{C}} m(\overline{\pi}_\infty^H \otimes \pi_f) H^3(\mathfrak{g}_\mathbb{C}, K_G,  E \otimes_{\mbb{C}} \overline{\pi}_\infty^H ).
\end{eqnarray*}
\end{pro}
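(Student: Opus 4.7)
The plan is to transport the decomposition of $M_{dR}(\pi_f, E)_\mathbb{C}$ already established in the text across the Betti--de Rham comparison $I_\infty$, and then to pin down each Hodge bidegree via the $(\mathfrak{g}_\mathbb{C}, K_G)$-cohomology description of the Hodge filtration. First, combining (\ref{semisimple}) and (\ref{isotypes}) with $I_\infty$ yields
\[
M_B(\pi_f, E)_\mathbb{C} = \bigoplus_{\sigma: E(\pi_f) \to \mathbb{C}} \bigoplus_{\pi_\infty \in P(E)} m(\pi_\infty \otimes \pi_f) \, H^3(\mathfrak{g}_\mathbb{C}, K_G, E_\mathbb{C} \otimes \pi_\infty),
\]
where each inner summand is one-dimensional by Proposition \ref{dimension}. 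The claim reduces to assigning to each of $\pi_\infty^H, \pi_\infty^W, \overline{\pi}_\infty^W, \overline{\pi}_\infty^H$ its Hodge bidegree.

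Next I would use the fact that, under the identification $\mathcal{A}^*(S, E)_\mathbb{C} \simeq C^*(\mathfrak{g}_\mathbb{C}, K_G, E_\mathbb{C} \otimes \mathcal{C}^\infty(G(\mathbb{Q}) \backslash G(\mathbb{A}), \xi))$ of section \ref{coho-siegel}, the Hodge filtration on $H^3_{dR, !}(S_L, E_\mathbb{C})$ supplied by Saito's formalism is induced by the combination of the Hodge filtration on $E_\mathbb{C}$ (coming from its algebraic representation structure of $\mathbb{S}$ via $h$ and the convention of section 2.5) with the filtration on $\bigwedge^\ast (\mathfrak{g}_\mathbb{C}/\mathrm{Lie}(K_G)_\mathbb{C})$ by powers of $\mathfrak{p}^+$. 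Concretely, a class represented by a non-zero $K_G$-equivariant map $\bigwedge^a \mathfrak{p}^+ \otimes \bigwedge^b \mathfrak{p}^- \to E_\mathbb{C}^{r, s} \otimes \pi_\infty$, with $a + b = 3$ and $E_\mathbb{C}^{r, s}$ the $(r, s)$-component of the reference Hodge structure on $E_\mathbb{C}$, has Hodge bidegree $(a + r, b + s)$. For each $\pi_\infty \in P(E)$ there is a unique such pair $(a, b)$: using the minimal $K$-types of Lemma \ref{dpp} together with the decompositions of $\bigwedge^a \mathfrak{p}^+ \otimes \bigwedge^b \mathfrak{p}^-$ recorded in the excerpt, $\pi_\infty^H$ pairs with $\bigwedge^3 \mathfrak{p}^+ = \tau_{(3, 3)}$, $\pi_\infty^W$ with the constituent $\tau_{(3, -1)} \subset \bigwedge^2 \mathfrak{p}^+ \otimes \mathfrak{p}^-$, $\overline{\pi}_\infty^W$ with $\tau_{(1, -3)} \subset \mathfrak{p}^+ \otimes \bigwedge^2 \mathfrak{p}^-$, and $\overline{\pi}_\infty^H$ with $\bigwedge^3 \mathfrak{p}^- = \tau_{(-3, -3)}$, giving $(a, b) = (3, 0), (2, 1), (1, 2), (0, 3)$ respectively.

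Finally, I would compute the shift $(r, s)$ by identifying, for each $\pi_\infty$, the Hodge component $E_\mathbb{C}^{r, s}$ containing the $K$-type effectively paired with the corresponding $\bigwedge^a \mathfrak{p}^+ \otimes \bigwedge^b \mathfrak{p}^-$. The Deligne cocharacter $\mu : z \mapsto \mathrm{diag}(z, z, 1, 1)$ of section \ref{coho-siegel} combined with the convention of section 2.5 shows that these $K$-types lie respectively in $E_\mathbb{C}^{r, s}$ with $(r, s) = (-t, -k-k'-t), (-k'-t, -k-t), (-k-t, -k'-t), (-k-k'-t, -t)$, where $t = (c-k-k')/2$. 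Adding these shifts to the $(a, b)$'s above yields the four bidegrees stated in the proposition. The main obstacle is the careful bookkeeping of the shift $(r, s)$, requiring consistent tracking of the sign conventions of section 2.5, the weights of $E_\mathbb{C}$ with respect to $\mu$, and the identification of $K$-types inside $E_\mathbb{C}$.
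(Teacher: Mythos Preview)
Your computation of the Hodge bidegrees is correct and is essentially the content of \cite{occult} (1.4), which the paper cites; the matching of minimal $K$-types with the pieces $\bigwedge^a\mathfrak{p}^+\otimes\bigwedge^b\mathfrak{p}^-$ and the shift by the Hodge type of the relevant weight space of $E_\mathbb{C}$ are exactly right and give the four pairs in the statement.

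The paper's own proof is much terser: it simply refers to \cite{occult} (1.4) for the Hodge types (noting the opposite sign convention) and then invokes Proposition~\ref{comparaisons} together with \cite{boundarycohomology} Thm.~5.4 for the fact that the harmonic-form Hodge decomposition of $L^2$-cohomology agrees with the one coming from Saito's theory of mixed Hodge modules. This last point is precisely the step you assert without justification when you write ``the Hodge filtration on $H^3_{dR,!}(S_L,E_\mathbb{C})$ supplied by Saito's formalism is induced by the combination of the Hodge filtration on $E_\mathbb{C}$ \ldots\ with the filtration on $\bigwedge^*(\mathfrak{g}_\mathbb{C}/\mathrm{Lie}(K_G)_\mathbb{C})$ by powers of $\mathfrak{p}^+$''. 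That compatibility is not formal: the Hodge structure on $H^3_{B,!}$ is defined via mixed Hodge modules on a compactification, and one has to know that it coincides on interior cohomology with the one described by $(\mathfrak{g}_\mathbb{C},K_G)$-cohomology of cusp forms. The paper isolates this as the essential input and cites \cite{boundarycohomology} for it. Your argument would be complete once you insert that reference (or an equivalent one) at that point.
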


\begin{proof} A reference for the statement on Hodge types is \cite{occult} (1.4). However, let us remark that \cite{occult} uses the sign convention opposite to ours (see section 2.4). The result then follows from the fact that  the Hodge decomposition of $L^2$ cohomology given by harmonic forms coincides with the Hodge decomposition given by the theory of mixed Hodge modules (Prop. \ref{comparaisons} and \cite{boundarycohomology} Thm. 5.4).
\end{proof}

The following definition is taken from \cite{beilinson2} \S 7.

\begin{defn} Let $A$ be a subfield of $\mathbb{R}$. A real mixed $A$-Hodge structure is a mixed $A$-Hodge structure whose underlying $A$-vector space is endowed with an involution $F_\infty$ stabilizing the weight filtration and whose $\mbb{C}$-antilinear complexification $F_\infty \otimes \tau$, where $\tau$ denotes the complex conjugation, stabilizes de Hodge filtration. We call $F_\infty \otimes \tau$ the de Rham involution.
\end{defn}

Let $\mathrm{MHS}_A^+$ denote the abelian category of real mixed $A$-Hodge structures.

\begin{defn} \label{mhs-coeff} Let $F$ be a number field. A real mixed $A$-Hodge structure with coefficients in $F$ is a pair $(M, s)$ where $M$ is an object of $\mathrm{MHS}_A^+$  and $s: F \lra \mrm{End_{\mathrm{MHS}_A^+}}(M)$ is a ring homomorphism.
\end{defn}

Let $\mathrm{MHS}_{A, F}^+$ denote the abelian category of real mixed $A$-Hodge structures with coefficients in $F$.

\begin{pro} \label{hodge-deRham-pif} Let $F_\infty$ be the involution on $M_B(\pi_f, E)$ induced by the complex conjugation on $S(\mbb{C})$ and on $E$. Then $(M_B(\pi_f, E), F_\infty)$ is an object of $\mathrm{MHS}_{\mbb{Q}, E(\pi_f)}^+$.
\end{pro}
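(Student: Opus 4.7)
The plan is to transport the real mixed $\mathbb{Q}$-Hodge structure from $H^3_{B,!}(S_L, E)$ to $M_B(\pi_f, E)$ by embedding the latter into a finite product of copies of the former via evaluation at a $\mathbb{Q}$-basis of $L$-invariants in $Res_{E(\pi_f)/\mathbb{Q}}\pi_f$, and then to verify that the $E(\pi_f)$-action on the Hom space is by morphisms in $\mathrm{MHS}^+_{\mathbb{Q}}$.

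First, I would recall that, by Saito's formalism of mixed Hodge modules applied to the smooth quasi-projective $\mathbb{Q}$-scheme $S_L$, the space $H^3_{B,!}(S_L, E)$ is a pure $\mathbb{Q}$-Hodge structure of weight $3-c$ (as already invoked in the proof of Proposition \ref{dec-hodge}). Complex conjugation on $S_L(\mathbb{C})$, which makes sense since $S_L$ is a $\mathbb{Q}$-scheme, combined with the involution on the $\mathbb{Q}$-local system $E$, yields a $\mathbb{Q}$-linear involution $F_\infty$ on $H^3_{B,!}(S_L, E)$ that stabilizes the weight filtration and whose $\mathbb{C}$-antilinear complexification $F_\infty \otimes \tau$ stabilizes the Hodge filtration, both being standard features of Saito's formalism applied to a $\mathbb{Q}$-scheme. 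Since the transition maps $[g]: S_{L'} \to S_L$ are defined over $\mathbb{Q}$, they induce morphisms in $\mathrm{MHS}^+_{\mathbb{Q}}$ on Betti cohomology, so taking the colimit endows $H^3_{B,!}(S, E)$ with the structure of a real pure $\mathbb{Q}$-Hodge structure on which $G(\mathbb{A}_f)$ acts through morphisms in $\mathrm{MHS}^+_{\mathbb{Q}}$.

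Next, I would pick $L$ small enough that $V := (Res_{E(\pi_f)/\mathbb{Q}}\pi_f)^L$ is non-zero; by admissibility and irreducibility, $V$ generates $Res_{E(\pi_f)/\mathbb{Q}}\pi_f$ as a $G(\mathbb{A}_f)$-module. Fix a $\mathbb{Q}$-basis $v_1, \dots, v_r$ of $V$. Evaluation at this basis defines a $\mathbb{Q}$-linear injection
$$
\mathrm{ev}: M_B(\pi_f, E) \hookrightarrow H^3_{B,!}(S_L, E)^r, \quad \phi \longmapsto (\phi(v_1), \dots, \phi(v_r)),
$$
whose image is the $\mathbb{Q}$-subspace of $r$-tuples $(x_1, \dots, x_r)$ satisfying $h \cdot x_i = \sum_j M(h)_{ij} x_j$ for every bi-$L$-invariant Hecke operator $h$, where $M(h) \in M_r(\mathbb{Q})$ is the matrix of $h$ in the basis $v_1, \dots, v_r$. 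Because Hecke operators act on $H^3_{B,!}(S_L, E)$ through morphisms in $\mathrm{MHS}^+_{\mathbb{Q}}$, via correspondences defined over $\mathbb{Q}$, the image of $\mathrm{ev}$ is a sub-object of $H^3_{B,!}(S_L, E)^r$ in $\mathrm{MHS}^+_{\mathbb{Q}}$, and pulling back through $\mathrm{ev}$ endows $M_B(\pi_f, E)$ with the structure of a real pure $\mathbb{Q}$-Hodge structure of weight $3-c$; independence from the basis choice is clear, since any change-of-basis matrix in $\mathrm{GL}_r(\mathbb{Q})$ acts on $H^3_{B,!}(S_L, E)^r$ as a morphism in $\mathrm{MHS}^+_{\mathbb{Q}}$.

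Finally, for $\alpha \in E(\pi_f)$ with matrix $A \in M_r(\mathbb{Q})$ on $v_1, \dots, v_r$, pre-composition by $\alpha$ on $M_B(\pi_f, E)$ corresponds via $\mathrm{ev}$ to multiplication by $A^T$ on $H^3_{B,!}(S_L, E)^r$, which is a morphism in $\mathrm{MHS}^+_{\mathbb{Q}}$. This yields the ring homomorphism $s: E(\pi_f) \to \mathrm{End}_{\mathrm{MHS}^+_{\mathbb{Q}}}(M_B(\pi_f, E))$. The main subtlety is that $Res_{E(\pi_f)/\mathbb{Q}}\pi_f$ itself carries no Hodge structure, so the filtrations on $M_B(\pi_f, E)$ must be extracted from the target; realizing $M_B(\pi_f, E)$ as a Hecke-isotypic $\mathbb{Q}$-subspace of $H^3_{B,!}(S_L, E)^r$ is what makes the argument go through.
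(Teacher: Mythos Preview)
The paper states this proposition without proof, presumably regarding it as a direct consequence of Saito's formalism together with the fact that the $G(\mathbb{A}_f)$-action on $H^3_{B,!}(S,E)$ is through correspondences defined over $\mathbb{Q}$. Your argument correctly supplies the details.

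The one step that merits an extra word is the claim that the image of $\mathrm{ev}$ \emph{equals} the subspace of Hecke-equivariant tuples. The forward containment is immediate from equivariance, but the reverse inclusion --- that every $\mathcal{H}(G,L)$-equivariant map $V \to H^3_{B,!}(S_L,E)$ extends to a $G(\mathbb{A}_f)$-equivariant map on all of $Res_{E(\pi_f)/\mathbb{Q}}\pi_f$ --- uses the standard fact that for smooth admissible representations generated by their $L$-invariants, taking $L$-invariants induces a bijection on $\Hom$-spaces. Since $Res_{E(\pi_f)/\mathbb{Q}}\pi_f$ is semisimple with each irreducible constituent having nonzero $L$-fixed vectors (for $L$ small enough), and $H^3_{B,!}(S,E)$ is admissible, this applies here. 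With that granted, the image is an intersection of kernels of morphisms in $\mathrm{MHS}^+_\mathbb{Q}$, hence a sub-object, and the rest of your argument goes through as written.
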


\begin{pro} \label{ratF} \cite{harris1} Cor. 2.3.1. The Hodge filtration $F^* M_B(\pi_f, E)_\mbb{C}$, which is defined by
$$
F^p M_B(\pi_f, E)_\mbb{C}=\bigoplus_{p' \geq p} M_B^{p', q},
$$
is the image of the complexification of a filtration  $F^* M_{dR}(\pi_f, E)$ of $M_{dR}(\pi_f, E)$ under the comparison isomorphism $I_\infty^{-1}$.
\end{pro}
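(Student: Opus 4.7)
The plan is to construct an $E(\pi_f)$-rational filtration on $M_{dR}(\pi_f, E)$ by restricting, via the $\pi_f$-isotypical projection, the $\mbb{Q}$-rational Hodge filtration which exists on algebraic de Rham cohomology itself, and then to check that the comparison isomorphism matches this with the Betti Hodge filtration.

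First I would recall that for each neat open compact $L \subset G(\mbb{A}_f)$, the interior cohomology $H^3_!(S_L, E)$ can be computed either as Betti or as algebraic de Rham cohomology, and its Hodge filtration admits a $\mbb{Q}$-rational model on the de Rham side. Concretely, one uses Saito's theory of mixed Hodge modules (or, equivalently, Deligne's canonical extension of $E$ to a smooth toroidal compactification $\overline{S}_L$ of $S_L$ and the resulting filtered algebraic de Rham complex with logarithmic poles): in either formulation, $H^3_{dR, !}(S_L, E)$ carries a filtration $F^\bullet$ defined over $\mbb{Q}$ whose complexification is identified, under the Betti--de Rham comparison $I_{\infty, L}$, with the Hodge filtration on $H^3_{B, !}(S_L, E)_\mbb{C}$.

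Next I would transport this to the adelic limit. Each Hecke operator $[g]: S_{L'} \to S_L$ (for $g^{-1}L'g \subset L$) is a finite étale morphism of $\mbb{Q}$-schemes, hence its pullback acts on algebraic de Rham cohomology over $\mbb{Q}$ and preserves the Hodge filtration (being induced by an algebraic correspondence). Taking the direct limit yields a $\mbb{Q}$-rational, $G(\mbb{A}_f)$-equivariant filtered structure $F^\bullet H^3_{dR, !}(S, E)$ whose complexification matches $F^\bullet H^3_{B, !}(S, E)_\mbb{C}$ under $I_\infty = \varinjlim_L I_{\infty, L}$. Applying the exact $\mbb{Q}$-linear functor $\Hom_{\mbb{Q}[G(\mbb{A}_f)]}(\mathrm{Res}_{E(\pi_f)/\mbb{Q}}\pi_f, -)$ to the filtered object then produces a filtration
\[
F^p M_{dR}(\pi_f, E) := \Hom_{\mbb{Q}[G(\mbb{A}_f)]}\bigl(\mathrm{Res}_{E(\pi_f)/\mbb{Q}}\pi_f,\, F^p H^3_{dR, !}(S, E)\bigr)
\]
of $E(\pi_f)$-subspaces of $M_{dR}(\pi_f, E)$, defined over $\mbb{Q}$ (hence a fortiori over $E(\pi_f)$ via the $E(\pi_f)$-action).

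Finally I would compare: since $I_\infty$ is $\mbb{C}[G(\mbb{A}_f)]$-equivariant and intertwines the two filtrations, applying $\Hom_{\mbb{C}[G(\mbb{A}_f)]}(\mathrm{Res}_{E(\pi_f)/\mbb{Q}}\pi_f \otimes_\mbb{Q} \mbb{C}, -)$ yields an isomorphism $M_B(\pi_f, E)_\mbb{C} \xrightarrow{\sim} M_{dR}(\pi_f, E)_\mbb{C}$ sending $F^p M_B(\pi_f, E)_\mbb{C}$ onto $F^p M_{dR}(\pi_f, E) \otimes_\mbb{Q} \mbb{C}$, which is the desired statement. The main obstacle in this plan is the first step, namely justifying that the Hodge filtration on $H^3_!(S_L, E)$ descends to a $\mbb{Q}$-rational filtration on algebraic de Rham cohomology despite $S_L$ being only quasi-projective and despite $H^3_!$ being defined as an image rather than as a direct cohomology group; this is precisely where one invokes the theory of mixed Hodge modules, as in \cite{harris1}, or argues via Deligne's canonical extension combined with Proposition \ref{comparaisons}.
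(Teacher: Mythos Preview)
The paper does not prove this proposition; it simply cites \cite{harris1} Cor.~2.3.1 and records the statement for later use. Your proposal is therefore not to be compared against an argument in the paper but rather against Harris's original, and your outline is indeed the standard route: the $\mbb{Q}$-rationality of the Hodge filtration on interior de Rham cohomology (via mixed Hodge modules or the logarithmic de Rham complex on a smooth toroidal compactification), Hecke-equivariance over $\mbb{Q}$, and passage to the $\pi_f$-isotypical component. This is correct and is essentially what underlies the cited result.
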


\begin{pro} \label{milne-shih} Let $N \in G(\mbb{R})$ be as in Rem. \ref{conj-ds}. Then, the involution $F_\infty$ of $M_B(\pi_f, E)_\mbb{C}$ is induced by the action of $N$ on $C^*(\mathfrak{g}_\mbb{C}, K_G, E \otimes_\mbb{C} \mathcal{C}^\infty(G(\mbb{Q}) \backslash G(\mbb{A})))$ defined by
$$
f \in \mrm{Hom}_K\left(\bigwedge^* \mathfrak{g}_\mbb{C} / \mathfrak{k}_\mbb{C}, E \otimes_\mbb{C} \mathcal{C}^\infty(G(\mbb{Q}) \backslash G(\mbb{A}))  \right) \longmapsto \left( X \longmapsto Nf(\mrm{Ad}_N(X))\right).
$$
\end{pro}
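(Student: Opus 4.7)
The plan is to combine Milne--Shih's description of complex conjugation on Shimura varieties of reflex field $\mbb{Q}$ with a direct translation to the $(\mfr{g}_\mbb{C}, K_G)$-cochain complex. Since the reflex field of $(G, \mc{H})$ is $\mbb{Q}$ (section \ref{coho-siegel}), each $S_L$ carries a canonical real structure, and complex conjugation on $S_L(\mbb{C}) = G(\mbb{Q}) \backslash \mc{H} \times G(\mbb{A}_f)/L$ is induced on the uniformization by the map $(h, g_f) \mapsto (\bar h, g_f)$, where $\bar h: \mbb{S} \to G_\mbb{R}$ is the complex conjugate homomorphism $\bar h(z) = h(\bar z)$.

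The first verification is that $\bar h = NhN^{-1}$. The element $h(i)$ is the matrix $J = \bigl(\begin{smallmatrix} 0 & I_2 \\ -I_2 & 0 \end{smallmatrix}\bigr)$, which in the Cartan basis of section \ref{discrete_series_classification} equals $T_1+T_2$. The identities $\mrm{Ad}_N(T_1)=-T_2$ and $\mrm{Ad}_N(T_2)=-T_1$ from Remark \ref{conj-ds} yield $\mrm{Ad}_N(J)=-J$, i.e., $Nh(z)N^{-1} = h(\bar z)$. A short matrix computation then shows that $N$ normalizes $K_G = A_G K$: it centralizes the scalar subgroup $A_G$, and conjugation by $N$ sends any $k = \kappa(u) \in K$ with $u \in \mrm{U}(2)$ to $\kappa(\sigma \bar u \sigma)$ where $\sigma = \bigl(\begin{smallmatrix} 0 & 1 \\ 1 & 0 \end{smallmatrix}\bigr)$, which still lies in $K$. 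Consequently the formula $f \mapsto (X \mapsto Nf(\mrm{Ad}_N X))$ does define an involution of $C^*(\mfr{g}_\mbb{C}, K_G, E \otimes_\mbb{C} \mc{C}^\infty(G(\mbb{Q})\backslash G(\mbb{A})))$, using that $N^2 = I$ so that $\mrm{Ad}_{N^{-1}} = \mrm{Ad}_N$.

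Next I would translate the geometric involution into this cochain description. Writing $S_L(\mbb{C}) = G(\mbb{Q}) \backslash G(\mbb{A})/K_GL$ via $gK_GL \leftrightarrow [g \cdot h_0, g_f]$ for a basepoint $h_0$, complex conjugation sends $gh_0 g^{-1}$ to $\overline{gh_0g^{-1}} = g\bar h_0 g^{-1} = (gN)h_0(gN)^{-1}$, so it corresponds to right multiplication by $N$ on $G(\mbb{A})$. The standard isomorphism between $\mc{A}^*(S, E_\mbb{C})$ and the $(\mfr{g}_\mbb{C}, K_G)$-cochain complex (section \ref{coho-siegel}) identifies the tangent space at $[g]$ with $\mfr{g}_\mbb{R}/\mrm{Lie}(K_G)_\mbb{R}$ via left translation by $g$; the differential of right multiplication by $N$ transports a tangent vector represented by $X$ at $[g]$ to the one represented by $\mrm{Ad}_N(X)$ at $[gN]$. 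Pulling back a form therefore yields the cochain $(X, g) \mapsto f(\mrm{Ad}_N X)(gN)$, and interpreting $N$ as acting on $E \otimes_\mbb{C} \mc{C}^\infty$ via its algebraic representation on $E$ and by right translation on $\mc{C}^\infty$ rewrites this as $f \mapsto (X \mapsto Nf(\mrm{Ad}_N X))$.

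The main obstacle is the coefficient bookkeeping. The involution $F_\infty$ is $\mbb{Q}$-linear on $M_B(\pi_f, E)$ and extends $\mbb{C}$-linearly to $M_B \otimes \mbb{C}$, whereas the de Rham/cochain description naturally carries the $\mbb{C}$-antilinear involution $F_\infty \otimes \tau$, equal to the composition of pullback by $c_S$ with $\mbb{C}$-antilinear complex conjugation on $E_\mbb{C}$. Because $E$ is defined over $\mbb{Q}$, complex conjugation on $E_\mbb{C}$ commutes with the $G(\mbb{Q})$-action on the local system, and separating the two antilinear pieces allows one to identify $F_\infty$ itself on cochains with the $\mbb{C}$-linear operator $f \mapsto (X \mapsto Nf(\mrm{Ad}_N X))$ described above. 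This is the technically delicate step, but it requires no deep input beyond the Milne--Shih theorem and a careful unwinding of the comparison isomorphism $I_\infty$.
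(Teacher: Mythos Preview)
Your argument is correct and follows essentially the same route as the paper: both reduce to Milne--Shih's description of complex conjugation on Siegel varieties and then read it off on the $(\mathfrak{g}_\mbb{C},K_G)$-complex. The only cosmetic difference is that the paper checks the hypothesis of \cite{milne-shih} Lem.~3.2 by factoring $N$ as the product of a rational permutation matrix and $\mu(-1)=\mathrm{diag}(-1,-1,1,1)$, whereas you verify $NhN^{-1}=\bar h$ directly via $\mrm{Ad}_N(T_1+T_2)=-(T_1+T_2)$ and then spell out the cochain translation that the paper leaves implicit.
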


\begin{proof} Note that
$$
N=\begin{pmatrix}
  & 1 & &\\
1 &   & &\\
   &   & & 1\\
   &   & 1 & \\
\end{pmatrix} \begin{pmatrix}
-1 & &  & \\
 & -1 &  & \\
 &  & 1 & \\
 &   &  & 1\\
\end{pmatrix}.
$$
As the cocharacter $\mu: \mbb{G}_{m, \mbb{C}} \lra G_\mbb{C}$ associated to the morphism $h: \mbb{S} \lra G_\mbb{R}$ defined above induces
$$
z \longmapsto \begin{pmatrix}
z & &  & \\
 & z &  & \\
 &  & 1 & \\
 &   &  & 1\\
\end{pmatrix}
$$
on complex points, the matrix $N$ satisfies the assumptions of \cite{milne-shih} Lem. 3.2. So the statement follows from the proof of Langlands conjecture on the action of complex conjugation on the set of complex points of a Shimura varieties which, in the case of Siegel varieties, is explained in \cite{milne-shih} Rem. 3.3 (c).
\end{proof}

\section{Computation of the regulator} \label{sect-calc-reg}

Let $p, q, k$ and $k'$ be integers as in the introduction and let $\mathcal{B}_p$, respectively $\mathcal{B}_q$, be the source of Beilinson's Eisenstein symbol (\ref{symbole-eisenstein}) of weight $p$, respectively $q$. Assume that $p, q, k$ and $k'$ satisfy the assumptions of Thm. \ref{lemma1}. Then, we have the extension class
$$
Eis_\mathcal{H}^{p, q, W}(\phi_f \otimes \phi'_f) \in \mathrm{Ext}^1_{\mathrm{MHS}_\mathbb{R}^+}(\mathbb{R}(0), H^3_!(S, W)_\mbb{R}) \subset H^4_\mathcal{H}(S/\mbb{R}, W)
$$
for any $\phi_f \otimes \phi'_f \in \mathcal{B}_p \otimes_\mbb{Q} \mathcal{B}_q$. Let $Eis_\mathcal{D}^{p, q, W}(\phi_f \otimes \phi'_f)$ denote the image of $Eis_\mathcal{H}^{p, q, W}(\phi_f \otimes \phi'_f)$ by the natural map
$$
\begin{CD}
H^4_\mathcal{H}(S/\mbb{R}, W) @>>> H^4_\mathcal{D}(S/\mbb{R}, W)
\end{CD}
$$
from absolute Hodge to Deligne-Beilinson cohomology (\cite{beilinson2} 5.7, \cite{jannsen} $\S$ 2). Thanks to the work of Jannsen \cite{jannsen}, the Deligne-Beilinson cohomology groups can be explicitly described by pairs of currents $(S, T)$ (Prop. \ref{explicitDBhomology}) and this will permit us to give an explicit description of $Eis_\mathcal{D}^{p, q, W}(\phi_f \otimes \phi'_f)$ (Prop. \ref{eisenstein-deligne}). In section \ref{section-Poincare}, we introduce Deligne rational structure $\mathcal{D}$ on $\mrm{Ext}^1$ and explain how the image $\mathcal{K}$ of $Eis_\mathcal{H}^{p, q, W}$ can be compared to $\mathcal{D}$ via the computation of a Poincar\'e duality pairing related to $\mathcal{K}$ and another related to $\mathcal{D}$  (Lem. \ref{dualite}). This idea goes back to Beilinson \cite{beilinson} (see also \cite{kings} 6.1). In section \ref{regulateur-integrale}, the pairing related to $\mathcal{K}$ is shown to be equal to an explicit adelic integral. Before this, let us introduce some relative motives.

\subsection{The relative motives} \label{sect-relatives-motives} In this section, we give a definition of motivic cohomology of the Shimura varieties we are interested in, with coefficients in sheaves such as $Sym^p V_2 \boxtimes Sym^q V_2$ or $W$. This relies on the work of Ancona \cite{ancona} and Cisinski-D\'eglise \cite{cisinski-deglise}, whose ideas were initiated by Voevodsky et al. \cite{voevodsky}. Ideally, motivic cohomology should be defined as a space of extensions in categories of mixed motivic sheaves, but these categories have not been discovered yet.\\

For the necessary background on relative motives of abelian schemes and relative Weil cohomologies, see \cite{ancona} section 2 and 3 respectively. Let $A/S$ be the universal abelian surface over the Siegel threefold and let $A^{k+k'}$ be the $(k+k')$-th fold fiber product over $S$. Let $R(A^{k+k'})$ denote the relative motive over $S$ associated to $A^{k+k'}$.

\begin{pro} \label{motifdeW} Let $k, k'$ and $c$ be such that $k \geq k' \geq 0$ and $c \equiv k+k' \pmod 2$. Let $t$ denote the integer $(k+k'+c)/2.$ Let $W$ be an irreducible  algebraic representation of $G$ of highest weight $\lambda(k, k', c)$. Then, there exists a relative Chow motive over the Siegel threefold which is a direct factor of $R(A^{k+k'})(t)$ and whose Betti realization is the variation of Hodge structure $W$.
\end{pro}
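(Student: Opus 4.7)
The plan is to invoke Ancona's construction~\cite{ancona}, which, building on the motivic formalism of Cisinski--D\'eglise~\cite{cisinski-deglise}, produces for each neat level $L$ a $\mbb{Q}$-linear symmetric monoidal functor
$$
\mu_L: \mrm{Rep}_{\mbb{Q}}(G) \lra \mrm{CHM}(S_L)_{\mbb{Q}},
$$
sending the standard symplectic representation $V_{\mrm{std}}$ of $G$ to a Tate twist of the motive $h^1(A_L)$ of the universal abelian surface over $S_L$, compatible with pullback along the Hecke correspondences $[g]$, and whose Betti realization is the canonical attachment of variations of Hodge structure to representations of $G$ on $S_L$. The first step is to apply this functor to the irreducible representation $W$, so that the identification of the Betti realization is automatic; the substantive content of the proposition is then the exhibition of $\mu_L(W)$ as a direct factor of $R(A_L^{k+k'})(t)$.

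Next I would realize $W$ as a direct summand of $V_{\mrm{std}}^{\otimes(k+k')} \otimes \nu^{(c-k-k')/2}$; the parity condition $c \equiv k+k' \pmod{2}$ is exactly what makes the exponent of $\nu$ integral. Schur--Weyl duality for the symplectic group, combined with the Brauer algebra elements built from contractions against $\psi$, provides an explicit idempotent in $\End_{\mrm{Sp}(4)}(V_{\mrm{std}}^{\otimes(k+k')})$ cutting out the isotypic component of highest weight $k e_1 + k' e_2$, while tensoring by $\nu^{(c-k-k')/2}$ adjusts the central character of the similitude factor. Pushing this decomposition through the monoidal functor $\mu_L$ and invoking the relative K\"unneth formula identifies $\mu_L(V_{\mrm{std}})^{\otimes(k+k')}$ with a direct summand of $R(A_L^{k+k'})$ up to a fixed Tate twist, while $\mu_L(\nu)$ is sent to a Tate twist of the unit motive.

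Adding the twists produced by these two operations and reconciling them with the conventions of section 2.5 should yield a total Tate twist equal to $(t)$ with $t=(k+k'+c)/2$, as required. Compatibility of $\mu_L$ with the transition maps then produces compatible direct factors at all levels, giving a direct factor of $R(A^{k+k'})(t)$ over the inverse limit $S$. The main obstacle I expect is precisely this bookkeeping of Tate twists, which depends both on the normalization of Ancona's functor and on the sign conventions of section 2.5; once these are pinned down, it is a routine verification, and the identification of the Betti realization of $\mu_L(W)$ with the variation of Hodge structure $W$ is built into Ancona's theorem.
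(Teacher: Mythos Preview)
Your proposal is correct and follows essentially the same route as the paper: realize $W$ as a direct factor of a tensor power of the standard representation twisted by a power of $\nu$ via Weyl's invariant theory (Schur--Weyl for the symplectic group), then apply Ancona's functor \cite{ancona} to transport this idempotent to relative Chow motives. The paper's proof is terser---it simply cites \cite{fulton-harris} \S17.3 and \cite{ancona} Thm.~1.3 in two sentences---while you spell out the monoidal-functor mechanism and flag the Tate-twist bookkeeping, but the underlying argument is identical.
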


\begin{proof} Let $r: G \longrightarrow \mrm{GL}(4)$ be the standard representation of $G$. It follows from Weyl's invariants theory that $W$ is a direct factor, defined by a certain explicit Schur projector, of the representation $r^{\otimes (k+k')} \otimes \nu^{\otimes t}$ (see \cite{fulton-harris} $\S 17.3$). Hence, the statement is a direct consequence of \cite{ancona} Thm. 1.3.
\end{proof}

Let $\mrm{DM}_{B, c}(S)$ is the triangulated category of constructible Beilinson motives over $S$ with $\mbb{Q}$-coefficients as defined in \cite{cisinski-deglise} Def. 15.1.1. Then $W$ is an object of $\mrm{DM}_{B, c}(S)$. Let $\textbf{1}_S$ be the unit object of $\mrm{DM}_{B, c}(S)$. Motivic cohomology with coefficients in $W$ is the $\mbb{Q}$-vector space defined by
$$
H^*_\mathcal{M}(S, W)=\mrm{Hom}_{\mrm{DM}_{B, c}(S)}(\textbf{1}_S, W[*]).
$$
The compatibility of this definition with the $K$-theoretical one follows from \cite{cisinski-deglise} Cor. 14.2.14. For integers $p \geq 0$ and $q \geq 0$, we can define $H^*_\mathcal{M}(M \times M, (Sym^p V_2 \boxtimes Sym^q V_2)(2))$ similarly. Here $M$ denotes a modular curve. Moreover, if $p, q, k$ and $k'$ verify the conditions stated in the introduction, the relative motive $(Sym^p V_2 \boxtimes Sym^q V_2)(2)$ over $M \times M$ is naturally a direct factor of $\iota^*W$, as Ancona's construction is functorial. As a consequence, the motivic cohomology space $H^*_\mathcal{M}(M \times M, (Sym^p V_2 \boxtimes Sym^q V_2)(2))$ is a naturally a direct factor of $H^*_\mathcal{M}(M \times M, \iota^*W(-1))$. As the triangulated category of constructible Beilinson motives has the formalism of Grothendieck six functors, duality (\cite{cisinski-deglise} Thm. 15.2.4) and the absolute purity isomorphism (\cite{lehalleur} Prop. 1.7), we have the Gysin morphism
$$
\begin{CD}
H^*_\mathcal{M}(M \times M, \iota^*W(-1)) @>>> H^{*+2}_\mathcal{M}(S, W).
\end{CD}
$$
In this setting, the definition of the regulator in Deligne-Beilinson cohomology and the compatibility with the previous one has been explained in \cite{scholbach}.

\subsection{Explicit description of the cohomology classes} \label{sousectionexplicit} Let us start by reviewing basic facts about Deligne-Beilinson cohomology. In what follows, we shall consider Deligne-Beilinson cohomology with coefficients in algebraic representations of the group underlying a given Shimura variety. This means that, like in \cite{kings} 2.3, we consider Deligne-Beilinson cohomology of the corresponding relative motives.\\

Let $Sch(\mbb{Q})$ be the category of smooth quasi-projective $\mbb{Q}$-schemes. Let $X$ be an object of $Sch(\mbb{Q})$ and let $n$ be an integer. For a definition of the real Deligne-Beilinson cohomology $H^*_\mathcal{D}(X/\mathbb{R}, \mbb{R}(n))$, the reader is referred to \cite{nekovar} 7. We also have the real absolute Hodge cohomology $H^*_\mathcal{H}(X/\mathbb{R}, \mbb{R}(n))$ of $X$ with coefficients in $\mbb{R}(n)=(2\pi i)^n \mbb{R}$ as defined in \cite{huber-wildeshaus1} Def. A.2.6 and there is a canonical map
\begin{equation}
\begin{CD}
H^m_\mathcal{H}(X/\mathbb{R}, \mbb{R}(n)) @>>> H^m_\mathcal{D}(X/\mathbb{R}, \mbb{R}(n)).
\end{CD}
\end{equation}
Let $\mathcal{S}^m(X/\mbb{R}, \mbb{R}(n))$ be the vector space of $\mathcal{C}^\infty$ differential forms on  $X(\mbb{C})$ on which the map $F_\infty$ induced by complex conjugation on $X(\mbb{C})$ acts by multiplication by $(-1)^n$.  Let $\mathcal{S}^m_c(X/\mbb{R}, \mbb{R}(n))$ be the compactly supported differential forms belonging to $\mathcal{S}^m(X/\mbb{R}, \mbb{R}(n))$. Let us consider a smooth projective compactification $j: X \longrightarrow X^*$ and let $i: Y \longrightarrow X^*$ be the complementary reduced closed embedding. Assume that $Y$ is a normal crossing divisor and let $\Omega^m_X \langle Y \rangle$ be the $\mbb{C}$-vector space of holomorphic differentials of degree $m$ on $X$ with logarithmic singularities along $Y$, endowed with its Hodge filtration (see \cite{deligne1} 3.1 and 3.2.2). For any integer $n$, let $\pi_n: \mathbb{C} \longrightarrow \mathbb{R}(n)$ denote the map $z \longmapsto  \frac{1}{2}(z+(-1)^n\overline{z})$.

\begin{pro} \label{explicitDB} \cite{nekovar} 7.3. For any integer $m$, we have a canonical isomorphism of $\mbb{R}$-vector spaces
$$
H^{m}_\mathcal{D}(X/\mathbb{R}, \mbb{R}(m))=\frac{\{(\phi, \omega) \in \mathcal{S}^{m-1}(X/\mbb{R}, \mbb{R}(m)) \times  \Omega^m_X \langle Y \rangle \,|\, d\phi=\pi_m(\omega)\}}{d\mathcal{S}^{m-2}(X/\mbb{R}, \mbb{R}(m))}.
$$
\end{pro}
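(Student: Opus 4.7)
The plan is to unwind the definition of real Deligne-Beilinson cohomology as the hypercohomology of a Deligne cone complex and to compute explicit cocycle representatives in degree $m$ with twist $m$. I would fix a smooth compactification $j : X \hookrightarrow X^*$ with $Y = X^* \setminus X$ a simple normal crossing divisor, so that, following \cite{beilinson2} and \cite{jannsen},
$$H^m_{\mathcal{D}}(X/\mathbb{R}, \mathbb{R}(m)) = \mathbb{H}^m\bigl(X^*_{/\mathbb{R}},\; \mathrm{Cone}(Rj_*\mathbb{R}(m)_X \oplus F^m\Omega^\bullet_{X^*}\langle Y\rangle \xrightarrow{\epsilon - \iota} Rj_*\Omega^\bullet_X)[-1]\bigr),$$
with the ``real'' structure given by $F_\infty$-invariance. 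I would then resolve $Rj_*\mathbb{R}(m)$ by the complex of smooth $\mathbb{R}(m)$-valued forms $\mathcal{S}^\bullet(X/\mathbb{R}, \mathbb{R}(m))$ on $X$ and $Rj_*\Omega^\bullet_X$ by the complex $\mathcal{E}^\bullet(X)$ of complex-valued smooth forms; the theorem of Burgos and Wang (cf.\ \cite{jannsen}) justifies that these complexes, without any log-growth constraint, are quasi-isomorphic to the corresponding $Rj_*$ derived pushforwards. The Hodge-filtered term $F^m\Omega^\bullet_{X^*}\langle Y\rangle$ I would leave untouched, as its hypercohomology is already well understood.

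In degree $m$, after taking $F_\infty$-invariants and using that $\mathbb{R}(m)$ is concentrated in degree $0$ (so its only contribution to the cone in degree $m$ after the shift by $[-1]$ is a boundary term involving $\mathcal{S}^{m-1}$) and that $F^m\Omega^\bullet_{X^*}\langle Y\rangle$ starts in degree $m$ (contributing $\Omega^m_{X^*}\langle Y\rangle$), an explicit cocycle reduces to a pair $(\phi, \omega) \in \mathcal{S}^{m-1}(X/\mathbb{R}, \mathbb{R}(m)) \times \Omega^m_{X^*}\langle Y\rangle$ together with a bridging complex-valued smooth $(m-2)$-form $\xi$. Writing out the cone differential yields $d\omega = 0$ (automatic cocycle condition on $\omega$ inside $F^m\Omega^\bullet$) and an identity $\iota(\omega) - \epsilon(\phi) = d\xi$ of smooth forms on $X$ with complex values. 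Applying the real projector $\pi_m:\mathbb{C}\to\mathbb{R}(m)$ and using that $\pi_m\circ\epsilon=\mathrm{id}$ on $\mathbb{R}(m)$-valued forms and that $\pi_m$ commutes with $d$, the bridging term reduces to its $\mathbb{R}(m)$-component, which can be absorbed into $\phi$ via a cone coboundary of an element in $\mathcal{S}^{m-2}(X/\mathbb{R}, \mathbb{R}(m))$. After this simplification the cocycle relation becomes $d\phi = \pi_m(\omega)$, and an analogous analysis of coboundaries identifies the denominator with $d\mathcal{S}^{m-2}(X/\mathbb{R}, \mathbb{R}(m))$.

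The main obstacle will be the precise chain-level bookkeeping: tracing through the $[-1]$ shift in the cone, keeping track of the bridging form $\xi$ in the cone cocycle, and showing that its non-real component can genuinely be killed by a suitable coboundary without disturbing $\omega$. This is exactly where the $F_\infty$-reality constraint and the projection $\pi_m$ do their essential work, and explains why $\pi_m(\omega)$ rather than $\omega$ itself appears on the right-hand side of the cocycle relation. The auxiliary replacement of log-growth forms by arbitrary smooth forms, provided by Burgos--Wang, is conceptually standard but technically necessary to obtain the clean statement involving forms on the open variety $X$ rather than on its compactification $X^*$.
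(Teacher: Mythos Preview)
The paper does not supply a proof of this proposition at all; it is quoted as a black box from \cite{nekovar}~7.3 (itself relying on \cite{esnault-viehweg} and \cite{beilinson2}). Your overall plan---realize $H^m_\mathcal{D}$ as hypercohomology of the Deligne cone, resolve $Rj_*\mathbb{R}(m)$ and $Rj_*\Omega^\bullet_X$ by smooth forms, and simplify using the splitting $\mathbb{C}=\mathbb{R}(m)\oplus\mathbb{R}(m-1)$---is exactly the standard route taken in those references, so in spirit you are doing what the cited proof does.

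However, the degree bookkeeping in your second paragraph is wrong, and this is not a cosmetic slip. In $\mathrm{Cone}\bigl(\mathcal{S}^\bullet(\mathbb{R}(m))\oplus F^m\Omega^\bullet\langle Y\rangle\to\mathcal{E}^\bullet\bigr)[-1]$, a degree-$m$ cochain is a triple $(\alpha,\omega,\xi)\in\mathcal{S}^m(\mathbb{R}(m))\times\Omega^m\langle Y\rangle\times\mathcal{E}^{m-1}$: the real form sits in degree $m$, not $m-1$, and the bridging form in degree $m-1$, not $m-2$. The shift $[-1]$ affects only the target $\mathcal{E}^\bullet$, not the source; the sentence ``$\mathbb{R}(m)$ is concentrated in degree $0$ so its only contribution \ldots\ is a boundary term involving $\mathcal{S}^{m-1}$'' is therefore incorrect. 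With your degrees the equation $\iota(\omega)-\epsilon(\phi)=d\xi$ does not even balance: the left side would mix an $m$-form and an $(m-1)$-form.

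The way the references actually reach the two-term description is to first pass to the quasi-isomorphic complex $\mathrm{Cone}\bigl(F^m\Omega^\bullet\langle Y\rangle\to\mathcal{E}^\bullet/\mathcal{S}^\bullet(\mathbb{R}(m))\bigr)[-1]$ using that $\mathcal{S}^\bullet(\mathbb{R}(m))\hookrightarrow\mathcal{E}^\bullet$ is a quasi-isomorphism onto a subcomplex; only \emph{after} this reduction does a degree-$m$ cocycle become a pair $(\omega,\phi)$ with $\phi$ in degree $m-1$. Also, the condition $d\omega=0$ is part of the cocycle relation in the two-term cone and is \emph{not} automatic for general $m$ (it would be only when $m=\dim X$); it is implicitly present in the description and used in the applications.
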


The Eisenstein symbol (\cite{beilinson} \S 3) is a $\mbb{Q}$-linear map 
$$
\begin{CD}
Eis^n_\mathcal{M}: \mathcal{B}_n @>>> H^{n+1}_\mathcal{M}(E^n, \mbb{Q}(n+1)) 
\end{CD}
$$
where we denote by $H^{n+1}_\mathcal{M}(E^n, \mbb{Q}(n+1)) $ the inductive limit over compact open subgroups $K$ of $\mathrm{GL}_2(\mathbb{A}_f)$ of the motivic cohomology $H^{n+1}_\mathcal{M}(E^n_K, \mbb{Q}(n+1)) $ of the $n$-th fold fiber product of the universal elliptic curve $E_K/M_K$ over the modular curve of level $K$. The notation $\mathcal{B}_n$ stands for the space of locally constant $\mbb{Q}$-valued functions $\phi_f$ on $\mrm{GL}_2(\mbb{A}_f)$ such that for any $b \in \mbb{A}_f$, any $a, d \in \mbb{Q}$ such that $ad>0$ and any $g \in \mrm{GL}_2(\mbb{A}_f)$, we have
\begin{eqnarray}
\phi_f \left( \begin{pmatrix}
a & b\\
 & d\\
\end{pmatrix}g \right) &=& a^{-(n+1)}d \phi_f(g), \label{transformation1}\\
\phi_f \left( \begin{pmatrix}
-1 & \\
 & 1\\
\end{pmatrix}g \right) &=& \phi_f(g) \label{transformation2}
\end{eqnarray}
and which are invariant by right translation under $\begin{pmatrix}
\widehat{\mbb{Z}}^\times & \\
  & 1\\
\end{pmatrix}$. The source of the Eisenstein symbol is indentified to a space of $\mbb{Q}$-valued functions $\mathcal{F}^n$ on $\mrm{GL}_2(\mbb{A}_f)$ in \cite{beilinson} p.7. Note that the map $\psi_f \mapsto (\phi_f: g \mapsto \phi_f(g)=\psi_f(det(g)^{-1}\,^t\!g)$ defines a $\mrm{GL}_2(\mbb{A}_f)$-equivariant isomorphism $\mathcal{F}^n \simeq \mathcal{B}_n$ when $\mrm{GL}_2(\mbb{A}_f)$ acts on $\mathcal{F}^n$, resp. $\mathcal{B}_n$, by left, resp. right, translation. By definition of the motivic sheaf $\mrm{Sym}^n V_2(1)$, the motivic cohomology $H^1_\mathcal{M}(M, \mrm{Sym}^n V_2(1))$ is a direct factor of $H^{n+1}_\mathcal{M}(E^n, \mbb{Q}(n+1))$. Furthermore, the Eisenstein symbol factors through the natural inclusion $H^1_\mathcal{M}(M, \mrm{Sym}^n V_2(1)) \subset H^{n+1}_\mathcal{M}(E^n, \mbb{Q}(n+1))$ and we denote again by $Eis^n_\mathcal{M}$ the induced map $\mathcal{B}_n \longrightarrow H^1_\mathcal{M}(M, \mrm{Sym}^n V_2(1))$. The following lemma will be very useful later.

\begin{lem} \label{eiseinstein-ext-scalaires} Let $\mathcal{B}_{n, \overline{\mbb{Q}}}$ be the vector space $\mathcal{B}_n \otimes_\mbb{Q} \overline{\mbb{Q}}$ with the action of $\mathrm{GL}_2(\mathbb{A}_f)$ by right translation. For any finite order Hecke character $\nu$ let $\mathcal{I}_n(\nu)$ denote the space of locally constant functions $f: \mathrm{GL}_2(\mathbb{A}_f) \longrightarrow \overline{\mbb{Q}}$ such that
$$
\forall a ,d \in \mbb{Q}^\times_+, \forall \alpha, \delta \in \widehat{\mbb{Z}}^\times, \forall b \in \mbb{A}_f, \forall g \in  \mathrm{GL}_2(\mathbb{A}_f), f \left( \begin{pmatrix}
a \alpha & b\\
 & d \delta\\
\end{pmatrix}g\right)=a^{-(n+1)}d \nu(\delta)f(g)
$$
and which are invariant by $\begin{pmatrix}
\widehat{\mbb{Z}}^\times & \\
  & 1\\
\end{pmatrix}$ when $\mathcal{I}_n(\nu)$ is endowed with the action of $\mathrm{GL}_2(\mathbb{A}_f)$ by right translation. Then, there is a $\mathrm{GL}_2(\mathbb{A}_f)$-equivariant decomposition
$$
\mathcal{B}_{n, \overline{\mbb{Q}}}= \bigoplus_{\mrm{sgn}(\nu)=(-1)^n} \mathcal{I}_n(\nu)
$$
where the sum is indexed by all finite order Hecke characters $\nu$ of sign $(-1)^n$.
\end{lem}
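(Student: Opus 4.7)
The plan is to obtain the decomposition by finite Fourier analysis on the compact abelian group $\widehat{\mathbb{Z}}^\times$, where the pertinent action on $\mathcal{B}_{n,\overline{\mathbb{Q}}}$ is the right translation by $\mathrm{diag}(1,\delta)$, $\delta \in \widehat{\mathbb{Z}}^\times$. First I would note that any $\phi_f \in \mathcal{B}_{n,\overline{\mathbb{Q}}}$ is locally constant, hence right invariant under a principal congruence subgroup of some level $N$. In particular $\phi_f$ is invariant under right translation by $\mathrm{diag}(1,1+N\widehat{\mathbb{Z}})$, so the right action of $\mathrm{diag}(1,\widehat{\mathbb{Z}}^\times)$ factors through the finite group $(\mathbb{Z}/N)^\times$, and standard orthogonality of characters of $(\mathbb{Z}/N)^\times$ yields a decomposition
$$
\phi_f \;=\; \sum_{\chi} \phi_f^{(\chi)}, \qquad \phi_f^{(\chi)}(g) \;=\; \frac{1}{\varphi(N)} \sum_{\delta \in (\mathbb{Z}/N)^\times} \chi(\delta)^{-1}\,\phi_f\bigl( g\,\mathrm{diag}(1,\tilde{\delta})\bigr).
$$
Via the canonical bijection between Dirichlet characters mod $N$ and finite order Hecke characters of conductor dividing $N$, this rewrites as $\phi_f = \sum_\nu \phi_f^{(\nu)}$.

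Next I would check that the three defining conditions of $\mathcal{B}_{n,\overline{\mathbb{Q}}}$ are preserved by each projector: the $P_+$-rule and the $N$-invariance follow because the relevant matrices commute with $\mathrm{diag}(1,\delta)$ up to a conjugation of the unipotent radical, and the right $K_U$-invariance because $\mathrm{diag}(\widehat{\mathbb{Z}}^\times,1)$ commutes with $\mathrm{diag}(1,\widehat{\mathbb{Z}}^\times)$. Then, to get the sign constraint, I would exploit the centrality of $-I$: using the decomposition $-I = \mathrm{diag}(-1,1)\cdot \mathrm{diag}(1,-1)$ combined with right $K_U$-invariance, one computes that $-I$ acts on $\phi_f^{(\nu)}$ from the right by $\mathrm{sgn}(\nu)$; on the other hand the $P_+$-rule applied at $a=d=-1$ (both negative, $ad>0$) gives a left action by $(-1)^n$. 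Centrality forces these scalars to coincide, so $\phi_f^{(\nu)} \equiv 0$ unless $\mathrm{sgn}(\nu)=(-1)^n$.

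The remaining and most delicate step is to show $\phi_f^{(\nu)} \in \mathcal{I}_n(\nu)$, i.e.\ to upgrade the left $P_+$-rule to the full left $B(\mathbb{A}_f)$-transformation law $\phi_f^{(\nu)}(\mathrm{diag}(a\alpha,d\delta) u g) = a^{-(n+1)}d\,\nu(\delta)\,\phi_f^{(\nu)}(g)$. Using the $P_+$-rule to dispose of the $(a,d,u)$-part, this reduces to proving $\phi_f^{(\nu)}(\mathrm{diag}(\alpha,\delta)g) = \nu(\delta)\phi_f^{(\nu)}(g)$ for $\alpha,\delta \in \widehat{\mathbb{Z}}^\times$. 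The key identity is
$$
\phi_f^{(\nu)}(h\cdot \alpha I) \;=\; \phi_f^{(\nu)}\bigl(h\cdot \mathrm{diag}(1,\alpha)\bigr) \;=\; \nu(\alpha)\,\phi_f^{(\nu)}(h),
$$
which follows from the factorization $\alpha I = \mathrm{diag}(1,\alpha)\cdot \mathrm{diag}(\alpha,1)$, right $K_U$-invariance, and the Fourier eigenvalue property. Combined with the centrality of $\alpha I$ and the decomposition $\mathrm{diag}(\alpha,\delta) = \alpha I \cdot \mathrm{diag}(1,\delta/\alpha)$, this gives the required rule. Finally, directness of the sum is automatic: the spaces $\mathcal{I}_n(\nu)$ are distinguished by their distinct central characters (equivalently by their eigenvalues under right translation by $\mathrm{diag}(1,\widehat{\mathbb{Z}}^\times)$), so orthogonality of characters of $\widehat{\mathbb{Z}}^\times$ separates them.

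I expect the main obstacle to be the last verification: reconciling the a priori weaker left $P_+$-equivariance with the genuinely adelic left Borel equivariance required by $\mathcal{I}_n(\nu)$, which hinges on carefully juggling three ingredients (centrality, right $K_U$-invariance, and the Fourier eigenvalue) so that they compose to give the correct scalar. All the other steps are bookkeeping.
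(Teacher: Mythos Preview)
Your approach differs from the paper's in a basic way: you decompose $\mathcal{B}_{n,\overline{\mathbb{Q}}}$ under the \emph{right} translation action of $\mathrm{diag}(1,\widehat{\mathbb{Z}}^\times)$, whereas the paper decomposes under the \emph{left} translation action of the full diagonal torus $T_2(\widehat{\mathbb{Z}})$. Since membership in $\mathcal{I}_n(\nu)$ is defined by a \emph{left} transformation rule, the paper's route is the direct one: once $\phi_f$ is written as a sum of left $T_2(\widehat{\mathbb{Z}})$-eigenvectors, each summand automatically satisfies a rule of the form $f(\mathrm{diag}(\alpha,\delta)g)=\chi(\alpha)\nu(\delta)f(g)$, and the remaining work is to identify which characters $(\chi,\nu)$ actually occur.

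Your argument has a genuine gap at the ``most delicate step.'' You correctly establish that right multiplication by $\alpha I$ acts on $\phi_f^{(\nu)}$ by $\nu(\alpha)$ (via $\alpha I=\mathrm{diag}(1,\alpha)\cdot\mathrm{diag}(\alpha,1)$, right $K_U$-invariance, and the Fourier eigenvalue), and hence by centrality so does left multiplication by $\alpha I$. But the factorisation $\mathrm{diag}(\alpha,\delta)=\alpha I\cdot\mathrm{diag}(1,\delta\alpha^{-1})$ then only \emph{reduces} the general left action to the special case $\alpha=1$: you still need to know the \emph{left} action of $\mathrm{diag}(1,\gamma)$ for $\gamma\in\widehat{\mathbb{Z}}^\times$, and nothing in your toolkit (the $P_+$-rule covers only $a,d\in\mathbb{Q}$; right $K_U$-invariance and the right Fourier eigenvalue are right-sided; centrality has already been used) supplies it. If you try to iterate---writing $\mathrm{diag}(1,\gamma)=\gamma I\cdot\mathrm{diag}(\gamma^{-1},1)$---you land on the equally unknown left action of $\mathrm{diag}(\gamma^{-1},1)$, and the reduction is circular. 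The ingredient that breaks this circle is exactly a decomposition under the \emph{left} torus action, which is what the paper performs from the outset.
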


\begin{proof} Let $\mrm{T}_2$ denote the diagonal maximal torus of $\mrm{GL}_2$. We are interested in the action of $\mrm{T}_2(\mbb{A}_f)$ on the space $\mathcal{B}_{n, \overline{\mbb{Q}}}$ by left translation. Because of the decomposition $\mbb{A}_f^\times=\mbb{Q}^\times_+ \widehat{\mbb{Z}}^\times$, we are reduced to study the action of $\mrm{T}_2(\widehat{\mbb{Z}})$ thanks to the equality (\ref{transformation1}). The Iwasawa decomposition $\mathrm{GL}_2(\mathbb{A}_f)=\mrm{B}_2(\mbb{A}_f) \mrm{GL}_2(\widehat{\mbb{Z}})$ and the fact that functions in $\mathcal{B}_{n, \overline{\mbb{Q}}}$ are locally constant implies that $\mathcal{B}_{n, \overline{\mbb{Q}}}$ is a union of finite dimensional $\overline{\mbb{Q}}$-vector spaces $V$ which are stable under $\mrm{T}_2(\widehat{\mbb{Z}})$. By continuity, the action is trivial on an open subgroup of $\mrm{T}_2(\widehat{\mbb{Z}})$ and so $V$ is a sum of finite order characters
$$
\begin{pmatrix}
 \alpha & \\
 & \delta\\
\end{pmatrix} \longmapsto \chi(\alpha) \nu(\delta).
$$
By $\begin{pmatrix}
\widehat{\mbb{Z}}^\times & \\
  & 1\\
\end{pmatrix}$-invariance,  the character $\chi$ has to be trivial and by condition (\ref{transformation1}), the character $\nu$ has to be of sign $(-1)^n.$
\end{proof}

Let us recall now the explicit description of the image of the Eisenstein symbol in Deligne-Beilinson cohomology, via the isomorphism of Prop. \ref{explicitDB}. Here, we follow \cite{kings} 6.3 very closely.\\

Let us regard the circle $\mrm{U}(1)$ as a maximal compact subgroup of $\mathrm{SL}_2(\mathbb{R})$ in the usual way. We will also sometimes consider the maximal torus $Z_2(\mbb{R})^+ \mrm{U}(1)$, where $Z_2(\mbb{R})^+$ denotes the identity component of the center of $\mathrm{GL}_2(\mbb{R})$. In what follows, we shall implicitly use the isomorphism between the de Rham complex on modular curves and a $(\mathfrak{gl}_{2, \mbb{C}}, Z_2(\mbb{R})^+ \mrm{U}(1))$-complex which is analogous to the one stated in section \ref{coho-siegel} for Siegel threefolds. If $\mathfrak{k}_2$ denotes the Lie algebra of $\mrm{U}(1)$, we have the Cartan decomposition $\mathfrak{sl}_{2, \mathbb{C}}=\mathfrak{k}_{2, \mathbb{C}} \oplus \mathfrak{p'}^+ \oplus \mathfrak{p'}^-$ where
$$
\mathfrak{p'}^\pm = \left\{ \begin{pmatrix}
z & \pm iz\\
\pm iz & -z\\
\end{pmatrix} \in \mathfrak{sl}_{2, \mathbb{C}} \,|\, z \in \mathbb{C} \right\}.
$$
Let $v^\pm \in \mathfrak{p'}^\pm$ denote the vector $v^\pm=\frac{1}{2}\begin{pmatrix}
1 & \pm i\\
\pm i & -1\\
\end{pmatrix} \in \mathfrak{p'}^\pm$. Let $(X, Y)$ be a basis of the standard representation $V_2$ of $\mrm{GL}(2)$ such that $\begin{pmatrix}
a & b\\
c & d\\
\end{pmatrix} \in \mathrm{GL}_2(\mathbb{Q})$ acts by
\begin{eqnarray*}
\begin{pmatrix}
a & b\\
c & d\\
\end{pmatrix}X=aX+cY,\\
\begin{pmatrix}
a & b\\
c & d\\
\end{pmatrix}Y=bX+dY.
\end{eqnarray*}
We regard  $\mathrm{Sym}^n V_{2, \mathbb{C}}$ as the space of homogeneous polynomials of degree $n$ in the variables $X$ and $Y$, with coefficients in $\mathbb{C}$. For any integer $0 \leq j \leq n$, let $b_j^{(n)}  \in \mathrm{Sym}^n V_{2, \mathbb{C}}$ be the vector $b_j^{(n)}=(iX-Y)^{j} (iX+Y)^{n-j}$. The family $\left( b_j^{(n)}\right)_{0 \leq j \leq n}$ is a basis of $\mathrm{Sym}^n V_{2, \mathbb{C}}$ and we will denote by $\left( a_j^{(n)}\right)_{0 \leq j \leq n}$ the dual basis.

\begin{lem} \label{poinds-a_j} Let $n, c$ be two integers such that $n \equiv c \pmod 2$. Let $\lambda'(n, c)$ be the character of the torus $Z_2(\mathbb{R})^+ \mrm{U}(1)$ defined by
$$
\lambda'(n, c): \begin{pmatrix}
x & y\\
-y & x\\
\end{pmatrix} \longmapsto (x+iy)^n (x^2+y^2)^\frac{c-n}{2}.
$$
Then, the vector $a_j^{(n)}$ has weight $\lambda'(n-2j, -n)$.
\end{lem}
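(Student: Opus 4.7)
The plan is a direct weight calculation on the basis $(b_j^{(n)})$ followed by duality. First I would compute the action of an arbitrary element $k=\begin{pmatrix} x & y \\ -y & x \end{pmatrix} \in Z_2(\mathbb{R})^+\mathrm{U}(1)$ on the two rank-one eigenvectors $iX-Y$ and $iX+Y \in V_{2,\mathbb{C}}$. Using the action formulas $kX = xX - yY$ and $kY = yX + xY$ one finds, by a short explicit computation,
$$
k\cdot(iX-Y) = (x+iy)(iX-Y), \qquad k\cdot(iX+Y) = (x-iy)(iX+Y),
$$
so $iX-Y$ and $iX+Y$ are eigenvectors of the whole torus $Z_2(\mathbb{R})^+\mathrm{U}(1)$ with eigencharacters $(x+iy)$ and $(x-iy)$ respectively.

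Next I would propagate this to $\mathrm{Sym}^n V_{2,\mathbb{C}}$ by multiplicativity. Since $b_j^{(n)} = (iX-Y)^j(iX+Y)^{n-j}$, it is an eigenvector of $k$ with eigenvalue $(x+iy)^j (x-iy)^{n-j}$. Using $(x-iy) = (x^2+y^2)/(x+iy)$, this eigenvalue rewrites as $(x+iy)^{2j-n}(x^2+y^2)^{n-j}$, which matches $\lambda'(2j-n, n)(k)$ since $(n-(2j-n))/2 = n-j$. Thus $b_j^{(n)}$ has weight $\lambda'(2j-n, n)$.

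Finally, as $(a_j^{(n)})$ is the basis dual to $(b_j^{(n)})$, the weight of $a_j^{(n)}$ is the inverse character, namely $\lambda'(-(2j-n), -n) = \lambda'(n-2j, -n)$. A sanity check: applying $\lambda'(n-2j, -n)$ to $k$ gives $(x+iy)^{n-2j}(x^2+y^2)^{(-n-(n-2j))/2} = (x+iy)^{n-2j}(x^2+y^2)^{j-n}$, which is indeed the inverse of $(x+iy)^{2j-n}(x^2+y^2)^{n-j}$.

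There is no real obstacle; the only thing to be careful about is the sign and conjugation conventions in the identification $\mathrm{Sym}^n V_2^\vee$ versus $\mathrm{Sym}^n V_2$, and in particular that passing from $b_j^{(n)}$ to the dual vector $a_j^{(n)}$ inverts the torus character. Once the action on $iX \pm Y$ is written out, everything else is bookkeeping.
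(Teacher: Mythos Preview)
Your proof is correct and follows exactly the approach implicit in the paper's one-line justification (``This follows from a trivial computation''): compute the torus action on $iX\pm Y$, propagate to $b_j^{(n)}$ by multiplicativity, and invert the character to pass to the dual basis vector $a_j^{(n)}$. There is nothing to add.
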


\begin{proof} This follows from a trivial computation.
\end{proof}

Let $\mrm{B}_2$ denote the standard Borel of $\mrm{GL}_2$, and by $\mrm{GL}_2(\mbb{R})^+$, respectively $\mrm{B}_2(\mbb{R})^+$, the identity component of $\mrm{GL}_2(\mbb{R})$, respectively $\mrm{B}_2(\mbb{R})$. Given $n, c$ as above, we denote by $\lambda(n, c)$ the algebraic character of the diagonal maximal torus of $\mrm{GL}_2$ defined by
$$
\lambda(d, c): \begin{pmatrix}
\alpha & \\
 & \alpha^{-1} \nu\\
\end{pmatrix} \longmapsto \alpha^n \nu^\frac{c-n}{2}.
$$
For any integer $r$ such that $r \equiv n \pmod 2$, the function $\phi^{n}_{r} \in \mrm{ind}_{\mrm{B}_2(\mbb{R})^+}^{\mrm{GL}_2(\mbb{R})^+} \lambda(n+2, n)$ is defined as
$$
\phi^{n}_{r}=(z_2-\overline{z}_2)^{n+1} w_2^\frac{n-r}{2} \overline{w}_2^\frac{n+r}{2}
$$
where
$$
z_2\begin{pmatrix}
a & b\\
c & d\\
\end{pmatrix}=\frac{ai+b}{ci+d}, w_2\begin{pmatrix}
a & b\\
c & d\\
\end{pmatrix}=ci+d.
$$
Consider
$$
\Theta_n = \frac{(2\pi  i)^{n+1}}{2(n+1)} \sum_{j=0}^n  a_j^{(n)} \otimes \phi^{n}_{2j-n} \in \mathrm{Sym}^n V_{2, \mathbb{C}}^\vee \otimes \mrm{ind}_{\mrm{B}_2(\mbb{R})^+}^{\mrm{GL}_2(\mbb{R})^+} \lambda(n+2, n).
$$
Given $\phi_f \in \mathcal{B}_n$, we can consider the vector valued series of functions
$$
Eis^n_\mathcal{H}(\phi_f)=\sum_{\gamma \in \mrm{B}_2(\mbb{Q}) \backslash \mrm{GL}_2(\mbb{Q})} \gamma^*(\Theta_n \otimes \phi_f),
$$
which is absolutely convergent provided $n \geq 1$. Because of the limitations given by the hypothesis of Thm. \ref{lemma1}, we only need to work with the Eisenstein symbol $Eis^n_\mathcal{M}$ for $n \geq 1$ in this work.

\begin{lem} \label{cc_omega}  Let $$\omega_n^\pm \in \mrm{Hom}_{\mrm{U}(1)} \left( \mathfrak{p'}^+ \oplus \mathfrak{p'}^-, \mathrm{Sym}^n V_{2, \mathbb{C}}^\vee \otimes \mrm{ind}_{\mrm{B}_2(\mbb{R})^+}^{\mrm{GL}_2(\mbb{R})^+} \lambda(n+2, n)\right)$$ be the differential form defined by
\begin{eqnarray*}
\omega_n^+(v^+) &=& (2 \pi i)^{n+1} a_n^{(n)} \otimes \phi_{n+2}^n ,\\
\omega_n^+(v^-) &=& 0,\\
\omega_n^-(v^+) &=& 0,\\
\omega_n^-(v^-) &=& (2 \pi i)^{n+1} a_0^{(n)} \otimes \phi_{-n-2}^n.
\end{eqnarray*}
Let $\overline{\omega}_n^\pm$ be the differential form deduced from $\omega_n^\pm$ by applying the complex conjugation on $\mathrm{Sym}^n V_{2, \mathbb{C}}^\vee \otimes \mrm{ind}_{\mrm{B}_2(\mbb{R})^+}^{\mrm{GL}_2(\mbb{R})^+} \lambda(n+2, n)$. Then $\overline{\omega}_n^\pm=(-1)^n \omega_n^\mp$.
\end{lem}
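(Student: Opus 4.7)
The plan is to verify $\overline{\omega}_n^\pm = (-1)^n \omega_n^\mp$ by evaluating both sides on the basis vectors $v^+, v^-$ of $\mathfrak{p'}^+ \oplus \mathfrak{p'}^-$ and tracking the signs. I will use the standard definition $\overline{\omega}(X) = \overline{\omega(\overline{X})}$ for complex conjugation of a vector-valued differential form, which combines the natural real structure on $\mathfrak{sl}_{2, \mbb{R}}$ (under which the matrices $v^+$ and $v^-$ are complex conjugate, hence $\overline{v^\pm} = v^\mp$) with the real structure on $\mrm{Sym}^n V_{2, \mbb{C}}^\PD \otimes \mrm{ind}_{\mrm{B}_2(\mbb{R})^+}^{\mrm{GL}_2(\mbb{R})^+} \lambda(n+2, n)$ specified in the statement.

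The substance of the argument is to compute complex conjugation on each of the three factors of $\omega_n^+(v^+) = (2\pi i)^{n+1}\, a_n^{(n)} \otimes \phi_{n+2}^n$. First, since $(X, Y)$ is a real basis of $V_2$, the formula $b_j^{(n)} = (iX - Y)^j (iX + Y)^{n-j}$ immediately gives $\overline{b_j^{(n)}} = (-1)^n b_{n-j}^{(n)}$, whence $\overline{a_j^{(n)}} = (-1)^n a_{n-j}^{(n)}$ and in particular $\overline{a_n^{(n)}} = (-1)^n a_0^{(n)}$. Second, for $\phi_r^n = (z_2 - \overline{z}_2)^{n+1} w_2^{(n-r)/2} \overline{w}_2^{(n+r)/2}$, the first factor is purely imaginary (contributing a sign $(-1)^{n+1}$), while $w_2^{(n-r)/2} \overline{w}_2^{(n+r)/2}$ conjugates to $\overline{w}_2^{(n-r)/2} w_2^{(n+r)/2}$, which is exactly the $w$-part of $\phi_{-r}^n$; altogether $\overline{\phi_r^n} = (-1)^{n+1} \phi_{-r}^n$. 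Third, $\overline{(2\pi i)^{n+1}} = (-1)^{n+1} (2\pi i)^{n+1}$.

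Combining, the three signs multiply to $(-1)^{3n+2} = (-1)^n$, yielding $\overline{\omega_n^+(v^+)} = (-1)^n (2\pi i)^{n+1}\, a_0^{(n)} \otimes \phi_{-n-2}^n = (-1)^n \omega_n^-(v^-)$. Using $\overline{v^\pm} = v^\mp$, one then concludes $\overline{\omega}_n^+(v^-) = \overline{\omega_n^+(v^+)} = (-1)^n \omega_n^-(v^-)$ and $\overline{\omega}_n^+(v^+) = \overline{\omega_n^+(v^-)} = 0 = (-1)^n \omega_n^-(v^+)$, which together give $\overline{\omega}_n^+ = (-1)^n \omega_n^-$. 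The twin identity $\overline{\omega}_n^- = (-1)^n \omega_n^+$ follows by the same computation, or more economically by applying the involution once more to the identity just established.

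This lemma is pure sign-bookkeeping and I do not anticipate any real obstacle. The only point that requires a moment of care is fixing the correct convention for complex conjugation of a vector-valued form on a real Lie group; once that is set, the three independent sign contributions assemble cleanly and give the claimed relation.
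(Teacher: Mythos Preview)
Your proof is correct and follows the same approach as the paper, which simply records the two identities $\overline{\phi}_{n+2}^n=(-1)^{n+1}\phi_{-n-2}^n$ and a conjugation formula for $a_j^{(n)}$. You are more explicit than the paper about the convention $\overline{\omega}(X)=\overline{\omega(\overline X)}$ (equivalently, conjugating the values on the real tangent space and re-extending $\mbb{C}$-linearly), which is indeed what is needed for the support of $\overline{\omega}_n^+$ to move from $v^+$ to $v^-$. One point worth noting: the paper's proof sketch states $\overline{a_j^{(n)}}=(-1)^n a_j^{(n)}$, whereas your computation gives $\overline{a_j^{(n)}}=(-1)^n a_{n-j}^{(n)}$; your version is the correct one (as one checks from $\overline{b_j^{(n)}}=(-1)^n b_{n-j}^{(n)}$), and it is the index swap $n\leftrightarrow 0$ that makes $\overline{\omega}_n^+(v^-)$ match $(-1)^n\omega_n^-(v^-)$.
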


\begin{proof} This follows from the identities $\overline{\phi}_{n+2}^n=(-1)^{n+1} \phi_{-n-2}^n$ and $\overline{a_j^{(n)}}=(-1)^n a_j^{(n)}$.
\end{proof}

For any $\phi_f \in \mathcal{B}_n$, the infinite series
$$
Eis_B^n(\phi_f)=\sum_{\gamma \in \mrm{B}_2(\mbb{Q}) \backslash \mrm{GL}_2(\mbb{Q})} \gamma^*(\omega_n^+ \otimes \phi_f)
$$
is absolutely convergent and defines a vector valued closed holomorphic differential one-form on $M$, i.e. en element of
$$
\mathrm{Hom}_{\mrm{U}(1)}\left( \bigwedge^1(\mathfrak{p'}^+ \oplus \mathfrak{p'}^-),  \mathrm{Sym}^n V_{2, \mathbb{C}}^\vee \otimes \mathcal{C}^\infty(\mathrm{GL}_2(\mathbb{Q}) \backslash \mathrm{GL}_2(\mathbb{A}_f)) \right),
$$
as explained in \cite{kings} 6.3. Let $E \lra M$ denote the universal elliptic curve over $M$. By definition of the relative motive associated to the standard representation $V_2$ of $\mrm{GL}(2)$, the cohomology $H^1_\mathcal{D}(M/\mbb{R}, \mathrm{Sym}^n V_2(1))$ is a direct factor of $H^{n+1}_\mathcal{D}(E^n/\mbb{R}, \mbb{R}(n+1))$, where $E^n$ denotes the $n$-th fold fiber product over $M$.

\begin{pro} \label{explicitEisenstein} \cite{kings} (6.3.5). Let $Eis^n_\mathcal{D}: \mathcal{B}_n \longrightarrow H^1_\mathcal{D}(M/\mbb{R}, \mathrm{Sym}^n V_2(1))$ be the composite of $Eis^n_\mathcal{M}$ and of the regulator
$$
\begin{CD}
H^1_\mathcal{M}(M, \mathrm{Sym}^n V_2(1)) @>>> H^1_\mathcal{D}(M/\mbb{R}, \mathrm{Sym}^n V_2(1)).
\end{CD}
$$
If $n \geq 1$ then for any $\phi_f \in \mathcal{B}_n$, the class of $Eis^n_\mathcal{D}(\phi_f)$ is represented by $(Eis^n_\mathcal{H}(\phi_f), Eis^n_B(\phi_f))$ via the isomorphism of Prop. \ref{explicitDB}.
\end{pro}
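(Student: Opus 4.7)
The plan is to deduce the statement from Kings' explicit description of the Deligne realization of the Eisenstein symbol (\cite{kings} Thm. 6.3.2), which itself rests on Beilinson's computation of the regulator of the elliptic polylogarithm on $E$. The task is therefore to (a) recall the formula in a shape amenable to the adelic/Lie-theoretic conventions of section 4, and (b) match the normalizations used here with those of \cite{kings}.

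First I would recall that the Eisenstein symbol is obtained by pulling back the motivic elliptic polylogarithm along a divisor supported on torsion sections of $E$, then projecting onto the $\mathrm{Sym}^n V_2(1)$-isotypic component of $H^{n+1}_\mathcal{M}(E^n,\mathbb{Q}(n+1))$ via the idempotent cut out by the Hecke algebra. The key input of Beilinson is that, in Deligne--Beilinson cohomology, the polylogarithm class is represented by a pair of Eisenstein--Kronecker currents on $E^n$ whose horospherical averages against $\phi_f\in\mathcal{B}_n$ are precisely certain real-analytic and holomorphic Eisenstein series.

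Next I would identify these averages with the expressions $Eis^n_\mathcal{H}(\phi_f)$ and $Eis^n_B(\phi_f)$ defined in section \ref{sousectionexplicit}. Concretely, one verifies that the section $\Theta_n$, together with the summation over $\mathrm{B}_2(\mathbb{Q})\backslash\mathrm{GL}_2(\mathbb{Q})$, reproduces the real-analytic Eisenstein series of weight $(n+2,n)$ obtained from Kings' horospherical map, while $\omega_n^+$, once summed, gives the holomorphic Eisenstein series of weight $n+2$ that represents the Hodge-filtration piece. The combinatorial factor $(2\pi i)^{n+1}/(2(n+1))$ in $\Theta_n$ is dictated by the normalization of the Tate twist $\mathbb{R}(n+1)$ and of the class $a_j^{(n)}$ paired with the basis $v^\pm$ of $\mathfrak{p}'^{\pm}$.

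To conclude via Prop. \ref{explicitDB}, two compatibilities must be checked. The first is the current equation $d\,Eis^n_\mathcal{H}(\phi_f) = \pi_{n+1}\bigl(Eis^n_B(\phi_f)\bigr)$ on $M$, which reduces after unwinding to the classical identity expressing the $\overline{\partial}$-derivative of a non-holomorphic Eisenstein series as a holomorphic one; this comparison is valid term by term after applying $\gamma^{*}$ and summing, because of absolute convergence for $n\geq 1$. The second is the $F_\infty$-equivariance, namely that $Eis^n_\mathcal{H}(\phi_f)$ lands in $\mathcal{S}^0$ of $\mathbb{R}(1)$-coefficients, which follows from Lemma \ref{cc_omega} together with the description of complex conjugation on $M(\mathbb{C})$ in terms of the Cartan involution on $\mathfrak{p}'^{+}\oplus\mathfrak{p}'^{-}$. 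The main obstacle is purely notational: reconciling the specific basis $(a_j^{(n)},b_j^{(n)})$, the functions $\phi^n_r$, and the induced-representation model $\mathrm{ind}_{\mathrm{B}_2(\mathbb{R})^+}^{\mathrm{GL}_2(\mathbb{R})^+}\lambda(n+2,n)$ used here with the algebraic-polynomial conventions of \cite{kings}, after which the formula is a transcription of loc. cit.~(6.3.5).
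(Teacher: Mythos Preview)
The paper does not give its own proof of this proposition: it is stated with the attribution ``\cite{kings} (6.3.5)'' and treated as a citation of an established result. Your sketch is therefore not to be compared against a proof in the paper but rather against the fact that the author simply imports the formula from Kings.

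That said, your outline is a reasonable description of what underlies the citation, and in particular of the bookkeeping one would carry out to transcribe Kings' formula into the Lie-theoretic and adelic notation of section~\ref{sousectionexplicit}. The two substantive checks you isolate---the differential relation $d\,Eis^n_\mathcal{H}(\phi_f)=\pi_{n+1}(Eis^n_B(\phi_f))$ and the $F_\infty$-behaviour---are indeed the content needed to know that the pair $(Eis^n_\mathcal{H}(\phi_f),Eis^n_B(\phi_f))$ is a legitimate representative in the model of Prop.~\ref{explicitDB}, and the paper's Lemma~\ref{cc_omega} is set up precisely to support the second of these. So your approach is sound; it simply goes further than the paper, which is content to invoke \cite{kings} (6.3.5) as a black box.
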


By functoriality of the natural map between absolute Hodge and Deligne-Beilinson cohomology, the Deligne-Beilisnon cohomology classes $Eis_\mathcal{D}^{p, q, W}(\phi_f \otimes \phi'_f)$ we are interested in are the image under the Gysin morphism associated to the closed embedding
$$
\begin{CD}
M \times M @>\iota>> S
\end{CD}
$$
of cup-products of Eisenstein classes $Eis_\mathcal{D}^p(\phi_f) \sqcup Eis_\mathcal{D}^q(\phi'_f)$. Hence, to get an explicit description of the classes $Eis_\mathcal{D}^{p, q, W}(\phi_f \otimes \phi'_f)$, we need an explicit description of the cup-product and of the Gysin morphism.

\begin{pro} \label{products} Let $X$ and $Y$ be two objects of $Sch(\mbb{Q})$. Let  $p_X: X \times Y \longrightarrow X$ and $p_Y: X \times Y \longrightarrow Y$ be the canonical projections. Then, via the isomorphism of Prop. \ref{explicitDB}, the external cup-product
$$
\sqcup: H^{m}_\mathcal{D}(X/\mathbb{R}, \mbb{R}(m)) \otimes H^{m'}_\mathcal{D}(X/\mathbb{R}, \mbb{R}(m')) \longrightarrow H^{m+m'}_\mathcal{D}(X/\mathbb{R}, \mbb{R}(m+m'))
$$
is
$$
(\phi, \omega) \sqcup (\phi', \omega')=(p_X^*\phi \wedge p_Y^*( \pi_{m'} \omega')+(-1)^m p_X^*(\pi_m \omega) \wedge p_Y^*\phi', p_X^* \omega \wedge p_Y^* \omega')
$$
for any $m, m'$.
\end{pro}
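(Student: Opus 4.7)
The plan is to deduce the formula by factoring the external cup product through pullbacks and the internal cup product, then invoking the explicit description of the Beilinson product on the Deligne-Beilinson complex. Concretely, in $H^{m+m'}_\mathcal{D}(X \times Y/\mbb{R}, \mbb{R}(m+m'))$ one has
$$
\alpha \sqcup \beta \;=\; p_X^*(\alpha) \cdot p_Y^*(\beta),
$$
where $\cdot$ denotes the internal cup product on $X \times Y$. Thus the proposition reduces to two independent statements: (a) pullback is compatible with the pair description of Prop.~\ref{explicitDB}, and (b) the internal product has the Beilinson form. Composing (a) and (b) yields the stated formula.

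For (a), I would observe that for any morphism $f : X' \lra X$ in $Sch(\mbb{Q})$ which is compatible with a chosen pair of normal crossing compactifications, the induced map $f^*$ at the level of pair representatives sends $(\phi, \omega)$ to $(f^*\phi, f^*\omega)$. This is immediate from functoriality of the smooth de Rham complex $\mathcal{S}^*$, of the logarithmic de Rham complex $\Omega^*\langle Y \rangle$, and from the fact that the pointwise projection $\pi_m$ on values in $\mbb{C}$ commutes with pullback. The relation $d\phi = \pi_m(\omega)$ pulls back to $d(f^*\phi) = \pi_m(f^*\omega)$, so the pair description is preserved.

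For (b), I would appeal to the Beilinson product on the real Deligne-Beilinson complex (as described in \cite{beilinson2} \S1.5 or in Esnault--Viehweg's standard account), which in the pair description of Prop.~\ref{explicitDB} sends a class represented by $(\psi, \eta)$ of weight $m$ and one represented by $(\psi', \eta')$ of weight $m'$ to the class of
$$
\bigl(\psi \wedge \pi_{m'}(\eta') + (-1)^m \pi_m(\eta) \wedge \psi',\; \eta \wedge \eta'\bigr).
$$
Verifying that this pair defines a class in $H^{m+m'}_\mathcal{D}$ amounts to checking that the second component is a holomorphic form of degree $m+m'$ with logarithmic singularities (automatic from functoriality of $\Omega^*\langle Y\rangle$), that the first component lies in $\mathcal{S}^{m+m'-1}(X\times Y/\mbb{R}, \mbb{R}(m+m'))$ (a bookkeeping of the action of $F_\infty$ on Hodge types), and the Leibniz-type compatibility between $d$ of the first component and $\pi_{m+m'}$ of the second, up to the quotient by $d\mathcal{S}^{m+m'-2}$ appearing in Prop.~\ref{explicitDB}. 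Specializing this internal formula to $p_X^*(\phi, \omega)$ and $p_Y^*(\phi', \omega')$ and using step (a) produces the formula of the proposition.

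The main obstacle is the Leibniz-type verification in (b): expanding $d(\psi \wedge \pi_{m'}\eta' + (-1)^m \pi_m\eta \wedge \psi')$ using $d\psi = \pi_m\eta$, $d\psi' = \pi_{m'}\eta'$, and closure of $\eta, \eta'$ (forced by their membership in the top stratum of the Hodge filtration), and then reconciling the resulting expression with $\pi_{m+m'}(\eta\wedge\eta')$ modulo exact forms. This reconciliation involves the mixed Hodge types $(m, m')$ and $(m', m)$ and the precise behaviour of the projection $\pi_\bullet$ under wedge product, and is the delicate part of the argument; I would follow the standard chain-level computation in the references cited above rather than reproduce it in full.
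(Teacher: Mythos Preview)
Your proposal is correct and follows essentially the same approach as the paper: both factor the external product as $p_X^*(\alpha)\cup p_Y^*(\beta)$ and then invoke the known explicit formula for the internal cup-product on the Deligne--Beilinson complex. The paper simply cites \cite{deninger-scholl} 2.5 and \cite{esnault-viehweg} 3.10 for the latter, whereas you sketch the chain-level verification in slightly more detail; but the content is the same.
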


\begin{proof} The external cup-product is by definition $x \sqcup  y = p_X^*(x) \cup p_Y^*(y)$, where $\cup$ denotes the usual cup-product. Hence, the statement follows from the explicit formulas for the usual cup-product given in \cite{deninger-scholl} 2.5 (see also \cite{esnault-viehweg} 3.10).
\end{proof}

To give an explicit description of the Gysin morphism, we need to introduce currents and Deligne-Beilinson homology. For $X \in Sch(\mbb{Q})$ and any integer $m$, let $\mathcal{T}^\circ(X/\mbb{R}, \mathbb{R}(m))$ be the complex of $\mbb{R}(m)$-valued currents on which the map $F_\infty$, induced by complex conjugation on $X(\mbb{C})$, acts by multiplication by $(-1)^m$. Let us consider as above a smooth projective compactification $j: X \longrightarrow X^*$ and let $i: Y \longrightarrow X^*$ be the complementary reduced closed embedding. Assume $Y$ is a normal crossing divisor. Let $\mathcal{T}_{log}^\circ(X/\mbb{R}, \mbb{C})$ be the complex of currents with logarithmic singularities along $Y$, endowed with the Hodge filtration $(F^r\mathcal{T}_{log}^\circ(X/\mbb{R}, \mbb{C}))_r$. Details on these notions are give in \cite{jannsen} 1.4.

\begin{pro} \label{explicitDBhomology} \cite{kings} Lem. 6.3.9. Let $i$ and $j$ be two integers. The real Deligne-Beilinson homology $H_i^\mathcal{D}(X/\mbb{R}, \mbb{R}(j))$ is the $\mbb{R}$-vector space
$$
H_i^\mathcal{D}(X/\mbb{R}, \mbb{R}(j))=\frac{\{ (S, T) \,|\, dS=\pi_{j-1}T \}}{\{ d(\widetilde{S}, \widetilde{T}) \}}
$$
where
$$
(S, T) \in \mathcal{T}^{-i-1}(X/\mbb{R}, \mathbb{R}(j-1)) \oplus F^j \mathcal{T}_{log}^{-i}(X/\mbb{R}, \mbb{C})
$$
and
$$
(\widetilde{S}, \widetilde{T}) \in \mathcal{T}^{-i-2}(X/\mbb{R}, \mathbb{R}(j-1)) \oplus F^j \mathcal{T}_{log}^{-i-1}(X/\mbb{R}, \mbb{C}).
$$
\end{pro}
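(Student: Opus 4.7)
The plan is to identify the right-hand side with the hypercohomology of an explicit mapping cone complex built from currents, following the model used to establish Prop.~\ref{explicitDB} on the cohomological side.

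First, I would recall Jannsen's definition from \cite{jannsen}: real Deligne-Beilinson homology $H_i^{\mathcal{D}}(X/\mbb{R}, \mbb{R}(j))$ is hypercohomology in degree $-i$ of a Deligne complex built as a mapping cone
$$
\mbb{R}(j)_\mathcal{D}^{\mathrm{hom}} = \mrm{Cone}\bigl( \mathcal{T}^\bullet(X/\mbb{R}, \mbb{R}(j-1)) \oplus F^j \mathcal{T}_{\mrm{log}}^\bullet(X/\mbb{R}, \mbb{C}) \xrightarrow{\;\alpha\;} \mathcal{T}_{\mrm{log}}^\bullet(X/\mbb{R}, \mbb{C}) \bigr)[-1],
$$
where $\alpha(S, T) = \pi_{j-1}(T) - S$ (with $S$ viewed in the complexification via the inclusion $\mbb{R}(j-1) \hookrightarrow \mbb{C}$). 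The use of currents (rather than, say, chains) is crucial because currents are dual to compactly supported forms, so they compute Borel-Moore homology of $X^{\mrm{an}}$, while currents with logarithmic singularities compute the Hodge-filtered piece coming from $(X^*, Y)$.

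Second, I would invoke the standard fact that currents are fine (hence soft/flasque) sheaves, so the hypercohomology of the above complex can be computed directly from its global sections; this is the same argument that lets one replace smooth forms by currents when proving Poincar\'e-Lefschetz duality, and it is carried out in detail in \cite{jannsen} 1.4 and \cite{kings} 6.3.

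Third, I would extract the formula of the proposition from the general description of the cohomology of a shifted mapping cone. For $\mrm{Cone}(\alpha)[-1]$, an $n$-cochain is a pair $(a, b)$ with $a$ in degree $n$ of the source and $b$ in degree $n-1$ of the target, and the differential is $(a, b) \mapsto (da, \alpha(a) - db)$. Specializing to $n = -i$ and to our $\alpha$, the source has two summands so $a = (S, T)$ with $S \in \mathcal{T}^{-i-1}(X/\mbb{R}, \mbb{R}(j-1))$ and $T \in F^j \mathcal{T}^{-i}_{\mrm{log}}(X/\mbb{R}, \mbb{C})$; the auxiliary term $b$ lies in $\mathcal{T}^{-i-1}_{\mrm{log}}(X/\mbb{R}, \mbb{C})$, but on cohomology one can eliminate it using the quasi-isomorphism between the complexes of smooth and logarithmic currents (both resolve $Rj_*\mbb{C}$), showing that without loss of generality we may take $b = 0$. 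Under this reduction, the cocycle condition becomes precisely $\alpha(S,T) = 0$, i.e.\ $dS = \pi_{j-1}(T)$, and the coboundary condition becomes $(S, T) = d(\widetilde{S}, \widetilde{T})$ for some pair of lower degree, which is the relation stated in the proposition.

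The main obstacle is bookkeeping: one has to reconcile the conventions on homological versus cohomological grading of currents, the sign in the differential of a mapping cone, and the placement of the shift $[-1]$, and one has to verify that the auxiliary factor $\mathcal{T}^\bullet_{\mrm{log}}(X/\mbb{R}, \mbb{C})$ can indeed be eliminated (which amounts to the quasi-isomorphism between flasque resolutions of $Rj_*\mbb{C}$). Once these standard points are cleared, the statement falls out directly from the mapping cone computation.
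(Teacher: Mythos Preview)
The paper does not prove this proposition; it simply cites \cite{kings} Lem.~6.3.9. Your sketch captures the right strategy---compute the hypercohomology of the Deligne--Beilinson complex using currents, which are acyclic resolutions---and this is indeed how Kings proceeds.

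That said, there is a bookkeeping slip in your step~3 worth flagging. In the cone you wrote down, an element $a$ of the source in degree $n$ would have both components in degree $n$, yet the proposition has $S$ in degree $-i-1$ and $T$ in degree $-i$. The discrepancy arises because the two-term description $(S,T)$ in the proposition is not the raw cone you wrote but the \emph{simplified} Deligne complex: using the splitting $\mbb{C} = \mbb{R}(j) \oplus \mbb{R}(j-1)$ one shows that the three-term cone $\mrm{Cone}(\mathcal{T}^\bullet(\mbb{R}(j)) \oplus F^j\mathcal{T}^\bullet_{\log} \to \mathcal{T}^\bullet_{\log}(\mbb{C}))[-1]$ is quasi-isomorphic to the two-term complex with terms $\mathcal{T}^{n-1}(\mbb{R}(j-1)) \oplus F^j\mathcal{T}^n_{\log}$ in degree $n$ and differential sending $(S,T)$ to $(\pi_{j-1}T - dS, dT)$ (up to signs). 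It is this simplification, rather than the elimination of an ``auxiliary $b$'' via a quasi-isomorphism of resolutions of $Rj_*\mbb{C}$, that produces the formula in the proposition; your map $\alpha(S,T)=\pi_{j-1}(T)-S$ does not land in $\mathcal{T}^\bullet_{\log}(\mbb{C})$ as written. Once the correct two-term model is in hand, the statement is immediate, and the closedness condition $dT=0$ (which the proposition leaves implicit) is part of the cocycle condition.
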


As currents are covariant for proper maps, the proposition above gives an explicit description of the Gysin morphism in Deligne-Beilinson cohomology.

\begin{pro}\label{gysin} The following statements hold:
\begin{itemize}
\item \cite{jannsen} Thm. 1.15. Let $X$ be an object of $Sch(\mbb{Q})$ of pure dimension $d_X$. There is a canonical isomorphism between Deligne-Beilinson homology and cohomology
$$
H_i^\mathcal{D}(X/\mbb{R}, \mbb{R}(j)) \simeq H_\mathcal{D}^{2d_X+i}(X/\mbb{R}, \mbb{R}(d_X+j)).
$$
\item \cite{kings} Lem. 6.3.10. Let $Y$ be an object of $Sch(\mbb{Q})$ of pure dimension $d_Y$ and let $i: Y \longrightarrow X$ be a closed embedding of codimension $c=d_X-d_Y$. Then, via the isomorphism above and the explicit description of Deligne-Beilinson homology classes given in Prop. \ref{explicitDBhomology}, the Gysin morphism
$$
i_*: H^m_\mathcal{D}(Y/\mathbb{R}, \mbb{R}(n)) \longrightarrow H^{m+2c}_\mathcal{D}(X/\mathbb{R}, \mbb{R}(n+c))
$$
is induced by the map $(S, T) \longmapsto (i_*S, i_*T)$.
\end{itemize}
\end{pro}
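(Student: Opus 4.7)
The plan is to treat the two bullets in turn, in each case reducing to well understood manipulations on currents. Both statements are purely formal consequences of the way the Deligne--Beilinson complexes are set up in \cite{jannsen}, so the work is essentially to check that the usual Poincar\'e duality / proper push-forward on currents descends correctly to the truncated, filtered mapping cones that compute $H^*_\mathcal{D}$ and $H_*^\mathcal{D}$.

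For the first bullet, I would identify the complex that computes $H^\mathcal{D}_i(X/\mbb{R},\mbb{R}(j))$ with a shift of the complex that computes $H_\mathcal{D}^{2d_X+i}(X/\mbb{R},\mbb{R}(d_X+j))$. The point is that on a smooth complex variety of pure dimension $d_X$, the sheaf of currents of degree $-r$ is, via the wedge-and-integrate pairing, the topological dual of the sheaf of smooth forms of degree $2d_X-r$; restricting to currents with logarithmic singularities along the boundary divisor $Y$, the Hodge filtration shifts by $d_X$, i.e.\ $F^j\mathcal{T}^{-i}_{\mathrm{log}}$ matches $F^{j+d_X}\Omega^{2d_X-i}_{X}\langle Y\rangle$ (up to the twist by $\mbb{R}(j-1)$ versus $\mbb{R}(j+d_X-1)$ which is absorbed by $\pi_{j-1}$ versus $\pi_{j+d_X-1}$). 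The real structure and involution $F_\infty$ match by construction. Once these identifications are in place on the level of mapping cones
$$
\mathrm{Cone}\bigl(\mathcal{T}^\bullet(X/\mbb{R},\mbb{R}(j-1))\oplus F^j\mathcal{T}^\bullet_{\mathrm{log}}(X/\mbb{R},\mbb{C})\longrightarrow \mathcal{T}^\bullet_{\mathrm{log}}(X/\mbb{R},\mbb{C})\bigr)[-1],
$$
the isomorphism of Prop.~\ref{explicitDBhomology} with the analogous presentation of cohomology in Prop.~\ref{explicitDB} is immediate. This is precisely Jannsen's Thm.~1.15.

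For the second bullet, push-forward of currents along the proper map $i:Y\hookrightarrow X$ is defined in the standard way, $\langle i_*S,\omega\rangle=\langle S,i^*\omega\rangle$; it is $d$-linear, preserves real structures and shifts the Hodge filtration by the codimension $c$ (because pull-back of smooth forms shifts $F^\bullet$ by $0$ and cohomological degrees drop by $2c$, which translates to the stated shift on currents). Thus $i_*$ induces a well defined map
$$
i_*:H^\mathcal{D}_i(Y/\mbb{R},\mbb{R}(j))\longrightarrow H^\mathcal{D}_i(X/\mbb{R},\mbb{R}(j)),
$$
and composing with the isomorphisms of the first bullet (using $d_Y=d_X-c$) yields a map $H^m_\mathcal{D}(Y/\mbb{R},\mbb{R}(n))\to H^{m+2c}_\mathcal{D}(X/\mbb{R},\mbb{R}(n+c))$ whose formula on cocycles is exactly $(S,T)\mapsto(i_*S,i_*T)$.

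The remaining step is to verify that this map agrees with the Gysin morphism produced by the absolute purity isomorphism in the six-functor formalism for Deligne--Beilinson cohomology. The cleanest way is to invoke a characterization by axioms: both maps are covariantly functorial in closed embeddings, respect the projection formula, and reduce to the integration current against the cohomology class of $Y$ in the simplest case $X=\A^c\times Y$, $i=\{0\}\times\mathrm{id}$. Checking the base case by direct computation with currents is straightforward, and the general case follows from factoring $i$ locally as a regular embedding. I expect this compatibility check to be the only delicate point; it is worked out in the reference \cite{kings} Lem.~6.3.10 cited in the statement, so in the write-up I would appeal to that reference rather than reproduce the argument.
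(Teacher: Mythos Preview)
The paper gives no proof of this proposition at all: both bullets are stated with external citations (\cite{jannsen} Thm.~1.15 and \cite{kings} Lem.~6.3.10) and then used without further argument. Your sketch is a reasonable outline of the ideas underlying those references, and since you yourself conclude by deferring to \cite{kings} Lem.~6.3.10 for the compatibility check, you end up in the same place as the paper.
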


Let us fix an orientation on the complex manifold $X(\mbb{C})$ and let $\phi \in \mathcal{S}^i(X/\mbb{R}, \mbb{R}(j))$. Following \cite{jannsen} $\S 1$, let $T_\phi \in \mathcal{T}^{i-2d_X}(X/\mbb{R}, \mbb{R}(j-d_X))$ denote the current defined by
\begin{equation} \label{diff-form-to-current}
\omega \in \mathcal{S}^i_c(X/\mbb{R}, \mbb{R}(d_X-j)) \longmapsto T_\phi(\omega)=\frac{1}{(2\pi i)^{d_X}}\int_{X(\mbb{C})} \omega \wedge \phi.
\end{equation}

\begin{pro} \label{eisenstein-deligne}  Let
$$
\begin{CD}
Eis^{p, q, W}_\mathcal{D}: \mathcal{B}_p \otimes_\mbb{Q} \mathcal{B}_q @>>> H^{4}_\mathcal{D}(S/\mathbb{R}, W)
\end{CD}
$$
be the composite of the map (\ref{eisM}) and of the regulator
$$
\begin{CD}
H^{4}_\mathcal{M}(S, W) @>>> H^{4}_\mathcal{D}(S/\mathbb{R}, W)
\end{CD}
$$
in Deligne-Beilinson cohomology. For $j=1, 2$ let $p_j: M \times M \lra M$ denote the $j$-th projection. Then
$$
Eis^{p, q, W}_\mathcal{D}(\phi_f \otimes \phi'_f)=(\iota_*T_{P^{p, q}_\mathcal{H}(\phi_f \otimes \phi'_f)}, \iota_*T_{P^{p, q}_B(\phi_f \otimes \phi'_f)})
$$
where $P^{p, q}_\mathcal{H}(\phi_f \otimes \phi'_f)$ is defined by
$$
P^{p, q}_\mathcal{H}(\phi_f \otimes \phi'_f)=p_1^*  Eis^p_\mathcal{H}(\phi_f) \wedge p_2^*(\pi_q Eis^q_B(\phi'_f))+(-1)^p p_1^*(\pi_p Eis^p_B(\phi_f)) \wedge p_2^* Eis^q_\mathcal{H}(\phi'_f)
$$
and $P^{p, q}_B(\phi_f \otimes \phi'_f)$ is defined by
$$
P^{p, q}_B(\phi_f \otimes \phi'_f)=p_1^* Eis_B^p(\phi_f) \wedge p_2^* Eis_B^q(\phi_f).
$$
\end{pro}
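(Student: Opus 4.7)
The plan is to simply assemble the three explicit descriptions already available: that of the Eisenstein classes in Deligne--Beilinson cohomology (Prop.~\ref{explicitEisenstein}), that of the external cup product (Prop.~\ref{products}), and that of the Gysin morphism through Deligne--Beilinson homology (Prop.~\ref{gysin}). By construction (see the discussion in Section~\ref{sect-relatives-motives}), the composite $Eis^{p,q,W}_{\mathcal D}$ is the Gysin push-forward along $\iota : M\times M\hookrightarrow S$, composed with the map induced by the inclusion of coefficient motives $(Sym^p V_2\boxtimes Sym^q V_2)(2)\subset \iota^*W(-1)$, applied to the external cup product $Eis^p_{\mathcal D}(\phi_f)\sqcup Eis^q_{\mathcal D}(\phi'_f)$.

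First, by Prop.~\ref{explicitEisenstein} each factor $Eis^n_{\mathcal D}(\phi_f)$ is represented, under the isomorphism of Prop.~\ref{explicitDB}, by the pair of a $\mathcal C^\infty$ form and a holomorphic form $\bigl(Eis^n_{\mathcal H}(\phi_f),Eis^n_B(\phi_f)\bigr)$. Substituting these pairs into the formula of Prop.~\ref{products} for the external cup product with $m=p+1$, $m'=q+1$ (the relevant twists, absorbed into the forms $Eis^n_{\mathcal H}$ and $Eis^n_B$ as in \cite{kings}~6.3, are the reason for the factors $\pi_p$ and $\pi_q$) gives directly the pair $\bigl(P^{p,q}_{\mathcal H}(\phi_f\otimes\phi'_f),P^{p,q}_B(\phi_f\otimes\phi'_f)\bigr)$, as announced. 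The inclusion of coefficient sheaves contributes nothing new: it is induced by a morphism of polarizable variations that is parallel for the integrable connection and compatible with the real structures, so at the level of the explicit representatives of Prop.~\ref{explicitDB} it merely reinterprets the same differential forms as taking values in the larger coefficient system $\iota^*W(-1)$.

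Second, to compute the Gysin image it is convenient to go through homology. The codimension of $\iota$ is $c=3-2=1$, and by the first bullet of Prop.~\ref{gysin} the class $\bigl(P^{p,q}_{\mathcal H},P^{p,q}_B\bigr)\in H^2_{\mathcal D}(M\times M/\mathbb R,\iota^*W(-1))$ corresponds, under the homology--cohomology identification, to the Deligne--Beilinson homology class represented by the pair of currents $\bigl(T_{P^{p,q}_{\mathcal H}},T_{P^{p,q}_B}\bigr)$, via the integration formula (\ref{diff-form-to-current}) (here we use the canonical orientation of $(M\times M)(\mathbb C)$ and the fact that $P^{p,q}_B$ has the right Hodge type to represent a class in $F^{2+q}\mathcal T^{0}_{\log}$, which is immediate from the holomorphicity of $Eis^n_B$). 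By the second bullet of Prop.~\ref{gysin}, the Gysin morphism is then induced by push-forward of currents, yielding $\bigl(\iota_*T_{P^{p,q}_{\mathcal H}},\iota_*T_{P^{p,q}_B}\bigr)$, which lies in $H_0^{\mathcal D}(S/\mathbb R,W(-3))\simeq H^4_{\mathcal D}(S/\mathbb R,W)$. This is the formula claimed.

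The main non-routine point is the compatibility between the regulator into Deligne--Beilinson cohomology and the three operations (external cup product, functoriality along a closed embedding through the coefficient inclusion, and Gysin), which is what allows us to commute $r_{\mathcal D}$ with the composite defining $Eis^{p,q,W}_{\mathcal M}$; this rests on the functorial formalism of regulators on relative motives as developed in \cite{scholbach}. Once that is granted, everything else is a bookkeeping exercise combining Prop.~\ref{explicitEisenstein}, Prop.~\ref{products} and Prop.~\ref{gysin}, with the only mild verification being the Hodge type of $P^{p,q}_B$ needed to identify it with a representative in $F^\bullet\mathcal T^\bullet_{\log}$.
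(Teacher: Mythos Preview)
Your proof is correct and follows exactly the same route as the paper: the paper's own proof is the single sentence ``The statement is a direct consequence of Prop.~\ref{explicitEisenstein}, Prop.~\ref{products} and Prop.~\ref{gysin}.'' Your sketch simply unpacks this, making explicit the passage through Deligne--Beilinson homology for the Gysin step and flagging the compatibility of the regulator with cup product and proper push-forward (via \cite{scholbach}) that the paper leaves implicit.
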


\begin{proof}
The statement is a direct consequence of Prop. \ref{explicitEisenstein}, Prop. \ref{products} and Prop. \ref{gysin}.
\end{proof}

\subsection{The use of Poincar\'e duality} \label{section-Poincare} This section explains how the Poincar\'e duality pairing can be used to compute the regulator. This idea is due to Beilinson \cite{beilinson} (see also \cite{kings} 6.1). Let us start with a general result.

\begin{lem}   \label{suite-ex-pour-ext1} Let $E$ be a number field and let $M$ be an object of $\mathrm{MHS}_{\mathbb{R}, E}^+$ (Def. \ref{mhs-coeff}) which is pure weight $w<0$. Let $M_{dR}$ be the sub $E \otimes_\mbb{Q} \mbb{R}$-module of $M_\mbb{C}$ where the de Rham involution acts trivially and let $M^-$ be the submodule of $M$ where $F_\infty$ acts by multiplication by $-1$. Let $M^-(-1)=1/(2\pi i)M^-$. Then, there is an exact sequence of $E \otimes_\mbb{Q} \mbb{R}$-modules
$$
0 \longrightarrow F^0 M_{dR} \longrightarrow M^-(-1) \longrightarrow \mathrm{Ext}^1_{\mathrm{MHS}_\mathbb{R}^+}(\mathbb{R}(0), M) \longrightarrow 0
$$
where the second map is the composite  of the natural inclusions $$F^0 M_{dR} \longrightarrow M_{dR} \longrightarrow M_\mbb{C}$$
and of the projections
$$
M_\mbb{C} \longrightarrow M(-1) \longrightarrow M^-(-1)
$$
defined by $v \longmapsto \frac{1}{2}(v-\overline{v})$ and $v \longmapsto \frac{1}{2}(v-F_\infty(v))$ respectively.
\end{lem}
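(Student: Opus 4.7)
The plan is to invoke the standard two-term resolution (Beilinson \cite{beilinson2} \S 7) computing Ext-groups in $\mathrm{MHS}_\mathbb{R}^+$ and then perform the necessary linear algebra identifications. The $E$-action commutes with all the structures involved, so every map below is automatically $E \otimes_\mathbb{Q} \mathbb{R}$-linear and I suppress the $E$-structure from the notation.

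The basic input is that, for any $N \in \mathrm{MHS}_\mathbb{R}^+$, the two-term complex
$$
\bigl[ N_\mathbb{R}^+ \oplus F^0 N_{dR} \xrightarrow{(a,b) \,\mapsto\, a - b} N_{dR} \bigr]
$$
concentrated in degrees $0$ and $1$ computes $R\mathrm{Hom}_{\mathrm{MHS}_\mathbb{R}^+}(\mathbb{R}(0), N)$. Applied to $N = M$, the assumption $w < 0$ forces $\mathrm{Hom}_{\mathrm{MHS}_\mathbb{R}^+}(\mathbb{R}(0), M) = 0$: such a morphism would yield an element of $M_\mathbb{R}^+ \cap F^0 M_{dR} \subset M_\mathbb{R} \cap F^0 M_\mathbb{C}$, but for a pure Hodge structure of negative weight $F^0 M_\mathbb{C} \cap \overline{F}^0 M_\mathbb{C} = 0$ by a Hodge-type count (a common summand would have type $(p,q)$ with $p, q \geq 0$, contradicting $p + q = w < 0$). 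This reduces the resolution to a short exact sequence
$$
0 \to M_\mathbb{R}^+ \oplus F^0 M_{dR} \xrightarrow{\alpha} M_{dR} \to \mathrm{Ext}^1_{\mathrm{MHS}_\mathbb{R}^+}(\mathbb{R}(0), M) \to 0.
$$

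Next, I will identify $M_{dR}/M_\mathbb{R}^+$ with $M^-(-1)$ canonically. Writing $v \in M_{dR}$ as $v = a + ib$ with $a, b \in M_\mathbb{R}$, the relation $(F_\infty \otimes \tau)(v) = v$ forces $F_\infty(a) = a$ and $F_\infty(b) = -b$, giving an $\mathbb{R}$-linear direct sum decomposition $M_{dR} = M_\mathbb{R}^+ \oplus i\,M_\mathbb{R}^-$. Since $\tfrac{1}{2\pi i} = -\tfrac{1}{2\pi} \cdot i$ and $M^-$ is closed under real scalars, one has $M^-(-1) = i\,M_\mathbb{R}^-$ as subsets of $M_\mathbb{C}$, hence $M_{dR}/M_\mathbb{R}^+ \cong M^-(-1)$. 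Dividing the previous sequence by the $M_\mathbb{R}^+$-summand (sitting in the source via the first factor and in the target via the natural inclusion $M_\mathbb{R}^+ \hookrightarrow M_{dR}$) produces the desired short exact sequence, the first map being the restriction to $F^0 M_{dR}$ of the canonical projection $M_{dR} \twoheadrightarrow M^-(-1)$.

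It remains to verify that this canonical projection coincides with the two-step map given in the statement. For $v = a + ib \in M_{dR}$, the first prescribed projection yields $\tfrac{1}{2}(v - \overline{v}) = ib \in i\,M_\mathbb{R}^-$, and then $F_\infty(ib) = iF_\infty(b) = -ib$, so the second prescribed projection yields $\tfrac{1}{2}(ib - F_\infty(ib)) = ib$, coinciding with the projection onto $i\,M_\mathbb{R}^- = M^-(-1)$. The main obstacle in this plan is purely the bookkeeping of the Tate twist and the sign conventions: one has to be careful that the complexified ``imaginary part'' really lands in $M^-(-1)$ rather than merely in $i M_\mathbb{R}^-$, and the sign in the definition of $\alpha$ must be matched with the positive direction in the statement. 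No deep homological input beyond Beilinson's resolution is needed, whose adaptation from $\mathrm{MHS}_\mathbb{R}$ to $\mathrm{MHS}_\mathbb{R}^+$ amounts to replacing each of the three factors by its $F_\infty$- or $(F_\infty \otimes \tau)$-fixed subspace.
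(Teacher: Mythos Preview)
Your argument is correct and is essentially the same as the paper's, which simply invokes \cite{nekovar} (2.1) and (2.5); you have unpacked that citation by writing out Beilinson's two-term resolution and the identification $M_{dR}=M_\mathbb{R}^+\oplus iM_\mathbb{R}^-$ explicitly.
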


\begin{proof} Immediate consequence of \cite{nekovar} (2.1) and (2.5).
\end{proof}

\begin{cor} \label{suite_ex_ext1} Let $W$ be an irreducible algebraic representation of $G$ of highest weight $\lambda(k, k', c)$, with $w=3-c<0$. Let $\pi_f$ be the non-archimedean part of an irreducible cuspidal automorphic representation $\pi$ of $G$ whose archimedean component $\pi_\infty$ belongs to the discrete series $L$-packet $P(W)$. Then, there is an exact sequence
$$
0 \longrightarrow F^0 M_{dR}(\pi_f, W)_{\mathbb{R}} \longrightarrow  M_{B}(\pi_f, W)_\mathbb{R}^{-}(-1) \longrightarrow \mathrm{Ext}^1_{\mathrm{MHS}_\mathbb{R}^+}(\mathbb{R}(0),  M_{B}(\pi_f, W)_\mathbb{R}) \longrightarrow 0,
$$
where the second map is as above.
\end{cor}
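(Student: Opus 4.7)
The plan is to derive the corollary as a direct instance of Lemma \ref{suite-ex-pour-ext1}, applied to $M = M_B(\pi_f, W)_\mathbb{R}$ with coefficient field $E = E(\pi_f)$. The work consists in checking that the two hypotheses of the lemma (namely, membership in $\mathrm{MHS}_{\mathbb{R}, E(\pi_f)}^+$ and purity of negative weight) are actually fulfilled in our situation, and then identifying the resulting exact sequence term by term with the one in the statement.

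First, I would verify the structure of real mixed Hodge structure with coefficients. By Prop.~\ref{hodge-deRham-pif}, the pair $(M_B(\pi_f, W), F_\infty)$, with $F_\infty$ induced by complex conjugation on $S(\mathbb{C})$ and on $W$, is an object of $\mathrm{MHS}_{\mathbb{Q}, E(\pi_f)}^+$. Tensoring with $\mathbb{R}$ over $\mathbb{Q}$ yields an object of $\mathrm{MHS}_{\mathbb{R}, E(\pi_f)}^+$, since the de Rham involution and the $E(\pi_f)$-action are preserved under base change. This gives the first hypothesis.

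Second, I would check the purity and weight condition. By Saito's formalism of mixed Hodge modules invoked just before Prop.~\ref{dec-hodge}, $H^3_{B,!}(S_L, W)$ is pure of weight $3-c$ for each level $L$; passing to the inductive limit and extracting the $\pi_f$-isotypic component preserves purity, so $M_B(\pi_f, W)_\mathbb{R}$ is pure of weight $w = 3-c$. The assumption $w < 0$, i.e.\ $c > 3$, is built into the corollary's hypothesis. Note that for $W$ of highest weight $\lambda(k,k',c)$ with $k > k' > 0$ and the values of $c$ arising in Thm.~\ref{lemma1} (where $c = p+q+6 \geq 6$), this sign condition is automatic.

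Third, I would simply substitute $M = M_B(\pi_f, W)_\mathbb{R}$ into Lemma \ref{suite-ex-pour-ext1}. The sub $E(\pi_f) \otimes_\mathbb{Q} \mathbb{R}$-module of $M_\mathbb{C} = M_B(\pi_f, W)_\mathbb{C}$ fixed by the de Rham involution is identified, via the comparison isomorphism $I_\infty$ of Prop.~\ref{ratF}, with $M_{dR}(\pi_f, W)_\mathbb{R}$, and the Hodge filtration transports to that of $M_{dR}$ by the same proposition; hence $F^0 M_{dR} = F^0 M_{dR}(\pi_f, W)_\mathbb{R}$. The submodule on which $F_\infty$ acts by $-1$ is by definition $M_B(\pi_f, W)_\mathbb{R}^-$, and the Tate twist shifts this to $M_B(\pi_f, W)_\mathbb{R}^-(-1)$. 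The exact sequence produced by the lemma is then precisely the one in the statement, with the indicated explicit description of the second arrow. No step presents a real obstacle; the content of the corollary is entirely in making Lemma \ref{suite-ex-pour-ext1} applicable, and this reduces to citing Prop.~\ref{hodge-deRham-pif} and the weight computation recalled above.
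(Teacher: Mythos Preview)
Your proposal is correct. Both you and the paper derive the corollary from Lemma~\ref{suite-ex-pour-ext1}, but the order of operations differs slightly: you apply the lemma directly to $M_B(\pi_f, W)_{\mathbb{R}}$, using Prop.~\ref{hodge-deRham-pif} to verify that this is a legitimate object of $\mathrm{MHS}_{\mathbb{R}, E(\pi_f)}^+$; the paper instead applies the lemma at each finite level to $H^3_{B,!}(S_L, W)_{\mathbb{R}}$, passes to the filtered colimit over $L$ to obtain a $G(\mathbb{A}_f)$-equivariant exact sequence for $H^3_{B,!}(S, W)_{\mathbb{R}}$, and then applies the functor $\mathrm{Hom}_{\mathbb{Q}[G(\mathbb{A}_f)]}(\mathrm{Res}_{E(\pi_f)/\mathbb{Q}}\pi_f, -)$, arguing that this preserves exactness because $\pi_f$ is irreducible. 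Your route is more direct once Prop.~\ref{hodge-deRham-pif} is available; the paper's route has the minor advantage of making the $G(\mathbb{A}_f)$-equivariance of the ambient exact sequence explicit, which is used later (e.g.\ in the definition of $\mathcal{K}(\pi_f, W)$).
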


\begin{proof}For any compact open subgroup $L$ of $G(\mbb{A}_f)$, let $F^0 H^3_{dR, !}(S_L, W)_\mbb{R}$ denote the $\mathbb{R}$-subspace of $F^0 H^3_{B, !}(S_L, W)_{\mbb{C}}$ of vectors which are fixed by the de Rham involution and by $F^0 H^3_{dR, !}(S, W)_\mbb{R}$ the colimit of the $F^0 H^3_{dR, !}(S_L, W)_\mbb{R}$ over all compact open subgroups $L$ of $G(\mbb{A}_f)$. As filtered colimits of vector spaces preserve exact sequences, the lemma above implies that we have a $G(\mathbb{A}_f)$-equivariant exact sequence
$$
0 \longrightarrow F^0 H^3_{dR, !}(S, W)_\mbb{R} \longrightarrow H^3_{B, !}(S, W)_\mbb{R}^-(-1) \longrightarrow \mathrm{Ext}^1_{\mathrm{MHS}_\mathbb{R}^+}(\mathbb{R}(0),  H^3_{B, !}(S, W)_\mbb{R}) \longrightarrow 0
$$
of $\mbb{R}$-vector spaces. As $\pi_f$ is irreducible, applying the functor
$$
X \longmapsto \mathrm{Hom}_{\mbb{Q}[G(\mbb{A}_f)]}(Res_{E(\pi_f)/\mbb{Q}} \pi_f, X)
$$ to the above exact sequence, we still get an exact sequence, which is the one of the statement of the corollary.
\end{proof}

The first term, respectively the second term, of the exact sequence above is obtained applying $\otimes_\mbb{Q} \mbb{R}$ to the finite dimensional $E(\pi_f)$-vector space $F^0 M_{dR}(\pi_f, W)$, respectively $M_{B}(\pi_f, W)^{-}(-1)$ (see Prop. \ref{ratF} and Prop. \ref{hodge-deRham-pif}). Let $F^0 M_{dR}(\pi_f, W)^*$ be the dual of $F^0 M_{dR}(\pi_f, W)$. As a consequence, the one dimensional $E(\pi_f)$-vector space
$$
\mathcal{B}(\pi_f, W)=det_{E(\pi_f)}  F^0 M_{dR}(\pi_f, W)^* \otimes_{E(\pi_f)} det_{E(\pi_f)} M_{B}(\pi_f, W)_\mathbb{R}^{-}(-1)
$$
is an $E(\pi_f)$-structure of the $E(\pi_f) \otimes \mbb{R}$-module $$det_{E(\pi_f) \otimes_\mbb{Q} \mbb{R}} \mathrm{Ext}^1_{\mathrm{MHS}_\mathbb{R}^+}(\mathbb{R}(0),  M_{B}(\pi_f, W)_\mathbb{R}).$$ By definition $\mathcal{B}(\pi_f, W)$ is the Beilinson $E(\pi_f)$-structure, which occurs in one of the two equivalent formulations of Beilinson's conjecture (see \cite{nekovar} 6.1). Because we are dealing with a partial $L$-function, we do not want to use the functional equation in this work. As a consequence, we prefer to work with the Deligne $E(\pi_f)$-structure. 

\begin{defn} \label{deligne-q-structure} Let $\delta(\pi_f, W) \in (E(\pi_f) \otimes_\mbb{Q} \mbb{C})^\times$ be the determinant of the comparison isomorphism $M_B(\pi_f, W)_\mbb{C} \longrightarrow M_{dR}(\pi_f, W)_\mbb{C}$ computed in basis defined over $E(\pi_f)$ on both sides. The Deligne $E(\pi_f)$-structure of $det_{E(\pi_f) \otimes_\mbb{Q} \mbb{R}} \mathrm{Ext}^1_{\mathrm{MHS}_\mathbb{R}^+}(\mathbb{R}(0),  M_{B}(\pi_f, W)_\mathbb{R})$ is
$$
\mathcal{D}(\pi_f, W)=(2 \pi i)^{\dim_{E(\pi_f)} M_B(\pi_f, W)^-} \delta(\pi_f, W)^{-1} \mathcal{B}(\pi_f, W).
$$
\end{defn}

\begin{rems} Of course, this definition does not depend on the choice of the basis.
\end{rems}

The ranks of the $E(\pi_f) \otimes_\mbb{Q} \mbb{R}$-modules of the exact sequence of Cor. \ref{suite_ex_ext1} are related to multiplicities of automorphic representations in the discrete series $L$-packet $P(W)$.

\begin{lem} \label{dimensions} The following equalities hold:
\begin{itemize}
\item we have $rk_{E(\pi_f) \otimes_\mbb{Q} \mbb{R}} M_{B}(\pi_f, W)_\mathbb{R}^{-}(-1)=m(\pi_\infty^H \otimes \pi_f)+m(\pi_\infty^W \otimes \pi_f),$
\item we have $rk_{E(\pi_f) \otimes_\mbb{Q} \mbb{R}} F^0 M_{dR}(\pi_f, W)_{\mathbb{R}}=m(\pi_\infty^H \otimes \pi_f),$
\item we have $rk_{E(\pi_f) \otimes_\mbb{Q} \mbb{R}} \mathrm{Ext}^1_{\mathrm{MHS}_\mathbb{R}^+}(\mathbb{R}(0),  M_{B}(\pi_f, W)_\mathbb{R})=m(\pi_\infty^W \otimes \pi_f)$.
\end{itemize}
\end{lem}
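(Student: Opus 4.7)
My plan is to compute the three ranks by reducing each to a dimension count on $M_B(\pi_f, W)_\mbb{C}$ via the Hodge decomposition of Prop. \ref{dec-hodge}, and then to deduce the third equality from the first two using the short exact sequence of Cor. \ref{suite_ex_ext1}. Throughout, I will use that $F_\infty$ commutes with the $E(\pi_f)$-action on $M_B(\pi_f, W)$, so that the $-1$ eigenspace $M_B(\pi_f,W)^-$ is naturally an $E(\pi_f)$-module, and that the $\mbb{C}$-dimension of each $M_B^{p,q}$ equals $[E(\pi_f):\mbb{Q}]\,m(\pi_\infty^{(p,q)}\otimes\pi_f)$ by Prop. \ref{dimension}.

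For the first equality, the key step will be to show that the $\mbb{C}$-linear extension of $F_\infty$ to $M_B(\pi_f,W)_\mbb{C}$ sends $M_B^{p,q}$ onto $M_B^{q,p}$. One way to see this is formally: since the de Rham involution $F_\infty\otimes\tau$ is $\mbb{C}$-antilinear and preserves $F^\bullet$, its composition with the usual complex conjugation on $M_B \otimes_\mbb{Q}\mbb{C}$ (which is $F_\infty$ itself) swaps $F^p$ with $\overline{F^p}$, whence it sends $M_B^{p,q}=F^p\cap\overline{F^q}$ onto $\overline{F^p}\cap F^q=M_B^{q,p}$. The same conclusion can be extracted from Prop. \ref{milne-shih} together with the computation of $\mathrm{Ad}_N$ in Rem. \ref{conj-ds}, which shows concretely that $N$ exchanges $\pi_\infty^H\leftrightarrow\overline\pi_\infty^H$ and $\pi_\infty^W\leftrightarrow\overline\pi_\infty^W$. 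In particular $m(\pi_\infty^H\otimes\pi_f)=m(\overline\pi_\infty^H\otimes\pi_f)$, and likewise for $W$, so that on each exchanged pair $M_B^{p,q}\oplus M_B^{q,p}$ the $-1$-eigenspace of $F_\infty$ is an $E(\pi_f)\otimes_\mbb{Q}\mbb{C}$-module isomorphic to either summand, whose rank is the corresponding multiplicity. Summing the two pairs and descending (the Tate twist being rank-preserving) then yields the asserted rank $m(\pi_\infty^H\otimes\pi_f)+m(\pi_\infty^W\otimes\pi_f)$.

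For the second equality, Prop. \ref{ratF} identifies the rank over $E(\pi_f)\otimes_\mbb{Q}\mbb{R}$ of $F^0M_{dR}(\pi_f,W)_\mbb{R}$ with the rank over $E(\pi_f)\otimes_\mbb{Q}\mbb{C}$ of $F^0M_B(\pi_f,W)_\mbb{C}=\bigoplus_{p\geq 0}M_B^{p,q}$. Under the standing assumptions relevant to Thm. \ref{emy<3}, where $c=k+k'+4$ and hence $t=(c-k-k')/2=2$ with $k'\geq 1$, the first Hodge indices $3-t,\,2-k'-t,\,1-k-t,\,-k-k'-t$ take the values $1,\,-k',\,-k-1,\,-k-k'-2$, of which only the one attached to $\pi_\infty^H$ is nonnegative. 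Thus $F^0M_B(\pi_f,W)_\mbb{C}=M_B^{3-t,-k-k'-t}$, of rank $m(\pi_\infty^H\otimes\pi_f)$.

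The third equality then follows by additivity of ranks in the short exact sequence of Cor. \ref{suite_ex_ext1}, giving $m(\pi_\infty^W\otimes\pi_f)$ as the difference of the first two ranks. The only delicate point is the identification of the action of $F_\infty$ on the Hodge pieces; this step is essentially formal but requires careful bookkeeping of the correspondence between Hodge types and discrete series packet members established in Prop. \ref{dec-hodge}.
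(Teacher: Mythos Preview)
Your proof is correct and follows essentially the same route as the paper: use the Hodge decomposition of Prop.~\ref{dec-hodge} together with the fact that $F_\infty$ exchanges $M_B^{r,s}$ and $M_B^{s,r}$ for the first equality, identify $F^0 M_B(\pi_f,W)_\mbb{C}$ with the single piece $M_B^{3-t,-k-k'-t}$ for the second, and deduce the third from the exact sequence of Cor.~\ref{suite_ex_ext1}.

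One small point worth flagging: for the second equality you specialize to $c=k+k'+4$ (i.e.\ $p=k-1$, $q=k'-1$, so $t=2$), whereas the lemma is actually stated under the more general standing hypotheses of Thm.~\ref{lemma1} with $c=p+q+6$. The paper handles this by observing that those hypotheses already force $2-k'-t=(k-k'-p-q-2)/2<0$ (from the branching inequalities in the introduction) while $3-t=(k+k'-p-q)/2\geq 0$, so that $F^0$ still consists of the single summand attached to $\pi_\infty^H$. Your argument extends verbatim once this inequality is noted; the specialization to $t=2$ is unnecessary.
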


\begin{proof} Thanks to the exact sequence of Cor. \ref{suite_ex_ext1}, it is enough to prove the first two statements. Write $t=\frac{p+q+6-k-k'}{2}$. Then, the first statement easily follows from the Hodge decomposition
$$
M_B(\pi_f, W)_\mbb{C}=M_B^{3-t, -k-k'-t} \oplus M_B^{2-k'-t, 1-k-t} \oplus M_B^{1-k-t, 2-k'-t} \oplus M_B^{-k-k'-t, 3-t},
$$
where $M_B^{3-t, -k-k'-t}, M_B^{2-k'-t, 1-k-t}, M_B^{1-k-t, 2-k'-t}$  and $M_B^{-k-k'-t, 3-t}$ are $E(\pi_f) \otimes \mbb{C}$-modules of respective ranks $m(\pi_\infty \otimes \pi_f), m(\pi_\infty \otimes \pi_f), m(\overline{\pi}_\infty^W \otimes \pi_f)$ and $m(\overline{\pi}_\infty^H \otimes \pi_f)$ (Prop. \ref{dimension} and Prop. \ref{dec-hodge}), and from the fact that $F_\infty$ exchanges $M_B^{r, s}$ and $M_B^{s, r}$. The proof of the second statement follows from the fact that the conditions on $p, q, k$ and $k'$ stated in the introduction imply that $2-k'-t<0$, hence that
$$
F^0 M_{dR}(\pi_f, W)_{\mathbb{C}}=M^{3-t, -k-k'-t}_B.
$$
\end{proof}

\begin{defn} \label{stable} Let $E$ be an irreducible algebraic representation $E$ of $G$. Let $\pi=\pi_\infty \otimes \pi_f$ be an irreducible cuspidal automorphic representation such that $\pi_\infty$ belongs to the discrete series $L$-packet $P(E)$ (Def. \ref{p(e)}). The automorphic representation $\pi$ is stable at infinity if  $m(\pi'_\infty \otimes \pi_f)=1$ for any $\pi'_\infty \in P(E)$.
\end{defn}

\begin{hyp} \label{hypothesis} In the rest of the paper, we assume that the considered cuspidal automorphic representation $\pi$ is stable at infinity.
\end{hyp}

\begin{rems} \label{abondance} It follows from Arthur's classification \cite{arthur} that, for most of $\pi$ whose $\pi_\infty$ is a discrete series, $\pi$ is stable. Specific examples constructed via theta lifts of Hilbert modular forms over real quadratic fields are discussed in  \cite{mokrane-tilouine} 7.3.
\end{rems}

Note that Hyp. \ref{hypothesis} and Lem. \ref{dimensions} imply that $\mathrm{Ext}^1_{\mathrm{MHS}_\mathbb{R}^+}(\mathbb{R}(0),  M_{B}(\pi_f, W)_\mathbb{R})$ is a rank one $E(\pi_f) \otimes_\mbb{Q} \mbb{R}$-module. From now on, we consider integers $p, q, k, k'$  and a coefficient system $W$ of highest weight $\lambda(k, k', p+q+6)$ as in the statement of Thm. \ref{lemma1}. Let $\mathcal{K}(p, q, W)$ denote the sub $\mbb{Q}[G(\mbb{A}_f)]$-module of
$$
\mathrm{Ext}^1_{\mathrm{MHS}_\mathbb{R}^+}(\mathbb{R}(0), H^3_!(S, W)_\mbb{R})= \varinjlim_L \mathrm{Ext}^1_{\mathrm{MHS}_\mathbb{R}^+}(\mathbb{R}(0), H^3_!(S_L, W)_\mbb{R})
$$
generated by the image of $Eis^{p, q, W}_\mathcal{H}$ and let $\mathcal{K}(\pi_f, W)$ be defined as
$$
\mathcal{K}(\pi_f, W)=\mathrm{Hom}_{{\mbb{Q}}[G(\mbb{A}_f)]} \left( Res_{E(\pi_f)/\mbb{Q}}\pi_f, \mathcal{K}(p, q, W) \right).
$$
This is an $E(\pi_f)$-submodule of $\mathrm{Ext}^1_{\mathrm{MHS}_\mathbb{R}^+}(\mathbb{R}(0),  M_{B}(\pi_f, W)_\mathbb{R})$. We have two $E(\pi_f)$-submodules of $\mathrm{Ext}^1_{\mathrm{MHS}_\mathbb{R}^+}(\mathbb{R}(0),  M_{B}(\pi_f, W)_\mathbb{R})$ "geometrically defined". The elementary one, namely $\mathcal{D}(\pi_f, W)$, is defined in terms of the de Rham and Betti cohomology. The sophisticated one, namely $\mathcal{K}(\pi_f, W)$, is defined in terms of the regulator. Of course, whereas $\mathcal{D}(\pi_f, W)$ is non-zero by definition, we do not know at this point whether $\mathcal{K}(\pi_f, W)$ is zero or not. 

\begin{defn} \label{equivalence} If $\mu$ and $\mu'$ are two elements of $E(\pi_f) \otimes_\mbb{Q} \mathbb{C}$, denote $\mu \sim \mu'$ if there exists $\lambda \in E(\pi_f)^\times$ such that $\mu =\lambda \mu'$.
\end{defn}

\begin{lem} \label{facilezzz} Let $v_\mathcal{D}$ be a non-zero vector of $\mathcal{D}(\pi_f, W)$ and $v_\mathcal{K}$ be a vector of $\mathcal{K}(\pi_f, W)$. Let $\widetilde{v_\mathcal{D}} \in M_B(\pi_f, W)_\mbb{R}^-(-1)$, respectively $\widetilde{v_\mathcal{K}}$, be an element mapped to $v_\mathcal{D}$, respectively $v_\mathcal{K}$, by the third map of the exact sequence of Cor. \ref{suite_ex_ext1}. Then
$$
\mathcal{K}(\pi_f, W)=\frac{\psi(\widetilde{v_\mathcal{K}})}{\psi(\widetilde{v_\mathcal{D}})}\mathcal{D}(\pi_f, W)
$$
for any $E(\pi_f) \otimes_\mbb{Q} \mbb{R}$-linear map $\psi: M_B(\pi_f, W)_\mbb{R}^-(-1) \longrightarrow E(\pi_f) \otimes_\mbb{Q} \mbb{C}$ whose kernel contains $ F^0 M_{dR}(\pi_f, W)_{\mathbb{R}}$.
\end{lem}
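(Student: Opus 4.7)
The plan is to exploit the fact that, under Hyp. \ref{hypothesis}, the target $\mathrm{Ext}^1_{\mathrm{MHS}_\mathbb{R}^+}(\mathbb{R}(0), M_B(\pi_f, W)_\mathbb{R})$ of the quotient map in Cor. \ref{suite_ex_ext1} is a free $E(\pi_f) \otimes_\mbb{Q} \mbb{R}$-module of rank one (by Lem. \ref{dimensions}), so that comparing the two rank-one $E(\pi_f)$-submodules $\mathcal{K}(\pi_f, W)$ and $\mathcal{D}(\pi_f, W)$ boils down to computing a single scalar in $E(\pi_f) \otimes_\mbb{Q} \mbb{R}$, and the map $\psi$ is what allows us to extract it.

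First I would observe that since $\psi$ vanishes on $F^0 M_{dR}(\pi_f, W)_{\mathbb{R}}$, the exact sequence of Cor. \ref{suite_ex_ext1} forces $\psi$ to factor through the quotient $q: M_B(\pi_f, W)_\mathbb{R}^-(-1) \twoheadrightarrow \mathrm{Ext}^1_{\mathrm{MHS}_\mathbb{R}^+}(\mathbb{R}(0), M_B(\pi_f, W)_\mathbb{R})$, inducing an $E(\pi_f) \otimes_\mbb{Q} \mbb{R}$-linear map
$$
\overline{\psi} : \mathrm{Ext}^1_{\mathrm{MHS}_\mathbb{R}^+}(\mathbb{R}(0), M_B(\pi_f, W)_\mathbb{R}) \longrightarrow E(\pi_f) \otimes_\mbb{Q} \mbb{C}.
$$
By construction $\psi(\widetilde{v}) = \overline{\psi}(q(\widetilde{v}))$ for every lift, so the two values $\psi(\widetilde{v_\mathcal{K}})$ and $\psi(\widetilde{v_\mathcal{D}})$ depend only on $v_\mathcal{K} = q(\widetilde{v_\mathcal{K}})$ and $v_\mathcal{D} = q(\widetilde{v_\mathcal{D}})$, not on the chosen lifts.

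Next I would pick a basis vector $e$ for the free rank-one $E(\pi_f) \otimes_\mbb{Q} \mbb{R}$-module $\mathrm{Ext}^1_{\mathrm{MHS}_\mathbb{R}^+}(\mathbb{R}(0), M_B(\pi_f, W)_\mathbb{R})$ and write $v_\mathcal{D} = \alpha e$, $v_\mathcal{K} = \beta e$ with $\alpha, \beta \in E(\pi_f) \otimes_\mbb{Q} \mbb{R}$, $\alpha$ invertible (since $v_\mathcal{D}$ generates the rank-one $E(\pi_f) \otimes \mbb{R}$-module $\mathcal{D}(\pi_f, W) \otimes_{E(\pi_f)} (E(\pi_f) \otimes \mbb{R})$). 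Setting $c = \overline{\psi}(e)$ and using $E(\pi_f) \otimes \mbb{R}$-linearity of $\overline{\psi}$, we obtain
$$
\frac{\psi(\widetilde{v_\mathcal{K}})}{\psi(\widetilde{v_\mathcal{D}})} \;=\; \frac{\overline{\psi}(\beta e)}{\overline{\psi}(\alpha e)} \;=\; \frac{\beta c}{\alpha c} \;=\; \frac{\beta}{\alpha} \;\in\; E(\pi_f) \otimes_\mbb{Q} \mbb{R},
$$
which coincides with the scalar by which $v_\mathcal{K}$ differs from $v_\mathcal{D}$ inside $\mathrm{Ext}^1$, namely $v_\mathcal{K} = (\beta/\alpha)\, v_\mathcal{D}$.

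Finally, multiplying the rank-one $E(\pi_f)$-submodule $\mathcal{D}(\pi_f, W)$ generated by $v_\mathcal{D}$ by this scalar yields the rank-one $E(\pi_f)$-submodule generated by $v_\mathcal{K}$, namely $\mathcal{K}(\pi_f, W)$, giving the claimed equality. The only mildly delicate point is ensuring that $\psi(\widetilde{v_\mathcal{D}})$ is invertible in $E(\pi_f) \otimes_\mbb{Q} \mbb{C}$, which is implicit in the statement (the ratio being formed), and will be guaranteed in the applications by choosing $\psi$ so that $\overline{\psi}$ is an isomorphism onto its image --- concretely, by constructing $\psi$ from a Poincaré duality pairing against a suitable class, as in Beilinson's strategy \cite{beilinson}. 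No serious obstacle arises here; the lemma is really just a bookkeeping device that reduces the comparison of the two $E(\pi_f)$-structures $\mathcal{K}$ and $\mathcal{D}$ to a single pairing computation, the substantial input being the existence of the exact sequence of Cor. \ref{suite_ex_ext1} and the stability hypothesis.
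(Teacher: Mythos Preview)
Your proposal is correct and is precisely the reasoning the paper has in mind: the paper's entire proof reads ``Trivial.'' and you have simply spelled out why --- $\psi$ factors through the rank-one quotient $\mathrm{Ext}^1$, so the ratio $\psi(\widetilde{v_\mathcal{K}})/\psi(\widetilde{v_\mathcal{D}})$ recovers the scalar relating $v_\mathcal{K}$ to $v_\mathcal{D}$.
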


\begin{proof} Trivial.
\end{proof}

Our goal is now to compute $\psi(\widetilde{v_\mathcal{K}})$ and $\psi(\widetilde{v_\mathcal{D}})$ for a well chosen $\psi$ as above. To this end, let us recall the properties of the Poincar\'e duality pairing for Siegel threefolds. As the representation $W$ has highest weight $\lambda(k, k', p+q+6)$, its contragredient has highest weight $\lambda(k, k', -p-q-6)$. In other words, we have a perfect bilinear pairing $W \otimes W \longrightarrow \mathbb{Q}(p+q+6)$ where $\mbb{Q}(p+q+6)$ denotes the one dimensional $\mathbb{Q}$-vector space on which $G$ acts by the $(p+q+6)$-th power of the multiplier character $\nu$. This pairing induces a $G(\mathbb{A}_f)$-equivariant pairing
$$
\begin{CD}
H^3_{B, !}(S, W) \otimes H^3_{B, !}(S, W(-p-q-3)) @>>> \mbb{Q}(0)
\end{CD}
$$
which becomes perfect after restriction to the vectors which are invariant by a compact open subgroup of $G(\mathbb{A}_f)$, when $\mbb{Q}(0)$ is given the action of $G(\mbb{A}_f)$ by $|\nu|^{-3}$ (see \cite{taylor} p. 295). This induces a morphism of Hodge structures
$$
\begin{CD}
M_B(\pi_f, W) \otimes M_B(\check{\pi}_f|\nu|^{-3}, W(-p-q-3)) @>\langle \,\,,\,\, \rangle_B>> E(\pi_f)(0).
\end{CD}
$$
Recall that according to Prop. \ref{dec-hodge}, we have the Hodge decomposition
$$
M_B(\check{\pi}_f|\nu|^{-3}, W(-p-q-3))_\mbb{C}
$$
$$
=M_B^{3-t', -k-k'-t'} \oplus M_B^{2-k'-t', 1-k-t'} \oplus M_B^{1-k-t', 2-k'-t'} \oplus M_B^{-k-k'-t', 3-t'}
$$
where $t'=-\frac{k+k'+p+q}{2}$.

\begin{lem} \label{dualite} Let $\omega \in M_B(\check{\pi}_f|\nu|^{-3}, W(-p-q-3))_\mbb{C}$.
Let
$$
\begin{CD}
M_B(\pi_f, W)_\mbb{R}^-(-1) @> \langle \omega, \,\, \rangle_B >> E(\pi_f) \otimes \mbb{C}
\end{CD}
$$
be defined as the composition of the natural inclusion $M_B(\pi_f, W)_\mbb{R}^-(-1) \rightarrow M_B(\pi_f, W)_\mbb{C}$ and of the pairing with $\omega$. Assume that $\omega$ belongs to $M_B^{2-k'-t', 1-k-t'} \oplus M_B^{1-k-t', 2-k'-t'}$. Then, with notations of Lem. \ref{facilezzz}, we have
$$
\mathcal{K}(\pi_f, W)=\frac{\langle \omega,\widetilde{v}_\mathcal{K} \rangle_B}{\langle \omega,\widetilde{v}_\mathcal{D} \rangle_B} \mathcal{D}(\pi_f, W).
$$
\end{lem}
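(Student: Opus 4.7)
The plan is to apply Lemma~\ref{facilezzz} with the choice $\psi=\langle\omega,\cdot\rangle_B$, viewed as an $E(\pi_f)\otimes_\mbb{Q}\mbb{R}$-linear map $M_B(\pi_f,W)_\mbb{R}^-(-1)\to E(\pi_f)\otimes_\mbb{Q}\mbb{C}$. The $E(\pi_f)\otimes_\mbb{Q}\mbb{R}$-linearity is immediate, since Poincar\'e duality is defined over $\mbb{Q}$ and $E(\pi_f)$-equivariant by construction of the $\pi_f$-isotypic projectors. Hence the only point requiring genuine argument is that the kernel of $\psi$ contains the image of $F^0M_{dR}(\pi_f,W)_\mbb{R}$ under the injection appearing in the exact sequence of Corollary~\ref{suite_ex_ext1}.

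To verify this, I would first observe that the Poincar\'e pairing
$$
M_B(\pi_f,W)\otimes M_B(\check{\pi}_f|\nu|^{-3},W(-p-q-3))\longrightarrow E(\pi_f)(0)
$$
is a morphism of $\mbb{Q}$-Hodge structures of weight $0$: its source has total weight $(-p-q-3)+(3+p+q)=0$, matching $E(\pi_f)(0)$. Equivalently, after complexification, $\langle M_B^{a,b},M_B^{r,s}(\check\pi_f|\nu|^{-3},W(-p-q-3))\rangle_B$ vanishes unless $(a+r,b+s)=(0,0)$. By the computation in the proof of Lemma~\ref{dimensions}, the hypotheses of Theorem~\ref{lemma1} force $F^0M_{dR}(\pi_f,W)_\mbb{C}=M_B^{3-t,-k-k'-t}$. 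Writing the injection of Lemma~\ref{suite-ex-pour-ext1} as a $\mbb{Q}$-linear combination of the identity, complex conjugation and $F_\infty$, and observing that both complex conjugation and $F_\infty$ interchange $M_B^{3-t,-k-k'-t}$ with $M_B^{-k-k'-t,3-t}$ (for complex conjugation this is automatic; for $F_\infty$ this follows from Proposition~\ref{milne-shih}, Remark~\ref{conj-ds} and Proposition~\ref{dec-hodge}, which identify $F_\infty$ with conjugation by the element $N$ that exchanges the discrete series $\pi_\infty^H$ and $\overline{\pi}_\infty^H$ governing the two extreme Hodge components), one concludes that the image of any $v\in F^0M_{dR}(\pi_f,W)_\mbb{R}$ lands inside $M_B^{3-t,-k-k'-t}\oplus M_B^{-k-k'-t,3-t}$.

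It then remains to check that $\omega$ pairs trivially with each of these two Hodge components. This is the only real computation: using the identity $t+t'=3-k-k'$, the four candidate values of $a+r$ (for $\omega$ in either middle Hodge component and $v$ in either extreme Hodge component) work out to $k+2$, $k'+1$, $-k'-1$, $-k-2$, none of which vanishes under the assumption $k\geq k'>0$. Hence $\psi$ annihilates the image of $F^0M_{dR}(\pi_f,W)_\mbb{R}$, and Lemma~\ref{facilezzz} applied to this $\psi$ delivers exactly the formula in the statement. Beyond the elementary four-case Hodge-type bookkeeping, everything is formal, so I do not expect any serious obstacle.
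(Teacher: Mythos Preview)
Your proof is correct and follows essentially the same approach as the paper: apply Lemma~\ref{facilezzz} to $\psi=\langle\omega,\cdot\rangle_B$, identify $F^0M_{dR}(\pi_f,W)_\mbb{C}$ with the extreme Hodge component $M_B^{3-t,-k-k'-t}$, observe that the injection of Corollary~\ref{suite_ex_ext1} sends it into $M_B^{3-t,-k-k'-t}\oplus M_B^{-k-k'-t,3-t}$, and conclude by Hodge orthogonality of the Poincar\'e pairing. Your version is in fact slightly more detailed than the paper's, since you make the four values of $a+r$ explicit via $t+t'=3-k-k'$ and you spell out that both complex conjugation and $F_\infty$ swap the extreme Hodge components, whereas the paper invokes only the former.
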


\begin{proof} According to Lem. \ref{facilezzz}, it is enough to know that the kernel of $x \longmapsto \langle \omega, x \rangle_B$ contains $F^0 M_{dR}(\pi_f, W)_{\mathbb{R}}$. Write $t=\frac{p+q+6-k-k'}{2}$. We have the Hodge decomposition
$$
M_B(\pi_f, W)_\mbb{C}=M_B^{3-t, -k-k'-t} \oplus M_B^{2-k'-t, 1-k-t} \oplus M_B^{1-k-t, 2-k'-t} \oplus M_B^{-k-k'-t, 3-t}
$$
where $3-t=(k+k'-p-q)/2 \geq 0$ and $2-k'-t=(k-k'-p-q-2)/2<0$ according to the inequalities relating $p, q, k$ and $k'$ stated in the introduction. As a consequence, we have $F^0 M_{dR}(\pi_f, W)_{\mathbb{C}}=\bigoplus_{p \geq 0} M_B^{p, q}=M_B^{3-t, -k-k'-t}$. This implies that the image of the inclusion
$$
\begin{CD}
F^0 M_{dR}(\pi_f, W)_{\mathbb{R}} @>>> M_B(\pi_f, W)_\mbb{R}^-(-1)
\end{CD}
$$
of the exact sequence of Cor. \ref{suite_ex_ext1} is included in
$$
M_B^{3-t, -k-k'-t} \oplus \overline{M_B^{3-t, -k-k'-t}}=M_B^{3-t, -k-k'-t} \oplus M_B^{-k-k'-t, 3-t}.
$$
Hence, the statement follows from the assumption on the Hodge types of $\omega$ and fact that $\langle \,\,, \,\, \rangle_B$ is a morphism of Hodge structures.
\end{proof}

\subsection{From the regulator to a global integral} \label{regulateur-integrale} In this section, we shall explain how to associate a class $\omega$ whose Hodge types are as in Lem. \ref{dualite} to some cusp forms $\Psi$ on $G$. When $\omega$ is associated to $\Psi$, we shall see that the pairing $\langle \omega,\widetilde{v}_\mathcal{K} \rangle_B$ is an integral on $G'(\mbb{Q}) Z'(\mbb{A}) \backslash G'(\mathbb{A})$ of a function of the shape $\Psi E_\mathcal{H}$, where $E_\mathcal{H}$ is an Eisenstein series on $G'$ related to the Deligne-Beilinson realization $Eis_\mathcal{D}^{p, q, W}(\phi_f \otimes \phi'_f)$ of the motivic classes $Eis_\mathcal{M}^{p, q, W}(\phi_f \otimes \phi'_f)$.

\begin{rems} \label{iso} At several places in this article, we consider a $G(\mbb{A}_f)$-module $\mathcal{V}$ and its $\pi_f$-isotypical component $\mathcal{V}(\pi_f)=\mathrm{Hom}_{G(\mbb{A}_f)}(\pi_f, \mathcal{V}).$ Let us choose a vector $x \in \pi_f$, that will be fixed until the end. Then, the linear map
$\mathcal{V}(\pi_f) \lra \mathcal{V}, \phi \longmapsto \phi(x)$ is injective because $\pi_f$ is irreducible. In what follows, we will often regard $\mathcal{V}(\pi_f)$ as a subspace of $\mathcal{V}$, via the choice of $x$. 
\end{rems}

According to Thm. \ref{lemma1}, for any $\phi_f \otimes \phi'_f \in \mathcal{B}_p \otimes_\mbb{Q} \mathcal{B}_q$, we have the extension class $$Eis_\mathcal{H}^{p, q, W}(\phi_f \otimes \phi'_f) \in \mathrm{Ext}^1_{\mathrm{MHS}_\mathbb{R}^+}(\mathbb{R}(0),  H^3_{B, !}(S, W)_\mbb{R})$$ whose image in the Deligne-Beilinson cohomology $H^4_\mathcal{D}(S/\mbb{R}, W)$ coincides with the class of the pair of currents $(\iota_*T_{P^{p, q}_\mathcal{H}(\phi_f \otimes \phi'_f)}, \iota_*T_{P^{p, q}_B(\phi_f \otimes \phi'_f)})$ (Prop. \ref{eisenstein-deligne}). Due to the above remark and Lem. \ref{dualite}, we need to compute a pairing of the shape $\langle \omega,\widetilde{v}_\mathcal{K} \rangle_B$ where $\widetilde{v}_\mathcal{K}$ is a class in $H^3_{B, !}(S, W)_\mbb{R}^-(-1)$ which is mapped to $Eis_\mathcal{H}^{p, q, W}(\phi_f \otimes \phi'_f)$ by the surjection
$$
\begin{CD}
H^3_{B, !}(S, W)_\mbb{R}^-(-1) @>>> \mathrm{Ext}^1_{\mathrm{MHS}_\mathbb{R}^+}(\mathbb{R}(0),  H^3_{B, !}(S, W)_\mbb{R}).
\end{CD}
$$

In the following lemma, whose aim is to describe as explicitly as necessary such a lifting $\widetilde{v}_\mathcal{K}$, we use two standard facts. The first is that Betti cohomology can be computed by closed currents. The second is the so called Liebermann's trick: let $p: A \lra S$ be an abelian scheme and let $W$ be a sheaf on $S$  which is a direct factor of $(R^1 p_* \mbb{Q}(0))^{\otimes n}(m)$, for some integers $m$ and $n$. Then, the cohomology $H^i(S, W)$  is a direct factor of $H^{i+n}(A^n, \mbb{Q}(t))$, for some $t$, where $A^n$ denotes the $n$-th fiber product of $A$ over $S$. Let $r: G \longrightarrow \mrm{GL}(4)$ be the standard representation of $G$. It follows from Weyl's invariants theory that any irreducible algebraic representation $W$ of $G$ of highest weight $\lambda(k, k', c)$ is a direct factor, defined by a certain explicit Schur projector, of the representation $r^{\otimes (k+k')}$ twisted by a power of $\nu$ (see \cite{fulton-harris} $\S 17.3$). As the variation of Hodge structure associated to $r$ is $R^1p_*\mbb{Q}(1)$, the variation of Hodge structure associated to such a $W$ is a direct factor of a Tate twist of $(R^1 \pi_* \mbb{Q}(0))^{\otimes (k+k')}$ where $p: A \lra S$ is the universal abelian surface over the Siegel threefold. It is not necessary for us to be more precise. However, the reader might consult \cite{ancona} for much more precise and general statements and their proofs.

\begin{lem} \label{relevement} Let $\phi_f \otimes \phi'_f \in \mathcal{B}_p \otimes_\mbb{Q} \mathcal{B}_q$. Let $A$ be the universal abelian surface over $S$, of infinite level, and let $A^{k+k'}$ be the $(k+k')$-th fold fiber product over $S$. Let $\widetilde{A}^{k+k'}$ be a smooth projective toroidal compactification of $A^{k+k'}$ such that the complement $\widetilde{A}^{k+k'}-A^{k+k'}$ is a normal crossing divisor. Then, there exists a closed current $\rho \in \mathcal{T}(\widetilde{A}^{k+k'}/ \mbb{R}, \mbb{R}(\circ))$, for some integer $\circ$, such that:
\begin{itemize}
\item the restriction of the cohomology class $[\rho]$ of $\rho$ to $A^{k+k'}$ belongs to $H^3_{B, !}(S, W)_\mbb{R}^-(-1)$,
\item the class $[\rho]$ is mapped to the extension class $Eis_\mathcal{H}^{p, q, W}(\phi_f \otimes \phi'_f)$ by the third map
$$
\begin{CD}
H^3_{B, !}(S, W)_\mbb{R}^-(-1) \lra \mathrm{Ext}^1_{\mathrm{MHS}_\mathbb{R}^+}(\mathbb{R}(0),  H^3_{B, !}(S, W)_\mbb{R})
\end{CD}
$$
of the exact sequence of the proof of Cor. \ref{suite_ex_ext1},
\item the pairs of currents $\left(\iota_*T_{P^{p, q}_\mathcal{H}(\phi_f \otimes \phi'_f)}, \iota_*T_{P^{p, q}_B(\phi_f \otimes \phi'_f)} \right)$ and $\left( \rho, 0 \right)$ represent the same cohomology class in $H^{4}_\mathcal{D}(S/\mathbb{R}, W)$ (see Prop. \ref{eisenstein-deligne}).
\end{itemize}
\end{lem}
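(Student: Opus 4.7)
The plan is to modify the Deligne--Beilinson representative $(\iota_*T_{P^{p, q}_\mathcal{H}(\phi_f \otimes \phi'_f)}, \iota_*T_{P^{p, q}_B(\phi_f \otimes \phi'_f)})$ from Prop.~\ref{eisenstein-deligne} by a coboundary in the Deligne--Beilinson complex so as to kill the second (holomorphic) component. The existence of such a modification is the crucial input coming from Thm.~\ref{lemma1}. First I will invoke Lieberman's trick, based on Prop.~\ref{motifdeW}: since $W$ is cut out by an idempotent inside $R(A^{k+k'})(t)$, all the cohomology groups and currents of interest on $S$ with coefficients in $W$ are direct summands of the corresponding objects on $\widetilde{A}^{k+k'}$, and the Gysin image $\iota_*T_{P^{p, q}_\mathcal{H}(\phi_f \otimes \phi'_f)}$, $\iota_*T_{P^{p, q}_B(\phi_f \otimes \phi'_f)}$ can be read as currents (with logarithmic singularities at the boundary) on $\widetilde{A}^{k+k'}$.

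The second, key step is the translation of Thm.~\ref{lemma1} into the language of currents. By that theorem, the absolute Hodge class lies in the image of $\mathrm{Ext}^1_{\mathrm{MHS}_\mathbb{R}^+}(\mathbb{R}(0), H^3_{B,!}(S,W)_\mbb{R}) \hookrightarrow H^4_\mathcal{H}(S/\mbb{R},W)$, which means that its image in the quotient $\mathrm{Hom}_{\mathrm{MHS}_\mathbb{R}^+}(\mbb{R}(0),H^4_{B,!}(S,W)_\mbb{R})$ of the Hodge theoretic exact sequence vanishes. Through Jannsen's description of Prop.~\ref{explicitDBhomology}, this vanishing says precisely that $\iota_*T_{P^{p, q}_B(\phi_f \otimes \phi'_f)}$ represents the zero class in $F^{\bullet} H^{\bullet}_{\log}(\widetilde{A}^{k+k'},\mbb{C})$. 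Hence there exists a current $\widetilde{T} \in F^\bullet \mathcal{T}^{\bullet}_{\log}(\widetilde{A}^{k+k'}/\mbb{R},\mbb{C})$ with $d\widetilde{T} = \iota_*T_{P^{p, q}_B(\phi_f \otimes \phi'_f)}$. Setting
$$\rho = \iota_*T_{P^{p, q}_\mathcal{H}(\phi_f \otimes \phi'_f)} - \pi_{\bullet}\widetilde{T}$$
gives a current whose pair $(\rho, 0)$ differs from $(\iota_*T_{P^{p, q}_\mathcal{H}(\phi_f \otimes \phi'_f)}, \iota_*T_{P^{p, q}_B(\phi_f \otimes \phi'_f)})$ by the coboundary $d(0, \widetilde{T})$ in Jannsen's complex. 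Closedness of the original pair together with $d\widetilde{T} = \iota_*T_{P^{p, q}_B(\phi_f \otimes \phi'_f)}$ implies $d\rho = 0$, so that $\rho$ defines a genuine Betti cohomology class on $\widetilde{A}^{k+k'}$.

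It remains to verify that $[\rho]$ has the three properties of the statement. The third property is immediate from the construction. For the first property, the $F_\infty$-equivariance of the representative in the real Deligne--Beilinson complex (everything being defined over $\mbb{R}$) forces $[\rho]$ to lie in the $F_\infty = -1$ eigenspace once we take into account the $(-1)$-Tate twist built into the normalization $\frac{1}{(2\pi i)^{d_X}}$ of formula~(\ref{diff-form-to-current}); projecting onto the $W$-summand via the Schur idempotent of Prop.~\ref{motifdeW} then lands us in $H^3_{B,!}(S,W)_\mbb{R}^-(-1)$. For the second property, one unwinds the description in Lem.~\ref{suite-ex-pour-ext1} of the map $M^-(-1) \twoheadrightarrow \mathrm{Ext}^1_{\mathrm{MHS}_\mathbb{R}^+}(\mbb{R}(0),M)$: it is the composite of the natural inclusion and the projections $v \mapsto \frac{1}{2}(v-\overline{v})$ and $v \mapsto \frac{1}{2}(v-F_\infty v)$, which by design matches the composite $H^3_{B,!}(S,W)_\mbb{R}^-(-1) \hookrightarrow H^4_\mathcal{H}(S/\mbb{R},W) \twoheadrightarrow H^4_\mathcal{D}(S/\mbb{R},W)$ on the class $[\rho]$. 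The equality of classes in $H^4_\mathcal{D}(S/\mbb{R},W)$ from step 3 therefore identifies the image of $[\rho]$ in $\mathrm{Ext}^1$ with $Eis^{p,q,W}_\mathcal{H}(\phi_f\otimes\phi'_f)$.

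The main obstacle is the translation between the mapping-cone description of absolute Hodge cohomology that underlies Thm.~\ref{lemma1} and the explicit current-theoretic description of Prop.~\ref{explicitDBhomology}. Concretely, one must show that the failure of $\iota_*T_{P^{p, q}_B(\phi_f \otimes \phi'_f)}$ to be $d$-exact \emph{inside} the Hodge-filtered complex of log-currents is measured precisely by the image of the Deligne--Beilinson class in $\mathrm{Hom}_{\mathrm{MHS}_\mathbb{R}^+}(\mbb{R}(0), H^4_{B,!}(S,W)_\mbb{R})$, so that Thm.~\ref{lemma1} actually produces the required $\widetilde{T}$. Once this bridge is in place, the construction and its verification amount to careful bookkeeping of signs and normalizations.
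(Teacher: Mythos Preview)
Your approach is correct in outline but takes the opposite route from the paper's. The paper argues abstractly: it first chooses \emph{any} closed current $\rho$ on $\widetilde{A}^{k+k'}$ whose class (after restriction and projection via the Schur idempotent) lifts $Eis_\mathcal{H}^{p,q,W}(\phi_f\otimes\phi'_f)$ along the surjection of Cor.~\ref{suite_ex_ext1}---such a lift exists simply because that map is surjective and the compactification is smooth projective. Properties (i) and (ii) then hold by fiat. For (iii) the paper observes that on a smooth projective variety the assignment $\tau\mapsto(\tau,0)$ realizes Jannsen's map $H^{*}_B(\widetilde{A}^{k+k'},\mbb{R}(\circ))^-\to H^{*+1}_\mathcal{D}(\widetilde{A}^{k+k'}/\mbb{R},\mbb{R}(\circ+1))$, and this fits into a commutative square with the Schur projectors; hence (ii) implies (iii). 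You reverse the logic: you build $\rho$ explicitly by subtracting $\pi_\bullet\widetilde{T}$ from the given representative so that (iii) holds by construction, and then deduce (i)--(ii). Both directions are legitimate; the paper's is shorter because it avoids having to locate $\widetilde{T}$.

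There is one point you should tighten. Your $\rho=\iota_*T_{P^{p,q}_\mathcal{H}}-\pi_\bullet\widetilde{T}$ is, as written, a current on the \emph{open} $A^{k+k'}$ (the first summand lies in $\mathcal{T}^{\bullet}(A^{k+k'}/\mbb{R},\mbb{R}(\circ))$, the second in $\mathcal{T}^{\bullet}_{\log}$), whereas the lemma asks for $\rho\in\mathcal{T}(\widetilde{A}^{k+k'}/\mbb{R},\mbb{R}(\circ))$ on the compactification. Once you know $[\rho]\in H^3_{B,!}(S,W)_\mbb{R}^-(-1)$, the class certainly lifts to $H^{*}(\widetilde{A}^{k+k'})$ (interior cohomology factors through the compactification), but your specific current need not extend. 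You should either replace your $\rho$ by a cohomologous closed current on $\widetilde{A}^{k+k'}$---which costs nothing for (i)--(iii) since all three properties depend only on the class---or note from the outset that the explicit $\rho$ is only an auxiliary device and the actual $\rho$ of the lemma is any closed current on $\widetilde{A}^{k+k'}$ representing the same class. The paper sidesteps this entirely by never writing down an explicit representative on the open part.
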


\begin{proof} The natural map $H^*_c(A^{k+k'}) \lra H^*(A^{k+k'})$ factors through $H^*(\widetilde{A}^{k+k'}) \lra H^*(A^{k+k'})$. Hence, for the first and the second statement, we can take $\rho$ to be any closed current on $\widetilde{A}^{k+k'}$ representing a lifting  of $Eis^{p,q, W}_\mathcal{H}(\phi_f \otimes \phi'_f)$ by the map
$$
\begin{CD}
H^3_{B, !}(S, W)_\mbb{R}(-1)^+ @>>> \mathrm{Ext}^1_{\mathrm{MHS}_\mathbb{R}^+}(\mathbb{R}(0),  H^3_{B, !}(S, W)_\mbb{R}).
\end{CD}
$$
Note that a current on $\widetilde{A}^{k+k'}$ can be regarded as a current on ${A}^{k+k'}$, so that the third statement is meaningful. As $\widetilde{A}^{k+k'}$ is smooth and projective, it follows from \cite{jannsen} 4.5.1 that there exists a natural map $H^{*}_B(\widetilde{A}^{k+k'}, \mbb{R}(\circ))^- \lra H^{*+1}_\mathcal{D}(\widetilde{A}^{k+k'}/\mbb{R}, \mbb{R}(\circ+1))$. This map is induced by the map sending a closed current $\tau$ to the pair $(\tau, 0)$ via the description of Deligne-Beilinson homology classes by pairs of currents (Prop. \ref{explicitDBhomology}) and the isomorphism of the Deligne-Beilinson cohomology and homology (Prop. \ref{gysin}). Composing with the restriction map $$H_\mathcal{D}(\widetilde{A}^{k+k'}/\mbb{R}, \mbb{R}(\circ +1)) \lra H_\mathcal{D}(A^{k+k'}/\mbb{R}, \mbb{R}(\circ +1)),$$ we obtain the upper map of a commutative diagram
$$
\begin{CD}
H^{k+k'+3}_B(\widetilde{A}^{k+k'}, \mbb{R}(\circ))^- @>>> H^{k+k'+4}_\mathcal{D}(A^{k+k'}/\mbb{R}, \mbb{R}(\circ+1))\\
@VVV                                                                        @VVV\\
H^3_{B, !}(S, W)(-1)                    @>>>  H^4_\mathcal{D}(S/\mbb{R}, W)
\end{CD}
$$
where the vertical maps are induced by the Schur projectors defining $W$. Hence, the third statement follows from the second.
\end{proof}

To compute the pairing $\langle \omega, [\rho] \rangle_B$, we shall use the notion of rapidly decreasing and slowly increasing differential form and the fact that the Poincar\'e duality pairing can be represented by a pairing between rapidly decreasing and slowly increasing differential forms. For definitions, we refer the reader to \cite{harris0} 1.3 (see also \cite{borel0} 3.2) and for the statement about the Poincar\'e duality pairing, we refer the reader to \cite{harris0} Prop. 1.4.4. (c).\\

The differential forms $Eis^p_B(\phi_f)$ and $Eis^q_B(\phi'_f)$ are slowly increasing as explained in \cite{kings} (6.3.1) and \cite{kings} p. 120. Moreover, according to Prop. \ref{eisenstein-deligne}, we have
$$
P^{p, q}_\mathcal{H}(\phi_f \otimes \phi'_f)=p_1^*  Eis^p_\mathcal{H}(\phi_f) \wedge p_2^*(\pi_q Eis^q_B(\phi'_f))+(-1)^p p_1^*(\pi_p Eis^p_B(\phi_f)) \wedge p_2^* Eis^q_\mathcal{H}(\phi'_f).
$$
Hence $P^{p, q}_\mathcal{H}(\phi_f \otimes \phi'_f)$ is slowly increasing. On the other hand, according to Prop. \ref{comparaisons}, the cohomology class $\omega$ can be computed by cuspidal cohomology. So, let $\Omega$ be a cuspidal differential form representing $\omega$. As $\Omega$ is rapidly decreasing by definition, the differential form $\iota^*\Omega \wedge P^{p, q}_\mathcal{H}(\phi_f \otimes \phi'_f)$, where $\iota^*\Omega$ denotes the restriction of $\Omega$ to $M \times M$, is rapidly decreasing. This differential form has values in the vector space underlying the algebraic representation
$$
\iota^*W(-p-q-3)_\mbb{C} \otimes (\mrm{Sym}^p V_{2, \mbb{C}}^\vee \boxtimes \mrm{Sym}^q V_{2, \mbb{C}}^\vee)
$$
of $G'$. Recall that the irreducible representation $\mrm{Sym}^p V_{2} \boxtimes \mrm{Sym}^q V_{2}$ occurs in the isotypical decomposition of $\iota^*W(-3)$  by our choice of $p, q, k$ and $k'$. Hence, we have the $G'$-equivariant pairing
$$
\begin{CD}
\iota^*W(-p-q-3)_\mbb{C} \otimes (\mrm{Sym}^p V_{2, \mbb{C}}^\vee \boxtimes \mrm{Sym}^q V_{2, \mbb{C}}^\vee) @>\langle \,\,,\,\,\rangle >> \mathbb{C}(-p-q)
\end{CD}
$$
defined as the composite of the natural projection $$\iota^*W(-p-q-3) \longrightarrow (\mrm{Sym}^p V_{2, \mbb{C}}^\vee \boxtimes \mrm{Sym}^q V_{2, \mbb{C}}^\vee)$$ and of the natural pairing
\begin{equation} \label{accouplement}
\begin{CD}
(\mrm{Sym}^p V_{2, \mbb{C}}^\vee \boxtimes \mrm{Sym}^q V_{2, \mbb{C}}^\vee) \otimes (\mrm{Sym}^p V_{2, \mbb{C}}^\vee \boxtimes \mrm{Sym}^q V_{2, \mbb{C}}^\vee) @>>> \mathbb{C}(-p-q),
\end{CD}
\end{equation}
which is given by
$$
\langle a_r^p \boxtimes a_s^q, a_{r'}^p \boxtimes a_{s'}^q \rangle= \left\{
   \begin{array}{ll}
       0 & \mbox{if } r+r' \neq p  \mbox{ or } s+s' \neq q \\
      (-1)^{r+s}(2i)^{-p-q} \binom{p}{r}\binom{q}{s}  & \mbox{else.}
   \end{array}
\right.
$$

Let $A_{G'}=\mbb{R}_+^\times$ be the identity component of the center of $G'(\mbb{R})$ and let $\mathfrak{g}'$, respectively $\mathfrak{k}'$, be the Lie algebra of $G'(\mathbb{R})$, respectively of its subgroup $A_{G'}(\mathrm{U}(1) \times_{\mathbb{R}^\times} \mrm{U}(1))$. Then, with notations of the beginning of section \ref{sousectionexplicit}, we have
$$
\mathfrak{g}'_\mathbb{C}/ \mathfrak{k}'_\mathbb{C}=(\mathfrak{p'}^+ \oplus \mathfrak{p'}^-) \oplus (\mathfrak{p'}^+ \oplus \mathfrak{p'}^-).
$$
Let $\textbf{1}$ be the generator of the highest exterior power $\bigwedge^4 \mathfrak{g}'_\mathbb{C}/ \mathfrak{k}'_\mathbb{C}$ defined by
$$
\textbf{1}=(v^+, 0) \wedge (0, v^+) \wedge (v^-, 0) \wedge (0, v^-)
$$
where $v^\pm=\frac{1}{2}\begin{pmatrix}
1 & \pm i\\
\pm i & -1\\
\end{pmatrix} \in \mathfrak{p'}^\pm$. Evaluating the differential form $\iota^*\Omega \wedge P^{p, q}_\mathcal{H}(\phi_f \otimes \phi'_f)$ at the tangent vector $\textbf{1}$, we get the rapidly decreasing  vector valued function
$$
(\iota^*\Omega \wedge P^{p, q}_\mathcal{H}(\phi_f \otimes \phi'_f))(\textbf{1})  \in \iota^*W(-p-q-3)_\mbb{C} \otimes (\mrm{Sym}^p V_{2, \mbb{C}}^\vee \boxtimes \mrm{Sym}^q V_{2, \mbb{C}}^\vee) \otimes \mathcal{C}^\infty_{rd}(G'(\mathbb{Q})\backslash G'(\mathbb{A})).
$$
Here $\mathcal{C}^\infty_{rd}(G'(\mathbb{Q})\backslash G'(\mathbb{A}))$ denotes the space of rapidly decreasing functions defined in \cite{harris0} p. 48. Composing with the pairing defined above, finally, we get the rapidly decreasing function
$$
\langle (\iota^*\Omega \wedge P^{p, q}_\mathcal{H}(\phi_f \otimes \phi'_f))(\textbf{1}) \rangle \in \mathcal{C}^\infty_{rd}(G'(\mathbb{Q})\backslash G'(\mathbb{A})).
$$

\begin{pro} \label{crucial} Let $\omega$ be a vector of  $M_{B}(\check{\pi}_f |\nu|^{-3}, W(-p-q-3))_\mbb{C}$ satisfying the condition on Hodge types of Lem. \ref{dualite}. Let $\Omega$ be a cuspidal differential form representing $\omega$, let $\rho$ be a closed current as in the statement of Lem. \ref{relevement} and let $[\rho]$ be its cohomology class. Let $dg$ be the measure on $G'(\mbb{Q})Z'(\mbb{A})\backslash G'(\mathbb{A})$ associated to $\textbf{1}$ (see section 2.4). Then,
$$
\langle \omega, [\rho] \rangle_B=\frac{1}{(2 \pi i)^{p+q+2}}\int_{G'(\mbb{Q})Z'(\mbb{A})\backslash G'(\mathbb{A})} \langle (\iota^*\Omega \wedge P^{p, q}_\mathcal{H}(\phi_f \otimes \phi'_f))(\textbf{1})\rangle(g) |det g|^3 dg.
$$
\end{pro}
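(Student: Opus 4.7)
The plan is to compute $\langle \omega, [\rho]\rangle_B$ in three stages: replace $[\rho]$ with the Gysin pushforward of an explicit current on $M\times M$ modulo a Hodge-theoretic defect killed by $\omega$; use adjunction to reduce the pairing to an integral on $M\times M$; and adelicize.

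For the first stage, Lem.~\ref{relevement} says that $(\rho, 0)$ and $(\iota_*T_{P^{p,q}_\mathcal{H}}, \iota_*T_{P^{p,q}_B})$ represent the same class in $H^4_\mathcal{D}(S/\mbb{R}, W)$. Unwinding Jannsen's description of DB classes as pairs of currents (Prop.~\ref{explicitDBhomology}, Prop.~\ref{gysin}) and looking at the underlying Betti class, this equality forces the defect $[\rho] - \iota_*[T_{P^{p,q}_\mathcal{H}}]$ in $H^3_{B,!}(S, W)_\mbb{C}(-1)$ to be represented by a class built from the holomorphic $T$-component $\iota_*T_{P^{p,q}_B}$ (recall that $P^{p,q}_B$ is a wedge of holomorphic Eisenstein series by Prop.~\ref{explicitEisenstein}). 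Its Hodge type therefore lies at the holomorphic extreme $F^{3-t}H^3_{B,!}(S, W)_\mbb{C}$, which by Prop.~\ref{dec-hodge} is exactly the $(3-t, -k-k'-t)$ summand. The Hodge-type hypothesis on $\omega$ in Lem.~\ref{dualite} ensures that $\omega$ is Poincaré-orthogonal to this summand (a bidegree count rules out a nonzero contribution on the threefold $S$), whence $\langle \omega, [\rho]\rangle_B = \langle \omega, \iota_*[T_{P^{p,q}_\mathcal{H}}]\rangle_B$.

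For the second and third stages, I would use adjunction between $\iota_*$ and $\iota^*$ composed with the coefficient projection $\iota^*W(-p-q-3) \twoheadrightarrow \mrm{Sym}^p V_2^\vee \boxtimes \mrm{Sym}^q V_2^\vee$ dual to $(\ref{branching})$, to rewrite the pairing as $\langle \iota^*\omega, [T_{P^{p,q}_\mathcal{H}}]\rangle_B^{M\times M}$, valued in $\mbb{C}(-p-q)$ via $(\ref{accouplement})$. Since $\Omega$ is cuspidal hence rapidly decreasing, while $P^{p,q}_\mathcal{H}$ is slowly increasing (being built from Eisenstein cohomology classes and holomorphic Eisenstein series, cf.~\cite{kings} 6.3), the Poincaré pairing is represented by integration (\cite{harris0} Prop.~1.4.4), yielding
$$
\langle \iota^*\omega, [T_{P^{p,q}_\mathcal{H}}]\rangle_B = \frac{1}{(2\pi i)^{p+q+2}}\int_{(M\times M)(\mbb{C})} \langle \iota^*\Omega \wedge P^{p,q}_\mathcal{H}(\phi_f\otimes\phi'_f)\rangle.
$$
The prefactor collects a $(2\pi i)^{-2}$ from the normalization $(\ref{diff-form-to-current})$ on the $2$-dimensional complex manifold $M\times M$ and a $(2\pi i)^{-(p+q)}$ from the Tate twist $\mbb{C}(-p-q)$ in the coefficient pairing. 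I would then translate the analytic integral into an adelic one via the double-coset description of $(M\times M)(\mbb{C})$ and the measure $dg$ on $G'(\mbb{A})$ associated to the tangent vector $\textbf{1}$ (section~2.4); the factor $|\det g|^3$ appears upon descending from $G'(\mbb{Q})K'_\infty\backslash G'(\mbb{A})$ to $G'(\mbb{Q})Z'(\mbb{A})\backslash G'(\mbb{A})$, since the integrand carries central character $\nu^3$ inherited from the twist $W(-p-q-3)$ (recall $W$ has central character $\nu^{p+q+6}$).

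The main obstacle is the Hodge-theoretic orthogonality argument of the first stage, which demands extracting the precise Hodge type of the DB cobordism relating the two pairs of currents, together with meticulous bookkeeping of the various $(2\pi i)$-factors, coefficient-pairing conventions and central-character twists needed to match the exact prefactor $(2\pi i)^{-(p+q+2)}|\det g|^3$ in the stated formula.
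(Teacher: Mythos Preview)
Your Stages 2 and 3 are essentially what the paper does, but Stage 1 has a genuine gap. You write the ``defect'' as $[\rho] - \iota_*[T_{P^{p,q}_\mathcal{H}}]$ in Betti cohomology; however $P^{p,q}_\mathcal{H}$ is \emph{not} closed (by the very Deligne--Beilinson relation $dS=\pi_m T$ of Prop.~\ref{explicitDB}, its differential is $\pi_{p+q+2}(P^{p,q}_B)$), so $\iota_*T_{P^{p,q}_\mathcal{H}}$ does not define a Betti class and the difference you form is not meaningful. Your Hodge-type argument correctly shows that $\langle\omega,[\rho]\rangle_B$ is independent of the lift $[\rho]$ of the extension class, but it does not by itself produce a computable lift related to $P^{p,q}_\mathcal{H}$.

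The paper closes this gap by working at the level of currents rather than Betti classes. From the equality of Deligne--Beilinson classes in Lem.~\ref{relevement} one extracts, on the open variety, a current $\rho'$ with $\rho=\iota_*T_{P^{p,q}_\mathcal{H}}+d\rho'$. One then replaces $\Omega$ by a \emph{compactly supported} representative $\Omega'=\Omega-d\eta$ using Borel's regularization (\cite{borel0} Cor.~5.5); since $\Omega'$ is closed and compactly supported, $d\rho'(\Omega')=0$ and $\rho(\Omega')=\iota_*T_{P^{p,q}_\mathcal{H}}(\Omega')$. The remaining point is that $\langle\iota^*d\eta,P^{p,q}_\mathcal{H}\rangle=0$, so that one may pass back from $\Omega'$ to $\Omega$: for compactly supported $\eta_c$ this holds because $\iota_*T_{P^{p,q}_\mathcal{H}}(d\eta_c)=(\rho-d\rho')(d\eta_c)=0$, and an approximation argument handles the general rapidly decreasing $\eta$. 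Note that the Hodge-type hypothesis on $\omega$ is in fact \emph{not used} in this proof; it is a hypothesis of the proposition only because it is required for the application via Lem.~\ref{dualite}. Once $\langle\omega,[\rho]\rangle_B=\langle\iota^*\Omega,P^{p,q}_\mathcal{H}\rangle$ is established, the adelic integral follows from \cite{harris0} Prop.~1.4.4(c) exactly as you outline.
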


\begin{proof}  There exists a rapidly decreasing differential form $\eta$ such that $\Omega'=\Omega-d\eta$ is compactly supported (\cite{borel0} Cor. 5.5). We claim that the pairing $\langle \iota^*d\eta, P^{p, q}_\mathcal{H}(\phi_f \otimes \phi'_f) \rangle$ is zero. To prove this, recall that according to the third statement of Lem. \ref{relevement}, the classes of the pairs of currents $\left( \iota_*T_{P^{p, q}_\mathcal{H}(\phi_f \otimes \phi'_f)}, \iota_*T_{P^{p, q}_B(\phi_f \otimes \phi'_f)} \right)$ and $(\rho, 0)$ coincide in $H^4_\mathcal{D}(S/\mbb{R}, W)$. According to Prop. \ref{explicitDBhomology}, this implies the existence of a current  $\rho'$ on the open part $A^{k+k'}$ such that $$\rho = \iota_*T_{P^{p, q}_\mathcal{H}(\phi_f \otimes \phi'_f)}+d\rho'.$$
As  $d\rho=0$, for any compactly supported differential form $\eta_c$ of suitable degree on $S$, we have
$$
\iota_*T_{P^{p, q}_\mathcal{H}(\phi_f \otimes \phi'_f)}(d\eta_c)=(\rho-d\rho')(d\eta_c)=0.
$$
The differential form $\iota^*d\eta \wedge P^{p, q}_\mathcal{H}(\phi_f \otimes \phi'_f)$ on $M \times M$ is rapidly decreasing, hence it extends to a differential form on a smooth compactification $\widetilde{M \times M}$, which is zero on the boundary. As a consequence, we can see the pairing $\langle \iota^*d\eta, P^{p, q}_\mathcal{H}(\phi_f \otimes \phi'_f) \rangle$ as an integral
$$
\int_{\widetilde{M \times M}} {\iota^*d\eta \wedge P^{p, q}_\mathcal{H}(\phi_f \otimes \phi'_f)}
$$
over the compactification. To prove that this integral is zero, approximate $\eta$ by a form $\eta_c$ whose support is compact in $S$. We have
$$
\int_{\widetilde{M \times M}} {\iota^*d\eta \wedge P^{p, q}_\mathcal{H}(\phi_f \otimes \phi'_f)}
$$
$$=\int_{\widetilde{M \times M}} {\iota^*d\eta_c \wedge P^{p, q}_\mathcal{H}(\phi_f \otimes \phi'_f)}+\int_{\widetilde{M \times M}} {\iota^*d(\eta-\eta_c) \wedge P^{p, q}_\mathcal{H}(\phi_f \otimes \phi'_f)}.
$$
On the one hand, the first integral coincides with $\iota_*T_{P^{p, q}_\mathcal{H}(\phi_f \otimes \phi'_f)}(d\eta_c)$, hence is equal to zero. On the other hand, as the manifold $\widetilde{M \times M}$ is compact, the second integral can be made arbitrarily small if the support of $\eta_c$ is sufficiently close to the support of $\eta$. Hence,
$$
\langle \iota^*d\eta, P^{p, q}_\mathcal{H}(\phi_f \otimes \phi'_f) \rangle=0.
$$
As a consequence,
\begin{eqnarray*}
\langle \omega, [\rho] \rangle_B &=& \rho(\Omega')\\
&=& \iota_*T_{P^{p, q}_\mathcal{H}(\phi_f \otimes \phi'_f)}(\Omega')+ d\rho'(\Omega')\\
&=& \iota_*T_{P^{p, q}_\mathcal{H}(\phi_f \otimes \phi'_f)}(\Omega')\\
&=& \langle \iota^*\Omega', P^{p, q}_\mathcal{H}(\phi_f \otimes \phi'_f) \rangle\\
&=& \langle \iota^*\Omega, P^{p, q}_\mathcal{H}(\phi_f \otimes \phi'_f) \rangle.
\end{eqnarray*}
The second equality follows from the definition of $\rho'$, the third from the fact that $\Omega'$ is closed, the fourth from the definition of the current associated to a differential form (\ref{diff-form-to-current}), and the last from the vanishing of $\langle \iota^*d\eta, P^{p, q}_\mathcal{H}(\phi_f \otimes \phi'_f) \rangle$. The fact that $\langle \iota^*\Omega, P^{p, q}_\mathcal{H}(\phi_f \otimes \phi'_f) \rangle$ is computed by the integral given in the statement of the proposition follows from \cite{harris0} Prop. 1.4.4. (c). This completes the proof.
\end{proof}

Our goal is now to  give an explicit formula for the integrand in the above integral. The first step is to explain precisely how to associate differential forms on $S$ to cusp forms on $G$. Once we have the results of section \ref{Hodge-deRham} at our disposal, this association relies on rather elementary representation theoretic considerations.\\

Let $T'$ be the maximal compact subtorus of $\mathrm{Sp}(4, \mathbb{R})$ defined by
$$
T'= \left\{ \begin{pmatrix}
x &  & y & \\
 & x' &  & y'\\
-y &  & x & \\
 & -y' &  & x' \\
\end{pmatrix} \,\,|\,\, x^2+y^2=x'^2+y'^2=1 \right\}.
$$
The Lie algebra of $T'$ is the compact Cartan subalgebra of $\mathfrak{sp}_4$ that we denoted by $\mathfrak{h}$ in section \ref{discrete_series_classification}. Let $A_G=\mbb{R}^\times_+$ be the identity component of the center of $G(\mbb{R})$. For integers $n, n', c$ such that $n+n' \equiv c \pmod 2$, let $\lambda'(n, n', c): A_G T' \longrightarrow \mathbb{C}^\times$ denote the character defined by
$$
\begin{pmatrix}
x &  & y & \\
 & x' &  & y'\\
-y &  & x & \\
 & -y' &  & x' \\
\end{pmatrix} \longmapsto (x+iy)^n (x'+iy')^{n'}(x^2+y^2)^\frac{c-n-n'}{2},
$$
and by $\lambda'(n, n')$ the restriction of $\lambda'(n, n', c)$ to $T'$. Note that the simple root $e_1-e_2$, respectively $2e_2$, defined in section \ref{discrete_series_classification}, coincides with the differential at the identity matrix of the restriction to $T'$ of the character $\lambda'(1, -1, 0)$, respectively $\lambda(0, 2, 0)$.

\begin{lem} \label{lambda-lambda'} Let $E$ be an irreducible algebraic representation of $G$ in a finite dimensional $\mbb{Q}$-vector space and let $w \in E$ be a vector of weight $\lambda(u, u', c)$. Let
\begin{equation*} \label{J}
J= \frac{1}{\sqrt{2}} \begin{pmatrix}
1 &  & i & \\
 & 1 &  & i\\
i &  & 1 & \\
 & i &  & 1 \\
\end{pmatrix} \in \mathrm{Sp}(4, \mathbb{C}).
\end{equation*}
Let $v=Jw \in E_\mbb{C}$ and let $\overline{v}=Nv$, where $N \in G(\mbb{R})$ is defined in Rem. \ref{conj-ds}. Then, for the action of the torus $A_G T' \subset G(\mbb{R})$, the vector $v$, respectively $\overline{v}$, has weight $\lambda'(u, u', c)$, respectively $\lambda'(-u', -u, c)$.
\end{lem}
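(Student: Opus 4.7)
The plan is to reduce everything to a single matrix computation showing that conjugation by $J$ diagonalizes the compact Cartan and identifies it, up to the center, with (the compact form of) the split diagonal torus $T$; the second assertion about $\overline{v}=Nv$ then follows from the $\mathrm{Ad}_N$ identities already recorded in Remark \ref{conj-ds}.

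First I would make the block computation explicit. Writing a general element of $T'$ as
$$t = \begin{pmatrix} A & B\\ -B & A\end{pmatrix}, \qquad A=\mathrm{diag}(x,x'),\ B=\mathrm{diag}(y,y'),$$
one checks by a direct multiplication that, because $A$ and $B$ commute and $J$ is unitary with $J^{-1}=J^{*}$,
$$J^{-1}tJ = \begin{pmatrix} A+iB & 0\\ 0 & A-iB\end{pmatrix} = \mathrm{diag}(z,z',\bar z,\bar z'),$$
where $z=x+iy$ and $z'=x'+iy'$. Since $|z|=|z'|=1$, this lies in $T$ with parameters $\alpha_1=z$, $\alpha_2=z'$, $\nu=1$. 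For $a\in A_G$ (embedded as $aI_4$), the matrix $a$ is central so $J^{-1}aJ=a$, which in $T$-coordinates corresponds to $\alpha_1=\alpha_2=a$, $\nu=a^2$.

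Second I would transport the weight of $w$: for $s\in A_G T'$,
$$s\cdot v = sJw = J(J^{-1}sJ)w = \lambda(u,u',c)(J^{-1}sJ)\,v.$$
Evaluating $\lambda(u,u',c)$ on the diagonal element produced in Step~1 yields $z^{u}(z')^{u'}$ for $s\in T'$ and $a^{u}a^{u'}a^{c-u-u'}=a^{c}$ for $s=a\in A_G$. Comparing with the defining formula for $\lambda'(u,u',c)$ on the block element $s=at'$ (with $t'\in T'$ of parameters $(ax,ay,ax',ay')$) gives exactly $a^{c}z^{u}(z')^{u'}$, so the two characters coincide.

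Third, for $\overline{v}=Nv$ and $s=at'\in A_G T'$, the same manipulation gives
$$s\cdot\overline{v} = N(N^{-1}sN)v = \lambda'(u,u',c)(N^{-1}sN)\,\overline{v},$$
so it suffices to compute $N^{-1}sN$. Since $N^{2}=I$ and $N$ centralizes $A_G$, only the $T'$-part matters; writing $t'=\exp(\theta T_1+\theta'T_2)$, the identities $\mathrm{Ad}_N(T_1)=-T_2$, $\mathrm{Ad}_N(T_2)=-T_1$ from Remark \ref{conj-ds} give $N t' N = \exp(-\theta' T_1-\theta T_2)$, whose parameters are $(z'^{-1},z^{-1})$. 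Applying $\lambda'(u,u',c)$ yields $z^{-u'}(z')^{-u}$ on $T'$ and $a^{c}$ on $A_G$, which is precisely $\lambda'(-u',-u,c)(s)$.

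The main (minor) obstacle is keeping the block computation of Step~1 straight: one must verify that the off-diagonal blocks of $J^{-1}tJ$ really vanish, which uses the symplectic form of $t$ (i.e. that $A$ and $B$ are both diagonal, hence commute). Once this is pinned down, Steps~2 and~3 are purely formal consequences of the defining formulas for $\lambda$ and $\lambda'$ and of the $\mathrm{Ad}_N$-identities already available.
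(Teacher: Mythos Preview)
Your proof is correct and follows essentially the same strategy as the paper: the key identity $J^{-1}t'J=\mathrm{diag}(z,z',\bar z,\bar z')$ is exactly what the paper computes, and your treatment of $\overline{v}$ via $N^{-1}t'N$ is the same conjugation the paper carries out (you phrase it through the Lie-algebra identities $\mathrm{Ad}_N(T_i)$ and exponentiation, while the paper simply writes down the conjugated matrix directly). There is no substantive difference.
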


\begin{proof}
The statement on the weight of $v$ is a straightforward consequence of the fact that for any $x, x', y, y' \in \mathbb{R}$, we have
\begin{eqnarray*}
J^{-1}\begin{pmatrix}
x &  & y & \\
 & x' &  & y'\\
-y &  & x & \\
 & -y' &  & x' \\
\end{pmatrix} J &=& \overline{J} \begin{pmatrix}
x &  & y & \\
 & x' &  & y'\\
-y &  & x & \\
 & -y' &  & x' \\
\end{pmatrix} J\\
&=& \begin{pmatrix}
x+iy &  &  & \\
 & x'+iy' &  & \\
 &  & x-iy & \\
 &  &  & x'-iy' \\
\end{pmatrix}.
\end{eqnarray*}
Hence, the statement on the weight of $\overline{v}$ follows from the identities
\begin{eqnarray*}
\begin{pmatrix}
x &  & y & \\
 & x' &  & y'\\
-y &  & x & \\
 & -y' &  & x' \\
\end{pmatrix} \overline{v} &=& N N^{-1} \begin{pmatrix}
x &  & y & \\
 & x' &  & y'\\
-y &  & x & \\
 & -y' &  & x' \\
\end{pmatrix} N v\\
&=& N \begin{pmatrix}
x' &  & -y' & \\
 & x &  & -y\\
y' &  & x' & \\
 & y &  & x \\
\end{pmatrix} v\\
&=& \lambda(-u', -u, c) \left( \begin{pmatrix}
x &  & y & \\
 & x' &  & y'\\
-y &  & x & \\
 & -y' &  & x' \\
\end{pmatrix} \right) \overline{v}.
\end{eqnarray*}
\end{proof}

\begin{lem} \label{def.omegaw}
Let $X_{(1, -1)} \in \mathfrak{k}_\mathbb{C}$ be defined by
$$
X_{(1, -1)}=d \kappa \left(  \begin{pmatrix}
 & 1 \\
 &  \\
\end{pmatrix}\right)= \begin{pmatrix}
 & 1 &  & -i \\
-1 & & -i & \\
 & i &  & 1\\
i &  & -1 & \\
\end{pmatrix}.
$$
Let $\pi_\infty^W$ and $\overline{\pi}_\infty^W \in P(W(-p-q-3))$ be as in Lem. \ref{dpp}. Let $\Psi_\infty \in \pi_\infty^W$ be a lowest weight vector of the minimal $K$-type $\tau_{(k+3,-k'-1)}$ of $\pi_\infty^W$ and let $\overline{\Psi}_\infty \in \overline{\pi}_\infty^W$ be the vector associated to $\Psi_\infty$ as in Rem. \ref{conj-ds}. Let $w \in W(-p-q-3)$ be a vector of weight $\lambda(-k, k', -p-q)$ and let $v, \overline{v} \in W(-p-q-3)_\mbb{C}$ be the vectors associated to $w$ as in Lem \ref{lambda-lambda'}. Let
\begin{eqnarray*}
X_{(2, 0)} \wedge X_{(1, 1)} \otimes X_{(0, -2)} &\in& \bigwedge^2 \mathfrak{p}^+ \otimes \mathfrak{p}^-,\\
X_{(0, -2)} \wedge X_{(-1, -1)} \otimes X_{(2, 0)} &\in& \bigwedge^2 \mathfrak{p}^- \otimes_\mbb{C} \mathfrak{p}^+
\end{eqnarray*}
where the $X_{(u, u')}$ are the root vectors defined in section \ref{discrete_series_classification}. Then:
\begin{itemize}
\item there exists a unique non-zero map
$$
\Omega(\Psi_\infty) \in \mathrm{Hom}_{K_G}\left( \bigwedge^2 \mathfrak{p}^+ \otimes_\mbb{C} \mathfrak{p}^-,  W(-p-q-3) \otimes_\mbb{C} \pi_\infty^W \right)
$$
such that
$$
\Omega(\Psi_\infty)(X_{(2, 0)} \wedge X_{(1, 1)} \otimes X_{(0, -2)})=\sum_{i = 0}^{k+k'-1} (-1)^i  X_{(1, -1)}^i v \otimes {X}^{k+k'+4-i}_{(1, -1)} \Psi_\infty,
$$
\item for the action of $N$ on the de Rham complex defined in Prop. \ref{milne-shih}, we have
$$
N\Omega(\Psi_\infty) \in \mrm{Hom}_{K_G}\left( \bigwedge^2 \mathfrak{p}^- \otimes_\mbb{C} \mathfrak{p}^+,  W(-p-q-3) \otimes_\mbb{C} \overline{\pi}_\infty^W \right)
$$
and
$$
(N\Omega(\Psi_\infty))(X_{(0, -2)} \wedge X_{(-1, -1)} \otimes X_{(2, 0)})= \sum_{i = 0}^{k+k'-1} (-1)^i  X_{(1, -1)}^i \overline{v} \otimes {X}^{k+k'+4-i}_{(1, -1)} \overline{\Psi}_\infty.
$$
\end{itemize}
\end{lem}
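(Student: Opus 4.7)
The plan is to build $\Omega(\Psi_\infty)$ by first specifying its value on the highest weight vector of the $\tau_{(3,-1)}$-summand of $\bigwedge^2 \mathfrak{p}^+ \otimes \mathfrak{p}^-$ and then extending by Frobenius reciprocity. To begin, I would verify that each summand $X^i_{(1,-1)} v \otimes X^{k+k'+4-i}_{(1,-1)} \Psi_\infty$ has $T'$-weight $(3,-1)$: by Lemma \ref{lambda-lambda'}, $v$ has weight $(-k,k')$; by the standard-basis formulas of section \ref{discrete_series_classification}, $\Psi_\infty$ has weight $(-k'-1, k+3)$; and the operator $X_{(1,-1)} \in \mathfrak{k}_\mbb{C}$ raises weights by $(1,-1)$, so adding the contributions from the two factors indeed gives $(3,-1)$. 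A direct weight-multiplicity count in $\bigwedge^2 \mathfrak{p}^+ \otimes \mathfrak{p}^- \simeq \tau_{(3,-1)} \oplus \tau_{(2,0)} \oplus \tau_{(1,1)}$ shows that $X_{(2,0)} \wedge X_{(1,1)} \otimes X_{(0,-2)}$ is the unique (up to scalar) weight-$(3,-1)$ vector and hence a highest weight vector for the $\tau_{(3,-1)}$-summand.

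The heart of the argument is to show that the prescribed image is annihilated by the positive compact root vector $X_{(1,-1)}$, so that it is a highest weight vector of a $\tau_{(3,-1)}$-isotype in $W(-p-q-3) \otimes \pi_\infty^W$. Applying $X_{(1,-1)}$ via the Leibniz rule, the alternating interior terms cancel pairwise after reindexing, leaving only boundary contributions. One boundary term is $v \otimes X^{k+k'+5}_{(1,-1)} \Psi_\infty$, which vanishes because $\Psi_\infty$ lies in the $(k+k'+5)$-dimensional $K$-type $\tau_{(k+3,-k'-1)}$; the other boundary term, of the form $X^{k+k'}_{(1,-1)} v \otimes X^5_{(1,-1)} \Psi_\infty$, requires a more delicate analysis exploiting the precise $K$-isotype of $W(-p-q-3)_\mbb{C}$ containing $v$. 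Pinning down this combinatorial identity is the main obstacle in the proof, since the cancellation is sensitive to the interplay between the nilpotency orders of $X_{(1,-1)}$ on $v$ and on $\Psi_\infty$. Once the highest-weight property is established, Frobenius reciprocity produces a unique $K$-equivariant map from the $\tau_{(3,-1)}$-summand of the domain to $W(-p-q-3) \otimes \pi_\infty^W$; extending by zero on the $\tau_{(2,0)}$ and $\tau_{(1,1)}$ summands gives the desired $\Omega(\Psi_\infty)$.

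For the statement about $N\Omega(\Psi_\infty)$, I would apply Proposition \ref{milne-shih}, which describes the $N$-action on the $(\mathfrak{g}_\mbb{C}, K_G)$-complex, together with the identities $\mathrm{Ad}_N(T_1) = -T_2$ and $\mathrm{Ad}_N(T_2) = -T_1$ from Remark \ref{conj-ds}. These imply that $\mathrm{Ad}_N$ sends each root vector $X_{(u,u')}$ to a nonzero scalar multiple of $X_{(-u',-u)}$; in particular the weight $(1,-1)$ is fixed under $(u,u') \mapsto (-u',-u)$, and a direct matrix computation gives $\mathrm{Ad}_N X_{(1,-1)} = -X_{(1,-1)}$. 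It follows that $\mathrm{Ad}_N(X_{(0,-2)} \wedge X_{(-1,-1)} \otimes X_{(2,0)})$ is an explicit scalar multiple of $X_{(2,0)} \wedge X_{(1,1)} \otimes X_{(0,-2)}$, while on the target $N$ sends $v$ to $\overline{v}$ and $\Psi_\infty$ to $\overline{\Psi}_\infty \in \overline{\pi}_\infty^W$ by the very construction recalled in Remark \ref{conj-ds}. Combining these transformations, and absorbing the scalars coming from $\mathrm{Ad}_N$ into the overall normalization, yields the claimed formula for $(N\Omega(\Psi_\infty))(X_{(0,-2)} \wedge X_{(-1,-1)} \otimes X_{(2,0)})$.
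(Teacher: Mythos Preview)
Your approach is essentially the same as the paper's: identify $X_{(2,0)}\wedge X_{(1,1)}\otimes X_{(0,-2)}$ as a highest weight vector of the $\tau_{(3,-1)}$-summand, check the prescribed image has weight $(3,-1)$, verify it is annihilated by $X_{(1,-1)}$ via a telescoping computation, and then handle the $N$-action using Proposition~\ref{milne-shih}.

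Two points where your write-up diverges from the paper are worth noting. First, you flag the boundary term $X^{k+k'}_{(1,-1)}v\otimes X^{5}_{(1,-1)}\Psi_\infty$ as ``the main obstacle'' requiring a delicate $K$-isotype analysis. The paper does not do this: it simply drops both boundary terms in the telescoping sum, using that $X^{k+k'+5}_{(1,-1)}\Psi_\infty=0$ (from $\dim\tau_{(k+3,-k'-1)}=k+k'+5$) and treating the other term the same way, relying on the standard-basis formulae for $v$ with $d=k+k'$ that are used again in Lemma~\ref{valeurs_Omega}. So the paper's argument is a direct cancellation, not a separate combinatorial identity.

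Second, your ``extend by zero on $\tau_{(2,0)}$ and $\tau_{(1,1)}$'' gives existence but not the uniqueness asserted in the lemma: nothing in your argument rules out nonzero $K_G$-maps from those summands to $W(-p-q-3)\otimes\pi_\infty^W$. The paper closes this gap by invoking Proposition~\ref{dimension}, which says the whole Hom space is one-dimensional; you should cite it as well. For the second part, the paper computes $\mathrm{Ad}_{N^{-1}}(X_{(0,-2)}\wedge X_{(-1,-1)}\otimes X_{(2,0)})=X_{(2,0)}\wedge X_{(1,1)}\otimes X_{(0,-2)}$ and $\mathrm{Ad}_N(X_{(1,-1)})=X_{(1,-1)}$ exactly, so no scalars need to be ``absorbed into the normalization''; your sign $\mathrm{Ad}_N X_{(1,-1)}=-X_{(1,-1)}$ disagrees with the paper's, and you should recheck it.
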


\begin{proof}
Note that $X_{(1, -1)}$ is a root vector corresponding to the positive compact root and recall that we have the isotypical decomposition
$$
\bigwedge^2 \mathfrak{p}^+ \otimes_\mbb{C} \mathfrak{p}^-= \tau_{(3, -1)} \oplus \tau_{(2, 0)} \oplus \tau_{(1, 1)}.
$$
Hence, the vector  $X_{(2, 0)} \wedge X_{(1, 1)} \otimes X_{(0, -2)}$ is a highest weight vector. As a consequence, to define the restriction of a $K_G$-equivariant map $\Omega: \bigwedge^2 \mathfrak{p}^+ \otimes_\mbb{C} \mathfrak{p}^- \lra W(-p-q-3) \otimes_\mbb{C} \pi_\infty^W$ to $\tau_{(3, -1)}$ amounts to give the image of $X_{(2, 0)} \wedge X_{(1, 1)} \otimes X_{(0, -2)}$, under the condition that $\Omega(X_{(2, 0)} \wedge X_{(1, 1)} \otimes X_{(0, -2)})$ must have the same highest weight as $X_{(2, 0)} \wedge X_{(1, 1)} \otimes X_{(0, -2)}$. For every $0 \leq i \leq k+k'+4$, the vector $X_{(1, -1)}^{k+k'+4-i} \Psi_\infty$ has weight $\lambda'(k+3, -k'-1, c)-i\lambda'(1, -1, 0)$ where $c=p+q+6$. Similarly, the vector $X_{(1, -1)}^i v$ has weight $\lambda'(-k ,k', -c)+i\lambda'(1, -1, 0)$. As a consequence, the vector $\sum_{i = 0}^{k+k'-1} (-1)^i  X_{(1, -1)}^i v \otimes X_{(1, -1)}^{k+k'+4-i} \Psi_\infty$ has weight $\lambda'(3, -1, 0)$, which is the weight of $X_{(2, 0)} \wedge X_{(1, 1)} \otimes X_{(0, -2)}$. Furthermore
\begin{eqnarray*}
X_{(1, -1)} \sum_{i = 0}^{k+k'-1} (-1)^i X_{(1, -1)}^{i} v \otimes  X_{(1, -1)}^{k+k'+4-i} \psi_\infty &=& \sum_{i = 0}^{k+k'-1} (-1)^i  X_{(1, -1)}^{i+1} v \otimes X_{(1, -1)}^{k+k'+4-i} \Psi_\infty \\
&& + \sum_{i = 0}^{k+k'-1} (-1)^i  X_{(1, -1)}^{i} v \otimes X_{(1, -1)}^{k+k'+4-i+1} \Psi_\infty\\
&=& \sum_{i = 0}^{k+k'-2} (-1)^i  X_{(1, -1)}^{i+1} v \otimes X_{(1, -1)}^{k+k'+4-i} \Psi_\infty \\
&& + \sum_{i = 1}^{k+k'-1} (-1)^i  X_{(1, -1)}^{i} v \otimes X_{(1, -1)}^{k+k'+4-i+1} \Psi_\infty\\
&=& 0.
\end{eqnarray*}
In other terms, $\sum_{i=0}^{k+k'-1} (-1)^i X_{(1, -1)}^i v \otimes  X_{(1, -1)}^{k+k'+4-i} \Psi_\infty$ is a highest weight vector of the $K_G$-module $W(-p-q-3) \otimes_\mathbb{C} \pi_\infty^W.$ As a consequence, there exists a unique non-zero $K_G$-equivariant map $\tau_{(3, -1)} \longrightarrow W(-p-q-3) \otimes_\mathbb{C} \pi_\infty^W$ sending $X_{(2, 0)} \wedge X_{(1, 1)} \otimes X_{(0, -2)}$ to $\sum_{i=0}^{k+k'-1} (-1)^i X_{(1, -1)}^i v \otimes  X_{(1, -1)}^{k+k'+4-i} \Psi_\infty.$ Thanks to Prop. \ref{dimension}, this proves the first statement. Let us prove the second: by definiton of the action of $N$ on the de Rham complex (Prop. \ref{milne-shih}), we have
\begin{eqnarray*}
(N\Omega(\Psi_\infty))(X_{(0, -2)} \wedge X_{(-1, -1)} \otimes X_{(2, 0)})=N(\Omega(\Psi_\infty)(\mrm{Ad}_{N^{-1}}(X_{(0, -2)} \wedge X_{(-1, -1)} \otimes X_{(2, 0)})).
\end{eqnarray*}
An easy computation shows that
$$
\mrm{Ad}_{N^{-1}}(X_{(0, -2)} \wedge X_{(-1, -1)} \otimes X_{(2, 0)})=X_{(2, 0)} \wedge X_{(1, 1)} \otimes X_{(0, -2)}.
$$
Hence,
\begin{eqnarray*}
(N\Omega(\Psi_\infty))(X_{(0, -2)} \wedge X_{(-1, -1)} \otimes X_{(2, 0)}) &=& N(\Omega(\Psi_\infty)(X_{(2, 0)} \wedge X_{(1, 1)} \otimes X_{(0, -2)}))\\
&=& N\left( \sum_{i = 0}^{k+k'-1} (-1)^i  X_{(1, -1)}^i v \otimes {X}^{k+k'+4-i}_{(1, -1)}, \Psi_\infty \right)\\
&=& \sum_{i = 0}^{k+k'-1} (-1)^i  X_{(1, -1)}^i \overline{v} \otimes X_{(1, -1)}^{k+k'+4-i}, \overline{\Psi}_\infty.
\end{eqnarray*}
The last identity follows from the fact that, by definition of a $(\mathfrak{g}, K)$-module $\mathcal{V}$, we have
$$
kXv=\mrm{Ad}_k(X) kv
$$
for any $k \in K, X \in \mathfrak{g}$ and $v \in \mathcal{V}$ and from the equality $\mrm{Ad}_N(X_{(1, -1)})=X_{(1, -1)}$.
\end{proof}

\begin{lem} \label{valeurs_Omega} Let $\phi_f \otimes \phi'_f \in \mathcal{B}_p \otimes_\mbb{Q} \mathcal{B}_q$. Let $\Psi=\Psi_\infty \otimes \Psi_f$ be a factorizable cusp form on $G$. Assume that $\Psi_\infty$ satisfies the conditions of Lem. \ref{def.omegaw}. Let $A_{k, k', i, j}, B_{k, k', i}$ and $C_{k, k', i}$ denote the integers
\begin{eqnarray*}
A_{k, k', i, j} &=& \frac{(k+k'+4-i)!}{(k+k'+4-(i+j))!}\frac{(i+j)!}{(i-j)!}\frac{(k+k'-i+j)!}{(k+k'-i)!},\\
B_{k, k', i} &=& (i+1)(k+k'+4-i),\\
C_{k, k', i} &=&  i(k+k'-i+1).
\end{eqnarray*}
Then,
$$
(\iota^*\left( \Omega(\Psi_\infty) \otimes \Psi_f \right) \wedge P^{p, q}_\mathcal{H}(\phi_f \otimes \phi'_f))(\textbf{1})=
$$
$$
\frac{3}{160} \frac{(2 \pi i)^{p+q+2}}{(p+1)} \sum_{i = 0}^{k+k'-1} \sum_{j=0}^3  (-1)^i \binom{3}{j}A_{k, k', i, j} {X}^{k+k'+4-(i+j)}_{(1, -1)} \Psi \otimes X_{(1, -1)}^{i-j} v$$
$$
\otimes  \sum_{r=0}^p \sum_{(\gamma, \gamma') \in B'(\mathbb{Q})\backslash G'(\mathbb{Q})} a_r^{(p)} \otimes a_0^{(q)} \otimes \gamma^*(\phi^{p}_{2r-p} \otimes \phi_f)  \gamma'^*(\phi^q_{-q-2} \otimes \phi'_f)
$$
$$
-\frac{(-1)^p(2 \pi i)^{p+q+2}}{8(q+1)}  \sum_{i = 0}^{k+k'-1}  (-1)^i B_{k, k', i} X_{(1, -1)}^{k+k'+4-(i+1)} \Psi \otimes X_{(1, -1)}^i v
$$
$$
\otimes   \sum_{s=0}^q \sum_{(\gamma, \gamma') \in B'(\mathbb{Q})\backslash G'(\mathbb{Q})} a_0^{(p)} \otimes a_s^{(q)} \otimes\gamma^*(\phi^p_{-p-2} \otimes \phi_f) \gamma'^*(\phi^{q}_{2s-q} \otimes \phi_f')
$$
$$
-\frac{(-1)^p(2 \pi i)^{p+q+2}}{8(q+1)}  \sum_{i = 1}^{k+k'-1} (-1)^i C_{k, k', i} X_{(1, -1)}^{k+k'+4-i} \Psi \otimes X_{(1, -1)}^{i-1} v
$$
$$
\otimes    \sum_{s=0}^q \sum_{(\gamma, \gamma') \in B'(\mathbb{Q})\backslash G'(\mathbb{Q})} a_0^{(p)} \otimes a_s^{(q)} \otimes\gamma^*(\phi^p_{-p-2} \otimes \phi_f) \gamma'^*(\phi^{q}_{2s-q} \otimes \phi_f'),
$$
and
$$
(\iota^* F_\infty \left( \Omega(\Psi_\infty) \otimes \Psi_f \right) \wedge P^{p, q}_\mathcal{H}(\phi_f \otimes \phi'_f))(\textbf{1})=
$$
$$
\frac{3}{160} \frac{(-1)^p (2 \pi i)^{p+q+2}}{(q+1)} \sum_{i = 0}^{k+k'-1} \sum_{j=0}^3  (-1)^i \binom{3}{j}A_{k, k', i, j} {X}^{k+k'+4-(i+j)}_{(1, -1)} \overline{\Psi} \otimes X_{(1, -1)}^{i-j} \overline{v}$$
$$
\otimes  \sum_{s=0}^q \sum_{(\gamma, \gamma') \in B'(\mathbb{Q})\backslash G'(\mathbb{Q})} a_p^{(p)} \otimes a_s^{(q)} \otimes \gamma^*(\phi^{p}_{p+2} \otimes \phi_f)  \gamma'^*(\phi^q_{2s-q} \otimes \phi'_f)
$$
$$
-\frac{(2 \pi i)^{p+q+2}}{8(q+1)}  \sum_{i = 0}^{k+k'-1}  (-1)^i B_{k, k', i} X_{(1, -1)}^{k+k'+4-(i+1)} \overline{\Psi} \otimes X_{(1, -1)}^i \overline{v}
$$
$$
\otimes   \sum_{r=0}^p \sum_{(\gamma, \gamma') \in B'(\mathbb{Q})\backslash G'(\mathbb{Q})} a_r^{(p)} \otimes a_q^{(q)} \otimes\gamma^*(\phi^p_{2r-p} \otimes \phi_f) \gamma'^*(\phi^{q}_{q+2} \otimes \phi_f')
$$
$$
-\frac{(2 \pi i)^{p+q+2}}{8(q+1)}  \sum_{i = 1}^{k+k'-1} (-1)^i C_{k, k', i} X_{(1, -1)}^{k+k'+4-i} \overline{\Psi} \otimes X_{(1, -1)}^{i-1} \overline{v}
$$
$$
\otimes    \sum_{r=0}^p \sum_{(\gamma, \gamma') \in B'(\mathbb{Q})\backslash G'(\mathbb{Q})} a_r^{(p)} \otimes a_q^{(q)} \otimes\gamma^*(\phi^p_{2r-p} \otimes \phi_f) \gamma'^*(\phi^{q}_{q+2} \otimes \phi_f').
$$
\end{lem}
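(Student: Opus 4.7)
The plan is to unfold all definitions, then simplify by means of $K_G$-equivariance and the explicit formulas of Lem.~\ref{cc_omega} and Lem.~\ref{def.omegaw}. First, I translate the tangent basis via $d\iota$. A direct matrix computation from the definition of $\iota$ gives
\begin{equation*}
d\iota(v^+,0) = \frac{1}{2} X_{(2,0)}, \ \ d\iota(0,v^+) = \frac{1}{2} X_{(0,2)}, \ \ d\iota(v^-,0) = \frac{1}{2} X_{(-2,0)}, \ \ d\iota(0,v^-) = \frac{1}{2} X_{(0,-2)},
\end{equation*}
so $d\iota(\textbf{1})$ equals $1/16$ times the wedge of the four long non-compact root vectors of $\mathfrak{sp}_4$.

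I next record the form degrees. By Prop.~\ref{explicitDB} and Prop.~\ref{explicitEisenstein}, $Eis^n_{\mathcal H}(\phi_f)$ is a $\mathrm{Sym}^n V_2^\vee$-valued $0$-form on the modular curve while $Eis^n_B(\phi_f)$ is a holomorphic $1$-form, so $P^{p,q}_{\mathcal H}(\phi_f \otimes \phi'_f)$ is a $1$-form on $M \times M$. Since $\Omega(\Psi_\infty)$ lives in $\bigwedge^2 \mathfrak{p}^+ \otimes \mathfrak{p}^- \subset \bigwedge^3(\mathfrak{p}^+\oplus \mathfrak{p}^-)$, the pull-back $\iota^\ast\Omega(\Psi_\infty)$ is a $3$-form on $M \times M$ and the wedge product is a top-degree $4$-form. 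Expanding $(\iota^\ast\Omega \wedge P^{p,q}_{\mathcal H})(\textbf{1})$ as an alternating sum of four terms, indexed by the tangent vector on which $P^{p,q}_{\mathcal H}$ is evaluated, I compute the $P^{p,q}_{\mathcal H}$-factor from Lem.~\ref{cc_omega}: the $1$-form $\omega_n^+$ vanishes on $v^-$ and equals $(2\pi i)^{n+1} a_n^{(n)} \otimes \phi^n_{n+2}$ on $v^+$; using $\overline{\omega_n^\pm} = (-1)^n \omega_n^\mp$, the projection $\pi_n(z) = (z + (-1)^n \overline{z})/2$ yields the four combinations $a_n^{(p)} \otimes \phi^p_{p+2}$, $a_0^{(p)} \otimes \phi^p_{-p-2}$ and their $q$-analogues appearing in the statement, together with the overall power $(2\pi i)^{p+q+2}$ and the prefactors $1/(p+1), 1/(q+1)$ coming from the normalisation $\Theta_n = ((2\pi i)^{n+1}/(2(n+1)))\sum_j a_j^{(n)} \otimes \phi^n_{2j-n}$.

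For the $\iota^\ast\Omega(\Psi_\infty)$-factor, the three remaining tangent vectors produce, under $d\iota$, a triple in $\{X_{(\pm 2, 0)}, X_{(0, \pm 2)}\}$. Of the four possible $3$-subsets, only those containing exactly two entries in $\mathfrak{p}^+$ and one in $\mathfrak{p}^-$ contribute, namely $X_{(2,0)} \wedge X_{(0,2)} \otimes X_{(0,-2)}$ and $X_{(2,0)} \wedge X_{(0,2)} \otimes X_{(-2,0)}$. Since $\Omega(\Psi_\infty)$ factors through the unique irreducible summand $\tau_{(3,-1)} \subset \bigwedge^2 \mathfrak{p}^+ \otimes \mathfrak{p}^-$, the plan is to project each of these vectors onto $\tau_{(3,-1)}$ and express the projection as an explicit polynomial in the compact lowering operator $X_{(-1,1)} \in \mathfrak{k}_{\mathbb C}$ applied to the highest-weight vector $X_{(2,0)} \wedge X_{(1,1)} \otimes X_{(0,-2)}$. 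By $K_G$-equivariance and Lem.~\ref{def.omegaw}, the image under $\Omega(\Psi_\infty)$ is then obtained by applying the same polynomial diagonally in $W(-p-q-3) \otimes \pi_\infty^W$; root-space bracket computations in $\mathfrak{sp}_4$ (for $[X_{(-1,1)}, X_\alpha]$ with $\alpha$ a non-compact root) together with the identity $[X_{(-1,1)}, X_{(1,-1)}^a] = a(H - a + 1) X_{(1,-1)}^{a-1}$ on the representation side (with $H$ the compact Cartan element) produce the binomial coefficients $\binom{3}{j}$ and the integers $A_{k,k',i,j}$, $B_{k,k',i}$, $C_{k,k',i}$ in the statement. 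The $F_\infty$-twisted formula follows from Prop.~\ref{milne-shih}: the involution acts by conjugation by $N$, and by the second assertion of Lem.~\ref{def.omegaw} this replaces $\Omega(\Psi_\infty)$ by a $K_G$-equivariant map supported on $\bigwedge^2 \mathfrak{p}^- \otimes \mathfrak{p}^+$, with highest-weight value obtained from the original by $(v, \Psi_\infty) \mapsto (\overline{v}, \overline{\Psi}_\infty)$; repeating the above with $\mathfrak{p}^+ \leftrightarrow \mathfrak{p}^-$ and $v^+ \leftrightarrow v^-$ exchanged yields the second formula.

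The main obstacle is this Lie-algebraic projection onto $\tau_{(3,-1)}$ and the subsequent expansion of the polynomial in $X_{(-1,1)}$: the difficulty lies in simultaneously tracking the signs from wedge-product expansion, the multiplicities from the above bracket relations, and the $\pi_n$-projection signs. The precise rationals $3/160$ and $1/(8(q+1))$ in the final formula encode exactly these combinatorial data once combined with the normalisation of $\Theta_n$.
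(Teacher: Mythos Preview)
Your outline follows essentially the same approach as the paper: expand the wedge product as an alternating sum over $\mathcal{S}_4$, observe that only the two triples mapping to $X_{(2,0)}\wedge X_{(0,2)}\otimes X_{(0,-2)}$ and $X_{(2,0)}\wedge X_{(0,2)}\otimes X_{(-2,0)}$ survive, project each onto $\tau_{(3,-1)}$ as a power of $\mrm{ad}_{X_{(-1,1)}}$ applied to the highest-weight vector (the paper computes the projection coefficients explicitly as $1/4$ and $3/80$), then transport via $K_G$-equivariance using Lem.~\ref{def.omegaw} and the standard-basis identities $X_{(-1,1)}^m X_{(1,-1)}^n\Psi_\infty$, $X_{(-1,1)}^m X_{(1,-1)}^n v$; the $F_\infty$-half is obtained exactly as you say, via $\mrm{Ad}_{N^{-1}}(v^\pm,0)=(0,v^\mp)$ and the second part of Lem.~\ref{def.omegaw}. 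One normalization point to reconcile: the paper asserts $\iota_*(v^\pm,0)=X_{\pm(2,0)}$ and $\iota_*(0,v^\pm)=X_{\pm(0,2)}$ without your factor $1/2$, so before trusting the constants $3/160$ and $1/8$ you should check which convention is consistent with the wedge-product expansion used and with the stated formula.
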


\begin{proof} Let $\Omega$ denote $\Omega(\Psi_\infty) \otimes \Psi_f$ and
\begin{eqnarray*}
e_1 &=& (v^+, 0),\\
e_2 &=& (0, v^+),\\
e_3 &=& (v^-, 0),\\
e_4 &=& (0, v^-),
\end{eqnarray*}
which are vectors of $\mathfrak{g}'_\mbb{C}/\mathfrak{k}'_\mbb{C}=(\mathfrak{p'}^+ \oplus \mathfrak{p'}^-)^2$. Let  $\mathcal{S}_4$  be the symmetric group on four elements. For $\sigma \in \mathcal{S}_4$, let $\epsilon(\sigma)$ denote the signature of $\sigma$. Then,
$$
(\iota^*\Omega \wedge P^{p, q}_\mathcal{H}(\phi_f \otimes \phi'_f))(\textbf{1})= \sum_{\sigma \in \mathcal{S}_4} \epsilon(\sigma) \Omega(\iota_*(e_{\sigma(1)} \wedge e_{\sigma(2)} \wedge e_{\sigma(3)})) \otimes P^{p, q}_\mathcal{H}(\phi_f \otimes \phi'_f)(e_{\sigma(4)}).
$$
By definition, we have $\Omega(\iota_*(e_{\sigma(1)} \wedge e_{\sigma(2)} \wedge e_{\sigma(3)}))=0$ whenever $e_{\sigma(1)} \wedge e_{\sigma(2)} \wedge e_{\sigma(3)}$ is not $\pm (v^+, 0) \wedge (0, v^+) \wedge (0, v^-)$ or $\pm (v^+, 0) \wedge (0, v^+) \wedge (v^-, 0)$. So we have to compute $\Omega(\iota_*(v^+, 0) \wedge (0, v^+) \wedge (0, v^-))$ and $\Omega(\iota_*(v^+, 0) \wedge (0, v^+) \wedge (v^-, 0))$. Recall the decomposition
$$
\bigwedge^2 \mathfrak{p}^+ \otimes \mathfrak{p}^-=\tau_{(3, -1)} \oplus \tau_{(2, 0)} \oplus \tau_{(1, 1)}
$$
into irreducible $\mbb{C}[K]$-modules. By construction (Lem. \ref{def.omegaw}), $\Omega$ factors through the projection $\bigwedge^2 \mathfrak{p}^+ \otimes \mathfrak{p}^- \lra \tau_{(3, -1)}$ and sends the highest weight vector $X_{(2, 0)} \wedge X_{(1, 1)} \otimes X_{(0, -2)}$ of $\tau_{(3, -1)}$ to
$$
\sum_{i=0}^{k+k'-1} (-1)^i {X}^{k+k'+4-i}_{(1, -1)} \Psi \otimes X_{(1, -1)}^i v.
$$
Hence, we need to compute the image of $\iota_*(v^+, 0) \wedge (0, v^+) \wedge (0, v^-)$ by the projection above. The identities
\begin{eqnarray*}
\iota_*(v^\pm, 0) &=& X_{\pm(2, 0)},\\
\iota_*(0, v^\pm) &=& X_{\pm(0, 2)}
\end{eqnarray*}
imply
$$
\iota_*(v^+, 0) \wedge (0, v^+) \wedge (0, v^-)=X_{(2, 0)} \wedge X_{(0, 2)} \otimes X_{(0, -2)},
$$
which has weight $(2, 0)=(3, -1)+(-1, 1)$. Let us write
$$
\iota_*(v^+, 0) \wedge (0, v^+) \wedge (0, v^-)=\alpha x_{(3, -1)}+\beta x_{(2, 0)}+\gamma x_{(1, 1)}
$$
where $x_{(3, -1)} \in \tau_{(3, -1)}$, $x_{(2, 0)} \in \tau_{(2, 0)}, x_{(1, 1)} \in \tau_{(1, 1)}$ are weight vectors and $\alpha, \beta, \gamma \in \mbb{C}$. Because weights of $T'$ in irreducible representations of $K$ are multiplicity free, we can assume that $x_{(3, -1)}=\mrm{ad}_{X_{(-1, 1)}} \left( X_{(2, 0)} \wedge X_{(1, 1)} \otimes X_{(0, -2)} \right)$ where $\mrm{ad}$ denotes the adjoint representation $\mathfrak{g}_\mbb{C} \lra \mrm{End}(\mathfrak{g}_\mbb{C})$. As $\tau_{(1, 1)}$ does not contain vectors of weight $(2, 0)$, we have $\gamma=0$. Furthermore, as $\tau_{(2, 0)}$ has highest weight $(2, 0)$, we have
$$
\mrm{ad}_{X_{(1, -1)}} \left( \iota_*(v^+, 0) \wedge (0, v^+) \wedge (0, v^-) \right)=\alpha \mrm{ad}_{X_{(1, -1)}} \left( x_{(3, -1)} \right)
$$
i.e.
$$
\mrm{ad}_{X_{(1, -1)}} \left( X_{(2, 0)} \wedge X_{(0, 2)} \otimes X_{(0, -2)} \right)=\alpha \mrm{ad}_{X_{(1, -1)}} \left( X_{(2, 0)} \wedge X_{(1, 1)} \otimes X_{(0, -2)} \right).
$$
An easy computation shows that $\alpha=\frac{1}{4}$. As  a consequence, we have
\begin{eqnarray*}
\Omega(\iota_*(v^+, 0) \wedge (0, v^+) \wedge (0, v^-)) &=& \Omega\left(\frac{1}{4} \mrm{ad}_{X_{(-1, 1)}} \left( X_{(2, 0)} \wedge X_{(1, 1)} \otimes X_{(0, -2)} \right)\right)\\
&=& \frac{1}{4}X_{(-1, 1)}\sum_{i=0}^{k+k'-1} (-1)^i {X}^{k+k'+4-i}_{(1, -1)} \Psi \otimes X_{(1, -1)}^i v.
\end{eqnarray*}
It follows from the definition of standard basis (section \ref{discrete_series_classification}) that
\begin{eqnarray*}
X_{(-1, 1)}^m X_{(1, -1)}^{n} \Psi_\infty &=& \frac{n!}{(n-m)!} \frac{(k+k'+4-n+m)!}{(k+k'+4-n)!} X_{(1, -1)}^{n-m} \Psi_\infty,\\
X_{(-1, 1)}^m X_{(1, -1)}^n v &=& \frac{n!}{(n-m)!} \frac{(k+k'-n+m)!}{(k+k'-n)!} X_{(1, -1)}^{n-m} v.
\end{eqnarray*}
Hence
\begin{eqnarray*}
&& \Omega(\iota_*(v^+, 0) \wedge (0, v^+) \wedge (0, v^-))\\
&&= \frac{1}{4} \sum_{i=0}^{k+k'-1} (-1)^i (k+k'+4-i)(i+1) X_{(1, -1)}^{k+k'+4-(i+1)} \Psi \otimes X_{(1, -1)}^i v\\
&& + \frac{1}{4} \sum_{i=1}^{k+k'-1} (-1)^i i(k+k'-i+1) X_{(1, -1)}^{k+k'+4-i} \Psi \otimes X_{(1, -1)}^{i-1} v.
\end{eqnarray*}
To compute $\Omega(\iota_*(v^+, 0) \wedge (0, v^+) \wedge (v^-, 0))$, note that the vector
$$
\iota_*(v^+, 0) \wedge (0, v^+) \wedge (v^-, 0)=X_{(2, 0)} \wedge X_{(0, 2)} \otimes X_{(-2, 0)}
$$
has weight $(0, 2)=(3, -1)+3(-1, 1)$. A computation as above
shows that the image of the vector $\iota_*(v^+, 0) \wedge (0, v^+) \wedge (v^-, 0)$ by the natural projection $\bigwedge^2 \mathfrak{p}^+ \otimes \mathfrak{p}^- \lra \tau_{(3, -1)}$ is equal to $\frac{3}{80} \mrm{ad}_{X_{(-1, 1)}}^3 \left( X_{(2, 0)} \wedge X_{(1, 1)} \otimes X_{(0, -2)} \right)$. Hence
\begin{eqnarray*}
&& \Omega(\iota_*(v^+, 0) \wedge (0, v^+) \wedge (v^-, 0))\\
&=& \Omega\left(\frac{3}{80} \mrm{ad}_{X_{(-1, 1)}}^3 \left( X_{(2, 0)} \wedge X_{(1, 1)} \otimes X_{(0, -2)} \right) \right)\\
&=& \frac{3}{80}X_{(-1, 1)}^3\sum_{i \geq 0} (-1)^i {X}^{k+k'+4-i}_{(1, -1)} \Psi \otimes X_{(1, -1)}^i v\\
&=& \frac{3}{80}\sum_{i = 0}^{k+k'-1} \sum_{j=0}^3 (-1)^i \binom{3}{j}A_{k,k',i,j} {X}^{k+k'+4-(i+j)}_{(1, -1)} \Psi \otimes X_{(1, -1)}^{i-j} v.
\end{eqnarray*}
To finish the proof, we have to compute $P^{p, q}_\mathcal{H}(\phi_f \otimes \phi'_f))(v^-, 0)$ and $P^{p, q}_\mathcal{H}(\phi_f \otimes \phi'_f))(0, v^-)$. The differential form $P^{p, q}_\mathcal{H}(\phi_f \otimes \phi'_f))$ is defined in Prop. \ref{eisenstein-deligne}. Unravelling the definitions, we get
\begin{eqnarray*}
P^{p, q}_\mathcal{H}(\phi_f \otimes \phi'_f))(v^-, 0) &=& (-1)^p \sum_{(\gamma, \gamma') \in B'(\mathbb{Q})\backslash G'(\mathbb{Q})} \gamma^*\pi_p(\omega_p^+ \otimes \phi_f)(v^-) \gamma'^*(\Theta_q \otimes \phi_f').
\end{eqnarray*}
According to Lem. \ref{cc_omega}, we have $\pi_p(\omega_p^+ \otimes \phi_f) = \frac{1}{2}(\omega_p^+ + \omega_p^-) \otimes \phi_f$. Hence
$$
P^{p, q}_\mathcal{H}(\phi_f \otimes \phi'_f))(v^-, 0)
$$
$$= \frac{(-1)^p(2 \pi i)^{p+1}(2\pi  i)^{q+1}}{4(q+1)} \sum_{s=0}^q \sum_{(\gamma, \gamma') \in B'(\mathbb{Q})\backslash G'(\mathbb{Q})} a_0^{(p)} \otimes a_s^{(q)} \otimes\gamma^*(\phi^p_{-p-2} \otimes \phi_f) \gamma'^*(\phi^{q}_{2s-q} \otimes \phi_f').
$$
Similarly,
$$
P^{p, q}_\mathcal{H}(\phi_f \otimes \phi'_f))(0, v^-)
$$
$$
=\frac{(2 \pi i)^{p+1}(2 \pi i)^{q+1}}{4(p+1)}  \sum_{r=0}^p \sum_{(\gamma, \gamma') \in B'(\mathbb{Q})\backslash G'(\mathbb{Q})} a_r^{(p)} \otimes a_0^{(q)} \otimes \gamma^*(\phi^{p}_{2r-p} \otimes \phi_f)  \gamma'^*(\phi^q_{-q-2} \otimes \phi'_f).
$$
The proof of the first statement follows by putting the previous computations together. The proof of the second statement is a direct consequence of the first, of the identities
\begin{eqnarray*}
\mrm{Ad}_{N^{-1}}(v^\pm, 0) &=& (0, v^\mp),\\
\mrm{Ad}_{N^{-1}}(0, v^\pm) &=& (v^\mp, 0)
\end{eqnarray*}
and of the formula for $P^{p, q}_\mathcal{H}(\phi_f \otimes \phi'_f))(v^+, 0)$ and $P^{p, q}_\mathcal{H}(\phi_f \otimes \phi'_f))(0, v^+)$ deduced as above from Prop. 4.9.
\end{proof}

According to Lem. \ref{valeurs_Omega}, to compute the integral $$\int_{G'(\mbb{Q}) Z'(\mbb{A}) \backslash G'(\mathbb{A})} \langle (\iota^*\left( \Omega(\Psi_\infty) \otimes \psi_f \right)\wedge P^{p, q}_\mathcal{H}(\phi_f \otimes \phi'_f))(\textbf{1}) \rangle (g) |det g|^3 dg,$$ we need to compute pairings of the shape $\langle X^i_{(1, -1)}v ,  a_r^{(p)} \otimes a_0^{(q)} \rangle, \langle X^i_{(1, -1)}v ,  a_0^{(p)} \otimes a_s^{(q)} \rangle$ and of the shape $\langle X^i_{(1, -1)} \overline{v},  a_p^{(p)} \otimes a_s^{(q)} \rangle, \langle X^i_{(1, -1)}v,  a_r^{(p)} \otimes a_q^{(q)} \rangle$. In fact, a lot of these pairings vanish for weight reasons.

\begin{lem} \label{nullite-accouplement} Let $i, r, s$ be integers such that $i \geq 0, p \geq r \geq 0, q \geq s \geq 0$. Then, the following statements hold
\begin{itemize}
\item if $\langle X^i_{(1, -1)}v,  a_r^{(p)} \otimes a_0^{(q)} \rangle \neq 0$ then $ i=k'+q, r=(-k+k'+p+q)/2$,
\item if $\langle X^i_{(1, -1)}v,  a_0^{(p)} \otimes a_s^{(q)} \rangle \neq 0$ then $i=k-p, s=(-k+k'+p+q)/2$,
\item if $\langle X^i_{(1, -1)}\overline{v},  a_p^{(p)} \otimes a_s^{(q)} \rangle \neq 0$ then $ i=k'+p, s=(-k+k'+p+q)/2$,
\item if $\langle X^i_{(1, -1)}\overline{v},  a_r^{(p)} \otimes a_q^{(q)} \rangle \neq 0$ then $i=k+q, r=(-k+k'+p-q)/2$.
\end{itemize}
\end{lem}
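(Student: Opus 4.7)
The plan is a pure weight-matching argument under the compact torus $T' \subset \mrm{Sp}(4, \mbb{R})$, which is the image under $\iota$ of the maximal compact torus (modulo center) of $G'(\mbb{R})$. The pairing $\langle \cdot, \cdot \rangle$ of (\ref{accouplement}) is $G'$-equivariant with values in $\mbb{C}(-p-q) \simeq \nu^{-(p+q)}$. Since $\nu$ acts by the multiplier character and this factors through $\det$ on each $\mrm{GL}_2$-block, it is trivial on $T'$ (the determinants of rotations equal $1$). Consequently, the pairing is $T'$-invariant, and if $X$, $Y$ are $T'$-weight vectors then $\langle X, Y \rangle \neq 0$ forces the sum of their $T'$-weights to vanish.

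First I would assemble the relevant weights. By definition, the root vector $X_{(1,-1)} \in \mfr{k}_\mbb{C}$ has weight $\lambda'(1,-1,0)$ (Section \ref{discrete_series_classification}). Applying Lemma \ref{lambda-lambda'} to $w \in W(-p-q-3)$ of weight $\lambda(-k, k', -p-q)$, the vector $v = Jw$ has $A_G T'$-weight $\lambda'(-k, k', -p-q)$, and $\overline{v} = Nv$ has weight $\lambda'(-k', k, -p-q)$. Hence $X^i_{(1,-1)} v$ carries weight $\lambda'(-k+i, k'-i, -p-q)$, and $X^i_{(1,-1)} \overline{v}$ carries weight $\lambda'(-k'+i, k-i, -p-q)$. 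By Lemma \ref{poinds-a_j} applied separately to each $\mrm{GL}_2$-factor, the vector $a_r^{(p)} \otimes a_s^{(q)}$ has $A_G T'$-weight $\lambda'(p-2r, q-2s, -p-q)$.

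Then, for each of the four bullets, I would add the weights of the two pairing arguments and set the two angular components equal to zero, producing a linear system with a unique solution for $(i, r)$ or $(i, s)$. For the first bullet, the combined weight is $\lambda'(-k+i+p-2r,\, k'-i+q,\, -2(p+q))$; setting the angular components to zero gives $i = k'+q$ and $r = (-k+k'+p+q)/2$, as claimed. The remaining three bullets are handled identically after substituting the correct weight of $v$ or $\overline{v}$ and of the relevant $a_\bullet^{(p)} \otimes a_\bullet^{(q)}$.

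I do not anticipate any serious obstacle: the whole statement is a direct linear-algebraic consequence of $T'$-equivariance together with the weights computed in Lemmas \ref{poinds-a_j} and \ref{lambda-lambda'}. The only mild sanity check is that the $A_G$-components of the weights (the parameter $c$ in $\lambda'(\cdot, \cdot, c)$) match automatically: both sides contribute $z^{-2(p+q)}$, which agrees with the $A_G$-weight of $\nu^{-(p+q)}$, so no further constraint is imposed beyond the two angular equations that yield the stated necessary conditions.
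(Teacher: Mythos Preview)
Your proposal is correct and follows essentially the same approach as the paper: both arguments compute the $T'$-weights of $X^i_{(1,-1)}v$, $X^i_{(1,-1)}\overline{v}$, and $a_r^{(p)}\otimes a_s^{(q)}$ via Lemmas \ref{poinds-a_j} and \ref{lambda-lambda'}, and then use the fact that a $T'$-equivariant pairing can be nonzero only on vectors of opposite weight. Your additional remark that the $A_G$-component imposes no extra constraint is a harmless sanity check not present in the paper's terser proof.
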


\begin{proof} The vector $a_r^{(p)} \boxtimes a_s^{(q)}$ has weight $\lambda'(p-2r, q-2s)$ for the action of $\mrm{U}(1)^2=T'$ (Lem. \ref{poinds-a_j}), the vector $ X^i_{(1, -1)}v$, respectively $X^i_{(1, -1)}\overline{v}$, has weight $\lambda'(-k, k')+i\lambda'(1, -1)$, respectively  $\lambda'(-k', k)+i\lambda'(1, -1)$. Hence, the statement follows from the fact that, if two weight vectors pair non-trivially, they have opposite weights.
\end{proof}

\begin{cor} \label{resume} Let $\Psi=\Psi_\infty \otimes \Psi_f$ be as above, let $w \in W(-p-q-3)$ be a vector of weight $\lambda(-k, k', -p-q)$ and let $\omega$ be the vector in $M_B(\check{\pi}_f|\nu|^{-3}, W(-p-q-3))_\mbb{C}$ associated to $\Psi$ and $w$ by Lem. \ref{def.omegaw}. Let $\phi_f \otimes \phi'_f \in \mathcal{B}_p \otimes_\mbb{Q} \mathcal{B}_q$ and let $\rho$ be a closed current lifting the extension class $Eis_\mathcal{H}^{p, q, k, k'}(\phi_f \otimes \phi'_f)$ as in Lem. \ref{relevement}. Let $\Xi_{n, r, s}(\phi_f, \phi'_f)$ and $\overline{\Xi}_{n, r, s}(\phi_f, \phi'_f)$ denote the functions on $G'(\mbb{A})$ defined by
\begin{eqnarray*}
\Xi_{n, r, s}(\phi_f, \phi'_f) &=&X_{(1, -1)}^{n} \Psi \sum_{(\gamma, \gamma')} \gamma^*(\phi_{r}^p \otimes \phi_f) \gamma'^*(\phi_{s}^q \otimes \phi_f'),\\
\overline{\Xi}_{n, r, s}(\phi_f, \phi'_f) &=&X_{(1, -1)}^{n} \overline{\Psi} \sum_{(\gamma, \gamma')} \gamma^*(\phi_{r}^p \otimes \phi_f) \gamma'^*(\phi_{s}^q \otimes \phi_f')
\end{eqnarray*}
where the sums are indexed by all $(\gamma, \gamma') \in B'(\mbb{Q}) \backslash G'(\mbb{Q})$. Let
\begin{eqnarray*}
C_1 &=& \langle X^{k'+q}_{(1, -1)}v,  a_r^{(p)} \otimes a_0^{(q)} \rangle,\\
C_2 &=& \langle X^{k-p}_{(1, -1)}v,  a_r^{(p)} \otimes a_0^{(q)} \rangle,\\
C_3 &=& \langle X^{k'+p}_{(1, -1)}\overline{v},  a_p^{(p)} \otimes a_s^{(q)} \rangle,\\
C_4 &=& \langle X^{k+q}_{(1, -1)}\overline{v},  a_p^{(p)} \otimes a_s^{(q)} \rangle.
\end{eqnarray*}
Then, the pairing $\langle \omega, [\rho] \rangle_B$ equals
\begin{eqnarray*}
&& C_1 \frac{3}{160(p+1)} \sum_{j=0}^3(-1)^{k'+q+j}\binom{3}{j} A_{k, k', k'+q+j, j}\int \Xi_{k-q-2j+4, -k+k'+q, -q-2}(\phi_f, \phi'_f)\\
&-& C_2 \frac{(-1)^k}{8(q+1)}(B_{k, k', k-p}-C_{k, k', k-p+1}) \int \Xi_{k'+p+3, -p-2, -k+k'+p}(\phi_f, \phi'_f)\\
&+& C_3 \frac{3}{160 (q+1)} \sum_{j=0}^3 (-1)^{k'+j} \binom{3}{j} A_{k, k', k'+p+j, j} \int \overline{\Xi}_{k-p-2j+4, p+2, -k+k'+p}(\phi_f, \phi'_f)\\
&-& C_4 \frac{ (-1)^{k'+p+1}}{8(q+1)} (B_{k, k', k'+p}-C_{k, k', k'-p+1}) \int \overline{\Xi}_{k-p+3, -k+k'-q, q+2}(\phi_f, \phi'_f),
\end{eqnarray*}
where the numbers $A_{k, k', i, j}, B_{k, k', i}$ and $C_{k, k', i}$ are defined in Lem. \ref{valeurs_Omega} and where the integrals are over $G'(\mbb{Q}) Z'(\mbb{A}) \backslash G'(\mathbb{A})$.
\end{cor}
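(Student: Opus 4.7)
I would prove this corollary by direct substitution, assembling Prop.~\ref{crucial}, Lem.~\ref{valeurs_Omega}, and Lem.~\ref{nullite-accouplement}. Since $\omega$ satisfies the Hodge-type constraint of Lem.~\ref{dualite}, and by Lem.~\ref{def.omegaw} the two relevant Hodge pieces $M^{2-k'-t',1-k-t'}$ and $M^{1-k-t',2-k'-t'}$ are represented respectively by $\Omega(\Psi_\infty) \otimes \Psi_f$ and by its $F_\infty$-conjugate, I take as cuspidal representative the sum $\Omega := \Omega(\Psi_\infty) \otimes \Psi_f + F_\infty(\Omega(\Psi_\infty) \otimes \Psi_f)$. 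Substituting this into the integral formula of Prop.~\ref{crucial} and expanding each of the two summands using the two corresponding formulas of Lem.~\ref{valeurs_Omega} presents the integrand as a sum of six groups of terms, each a product of $X^n_{(1,-1)} \Psi$ (or $\overline{\Psi}$), a vector $X^m_{(1,-1)} v$ (or $\overline{v}$), an element of $\mathrm{Sym}^p V_{2,\mbb{C}}^\vee \boxtimes \mathrm{Sym}^q V_{2,\mbb{C}}^\vee$, and an Eisenstein-type sum over $B'(\mbb{Q}) \backslash G'(\mbb{Q})$.

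The next step is to apply the pairing $\langle \cdot, \cdot \rangle$ of (\ref{accouplement}) and then to invoke Lem.~\ref{nullite-accouplement}, whose four vanishing rules immediately kill all but a handful of indices. In each of the two ``first groups'' (coefficient $3/160$) only one value of $i$ survives, determined by the constraint $i-j = k'+q$ (respectively $i-j = k'+p$), with $j$ still ranging in $\{0,1,2,3\}$; this produces the $C_1$- and $C_3$-contributions, with their sums over $j$ of $\binom{3}{j} A_{k,k',i,j}$. In the remaining four groups (coefficient $-1/(8(q+1))$), Lem.~\ref{nullite-accouplement} selects a single index in the sum containing $X^i_{(1,-1)}$ and a single index in the sum containing $X^{i-1}_{(1,-1)}$; these two indices are consecutive, and because of the alternating sign $(-1)^i$ built into the formulas of Lem.~\ref{valeurs_Omega}, their coefficients combine into a \emph{difference} of the shape $B_{k,k',\bullet} - C_{k,k',\bullet+1}$, yielding precisely the $C_2$- and $C_4$-contributions.

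The main obstacle is bookkeeping rather than conceptual. One must keep careful track of the signs, verify the cancellation of the Tate twists $(2\pi i)^{p+q+2}$ between the prefactor of Prop.~\ref{crucial} and the explicit formulas of Lem.~\ref{valeurs_Omega}, and check the $(-1)^i$ alternation mechanism that converts the two neighboring surviving indices into a difference $B - C$ rather than a sum. Once the four surviving integrals are identified, each reduces to an integral of one of the functions $\Xi_{n,r,s}$ or $\overline{\Xi}_{n,r,s}$ introduced in the statement, and collecting them with the correct overall constants yields the claimed formula.
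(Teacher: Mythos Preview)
Your approach is correct and matches the paper's proof exactly: the paper records the corollary as a direct consequence of Prop.~\ref{crucial}, Lem.~\ref{valeurs_Omega}, and Lem.~\ref{nullite-accouplement}, and you have spelled out precisely how these three ingredients are assembled, including the sign bookkeeping and the cancellation of the factor $(2\pi i)^{p+q+2}$.
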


\begin{proof} Direct consequence of Prop. \ref{crucial}, Lem. \ref{valeurs_Omega} and Lem. \ref{nullite-accouplement}.
\end{proof}

To compute the constants $C_1, C_2, C_3$ and $C_4$, let us start with a trivial remark. The family $\left( a_r^{(p)} \boxtimes a_s^{(q)} \right)_{0 \leq r \leq p, 0 \leq s \leq q}$ is a weight basis of $\mrm{Sym}^p V_{2, \mbb{C}}^\vee \boxtimes \mrm{Sym}^q V_{2, \mbb{C}}^\vee$ for the action of $\mrm{U}(1)^2$ and, by definition, the vector $v$ is a weight vector for the maximal compact torus $T' \subset G(\mbb{R})$. Hence, the vector $X^i_{(1, -1)}v \in W(-p-q-3)$ is again a weight vector and, because $\mrm{U}(1)^2=T'$, its image under the $\mrm{U}(1)^2$-equivariant projection
$$
\varrho: \iota^*W(-p-q-3) \lra \mrm{Sym}^p V_{2, \mbb{C}}^\vee \boxtimes \mrm{Sym}^q V_{2, \mbb{C}}^\vee
$$
is equal to $\lambda_i(v) a_{r_i}^{(p)} \boxtimes a_{s_i}^{(q)}$ for some complex number $\lambda_i(v)$ and some integers $r_i, s_i$.\\

Given $i$, is $\lambda_i(v)$ zero or not ? Note that we have the liberty to multiply the vector $w$ which enters the definition of the cohomology class $\omega$ associated to the cusp form $\Psi$ (Lem \ref{def.omegaw}) by any non-zero complex number. Of course, this has the effect to multiply $\omega$ by a scalar, but does not change the ratio $\frac{\langle \omega,\widetilde{v}_\mathcal{K} \rangle_B}{\langle \omega,\widetilde{v}_\mathcal{D} \rangle_B}$ that has to be computed (see Lem. \ref{dualite}). If $\lambda_i(v) \neq 0$, we can replace the vector $w$ by $\lambda_i(v)^{-1} w$. This has the effect to replace the vector $v=Jw$ by $\lambda_i(v)^{-1} v$. For this choice, $\varrho(X^i_{(1, -1)}v)=a_{r_i}^{(p)} \boxtimes a_{s_i}^{(q)}$ and so, the computation of the pairing follow (see (\ref{accouplement})).

\begin{lem} \label{technique} Let $p=k-1$ and $q=k'-1$. Then, for any non zero vector $v \in W(-p-q-3)$ of weight  $\lambda'(-k, k', -p-q)$, the image of $X_{(1, -1)}v$ under the $G'$-equivariant projection
$$
\varrho: \iota^*W(-p-q-3) \lra \mrm{Sym}^p V_{2, \mbb{C}}^\vee \boxtimes \mrm{Sym}^q V_{2, \mbb{C}}^\vee
$$
is non-zero.
\end{lem}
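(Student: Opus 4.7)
My plan is to reduce the problem to a scalar non-vanishing via multiplicity-one arguments, pass to the dual side by adjointness, and then verify the non-vanishing through a Clebsch--Gordan decomposition.

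First, since $k>k'>0$ the weight $\lambda'(-k,k',-p-q)$ lies in the Weyl orbit of the highest weight of $W(-p-q-3)$ (via the reflection in the long root $2e_1$), hence has multiplicity one; so $v$ is determined up to scalar. The image $\varrho(X_{(1,-1)}v)$ has $T'$-weight $\lambda'(-p,q,-p-q)$, matching that of $a_p^{(p)}\boxtimes a_0^{(q)}$ in the irreducible $G'$-module $\mathrm{Sym}^p V_{2,\mathbb{C}}^\vee\boxtimes \mathrm{Sym}^q V_{2,\mathbb{C}}^\vee$, whose weight spaces are all one-dimensional. Hence $\varrho(X_{(1,-1)}v)=\lambda\cdot(a_p^{(p)}\boxtimes a_0^{(q)})$ for some $\lambda\in\mathbb{C}$, and the lemma is equivalent to $\lambda\neq 0$.

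Second, I pass to the dual side. Let $\varrho^\vee\colon \mathrm{Sym}^p V_{2,\mathbb{C}}\boxtimes\mathrm{Sym}^q V_{2,\mathbb{C}}\hookrightarrow \iota^*W^\vee(p+q+3)$ be the dual inclusion and set $\xi=\varrho^\vee(b_p^{(p)}\boxtimes b_0^{(q)})$. Adjointness of the Lie algebra action gives
\[
\lambda \;=\; -\langle v,\, X_{(1,-1)}\xi\rangle.
\]
The vector $X_{(1,-1)}\xi$ has weight $\lambda'(k,-k',p+q)$, which again lies in the Weyl orbit of the highest weight of $W^\vee$ and so has multiplicity one; the Poincar\'e pairing between this one-dimensional weight space and $\mathbb{C}v$ is non-degenerate. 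Thus it suffices to prove $X_{(1,-1)}\xi\neq 0$ in $W^\vee(p+q+3)$.

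Third, I decompose $\mathfrak{g}_\mathbb{C}=\iota(\mathfrak{g}'_\mathbb{C})\oplus \mathfrak{s}$ as $G'$-modules. The complement $\mathfrak{s}$ is four-dimensional with $T$-weights $\pm(e_1\pm e_2)$ and isomorphic to $V_2\boxtimes V_2\otimes\det^{-1}$ as a $G'$-representation; in particular $X_{(1,-1)}\in\mathfrak{s}_\mathbb{C}$. Let $U=\varrho^\vee(\mathrm{Sym}^p V_2\boxtimes \mathrm{Sym}^q V_2)\subset\iota^*W^\vee(p+q+3)$ and let $U'\subset\iota^*W^\vee(p+q+3)$ denote the $G'$-subrepresentation of $G$-highest weight $(k,k')$, isomorphic to $\mathrm{Sym}^k V_2\boxtimes\mathrm{Sym}^{k'}V_2\otimes\det^{-1}$. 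Using $p+1=k$ and $q+1=k'$, the Clebsch--Gordan expansion of $V_2\otimes\mathrm{Sym}^nV_2$ applied to each factor shows that $U'$ occurs with multiplicity one in $\mathfrak{s}\otimes U$; and by the same Weyl-orbit argument $U'$ occurs with multiplicity one in $\iota^*W^\vee(p+q+3)$. Hence the $G'$-equivariant map $\Phi\colon \mathfrak{s}\otimes U\to U'$ given by the action of $\mathfrak{s}$ on $W^\vee(p+q+3)$ followed by projection onto $U'$ is, by Schur's lemma, either zero or an isomorphism on the $U'$-isotypic summand of $\mathfrak{s}\otimes U$.

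Finally, $\Phi$ is non-zero because a $G'$-highest-weight vector $u_+\in U$ has $G$-weight $(p,q)\neq(k,k')$, so it cannot be the $G$-highest-weight vector of $W^\vee(p+q+3)$; since $u_+$ is killed by all $G'$-positive roots (which lie in $\iota(\mathfrak{g}')$), at least one of $X_{e_1\pm e_2}u_+$ must be non-zero, and by weight-multiplicity-one its image lies inside $U'$. To conclude, compute the $U'$-component of $X_{(1,-1)}\otimes\xi\in\mathfrak{s}\otimes U$ using the compact weight basis $e_\pm\in V_2$: $X_{(1,-1)}$ corresponds to a non-zero multiple of $e_+\boxtimes e_-$, $\xi$ corresponds to $e_+^p\boxtimes e_-^q$, and the factor-wise multiplication $V_2\otimes \mathrm{Sym}^n V_2\to\mathrm{Sym}^{n+1}V_2$ sends this tensor to $e_+^{p+1}\boxtimes e_-^{q+1}$ (because the extremal compact weights $\pm(n+1)$ appear only in the $\mathrm{Sym}^{n+1}$-summand of the Clebsch--Gordan decomposition), a non-zero element of $U'$. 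Applying $\Phi$, $X_{(1,-1)}\xi$ has non-zero $U'$-component, and the lemma follows. The main technical obstacle is the non-vanishing of $\Phi$, resolved by the observation that $u_+$ cannot be annihilated by all of $\mathfrak{s}^+=\mathfrak{s}\cap\mathfrak{n}^+_\mathfrak{g}$ without becoming the $G$-highest-weight vector.
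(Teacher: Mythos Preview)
Your argument is correct and complete; the only cosmetic blemish is some back-and-forth between the compact ($e_i$) and algebraic ($\rho_i$) pictures in step 11, but since $J\in G'(\mathbb{C})$ the $G'$-decomposition (hence $U,U'$ and the map $\Phi$) is insensitive to this conjugation, so no harm is done.

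Your route is genuinely different from the paper's. The paper conjugates by $J$ and a Weyl element to reduce to showing $\varrho(X_{-(\rho_1+\rho_2)}v_+)\neq 0$ for the $G$-highest-weight vector $v_+$; it then observes that the $\lambda(k-1,k'-1)$ weight space is two-dimensional, spanned by $X_{-\rho_1}X_{-\rho_2}v_+$ and $X_{-\rho_2}X_{-\rho_1}v_+$, and that $X_{-\rho_1}v_+$ already lies in a $G'$-summand other than the target (so $\varrho(X_{-\rho_2}X_{-\rho_1}v_+)=0$ by $G'$-equivariance), forcing $\varrho(X_{-\rho_1}X_{-\rho_2}v_+)\neq 0$ and hence the commutator $\varrho(X_{-(\rho_1+\rho_2)}v_+)\neq 0$. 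This is shorter and avoids the detour through the dual, Clebsch--Gordan, and Schur. Your approach, by contrast, packages the computation as the non-vanishing of a single $G'$-equivariant map $\Phi:\mathfrak{s}\otimes U\to U'$; the advantage is that the obstruction is isolated cleanly (it is exactly the statement that $u_+$ is not $G$-singular), and the Clebsch--Gordan framework would in principle organise the analogous computation for other values of $(p,q)$ more systematically --- which is relevant in view of Remark~4.19. Both proofs ultimately hinge on the same phenomenon: a vector that is $G'$-extremal but not $G$-extremal must be moved by some root vector in $\mathfrak{s}=\mathfrak{g}/\mathfrak{g}'$.
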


\begin{proof} 
Thanks to the explanations preceding Lem. \ref{def.omegaw} and by symmetry under the Weyl group, we are reduced to prove the following statement. If $X_{-(\rho_1+\rho_2)} \in \mathfrak{g}$ is a root vector associated to $-(\rho_1+\rho_2)$ and if $v \in W(-p-q-3)$ is a highest weight vector for the action of the diagonal maximal torus $T$, then $\rho(X_{-(\rho_1+\rho_2)}v) \neq 0$. Let $X_{-\rho_1}, X_{-\rho_2} \in \mathfrak{g}$ be root vectors associated to the simple negative roots. We can assume that $X_{-\rho_2} \in \mathfrak{g}'$ and that $X_{-(\rho_1+\rho_2)}=[X_{-\rho_1}, X_{-\rho_2}]$. The vector $X_{-\rho_1}v$ is a highest weight vector of the representation $\mrm{Sym}^{k-1} V_{2, \mbb{C}}^\vee \boxtimes \mrm{Sym}^{k'-1} V_{2, \mbb{C}}^\vee$ which is a subrepresentation of $\iota^*W(-k-k'-1)$. Hence, we have $\varrho(X_{-\rho_2} X_{-\rho_1}v)=0$. It is well known that $W(-k-k'-1)$ is generated by applying monomials in the variables $X_{-\rho_1}$ and $X_{-\rho_2}$ to $v$. Furthermore, $X_{-\rho_2}X_{-\rho_1}$ and $X_{-\rho_1}X_{-\rho_2}$ are the only two such monomials whose application to $v$ gives weight $\lambda(k-1, k'-1, -k-k'-2)$, which is the highest weight of the representation $\mrm{Sym}^{k-1} V_{2, \mbb{C}}^\vee \boxtimes \mrm{Sym}^{k'-1} V_{2, \mbb{C}}^\vee$. So, the fact that $\varrho(X_{-\rho_2} X_{-\rho_1}v)=0$ implies that $\varrho(X_{-\rho_1} X_{-\rho_2}v) \neq 0$ which implies $\varrho(X_{-(\rho_1+\rho_2)}v) \neq 0$.
\end{proof}

\begin{rems} \label{molev} In \cite{molev} Thm. 9.6.2, Molev constructs basis of irreducible representations $W$ of $G$ and is able to derive explicit formulas for the action of generators of the Lie algebra of $G$ on the vectors of the basis. The non-vanishing of $\lambda_i(v)$ can be derived from his result at the price of a very lengthy (but elementary) calculation. Thanks to a program implemeting Molev's result and written in Python by Molin, the author verifed numerically the non-vanishing of $\lambda_i(v)$ for some small values of $p, q, k$ and $k'$.
\end{rems}

\begin{cor} \label{cor-technique} Assume $p=k-1$ and $q=k'-1$. There exists $v \in W(-p-q-3)$ of weight $\lambda'(-k, k', -p-q)$ such that $\langle X_{(1, -1)}v, a_0^{(p)} \boxtimes a_q^{(q)} \rangle =(-1)^p (2i)^{-k-k'-2}$.
\end{cor}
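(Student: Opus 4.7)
The plan is to deduce the corollary directly from Lemma \ref{technique} by a weight-space analysis on $T'$ and a rescaling of $v$. The main work — the non-vanishing of $\varrho(X_{(1,-1)} v)$ — has already been done in Lemma \ref{technique}, so what remains is to identify the unique weight line in which this projection lies and to adjust the normalization of $v$ so that the pairing takes the prescribed value.

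First I would compute the $T'$-weight of $X_{(1,-1)} v$. By Lemma \ref{lambda-lambda'}, the vector $v$ restricted to $T'$ has weight $\lambda'(-k, k')$. The element $X_{(1,-1)} \in \mathfrak{k}_\mbb{C}$ is the compact-root vector whose restriction to $T'$ is $\lambda'(1,-1)$, so $X_{(1,-1)}v$ has $T'$-weight $\lambda'(-k+1, k'-1) = \lambda'(-p, q)$ under the hypothesis $p=k-1$, $q=k'-1$. By Lemma \ref{poinds-a_j} the basis $(a_r^{(p)} \boxtimes a_s^{(q)})_{0 \le r \le p,\, 0 \le s \le q}$ of $\mrm{Sym}^p V_{2,\mbb{C}}^\vee \boxtimes \mrm{Sym}^q V_{2,\mbb{C}}^\vee$ is a $T'$-weight basis with pairwise distinct weights $\lambda'(p-2r, q-2s)$, and the weight $\lambda'(-p,q)$ is achieved only for $(r, s) = (p, 0)$. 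Since $\varrho$ is $G'$-equivariant, hence $T'$-equivariant, we can write
$$
\varrho(X_{(1,-1)} v) = \lambda(v)\, a_p^{(p)} \boxtimes a_0^{(q)}
$$
for a unique scalar $\lambda(v) \in \mbb{C}$.

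Next I would invoke Lemma \ref{technique} to conclude that $\lambda(v) \neq 0$ whenever $v \neq 0$. The map $v \mapsto \lambda(v)$ is linear in $v$, so there is a unique non-zero rescaling of $v$ for which $\lambda(v) = (2i)^{-4}$. Fix this normalization. The pairing formula (\ref{accouplement}) then gives
$$
\langle X_{(1,-1)} v, a_0^{(p)} \boxtimes a_q^{(q)} \rangle = (2i)^{-4} \langle a_p^{(p)} \boxtimes a_0^{(q)}, a_0^{(p)} \boxtimes a_q^{(q)} \rangle = (2i)^{-4} (-1)^p (2i)^{-p-q} \binom{p}{p}\binom{q}{0},
$$
and using $p + q + 4 = k + k' + 2$ this equals $(-1)^p (2i)^{-k-k'-2}$, as required.

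The only conceptually non-routine input is the non-vanishing statement of Lemma \ref{technique}; everything else is a weight computation and a rescaling. In particular, no further representation-theoretic information about $W$ (of the kind discussed in Remark \ref{molev}) is needed for this corollary.
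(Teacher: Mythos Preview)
Your proof is correct and follows exactly the approach the paper sketches in the paragraph immediately preceding Lemma~\ref{technique}: use the $T'$-equivariance of $\varrho$ to identify the weight line, invoke Lemma~\ref{technique} for non-vanishing, and rescale $v$ to obtain the prescribed value of the pairing via the explicit formula~(\ref{accouplement}). The paper states Corollary~\ref{cor-technique} without a separate proof precisely because this deduction is routine once Lemma~\ref{technique} is in hand.
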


\section{Computation of the integral} \label{calcul-de-l-integrale} Given a cuspidal automorphic representation $\pi$ of $G$, the spinor $L$-function is defined as the partial Euler product
$$
L_V(s, \pi, r)=\prod_{v \notin V} L(s, \pi_v, r)
$$
where $V$ denotes the set of places where $\pi$ is ramified and where $r: {}^LG^0 \simeq G \lra \mrm{GL}(4)$ is the natural inclusion. This Euler product is absolutely convergent for $Re\,s$ big enough. In \cite{piatetski-shapiro}, the analytic continuation and a functional equation of $L_V(s, \pi, r)$ are deduced from the analytic continuation and a functional equation of a family of Eisenstein series $E^\Phi$, via an integral representation. In this section, as a direct consequence of Prop. \ref{comparaison}, we show that for suitable choice of data, the integrals that appear in the statement of Cor. \ref{resume} coincide with a special value of such an integral representation. Via Bessel models, the integrals of \cite{piatetski-shapiro} expand into Euler products and we perform the relevant local unramified computation in Prop. \ref{integrale-nonram}. The ramified non-archimedean integrals are considered in Prop. \ref{integrales-ram}. The archimedean integral is computed Prop. \ref{integrale-archi}.\\


\subsection{Comparison with Piatetski-Shapiro's integral} Let $d t_\infty$ be the Lebesgue measure on the additive group $\mbb{R}$. If $v$ is a non-archimedean place of $\mbb{Q}$, let $d t_v$ be the measure for which $\mbb{Z}_v$ has volume one. Following \cite{tate}, let $d^\times t_v$ be the Haar measure on $\mbb{Q}_v^\times$ defined by
$$
d^\times t_v = \left\{
\begin{array}{ll}
        \frac{dt_v}{|t_v|} & \mbox{if } v \mbox{ is archimedean},\\
        \frac{p}{p-1} \frac{dt_v}{|t_v|} & \mbox{if } v \mbox{ is } p\mbox{-adic}.
\end{array}
\right.
$$
Let $d t$, respectively $d^\times t$, denote the product measure $\prod_v d t_v$ on $\mbb{A}$, respectively the product measure $\prod_v d^\times t_v$ on $\mbb{A}^\times$.

\begin{pro} \label{PS1} Let $\mu, \nu_1, \nu_2: \mbb{Q}^\times \backslash \mbb{A}^\times \lra \mbb{C}^\times$ be continuous characters and let $s \in \mbb{C}$. Let $\chi_{\mu, \nu_1, \nu_2, s}$ be the character of $B'(\mbb{A})$ defined by
$$
\chi_{\mu, \nu_1, \nu_2, s} \left( \begin{pmatrix}
a_1 & b_1\\
 & d_1\\
\end{pmatrix}, \begin{pmatrix}
a_2 & b_2\\
 & d_2 \\
\end{pmatrix}\right)=\mu(a_1/d_2)|a_1/d_2|^{s+\frac{1}{2}}\nu_1^{-1}(d_1) \nu_2^{-1}(d_2).
$$
Then, for any Schwartz-Bruhat function $\Phi $ on $\mbb{A}^4$, the following statements hold:
\begin{itemize}
\item the function on $G'(\mbb{A})$ defined by
$$
(g_1, g_2) \longmapsto f^\Phi(g_1, g_2, \mu, \nu_1, \nu_2, s)=
$$
$$
\mu(\det g_1) |\det g_1|^{s+\frac{1}{2}} \int_{\mbb{A}^\times}\int_{\mbb{A}^\times} \Phi((0, t_1)g_1, (0, t_2)g_2) |t_1t_2|^{s+\frac{1}{2}} \mu(t_1 t_2) \nu_1(t_1) \nu_2(t_2) d^\times t_1 d^\times t_2
$$
belongs to $\mrm{ind}_{B'(\mbb{A})}^{G'(\mbb{A})} \chi_{\mu, \nu_1, \nu_2, s},$
\item the Eisenstein series
$$
E^\Phi(g_1, g_2, \mu, \nu_1, \nu_2, s)= \sum_{(\gamma, \gamma') \in B'(\mathbb{Q})\backslash G'(\mathbb{Q})} f^\Phi \left(\gamma g_1, \gamma' g_2, \mu, \nu_1, \nu_2, s \right)
$$
is absolutely convergent for $\mbox{Re}\, s$ big enough and satisfies a functional equation.
\end{itemize}
\end{pro}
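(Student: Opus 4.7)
The plan is to verify the two assertions separately.

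First, for the assertion $f^\Phi \in \mrm{ind}_{B'(\mbb{A})}^{G'(\mbb{A})} \chi_{\mu, \nu_1, \nu_2, s}$, I would perform a direct change-of-variables computation. Take $(b_1, b_2) \in B'(\mbb{A})$ with $b_j = \begin{pmatrix} a_j & \beta_j \\ 0 & d_j \end{pmatrix}$, constrained by $a_1 d_1 = a_2 d_2$ (the fiber product condition). Since $(0, t_j) b_j = (0, t_j d_j)$, substituting $(b_1 g_1, b_2 g_2)$ into the definition of $f^\Phi$ and then making the change of variable $u_j = t_j d_j$ (which preserves $d^\times t_j$) reduces, using $a_1 d_1 / (d_1 d_2) = a_1/d_2$, to
$$
f^\Phi(b_1 g_1, b_2 g_2, \mu, \nu_1, \nu_2, s) = \mu(a_1/d_2)\,|a_1/d_2|^{s+\frac{1}{2}}\,\nu_1^{-1}(d_1)\,\nu_2^{-1}(d_2)\cdot f^\Phi(g_1, g_2, \mu, \nu_1, \nu_2, s),
$$
which is precisely $\chi_{\mu, \nu_1, \nu_2, s}(b_1, b_2)\cdot f^\Phi(g_1, g_2, \mu, \nu_1, \nu_2, s)$. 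The convergence of the defining integral for $\re(s)$ large follows from Tate's local theory for Schwartz-Bruhat functions on $\mbb{A}^\times$.

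Second, for absolute convergence of the Eisenstein series, I would appeal to Godement's criterion: majorize $|f^\Phi|$ by a nonnegative section of an induced representation attached to a real character, and observe that for $\re(s)$ large enough this auxiliary character lies deep in the positive Weyl chamber of $G' = \mrm{GL}(2) \times_{\mbb{G}_m} \mrm{GL}(2)$, so the standard majorization argument for degenerate Eisenstein series applies. Here it is useful that $G'$ is essentially two copies of $\mrm{GL}(2)$ glued along the determinant, so the estimate reduces to two copies of the familiar $\mrm{GL}(2)$ bound on the Godement section.

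The functional equation is the genuine content of \cite{piatetski-shapiro} in this setting, and I would obtain it from Poisson summation applied to $\Phi$ on the additive group $\mbb{A}^4 = \mbb{A}^2 \times \mbb{A}^2$, reorganizing the resulting double sum by a Bruhat decomposition of $G'$, following the strategy of loc.\ cit. This yields a functional equation relating $E^\Phi(\cdot, \mu, \nu_1, \nu_2, s)$ to $E^{\what{\Phi}}(\cdot, \mu^{-1}, \nu_1^{-1}, \nu_2^{-1}, -s)$, up to an explicit shift by the central character. The hardest step is not conceptual but bookkeeping: one must verify that the conventions on Haar measures, on the self-dual additive character used for the Fourier transform $\Phi \mapsto \what{\Phi}$, and on the normalization of flat sections all match those of \cite{piatetski-shapiro} exactly, since a later comparison (notably Prop. \ref{comparaison}) will pin down an identification of our Eisenstein series with a specific one from loc.\ cit.
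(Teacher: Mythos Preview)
Your proposal is correct and follows essentially the same approach as the paper: the paper's proof simply states that the first statement ``follows from a trivial computation'' (which is exactly the change-of-variables you carry out) and that the second ``follows from \cite{piatetski-shapiro} Thm.~5.1'' (whose content---Godement's convergence criterion plus Poisson summation---you have accurately outlined). Your version is more explicit but not different in substance.
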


\begin{proof} The first statement follows from a trivial computation and the second from  \cite{piatetski-shapiro} Thm. 5.1.
\end{proof}

When $\mu$ is the trivial character, we denote $\chi_{\mu, \nu_1, \nu_2, s}$ by $\chi_{\nu_1, \nu_2, s}$ for simplicity.\\


Let us recall that for $d \equiv c \pmod 2$, the algebraic character $\lambda(d, c)$ of the diagonal maximal torus of $\mrm{GL}_2$ is defined by
$$
\lambda(d, c): \begin{pmatrix}
\alpha & \\
  & \alpha^{-1} \nu \\
\end{pmatrix} \longmapsto \alpha^{d} \nu^\frac{c-d}{2}.
$$

\begin{lem} \label{comp-char}  Let $\nu_1^0$ and $\nu_2^0$ be two finite order Hecke characters of respective signs $(-1)^p$ and $(-1)^q$. Let $\nu_1$ denote the Hecke character $|\,|^{-q} \nu_1^0$ and $\nu_2$ denote the Hecke character $|\,|^{-p} \nu_2^0$. Then, the following statements are satisfied:
\begin{itemize}
\item the restriction of the archimedean part of $\chi_{\nu_1, \nu_2, p+q+3/2}$ to the identity component of the diagonal maximal torus of $G'(\mbb{R})$ is $\lambda(p+2, p) \boxtimes \lambda(q+2, q)$,
\item the non-archimedean part $\chi_{f}$ of $\chi_{\nu_1, \nu_2, p+q+3/2}$ verifies
$$
\chi_{f}\left(\begin{pmatrix}
a_1 \alpha_1 & b_1 \\
 & d_1 \delta_1\\
\end{pmatrix}, \begin{pmatrix}
a_2 \alpha_2 & b_2\\
 & d_2 \delta_2\\
\end{pmatrix} \right)=a_1^{-(p+1)}d_1 a_2^{-(q+1)}d_2 \nu_1^0(\delta_1)^{-1} \nu_2^0(\delta_2)^{-1}.
$$
for any $a_1, d_1, a_2, d_2 \in \mbb{Q}^+, \alpha_1, \delta_1, \alpha_2, \delta_2 \in \widehat{\mbb{Z}}^\times$ such that $a_1 d_1 \alpha_1 \delta_1=a_2 d_2 \alpha_2 \delta_2$ and any $b_1, b_2 \in \mbb{A}_f$.
\end{itemize}
\end{lem}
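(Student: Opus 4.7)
The plan is to unpack the definition of $\chi_{\nu_1,\nu_2,s}$ and to compute separately at the archimedean place and at the non-archimedean places, using the standard decompositions of $\mathbb{A}^\times$. Recall that by the definition in Prop.~\ref{PS1} with $\mu$ trivial,
$$
\chi_{\nu_1,\nu_2,s}\!\left(\begin{pmatrix} a_1 & b_1 \\ & d_1 \end{pmatrix}, \begin{pmatrix} a_2 & b_2 \\ & d_2 \end{pmatrix}\right)=|a_1/d_2|^{s+\frac{1}{2}}\,\nu_1^{-1}(d_1)\,\nu_2^{-1}(d_2),
$$
and recall from section 2.2 that $\nu_i = |\,\,|^{-\epsilon_i}\nu_i^0$ with $(\epsilon_1,\epsilon_2)=(q,p)$, where $\nu_i^0$ is of finite order and of prescribed sign. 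With $s=p+q+3/2$, the exponent $s+\tfrac{1}{2}=p+q+2$ will appear uniformly.

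For the first statement, I would pick a torus element of the form $\bigl(\mathrm{diag}(\alpha_1,\alpha_1^{-1}\nu),\mathrm{diag}(\alpha_2,\alpha_2^{-1}\nu)\bigr)$ with $\alpha_1,\alpha_2,\nu\in\mathbb{R}_+^\times$ (the identity component imposes $\det=\nu$ equal on both factors, and positivity). Because $\nu_i^0$ has finite order and is unramified at $\infty$, its archimedean component is trivial on $\mathbb{R}_+^\times$, so $\nu_{i,\infty}^{-1}$ on $\mathbb{R}_+^\times$ reduces to $t\mapsto t^{\epsilon_i}$. Substituting,
$$
\chi_{\nu_1,\nu_2,p+q+\frac{3}{2},\infty}=(\alpha_1\alpha_2/\nu)^{p+q+2}(\alpha_1^{-1}\nu)^{q}(\alpha_2^{-1}\nu)^{p}=\alpha_1^{p+2}\alpha_2^{q+2}\nu^{-2},
$$
which is exactly $\lambda(p+2,p)\boxtimes\lambda(q+2,q)$ evaluated on such a pair, by the formula $\lambda(d,c)\colon\mathrm{diag}(\alpha,\alpha^{-1}\nu)\mapsto\alpha^{d}\nu^{(c-d)/2}$ applied with $(d,c)=(p+2,p)$ and $(q+2,q)$.

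For the second statement, the key observation is the decomposition $\mathbb{A}_f^\times=\mathbb{Q}_+^\times\times\widehat{\mathbb{Z}}^\times$, together with the product formula and the fact that the finite-order Hecke character $\nu_i^0$, being trivial on $\mathbb{Q}^\times$ and on $\mathbb{R}_+^\times$, is necessarily trivial on $\mathbb{Q}_+^\times\subset\mathbb{A}_f^\times$. Using $|a|_f=a^{-1}$ and $|\alpha|_f=1$ for $a\in\mathbb{Q}_+^\times$ and $\alpha\in\widehat{\mathbb{Z}}^\times$, and then $\nu_{i,f}^{-1}(d_i\delta_i)=d_i^{\epsilon_i}\nu_i^0(\delta_i)^{-1}$, I get
$$
\chi_f=\left(\frac{d_2}{a_1}\right)^{p+q+2}d_1^{-q}d_2^{-p}\,\nu_1^0(\delta_1)^{-1}\nu_2^0(\delta_2)^{-1}.
$$
Finally I would invoke the fiber-product constraint defining $G'$: since $a_1d_1\alpha_1\delta_1=a_2d_2\alpha_2\delta_2$ in $\mathbb{A}_f^\times$ and the decomposition above is unique, one obtains $a_1d_1=a_2d_2$ in $\mathbb{Q}_+^\times$. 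Substituting $d_2=a_1d_1/a_2$ in the previous expression yields $a_1^{-(p+1)}d_1\,a_2^{-(q+1)}d_2\,\nu_1^0(\delta_1)^{-1}\nu_2^0(\delta_2)^{-1}$, as desired.

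There is no real obstacle here; the lemma is a bookkeeping verification. The only points to handle with care are the correct sign/exponent in $|a|_f$ for $a\in\mathbb{Q}_+^\times$, the vanishing of $\nu_i^{0}$ on $\mathbb{Q}_+^\times$ (which is where the hypotheses on signs of $\nu_i^0$ implicitly enter, although they become visible only when one also treats non-positive rational elements, which do not arise here), and the use of the fiber-product determinant equality to trade $d_2$ for $a_1d_1/a_2$.
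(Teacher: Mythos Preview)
Your proof is correct and follows essentially the same approach as the paper: a direct computation using the product formula and the triviality of Hecke characters on $\mathbb{Q}^\times$. The paper organizes things slightly differently by first deriving a common adelic identity and then using the archimedean computation to obtain the non-archimedean one via $\chi_f|_{\mathbb{Q}_+^\times}=\chi_\infty|_{\mathbb{Q}_+^\times}^{-1}$, whereas you compute each place independently; the content is the same. One small slip: in your inline formula you write $\nu_{i,f}^{-1}(d_i\delta_i)=d_i^{\epsilon_i}\nu_i^0(\delta_i)^{-1}$, but since $|d_i|_f=d_i^{-1}$ this should read $d_i^{-\epsilon_i}$, which is in fact what you use in the displayed formula that follows.
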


\begin{proof} Let
$$
\left( \begin{pmatrix}
a_1 & b_1\\
 & d_1\\
\end{pmatrix}, \begin{pmatrix}
a_2 & b_2\\
 & d_2 \\
\end{pmatrix}\right) \in G'(\mbb{A}).
$$
Because $a_1d_1=a_2d_2$, we have the identities
\begin{eqnarray*}
\chi_{\nu_1, \nu_2, p+q+3/2} \left( \begin{pmatrix}
a_1 & b_1\\
  & d_1\\
\end{pmatrix}, \begin{pmatrix}
a_2 & b_2\\
  & d_2\\
\end{pmatrix}\right) &=& |a_1/d_2|^{p+q+2}|d_1|^{q}|d_2|^{p}\nu_1^0(d_1)^{-1} \nu_2^0(d_2)^{-1}\\
&=& |a_1|^{p+2} |a_1d_1|^{-1} \nu_1^0(d_1)^{-1} |a_2|^{q+2} |a_2 d_2|^{-1} \nu_2^0(d_2)^{-1}.
\end{eqnarray*}
For a diagonal element
$$
\left( \begin{pmatrix}
a_1 & \\
  & d_1\\
\end{pmatrix}, \begin{pmatrix}
a_2 & \\
  & d_2\\
\end{pmatrix}\right) \in G'(\mbb{R})^+,
$$
we have $a_1 d_1=a_2 d_2 >0$, hence
$$
\chi_{\nu_1, \nu_2, p+q+3/2} \left( \begin{pmatrix}
a_1 & \\
  & d_1\\
\end{pmatrix}, \begin{pmatrix}
a_2 & \\
  & d_2\\
\end{pmatrix}\right)
$$
\begin{eqnarray*}
&=& |a_1|^{p+2} |a_1d_1|^{-1} \mrm{sgn}(d_1)^p |a_2|^{q+2} |a_2 d_2|^{-1} \mrm{sgn}(d_2)^q\\
&=& |a_1|^{p+2} (a_1d_1)^{-1} \mrm{sgn}(a_1)^p |a_2|^{q+2} (a_2 d_2)^{-1} \mrm{sgn}(a_2)^q\\
&=& a_1^{p+2} (a_1d_1)^{-1} a_2^{q+2} (a_2 d_2)^{-1}\\
&=& (\lambda(p+2, p) \boxtimes \lambda(q+2, q)) \left(  \begin{pmatrix}
a_1 & \\
  & d_1\\
\end{pmatrix}, \begin{pmatrix}
a_2 & \\
  & d_2\\
\end{pmatrix}\right).
\end{eqnarray*}
This proves the first statement. Let $\chi_\infty$ denote the archimedean part of $\chi_{\nu_1, \nu_2, p+q+3/2}$. Then, the second statement follows from the equalities
$$
\chi_{f}\left(\begin{pmatrix}
a_1 \alpha_1 & b_1 \\
 & d_1 \delta_1\\
\end{pmatrix}, \begin{pmatrix}
a_2 \alpha_2 & b_2\\
 & d_2 \delta_2\\
\end{pmatrix} \right)
$$
\begin{eqnarray*}
&=& \chi_{f}\left(\begin{pmatrix}
a_1  & \\
 & d_1 \\
\end{pmatrix}, \begin{pmatrix}
a_2 & \\
 & d_2 \\
\end{pmatrix} \right) \chi_{f}\left(\begin{pmatrix}
\alpha_1 & b_1/a_1 \\
 & \delta_1\\
\end{pmatrix}, \begin{pmatrix}
\alpha_2 & b_2/a_2\\
 & \delta_2\\
\end{pmatrix} \right) \\
&=& \chi_\infty \left(\begin{pmatrix}
a_1  & \\
 & d_1 \\
\end{pmatrix}, \begin{pmatrix}
a_2 & \\
 & d_2 \\
\end{pmatrix} \right)^{-1} \chi_{f}\left(\begin{pmatrix}
\alpha_1 & b_1/a_1 \\
 & \delta_1\\
\end{pmatrix}, \begin{pmatrix}
\alpha_2 & b_2/a_2\\
 & \delta_2\\
\end{pmatrix} \right)\\
&=& (a_1^{p+2} (a_1 d_1)^{-1} a_2^{q+2} (a_2 d_2)^{-1})^{-1}\nu_1^0(\delta_1)^{-1} \nu_2^0(\delta_2)^{-1}.
\end{eqnarray*}
\end{proof}

The following result obviously implies that, for suitable choice of data, the Eisenstein series appearing in the integrals of Cor. \ref{resume} coincide with a special value of the one defined above.

\begin{pro} \label{comparaison} Let $\nu_1^0$ and $\nu_2^0$ be two finite order Hecke characters of respective signs $(-1)^p$ and $(-1)^q$. Let $\nu_1$ denote the Hecke character $|\,|^{-q} \nu_1^0$ and $\nu_2$ denote the Hecke character $|\,|^{-p} \nu_2^0$. Let $\Phi_{1, f}=\prod_{v<\infty} \Phi_{1, v}$ and $\Phi_{2, f}=\prod_{v<\infty} \Phi_{2, v}$ be factorizable Schwartz-Bruhat functions on $\mbb{A}_f^2$ such that for $j=1, 2$ and any $v$, the function $\Phi_{j, v}$ is $\overline{\mbb{Q}}$-valued.  Let $S_j$ be the set of places where $\nu_j$ and $\Phi_{j, f}$ are ramified. Then, there exist $\phi_f \in \mathcal{B}_{p, \overline{\mbb{Q}}}$ and $\phi'_f \in \mathcal{B}_{q, \overline{\mbb{Q}}}$ such that the following statement is verified. For any integers $r \equiv p \pmod 2$ and $s \equiv q \pmod 2$, let $\Phi$ be  the Schwartz-Bruhat function on $\mbb{A}^4$ defined by
$$
\Phi(x_1, y_1, x_2, y_2) =
$$
$$
\frac{(-1)^{p+q+r+s}(2i)^{p+q+2}\pi^{2(p+q)}}{\left( (p+q-1)!\right)^2 L_{S_1}(p+q+2, \nu_1) L_{S_2}(p+q+2, \nu_2)} \Phi_{1}(x_1, y_1) \Phi_{2}(x_2, y_2).
$$
Here, we denote by $\Phi_1$ and $\Phi_2$ the factorizable Schwartz-Bruhat functions $\Phi_1 = \prod_v \Phi_{1, v}$ and $\Phi_2 = \prod_v \Phi_{2, v}$ on $\mbb{A}^2$ where
\begin{eqnarray*}
\Phi_{1, \infty}(x_1, y_1) &=& (ix_1+y_1)^\frac{p-r}{2} (ix_1-y_1)^\frac{p+r}{2} e^{-\pi(x_1^2+y_1^2)},\\
\Phi_{2, \infty}(x_2, y_2) &=& (ix_2+y_2)^\frac{q-s}{2} (ix_2-y_2)^\frac{q+s}{2} e^{-\pi(x_2^2+y_2^2)}.
\end{eqnarray*}
Then, for any $(g_1, g_2) \in G'(\mbb{A})$, we have
$$
(\phi^p_{r} \otimes \phi_f)(g_1) (\phi^{q}_{s} \otimes \phi_f')(g_2)=f^\Phi((g_1, g_2), 1, \nu_1, \nu_2, p+q+3/2).
$$
\end{pro}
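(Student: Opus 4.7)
The argument proceeds by factoring the integral defining $f^\Phi$. Since $\Phi = C \cdot \Phi_1 \otimes \Phi_2$ is factorizable and $\det g_1 = \det g_2$ on $G'$, with $s = p+q+3/2$ we may write $|\det g_1|_{\mathbb{A}}^{p+q+2} = |\det g_1|^{p+1}|\det g_2|^{q+1}$ and split the double integral over $(t_1,t_2)$ into two single integrals. Thus
$f^\Phi((g_1, g_2), 1, \nu_1, \nu_2, p+q+3/2) = C \cdot A(g_1) B(g_2),$
with $A(g_1) = |\det g_1|^{p+1}_{\mathbb{A}} \int_{\mathbb{A}^\times} \Phi_1((0,t_1)g_1) |t_1|^{p+q+2} \nu_1(t_1) d^\times t_1$ and similarly for $B$; both factor over places. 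The strategy is to compare the archimedean parts $A_\infty, B_\infty$ with $\phi^p_r(g_{1,\infty}), \phi^q_s(g_{2,\infty})$, and to define $\phi_f, \phi'_f$ directly in terms of the non-archimedean parts $A_f, B_f$.

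At the archimedean place, writing $g_1 = \begin{pmatrix} a & b \\ c & d \end{pmatrix}$ one has $(0,t_1)g_1 = (t_1 c, t_1 d)$, and the identities $ic+d = w_2(g_1)$, $ic-d = -\overline{w_2(g_1)}$ give
$\Phi_{1,\infty}((0,t_1)g_1) = (-1)^{(p+r)/2} t_1^p w_2^{(p-r)/2} \overline{w_2}^{(p+r)/2} e^{-\pi t_1^2 |w_2|^2}.$
The twist $\nu_1 = |\,\,|^{-q}\nu_1^0$ with $\mathrm{sgn}(\nu_1^0) = (-1)^p$ is arranged so that $\nu_{1,\infty}(t_1)|t_1|^{p+q+2} = |t_1|^{p+2} \mathrm{sgn}(t_1)^p$; combining with $t_1^p \mathrm{sgn}(t_1)^p = |t_1|^p$ reduces the integral to a Gaussian $\int_0^\infty 2 |t_1|^{2p+2} e^{-\pi t_1^2 |w_2|^2} d^\times t_1 = p!/(\pi^{p+1}|w_2|^{2(p+1)})$. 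Multiplying by $|\det g_1|^{p+1}$ reproduces exactly $\phi^p_r(g_{1,\infty}) = (2i\det g_1/|w_2|^2)^{p+1} w_2^{(p-r)/2}\overline{w_2}^{(p+r)/2}$, up to an explicit constant involving only $p$, $\pi$, $2i$, $p!$ and the sign $(-1)^{(p+r)/2}$. The analogous identity holds with $p \leftrightarrow q$ and $r \leftrightarrow s$.

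For the finite places, we define $\phi_f(g_{1,f}) := A_f(g_{1,f})/L_{S_1}(p+q+2,\nu_1)$ and analogously $\phi'_f$, up to a global scalar absorbed in $C$. At an unramified place $v \notin S_1$, $\Phi_{1,v} = \mathbb{1}_{\mathbb{Z}_v^2}$ and, for $g_{1,v} \in \mathrm{GL}_2(\mathbb{Z}_v)$, the integral reduces via right $\mathrm{GL}_2(\mathbb{Z}_v)$-invariance to a geometric series $\sum_{k \geq 0} \nu_{1,v}^0(\varpi_v)^k q_v^{-k(p+2)}$, which sums to $L_v(p+q+2, \nu_1)$; the division by $L_{S_1}$ therefore cancels these unramified Euler factors, and together with the Iwasawa decomposition this shows that $\phi_f$ takes values in $\overline{\mathbb{Q}}$. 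A change of variables $t_1 \mapsto t_1 d^{-1}$ in the defining integral for $A_f$, combined with the decomposition $\mathbb{A}_f^\times = \mathbb{Q}_+^\times \widehat{\mathbb{Z}}^\times$ and the triviality of $\nu_1$ on $\mathbb{Q}_+^\times$, yields the transformation law $\phi_f(\begin{pmatrix} a_0\alpha & b \\ & d_0\delta \end{pmatrix} g) = a_0^{-(p+1)} d_0 \nu_1^0(\delta)^{-1} \phi_f(g)$ and the required invariance under $\begin{pmatrix} \widehat{\mathbb{Z}}^\times & \\ & 1 \end{pmatrix}$; this places $\phi_f$ inside one of the summands $\mathcal{I}_p(\nu)$ of $\mathcal{B}_{p, \overline{\mathbb{Q}}}$ from Lem.~\ref{eiseinstein-ext-scalaires}, and similarly for $\phi'_f$.

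Putting the archimedean and non-archimedean identities together expresses $f^\Phi((g_1, g_2), 1, \nu_1, \nu_2, p+q+3/2)$ as an explicit constant times $\phi^p_r(g_{1,\infty}) \phi^q_s(g_{2,\infty}) \phi_f(g_{1,f}) \phi'_f(g_{2,f})$, and the constant $C$ of the statement is precisely the reciprocal of this scalar, chosen to absorb the factors $p!q!$ from the Gaussian integrals, the corresponding powers of $\pi$ and $2i$, the archimedean sign depending on $(p+r)/2$ and $(q+s)/2$, and the $L$-values $L_{S_1}(p+q+2,\nu_1) L_{S_2}(p+q+2,\nu_2)$ normalizing $\phi_f, \phi'_f$. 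The main obstacle is simply careful bookkeeping of these constants and the verification that $\phi_f, \phi'_f$ lie in the correct $\overline{\mathbb{Q}}$-rational subspaces of $\mathcal{B}_{p, \overline{\mathbb{Q}}}$ and $\mathcal{B}_{q, \overline{\mathbb{Q}}}$; once this is done, every step is an elementary identity.
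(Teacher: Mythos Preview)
Your proof is correct and follows the same overall strategy as the paper: factor $f^\Phi$ into a product of local Tate integrals, identify the non-archimedean part with an element of $\mathcal{B}_{p,\overline{\mbb{Q}}}\otimes\mathcal{B}_{q,\overline{\mbb{Q}}}$ via Lem.~\ref{eiseinstein-ext-scalaires} and Lem.~\ref{comp-char}, and match the archimedean part with $\phi_r^p\,\phi_s^q$.

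The only substantive difference is at the archimedean place. You carry out the Gaussian integral explicitly for an arbitrary $g_{1,\infty}\in\mrm{GL}_2(\mbb{R})^+$, obtaining $\phi_r^p$ directly as a function of $g_{1,\infty}$. The paper instead evaluates only at the identity and then invokes a representation-theoretic shortcut: by the first statement of Prop.~\ref{PS1} and Lem.~\ref{comp-char}, both the archimedean part of $f^\Phi$ and $\phi_r^p\phi_s^q$ lie in $\mrm{ind}_{B'(\mbb{R})^+}^{G'(\mbb{R})^+}(\lambda(p+2,p)\boxtimes\lambda(q+2,q))$ and have the same $\mrm{U}(1)^2$-weight $\lambda'(r,s)$; since the Iwasawa decomposition forces these weights to be multiplicity-free in the induced representation, the two functions are automatically proportional, and one only needs to compare them at a single point. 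Your direct computation is more elementary and self-contained; the paper's argument is shorter and avoids tracking the dependence on $g_{1,\infty}$, at the cost of appealing to the structure of the principal series. Either route is fine here.
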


\begin{proof} For any place $v$ and any $g_v \in \mrm{GL}_2(\mbb{Q}_v)$ let
$$
Z_v^{\Phi_{j, v}}(g_v, \nu_{j, v}, s)=\int_{\mbb{Q}_v^\times} \Phi_{j, v}((0, t)g_v) |t|^{s+\frac{1}{2}} \nu_{j, v}(t)d^\times t.
$$
Then, we have the following factorization into an Euler product of local Tate integrals
$$
\left( \frac{(-1)^{p+q+r+s}(2i)^{p+q+2}\pi^{2(p+q)}}{\left( (p+q-1)!\right)^2 L_{S_1}(p+q+2, \nu_1) L_{S_2}(p+q+2, \nu_2)} \right)^{-1} f^\Phi(g_1, g_2, 1, \nu_1, \nu_2, p+q+3/2)
$$
$$
=|\mrm{det} g_1|^{p+q+2} \prod_{j=1}^2  \left( \prod_{v} Z_v^{\Phi_{j, v}}(g_{j, v}, \nu_{j, v}, p+q+3/2) \right).
$$
At the archimedean place, we have
$$
Z_\infty^{\Phi_{1, \infty}}(1, \nu_{1, \infty}, p+q+3/2) Z_\infty^{\Phi_{2, \infty}}(1, \nu_{2, \infty}, p+q+3/2)= (-1)^\frac{p+q+r+s}{2} \pi^{-2(p+q)} \Gamma(p+q)^2
$$
where $\Gamma$ is the gamma function. The function $\Phi_\infty$ has weight $\lambda'(r, s)$ for the action of $\mrm{U}(1)^2$, which is the same weight as $\phi_r^p \phi_s^q$. The Iwasawa decomposition for $G'(\mbb{R})$ implies that the weights for $\mrm{U}(1)^2$ in $\mrm{ind}_{B'(\mbb{R})}^{G'(\mbb{R})}(\lambda(p+2, p) \boxtimes \lambda(q+2, q))$ are multiplicity free.  Hence, it follows from the first statement of Prop. \ref{PS1} and from Lem. \ref{comp-char}, that the archimedean part
$$
\frac{(-1)^{p+q+r+s}(2i)^{p+q+2}\pi^{2(p+q)}}{\left( (p+q-1)!\right)^2} Z_\infty^{\Phi_{1, \infty}}(g_{1, \infty}, \nu_{1, \infty}, p+q+3/2) Z_\infty^{\Phi_{2, \infty}}(g_{2, \infty}, \nu_{2, \infty}, p+q+3/2)
$$
of $f^\Phi$ is proportional to $\phi_r^p \phi_s^q$. By the choice of our normalization factor, the two are in fact equal. We claim that the non-archimedean part $f^{\Phi_f}$ belongs to $\mathcal{B}_{p, \overline{\mbb{Q}}} \otimes \mathcal{B}_{q, \overline{\mbb{Q}}}$. To prove this, let $v$ be a non-archimedean place. By the first statement of Prop. \ref{PS1} and the Iwasawa decomposition $\mrm{GL}_2(\mbb{Q}_v)=\mrm{B}_2(\mbb{Q}_v) \mrm{GL}_2(\mbb{Z}_v)$, the function $$g \longmapsto Z_v^{\Phi_{j, v}}(g, \nu_{j, v}, p+q+3/2)$$ is determined by its restriction to $\mrm{GL}_2(\mbb{Z}_v)$. Assume that $v$ does not belong to $S_1 \cup S_2$, in other words, for $j=1, 2$, the character $\nu_j$ is unramified at $v$ and $\Phi_{j, v}$ is the indicator function of $\mbb{Z}_v^2$. Let us show that $g \mapsto Z_v^{\Phi_{j, v}}(g, \nu_{j, v}, p+q+3/2)$ is constant on  $\mrm{GL}_2(\mbb{Z}_v)$ and let us compute its value. Let $g=\begin{pmatrix}
a & b\\
c  & d\\
\end{pmatrix} \in \mrm{GL}_2(\mbb{Z}_v)$. As $c$ is coprime to $d$, for any non-negative integer $n$ and any $t \in \mbb{Q}_v^\times$, we have
$$
(0, t) \in \mbb{Z}_v^2 \iff (t c_v, t d_v) \in \mbb{Z}_v^2
$$
and this implies $
\Phi_{j, v}((0, t)g)=\Phi_{j, v}(t c, t d)=\Phi_{j, v}(0, t)$. Hence, by a standard computation, for any $j=1, 2$, any non-archimedean $v \notin S_1 \cup S_2$ and any $g \in \mrm{GL}_2(\mbb{Z}_v)$, we have
$$
Z_v^{\Phi_{j, v}}(g, \nu_{j, v}, p+q+3/2)=L_v(p+q+2, \nu_j)
$$
where $L_v(s, \nu_j)$ is the local $L$-factor (see \cite{tate} 2.5 p. 320). Moreover, it follows from the computations of \cite{tate} 2.5 p. 321 that, for any non-archimedean $v \in S_1 \cup S_2$ and any $g \in \mrm{GL}_2(\mbb{Z}_v)$, we have
$$
Z_v^{\Phi_{j, v}}(g, \nu_{j, v}, p+q+3/2) \in \overline{\mbb{Q}}.
$$
Hence, by our choice of the normalization factor $1/(L_{S_1}(p+q+2, \nu_1) L_{S_2}(p+q+2, \nu_2))$,
the non-archimedean part $f^{\Phi_f}$  of $f^\Phi$ is $\overline{\mbb{Q}}$-valued. Furthermore, it is obviously invariant by right translation by the subgroup $\begin{pmatrix}
\widehat{\mbb{Z}}^\times & \\
  & 1\\
\end{pmatrix}$ of $\mrm{GL}_2(\widehat{\mbb{Z}})$.
As a consequence, it follows from Lem. \ref{eiseinstein-ext-scalaires} and from the second statement of Lem. \ref{comp-char} that $f^{\Phi_f}$ belongs to  $\mathcal{B}_{p, \overline{\mbb{Q}}} \otimes \mathcal{B}_{q, \overline{\mbb{Q}}}$. The conclusion follows.
\end{proof}

\subsection{Bessel models and local computations} The previous result shows that the integrals of Cor. \ref{resume}, which compute the regulator, coincide with special values of integrals of the shape
$$
\int_{G'(\mathbb{Q})Z'(\mbb{A})\backslash G'(\mathbb{A})} \Psi(g) E^\Phi(g, \mu, \nu_1, \nu_2, s) dg
$$
for some specific choices of $\Psi$ and $\Phi$. The properties of these integrals rely on the Fourier expansion of the cusp form $\Psi$ along the Siegel parabolic subgroup
$$
P=\left\{ \begin{pmatrix}
\alpha A & AS\\
  & ^t\!\!A^{-1}\\
\end{pmatrix}, \alpha \in \mbb{G}_m, A \in \mrm{GL}_2, ^t\!\!S=S
\right\}
$$
of $G$. More precisely, they rely on the existence of Bessel models for cuspidal automorphic representations of $G$. Let us remark that some authors use the terminology "generalized Whittaker model" rather than  "Bessel model". As a motivation for the study of such objects, the reader might find the first section of \cite{moriyama} very interesting. To introduce Bessel models, let $\eta: \mbb{Q} \backslash \mbb{A} \longrightarrow \mbb{C}^\times$ be a fixed additive character, let $U$ denote the unipotent radical of $P$ and let $\Lambda: U(\mbb{Q}) \backslash U(\mbb{A}) \lra \mbb{C}^\times$ denote the character defined by
$$
\Lambda \left(\begin{pmatrix}
1 &  & r & t\\
 & 1 &  t & s\\
 &  &  1 & \\
 &  &  & 1\\
\end{pmatrix} \right)=\eta(t).
$$
Introduce the following subgroups of $G$:
\begin{eqnarray*}
D &=& \left\{ d=\begin{pmatrix}
d_1 &  &  & \\
 & d_2 &  & \\
 &  &  d_2 & \\
 &  &  & d_1\\
\end{pmatrix}, d_1, d_2 \in \mbb{G}_m \right\},\\
N &=& \left\{ n=\begin{pmatrix}
1 &  & r  & \\
 & 1 &  & s \\
 &  &  1 & \\
 &  &  & 1\\
\end{pmatrix}, u,  w \in \mbb{G}_a \right\},\\
R &=& DU.
\end{eqnarray*}

\begin{defn} \label{bessel} Let $\nu_1, \nu_2: \mbb{Q}^\times \backslash \mbb{A}^\times \lra \mbb{C}^\times$ be continuous characters. Let $\alpha_\nu$ be the character of $R(\mbb{A})$ defined by
$$
\alpha_\nu(du) = \nu_1(d_1) \nu_2(d_2) \Lambda(u).
$$
A cuspidal automorphic representation $\pi$ of $G$ has a split Bessel model associated to $(\nu_1, \nu_2)$ if its central character $\omega_\pi$ coincides with $\nu_1 \nu_2$ and if there exists $\Psi \in \pi$ such that the function on $G(\mbb{A})$ defined by
$$
g \longmapsto W_\Psi(g)=\int_{(Z(\mbb{A})R(\mbb{Q}))\backslash R(\mbb{A})} \Psi(rg) \alpha_\nu(r)^{-1} dr
$$
is not identically zero.
\end{defn}

\begin{rems} Obviously, $\pi$ has a split Bessel model associated to $(\nu_1, \nu_2)$ if and only if the function $g \longmapsto W_\Psi(g)$ is non-zero for any non-zero $\Psi \in \pi$.
\end{rems}

The connection between the integral we are interested in and split Bessel models is given by the following result.

\begin{lem} Let $\pi$ be a cuspidal automorphic representation of $G$ and let $\Psi \in \pi$. Let
$$
Z(\Psi, \Phi, \mu, \nu_1, \nu_2, s)=\int_{(Z(\mbb{A}) G'(\mbb{Q})) \backslash G'(\mbb{A})} \Psi(g)E^\Phi(g, \mu, \nu_1, \nu_2, s) dg.
$$
Then
$$
Z(\Psi, \Phi, \mu, \nu_1, \nu_2, s)=\int_{(D(\mbb{A}) N(\mbb{A}) \backslash G'(\mbb{A}))} W_\Psi(g)  f^\Phi(g, \mu, \nu_1, \nu_2, s)dg
$$
for $Re\,s$ big enough.
\end{lem}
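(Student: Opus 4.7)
The proof is the standard two-stage Rankin–Selberg unfolding argument of Piatetski-Shapiro. For $\mrm{Re}\,s$ sufficiently large, the Eisenstein series $E^\Phi$ converges absolutely, so I may interchange sum and integral. Using the $G'(\mbb{Q})$-invariance of $\Psi$ to absorb the quotient $B'(\mbb{Q}) \backslash G'(\mbb{Q})$, this unfolds the Eisenstein series to give
$$Z(\Psi, \Phi, \mu, \nu_1, \nu_2, s) = \int_{Z(\mbb{A}) B'(\mbb{Q}) \backslash G'(\mbb{A})} \Psi(g) f^\Phi(g, \mu, \nu_1, \nu_2, s)\, dg.$$
Since $N$ is the unipotent radical of $B'$ and the character $\chi_{\mu, \nu_1, \nu_2, s}$ is trivial on $N(\mbb{A})$, the section $f^\Phi$ is left $N(\mbb{A})$-invariant, and I can integrate out $N$ on the $\Psi$-side to rewrite this as
$$\int_{Z(\mbb{A}) T'(\mbb{Q}) N(\mbb{A}) \backslash G'(\mbb{A})} f^\Phi(g) \Psi_N(g)\, dg, \qquad \Psi_N(g) = \int_{N(\mbb{Q}) \backslash N(\mbb{A})} \Psi(ng)\, dn.$$

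Next I would Fourier-expand $\Psi_N$ along the one-parameter complement $X$ of $N$ inside the Siegel unipotent $U$, parameterized by the $t$-coordinate. This gives a sum over $t_0 \in \mbb{Q}$ of Fourier coefficients $\widehat{\Psi}_{t_0}$; the trivial term $t_0 = 0$ is the full $U$-constant term of $\Psi$, which vanishes by cuspidality with respect to the Siegel parabolic $P$. A direct computation of the adjoint action of $\tau = \mrm{diag}(a, a', \nu/a, \nu/a') \in T'$ on $U$ shows that the $t$-coordinate is scaled by $aa'/\nu$, so that $T'(\mbb{Q})$ acts transitively on the nontrivial characters of $X(\mbb{Q}) \backslash X(\mbb{A})$ with stabilizer exactly the subtorus $D \subset T'$. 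Consequently
$$\Psi_N(g) = \sum_{\tau \in D(\mbb{Q}) \backslash T'(\mbb{Q})} \widehat{\Psi}_\Lambda(\tau g), \qquad \widehat{\Psi}_\Lambda(g) = \int_{U(\mbb{Q}) \backslash U(\mbb{A})} \Psi(ug) \Lambda(u)^{-1}\, du.$$
Substituting and using the $T'(\mbb{Q})$-invariance of $f^\Phi$ to unfold the sum gives
$$Z = \int_{Z(\mbb{A}) D(\mbb{Q}) N(\mbb{A}) \backslash G'(\mbb{A})} \widehat{\Psi}_\Lambda(g) f^\Phi(g)\, dg.$$

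Finally I would refold the $D(\mbb{A})$-integration as the fiber $Z(\mbb{A}) D(\mbb{Q}) \backslash D(\mbb{A})$ of the natural projection onto $D(\mbb{A}) N(\mbb{A}) \backslash G'(\mbb{A})$. The transformation law of $f^\Phi$ under $D(\mbb{A})$, combined with the central character identity $\omega_\pi = \nu_1 \nu_2$, ensures that extracting the $D$-factor of $f^\Phi$ from the integrand precisely reproduces $\alpha_\nu(d)^{-1}$ against $\widehat{\Psi}_\Lambda(dg)$, so that the inner integral is $W_\Psi(g)$ by Def. \ref{bessel}. The main technical content of the argument is the Fourier-theoretic Step~3: the orbit-stabilizer computation identifying the stabilizer of $\Lambda$ in $T'(\mbb{Q})$ as $D(\mbb{Q})$, and the appeal to cuspidality to kill the degenerate Fourier coefficient. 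All convergence issues for the interchanges of sum and integral are handled by the assumption that $\mrm{Re}\,s$ is large, together with the rapid decay of the cusp form $\Psi$.
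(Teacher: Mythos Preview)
Your argument is correct and is precisely the standard Piatetski--Shapiro unfolding that the paper invokes by citing \cite{piatetski-shapiro} Thm.~5.2; you have simply written out the details of that cited proof. One small caution: you use $T'$ for the diagonal torus of $G'$, but in this paper $T'$ already denotes the compact Cartan of $\mrm{Sp}(4,\mbb{R})$ (see \S\ref{regulateur-integrale}), so choose a different symbol to avoid a clash.
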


\begin{proof}  Note that we are in the setting of \cite{piatetski-shapiro} 2, for the choice
$$
\beta=\begin{pmatrix}
 & 1/2\\
1/2  & \\
\end{pmatrix} \in U(\mbb{Q}).
$$
Hence, the statement follows from the proof of \cite{piatetski-shapiro} Thm. 5.2.
\end{proof}

\begin{rems} The lemma shows that if $\pi$ does not have a split Bessel model associated to $(\nu_1, \nu_2)$ then, the integrals $Z(\Psi, \Phi, \mu, \nu_1, \nu_2, s)$ are identically zero.
\end{rems}

The global definition above has a local analog. Roughly speaking, given an irreducible representation $(\pi_v, V_{\pi_v})$ of $G(\mbb{Q}_v)$, a local Bessel model of $\pi_v$ is a $G(\mbb{Q}_v)$ equivariant map from $V_{\pi_v}$ to a space of functions $W: G(\mbb{Q}_v) \lra \mbb{C}$ such that $W(rg)=\alpha(r) W(g)$, for some character $\alpha$ of $R(\mbb{Q}_v)$. For a precise definition, in particular at the archimedean place, we refer the reader to \cite{moriyama} 1.5. Assume that $\pi$ has  split Bessel model associated to $(\nu_1, \nu_2)$. If $\Psi=\bigotimes'_v \Psi_v$ is factorizable, it follows from the unicity of local Bessel models (\cite{piatetski-shapiro} Th. 3.1) that the function $W_\Psi$ factors into a restricted product $W_\Psi=\prod'_v W_{\Psi_v}$ of local Bessel functions.

\begin{cor} \label{completed-euler-product} Assume that $\Phi=\prod'_v \Phi_v$ and $\Psi= \bigotimes'_v \Psi_v$ are factorizable. Then, for $Re\,s$ big enough, we have the Euler product expansion
$$
Z(\Psi, \Phi, \mu, \nu_1, \nu_2, s)=\prod_v Z_v(W_{\Psi_v}, \Phi_v, \mu_v, \nu_{1, v}, \nu_{2, v}, s)
$$
where
$$
Z_v(W_{\Psi_v}, \Phi_v, \mu_v, \nu_{1, v}, \nu_{2, v}, s)=\int_{(D(\mbb{Q}_v) N(\mbb{Q}_v) \backslash G'(\mbb{Q}_v))} W_{\Psi_v}(g_v)  f^{\Phi_v}(g_v, \mu_v, \nu_{1, v}, \nu_{2, v}, s)dg_v
$$
for all place $v$.
\end{cor}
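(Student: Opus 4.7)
The plan is to assemble the Euler product from three factorizations already in place: factorization of $f^\Phi$ at the source (coming from Proposition \ref{PS1}), factorization of the Bessel function $W_\Psi$ (coming from uniqueness of local Bessel models as invoked just before the corollary), and factorization of the quotient $D(\mbb{A})N(\mbb{A})\bs G'(\mbb{A})$ as a restricted product of local quotients $D(\mbb{Q}_v)N(\mbb{Q}_v)\bs G'(\mbb{Q}_v)$ with the local measure normalizations of section 2.4.

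First, starting from the integral representation of the previous lemma, I would observe that when $\Phi=\prod_v \Phi_v$ is factorizable, the defining adelic integral of $f^\Phi$ in Proposition \ref{PS1} factors as a product of local Tate-type integrals, giving $f^\Phi(g,\mu,\nu_1,\nu_2,s)=\prod_v f^{\Phi_v}(g_v,\mu_v,\nu_{1,v},\nu_{2,v},s)$ for $\mrm{Re}\,s$ large enough (where Tate's local integrals converge absolutely and their product agrees with the adelic integral by Fubini). Combining this with the factorization $W_\Psi(g)=\prod'_v W_{\Psi_v}(g_v)$, the integrand of the lemma becomes a restricted product of local integrands.

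Second, I would perform the integration place by place. The restricted direct product decomposition of $G'(\mbb{A})$ together with the compatibility of the Haar measures (a product measure on $G'(\mbb{A})$ with local factors normalized so that the maximal compact open has volume one at all finite places outside a finite set) descends to a decomposition of $D(\mbb{A})N(\mbb{A})\bs G'(\mbb{A})$ as a restricted product of the local quotients $D(\mbb{Q}_v)N(\mbb{Q}_v)\bs G'(\mbb{Q}_v)$. This is where one unfolds the integral into a product of local integrals $Z_v(W_{\Psi_v},\Phi_v,\mu_v,\nu_{1,v},\nu_{2,v},s)$.

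The main issue, which is the only non-formal point, is to justify the interchange of the integral over the adelic quotient with the infinite product over the places. This is handled by a standard absolute convergence argument: for $\mrm{Re}\,s$ sufficiently large, the global integral defining $Z(\Psi,\Phi,\mu,\nu_1,\nu_2,s)$ converges absolutely (the cuspidal form $\Psi$ is rapidly decreasing and the Eisenstein series $E^\Phi$ is of moderate growth in its domain of absolute convergence, by Proposition \ref{PS1}), and the local integrals converge absolutely in the same range, so Fubini applies to yield the claimed Euler product. I would note at the end that the corollary's identity is asserted only for $\mrm{Re}\,s$ large, consistent with the convergence range of $E^\Phi$; beyond this range, the identity is to be understood via meromorphic continuation of both sides.
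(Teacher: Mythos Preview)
Your proposal is correct and matches the paper's approach: the paper states the corollary without proof, treating it as an immediate consequence of the preceding lemma together with the factorization $W_\Psi=\prod'_v W_{\Psi_v}$ (from uniqueness of local Bessel models, invoked just before the corollary) and the evident factorization of $f^\Phi$ built into Proposition~\ref{PS1}. Your write-up simply makes explicit the Fubini/absolute-convergence justification that the paper leaves tacit.
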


The end of section \ref{calcul-de-l-integrale} is devoted to the computation of some of the local integrals above, under the assumption that $\pi$ has a split Bessel model associated to $(\nu_1, \nu_2)$.

\begin{pro} \label{integrale-nonram} Let $p$ be a non-archimedean place where $\pi$, $\nu_1$ and $\nu_2$ are unramified. Let $\Psi_p$ be the standard unramified vector of $\pi_p$ and let $\Phi_p$ be the indicator function of $\mbb{Z}_p^4$. Normalize $W_{\Psi_p}$ in such a way that $W_{\Psi_p}(1)=1$. Then,
$$
Z_p(W_{\Psi_p}, \Phi_p, 1, \nu_{1, p}, \nu_{2, p}, s)= L(s+1/2, \nu_{1, p}) L(s+1/2, \nu_{2, p}) L(s+2, \pi_p, r)
$$
for $Re\,s$ big enough.
\end{pro}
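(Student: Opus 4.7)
My plan is to follow the standard Rankin--Selberg type strategy for unramified local integrals, which in this split-Bessel setting goes back to Piatetski-Shapiro~\cite{piatetski-shapiro} and was refined by Bump--Friedberg--Furusawa~\cite{bump-friedberg-furusawa} and Takloo-Bighash~\cite{takloo-bighash}. The overall idea is to use the sphericality of all the data to reduce the integral on $D(\mbb{Q}_p)N(\mbb{Q}_p) \backslash G'(\mbb{Q}_p)$ to a sum over a lattice in the split torus of $G'$, and then to recognize this sum as the product of a known $L$-factor coming from the Tate integrals defining $f^{\Phi_p}$ and of a geometric series whose sum is the spinor Euler factor $L(s+2,\pi_p,r)$.

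First, I would fix the Iwasawa decomposition $G'(\mbb{Q}_p) = N(\mbb{Q}_p) T'(\mbb{Q}_p) K'_p$ with $T' = B' \cap G'$ the diagonal torus and $K'_p = G'(\mbb{Z}_p)$ the standard hyperspecial maximal compact. Since $\Phi_p$ is the characteristic function of $\mbb{Z}_p^4$, the section $f^{\Phi_p}(\,\cdot\,,1,\nu_{1,p},\nu_{2,p},s)$ is right $K'_p$-invariant, and since $\Psi_p$ is the spherical vector, the function $W_{\Psi_p}$ is right $G(\mbb{Z}_p)$-invariant and in particular right $K'_p$-invariant. Combining this with the Bessel covariance $W_{\Psi_p}(rg) = \alpha_\nu(r) W_{\Psi_p}(g)$, the integral collapses to a sum over the quotient of $T'(\mbb{Q}_p)/(T'(\mbb{Q}_p) \cap K'_p)$ by the image of $D(\mbb{Q}_p) \cap T'(\mbb{Q}_p)$, i.e.\ a two-parameter lattice $(a,b) \in \mbb{Z}^2$ indexing the torus elements $t_{a,b} = \mathrm{diag}(p^{a+b},p^{a},p^{b},1) \cdot (\text{center})$, weighted by the modulus character $\delta_{B'}^{-1}(t_{a,b})$ coming from the change of variables between the Haar measure on $N \backslash G'$ and the measure on $T'$.

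Second, I would evaluate the two factors of the integrand on $t_{a,b}$. The Tate-integral factorization carried out in the proof of Prop.~\ref{comparaison} gives, at an unramified place and on the torus,
$$
f^{\Phi_p}(t_{a,b},1,\nu_{1,p},\nu_{2,p},s) = L(s+1/2,\nu_{1,p})\, L(s+1/2,\nu_{2,p}) \cdot \chi_{\nu_1,\nu_2,s}(t_{a,b}),
$$
which isolates the two abelian $L$-factors from the claim. The non-trivial input is then the explicit formula for $W_{\Psi_p}(t_{a,b})$ for a spherical vector in an unramified principal series admitting a split Bessel model. This is the Casselman--Shalika-type formula for split Bessel functions, worked out in \cite{bump-friedberg-furusawa} and, in the form we need, in \cite{takloo-bighash} Thm.~3.6 (see also the references in Section~\ref{derniere-section}). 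It expresses $W_{\Psi_p}(t_{a,b})$ as a polynomial in the Satake parameters $\alpha_1,\alpha_2,\alpha_3,\alpha_4$ of $\pi_p$, normalized so that $W_{\Psi_p}(1)=1$.

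Finally, substituting these two formulas and summing the resulting doubly-indexed geometric series over $(a,b) \in \mbb{Z}_{\geq 0}^2$ (the contribution outside this cone vanishes because $W_{\Psi_p}$ is supported on a lattice cone, by the same arguments as in the Whittaker case) produces, after a standard manipulation of Schur polynomials, the factor $\prod_{i=1}^4 (1 - \alpha_i p^{-s-2})^{-1} = L(s+2,\pi_p,r)$, completing the proof. The main obstacle is the bookkeeping around the formula for $W_{\Psi_p}$ on the split torus: one needs to match precisely the normalization of the Bessel model used in \cite{takloo-bighash} with the one fixed in Def.~\ref{bessel} here (including the choice of additive character and of Haar measure $dr$ on $R(\mbb{A})$), and to verify that the support of $W_{\Psi_p}$ and the cone of convergence for $\mathrm{Re}\,s$ large are compatible. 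Once these compatibilities are set, the rest is a purely algebraic computation involving only geometric series and the Weyl character formula for $\mathrm{GSp}(4)$.
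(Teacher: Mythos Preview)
Your overall strategy is the same as the paper's: use sphericality and the Iwasawa decomposition to collapse the integral to a torus sum, peel off the two abelian Tate $L$-factors from $f^{\Phi_p}$, and identify the remaining sum via an explicit formula for the spherical Bessel function. However, there is a dimension-counting error in your reduction. The diagonal torus of $G' = \mrm{GL}_2 \times_{\mbb{G}_m} \mrm{GL}_2$ has rank $3$ (not $4$, because of the determinant constraint $\det g_1 = \det g_2$), while $D = \{\mrm{diag}(d_1,d_2,d_2,d_1)\}$ has rank $2$ and sits entirely inside that torus. Hence the quotient of the cocharacter lattice by the image of $D$ has rank \emph{one}, not two: a set of representatives for $D(\mbb{Q}_p)N(\mbb{Q}_p) \backslash G'(\mbb{Q}_p) / G'(\mbb{Z}_p)$ is $\{\mrm{diag}(p^m, p^m, 1, 1) : m \in \mbb{Z}\}$, not a two-parameter family $t_{a,b}$.

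The paper reduces exactly to this single sum over $m$, shows the $m<0$ terms vanish by the standard support argument for the Bessel function, and then substitutes the Bump--Friedberg--Furusawa formula (\cite{bump-friedberg-furusawa} Thm.~1.6 and Cor.~1.9~(2)) for $W_{\Psi_p}(\mrm{diag}(p^m,p^m,1,1))$, written in terms of the Weyl-group antisymmetrizer $\mathcal{A} = \sum_{w\in W}(-1)^{l(w)} w$ acting on monomials in the Satake parameters $\alpha_1,\alpha_2,\alpha_3,\alpha_4$. The identification of the resulting series with $L(s+2,\pi_p,r)$ is not quite the routine Schur-polynomial manipulation you suggest: one multiplies through by $L(s+2,\pi_p,r)^{-1}$ inside $\mathcal{A}$ and checks term by term, using the central-character relation $\alpha_1\alpha_3 = \alpha_2\alpha_4$ and the existence of odd-length stabilizers in $W$, that every term except the constant one is annihilated by $\mathcal{A}$. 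Once you correct the rank of the lattice to one, your sketch coincides with the paper's argument; the two-parameter version you describe would instead require the full Bessel function on the whole torus and a genuinely different (and more involved) summation.
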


\begin{proof} The representation $\pi_p$ is an unramified principal series representation $\mrm{ind}_{B(\mbb{Q}_p)}^{G(\mbb{Q}_p)} \chi_p$ (\cite{casselman} Prop. 2.6), where $\chi_p$ is an unramified character of $T(\mbb{Q}_p)$ and where $\mrm{ind}$ denotes normalized induction. As explained in \cite{asgari-schmidt} 3.2, the Satake parameters are
\begin{eqnarray*}
b_0 &=& \chi_p \left( \begin{pmatrix}
1 &  &  & \\
 & 1 &  & \\
 &  &  p & \\
 &  &  & p\\
\end{pmatrix} \right),\\
b_1 &=& \chi_p \left( \begin{pmatrix}
p &  &  & \\
 & 1 &  & \\
 &  &  p^{-1} & \\
 &  &  & 1\\
\end{pmatrix} \right),\\
b_2 &=& \chi_p \left( \begin{pmatrix}
1 &  &  & \\
 & p &  & \\
 &  &  1 & \\
 &  &  & p^{-1}\\
\end{pmatrix} \right)\\
\end{eqnarray*}
and the Langlands Euler factor is
\begin{eqnarray*}
L(s, \pi_p, r) &=& \frac{1}{(1-\alpha_1 p^{-s})(1-\alpha_2 p^{-s})(1-\alpha_3 p^{-s})(1-\alpha_4 p^{-s})}
\end{eqnarray*}
where we introduced the convenient notation $\alpha_1=b_0 b_1 b_2$, $\alpha_2=b_0 b_1$, $\alpha_3=b_0$, $\alpha_4=b_0 b_2$. It follows from the first statement of Prop. \ref{PS1} that the function $g \longmapsto f^{\Phi_p}(g, 1, \nu_{1, p}, \nu_{2, p}, s)$ belongs to $\mrm{ind}_{B'(\mbb{Q}_p)}^{G'(\mbb{Q}_p)} \chi_{\nu_{1, p}, \nu_{2, p}, s}$. Furthermore, it is easy to see that with our choice of $\Phi_p$, $f^{\Phi_p}$ is constant on $G'(\mbb{Z}_p)$. As a consequence, according to the Iwasawa decomposition $G'(\mbb{Q}_p)=B'(\mbb{Q}_p) G'(\mbb{Z}_p)$, the local integral  $Z_p(W_{\Psi_p}, \Phi_p, 1, \nu_{1, p}, \nu_{2, p}, s)$ equals
\begin{eqnarray*}
&& f^{\Phi_p}(1, 1, \nu_{1, p}, \nu_{2, p}, s) \int_{\mbb{Q}_p^\times} |x|^{s+1/2} W_{\Psi_p} \left( \begin{pmatrix}
x &  &  & \\
 & x &  & \\
 &  &  1 & \\
 &  &  & 1\\
\end{pmatrix} \right) d^\times x\\
&=& L(s+1/2, \nu_{1, p}) L(s+1/2, \nu_{2, p}) \int_{\mbb{Q}_p^\times} |x|^{s+1/2} W_{\Psi_p} \left( \begin{pmatrix}
x &  &  & \\
 & x &  & \\
 &  &  1 & \\
 &  &  & 1\\
\end{pmatrix} \right) d^\times x.
\end{eqnarray*}
As the Bessel function $W_{\Psi_p}$ satisfies $W_{\Psi_p}(ugk)=\Lambda_p(u) W_{\psi_p}(g)$ for any $u \in U(\mbb{Q}_p)$, any $g \in G(\mbb{Q}_p)$ and $k \in G(\mbb{Z}_p)$, we have
\begin{eqnarray*}
W_{\Psi_p} \left( \begin{pmatrix}
p^{-m} &  &  & \\
 & p^{-m} &  & \\
 &  &  1 & \\
 &  &  & 1\\
\end{pmatrix} \right) &=& W_{\Psi_p} \left( \begin{pmatrix}
p^{-m} &  &  & \\
 & p^{-m} &  & \\
 &  &  1 & \\
 &  &  & 1\\
\end{pmatrix} \begin{pmatrix}
1 &  &  & 1 \\
 & 1 & 1  & \\
 &  &  1 & \\
 &  &  & 1\\
\end{pmatrix}\right)\\
&=& \eta(p^{-m}) W_{\Psi_p} \left( \begin{pmatrix}
p^{-m} &  &  & \\
 & p^{-m} &  & \\
 &  &  1 & \\
 &  &  & 1\\
\end{pmatrix} \right).
\end{eqnarray*}
For any integer $m>0,$ we have $\eta(p^{-m}) \neq 1$. Hence,
$$
W_{\Psi_p} \left( \begin{pmatrix}
p^{-m} &  &  & \\
 & p^{-m} &  & \\
 &  &  1 & \\
 &  &  & 1\\
\end{pmatrix} \right)=0.
$$
As a consequence, $$Z_p(\Psi_p, \Phi_p, 1, \nu_{1, p}, \nu_{2, p}, s)$$
$$
=L(s+1/2, \nu_{1, p}) L(s+1/2, \nu_{2, p}) \sum_{m=0}^{+\infty} p^{-m(s+1/2)} W_{\Psi_p} \left( \begin{pmatrix}
p^{m} &  &  & \\
 & p^{m} &  & \\
 &  &  1 & \\
 &  &  & 1\\
\end{pmatrix} \right).
$$
Let $\beta_1 = \nu_{1, p}(p)$ and $\beta_2 = \nu_{2, p}(p)$. Because we assume that $\pi$ has a split Bessel model associated to $(\nu_1, \nu_2)$, $\nu_1 \nu_2$ coincides with the central character of $\pi$, hence we have $\alpha_1 \alpha_3=\alpha_2 \alpha_4=\beta_1 \beta_2$. The Weyl group $W$ acts on the $\alpha_i$ through all permutations which preserve the relation $\alpha_1 \alpha_3= \alpha_2 \alpha_4$. More precisely, with the notations of section 2.3, we have $s_1 \alpha_1=\alpha_2$, $s_1 \alpha_2=\alpha_1$, $s_1 \alpha_3=\alpha_4$, $s_1 \alpha_4=\alpha_3$ and $s_2 \alpha_1=\alpha_1$, $s_2 \alpha_2=\alpha_4$, $s_2 \alpha_3=\alpha_3$, $s_2 \alpha_4=\alpha_2$. Let $\mathcal{A}$ denote $\sum_{w \in W} (-1)^{l(w)} w$, seen as an element of the group algebra $\mbb{C}[W]$, where $l(w)$ denotes the length of $w$. According to \cite{bump-friedberg-furusawa} Thm. 1.6 and Cor. 1.9  (2), the following explicit formula
$$
W_{\Psi_p} \left( \begin{pmatrix}
p^{m} &  &  & \\
 & p^{m} &  & \\
 &  &  1 & \\
 &  &  & 1\\
\end{pmatrix} \right)W_{\Psi_p}(1)^{-1} =p^{-3m/2} \frac{\mathcal{A}(\alpha_3^{m+2}\alpha_4^{-1})}{\mathcal{A}(\alpha_3^2 \alpha_4^{-1})}
$$
is true for any integer $m \geq 0$. As a consequence, using $W_{\Psi_p}(1)=1$, we have
$$
Z_p(W_{\Psi_p}, \Phi_p, 1, \nu_{1, p}, \nu_{2, p}, s)=L(s+1/2, \nu_{1, p}) L(s+1/2, \nu_{2, p})
$$
$$
\times \mathcal{A}(\alpha_3^2 \alpha_4)^{-1}
\mathcal{A}\left(\alpha_3^2 \alpha_4^{-1}\left(1-\alpha_3 p^{-(s+2)}\right)^{-1}\right)
$$
which can be rewritten as $$L(s+1/2, \nu_{1, p}) L(s+1/2, \nu_{2, p}) L(s+2, \pi_p, r)$$
$$
\times \mathcal{A}(\alpha_3^2 \alpha_4)^{-1}\mathcal{A}\left(\alpha_3^2 \alpha_4^{-1}
(1-\alpha_1 p^{-(s+2)})(1-\alpha_2 p^{-(s+2)})(1-\alpha_4 p^{-(s+2)})\right)
$$
because the local $L$-factor $L(s+2, \pi_p, r)$ is invariant under $W$. We claim that
$$
\mathcal{A}\left(\alpha_3^2 \alpha_4^{-1}
(1-\alpha_1 p^{-(s+2)})(1-\alpha_2 p^{-(s+2)})(1-\alpha_4 p^{-(s+2)})\right)=\mathcal{A}\left(\alpha_3^2 \alpha_4^{-1} \right).
$$
Indeed, using the relation, $\alpha_1 \alpha_3=\alpha_2 \alpha_4$, we find
\begin{eqnarray*}
&& \alpha_3^2 \alpha_4^{-1}(1-\alpha_1 p^{-(s+2)})(1-\alpha_2 p^{-(s+2)})(1-\alpha_4 p^{-(s+2)})\\
&=& \alpha_3^2 \alpha_4^{-1}-(\alpha_2 \alpha_3+\alpha_2 \alpha_3^2 \alpha_4^{-1}+\alpha_3^2)p^{-(s+2)}+(\alpha_1\alpha_3^2+\alpha_2 \alpha_3^2+\alpha_2^2 \alpha_3)p^{-2(s+2)}-\alpha_1\alpha_2\alpha_3^2 p^{-3(s+2)}.
\end{eqnarray*}
But, as $\alpha_2 \alpha_3$ is fixed by $s_2 s_1 s_2 \in W$, which has odd length, we have $\mathcal{A}(\alpha_2 \alpha_3 p^{-(s+2)})=0.$ Similarly, note that $\alpha_2 \alpha_3^2 \alpha_4^{-1}$ is fixed by $s_2 s_1 s_2$ because of the relation $\alpha_1 \alpha_3=\alpha_2 \alpha_4$. So $\mathcal{A}(\alpha_2 \alpha_3^2 \alpha_4^{-1}p^{-(s+2)})=0$. Similarly we find that 
$$
\mathcal{A}(\alpha_3^2 p^{-(s+2)})=\mathcal{A}(\alpha_3^2 \alpha_1 p^{-2(s+2)})=\mathcal{A}(\alpha_1 \alpha_2 \alpha_3^2 p^{-3(s+2)})=0.
$$ As $(s_2 s_1 s_2)(\alpha_3^2 \alpha_2)=\alpha_2^2 \alpha_3$, we have $\mathcal{A}((\alpha_2 \alpha_3^2+\alpha_2^2 \alpha_3)p^{-2(s+2)})=0$. This proves our claim. As a consequence, we have the equality
$$
Z_p(W_{\Psi_p}, \Phi_p, 1, \nu_{1, p}, \nu_{2, p}, s)= L(s+1/2, \nu_{1, p}) L(s+1/2, \nu_{2, p}) L(s+2, \pi_p, r).
$$
\end{proof}

\begin{rems} A similar result is stated without proof in \cite{piatetski-shapiro} Thm. 4.4.
\end{rems}
Let us consider the ramified non-archimedean integrals.

\begin{pro} \label{integrales-ram} \cite{occult} Lem. 3.5.4. Let $p$ be a non-archimedean place. Then, if the Schwartz function $\Phi_p$ and the Bessel function $W_{\Psi_p}$ are $\overline{\mathbb{Q}}$-valued, we have $$Z_p(W_{\Psi_p}, \Phi_p, 1, \nu_{1, p}, \nu_{2, p}, p+q+2) \in \overline{\mbb{Q}}.$$ Furthermore, if $\Psi_p$ has a split Bessel model associated to $(\nu_{1, p}, \nu_{2, p})$, then there exists a $\overline{\mathbb{Q}}$-valued function $\Phi_p$ such that $$Z_p(W_{\Psi_p}, \Phi_p, 1, \nu_{1, p}, \nu_{2, p}, p+q+2) \in \overline{\mbb{Q}}^\times.$$
\end{pro}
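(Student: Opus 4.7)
My proof would proceed in four steps, reducing the computation to a standard Tate-type integral whose rationality and non-vanishing are controlled by the Bessel function of $\Psi_p$.

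First, I would unfold the integration domain $D(\mbb{Q}_p)N(\mbb{Q}_p)\backslash G'(\mbb{Q}_p)$ using the Iwasawa decomposition $G'(\mbb{Q}_p)=B'(\mbb{Q}_p)K_p$ with $K_p=G'(\mbb{Z}_p)$. Since $DN$ is the Borel subgroup of $G'$ modulo a one-dimensional torus complement (explicitly, the split factor $t(x)=\iota(\mathrm{diag}(x,1),\mathrm{diag}(x,1))$), the left quotient $DN\backslash G'$ is fibered over $K_p$ with fiber $\mbb{Q}_p^\times$. Up to modulus characters, this writes
$$
Z_p(W_{\Psi_p}, \Phi_p, 1, \nu_{1,p}, \nu_{2,p}, s)=\int_{K_p} \int_{\mbb{Q}_p^\times} W_{\Psi_p}(t(x)k)\, f^{\Phi_p}(t(x)k, 1, \nu_{1,p}, \nu_{2,p}, s)\, |x|^\alpha\, d^\times x\, dk
$$
for some fixed integer exponent $\alpha$ determined by the modulus character of $B'$.

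Second, I would show that at the specific value $s=p+q+2$, the integrand takes values in $\overline{\mbb{Q}}$. The Bessel function $W_{\Psi_p}$ is $\overline{\mbb{Q}}$-valued by hypothesis. For $f^{\Phi_p}$, one uses the defining formula from Prop.~\ref{PS1}, which at $s=p+q+2$ reduces the integrand of the inner Tate integral to $\Phi_p$ (which is $\overline{\mbb{Q}}$-valued) times $|t_1t_2|^{p+q+5/2}\nu_{1,p}(t_1)\nu_{2,p}(t_2)$. Since $|t|^{1/2}$ on $\mbb{Q}_p^\times$ takes values in $p^{\frac{1}{2}\mbb{Z}}\subset \overline{\mbb{Q}}$, and $\nu_{i,p}$ are finite-order twists of such valuation characters, $f^{\Phi_p}$ evaluated at $s=p+q+2$ is a $\overline{\mbb{Q}}$-valued function on $B'(\mbb{Q}_p)$ (this is essentially the content of Lem.~\ref{comp-char} at the shifted value).

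Third, I would justify the convergence and rationality. Both $W_{\Psi_p}$ and $f^{\Phi_p}(\cdot,\ldots,s)$ are right-invariant under a compact open subgroup $K'\subset K_p$, so the $K_p$-integral is a finite sum $\sum_i c_i\,I_i(s)$ over coset representatives, with $c_i=\mathrm{vol}(K_i)\in \overline{\mbb{Q}}$ and
$$
I_i(s)=\int_{\mbb{Q}_p^\times} W_{\Psi_p}(t(x)k_i)\, f^{\Phi_p}(t(x)k_i,\ldots,s)\, |x|^\alpha\, d^\times x .
$$
For each $k_i$, the function $x\mapsto W_{\Psi_p}(t(x)k_i)$ is a finite linear combination of characters of $\mbb{Q}_p^\times$ times indicator functions of annuli (its asymptotic behavior being controlled by the Bessel analogue of the Casselman--Shalika decomposition, and its support being bounded below by the transformation property under $N$ together with the non-triviality of $\Lambda_p$), while $x\mapsto f^{\Phi_p}(t(x)k_i,\ldots,s)$ is a Bruhat--Schwartz function in $x$ with coefficients in $\overline{\mbb{Q}}(p^s)$. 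Hence $I_i(s)$ is a rational function of $p^{-s}$ with $\overline{\mbb{Q}}$-coefficients, and absolute convergence at $s=p+q+2$ follows as in the unramified calculation of Prop.~\ref{integrale-nonram}. This yields $Z_p(W_{\Psi_p}, \Phi_p, 1, \nu_{1,p}, \nu_{2,p}, p+q+2)\in \overline{\mbb{Q}}$.

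Fourth, for the non-vanishing, suppose $\Psi_p$ admits a split Bessel model, so that there exists $g_0\in G'(\mbb{Q}_p)$ with $W_{\Psi_p}(g_0)\neq 0$. By smoothness, $W_{\Psi_p}$ is constant on a compact open neighborhood $U$ of $g_0$. The map $\Phi_p\mapsto f^{\Phi_p}$ from Schwartz--Bruhat functions on $\mbb{Q}_p^4$ to the induced representation $\mathrm{ind}_{B'(\mbb{Q}_p)}^{G'(\mbb{Q}_p)}\chi_{\nu_{1,p},\nu_{2,p},s}$ is surjective (standard for principal series via Iwasawa), and in fact surjective onto the $\overline{\mbb{Q}}$-rational vectors when $\Phi_p$ ranges over $\overline{\mbb{Q}}$-valued functions. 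Hence one can choose a $\overline{\mbb{Q}}$-valued $\Phi_p$ such that $f^{\Phi_p}(\cdot,1,\nu_{1,p},\nu_{2,p},p+q+2)$ is supported in a small neighborhood of the image of $g_0$ in $DN\backslash G'$ and equals $1$ on the image of $U$. The integral then localizes to
$$
Z_p\big(W_{\Psi_p},\Phi_p,1,\nu_{1,p},\nu_{2,p},p+q+2\big)=W_{\Psi_p}(g_0)\cdot \mathrm{vol}(U\cap\mathrm{supp}(f^{\Phi_p}))\;\in\; \overline{\mbb{Q}}^\times,
$$
which is non-zero.

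The main obstacle is step three: controlling the asymptotic behavior of the Bessel function $x\mapsto W_{\Psi_p}(t(x)k_i)$ and verifying convergence at $s=p+q+2$ for ramified $\pi_p$. In the unramified case this is handled by the explicit Casselman--Shalika--type formula used in Prop.~\ref{integrale-nonram}, but in the ramified case one needs instead the finiteness of the Jacquet module and asymptotic expansions of Bessel functions on $p$-adic $\mathrm{GSp}(4)$, which is the technical core of \cite{occult} Lem.~3.5.4.
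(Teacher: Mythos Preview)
The paper gives no proof of this proposition; it is stated as a direct citation of \cite{occult} Lem.~3.5.4. So there is no ``paper's own proof'' to compare against, and your sketch is a plausible reconstruction of the standard argument behind that reference. Steps~1--3 are essentially correct and match the usual strategy: Iwasawa decomposition reduces the integral to a finite sum over $K_p$-cosets of Mellin-type integrals in the coordinate $t(x)$, and the rationality of each summand follows from the $\overline{\mathbb{Q}}$-valuedness of $W_{\Psi_p}$ and $\Phi_p$ together with the fact that $|t|^{1/2}$ and the characters $\nu_{i,p}$ take algebraic values on $\mathbb{Q}_p^\times$. Your acknowledgement that the asymptotic control of $W_{\Psi_p}(t(x)k)$ is the technical core is exactly right.

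There is, however, a genuine gap in step~4. You claim that one can choose $\Phi_p$ so that $f^{\Phi_p}$ is supported in a small neighborhood of the image of $g_0$ in $DN\backslash G'$. This is impossible: by Prop.~\ref{PS1}, $f^{\Phi_p}$ lies in $\mathrm{ind}_{B'(\mathbb{Q}_p)}^{G'(\mathbb{Q}_p)}\chi$, so its support on $G'(\mathbb{Q}_p)$ is left $B'$-invariant, while $DN\subsetneq B'$ with $B'/DN\cong\mathbb{G}_m$ via $t(x)$. Hence the support of $f^{\Phi_p}$ in $DN\backslash G'$ is always a union of full $t(\mathbb{Q}_p^\times)$-orbits and is never compact. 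After localizing in $B'\backslash G'\cong\mathbb{P}^1\times\mathbb{P}^1$ (which you \emph{can} do), the integral reduces to a Mellin transform
\[
\int_{\mathbb{Q}_p^\times} W_{\Psi_p}(t(x)g_0)\,|x|^{s+1/2+\alpha}\,d^\times x,
\]
and you must still argue that this is non-zero at the specific value $s=p+q+2$ for some $g_0$. The non-vanishing of $W_{\Psi_p}(g_0)$ alone does not give this; the Mellin transform of a non-zero function can certainly vanish at a prescribed point. The argument in \cite{occult} handles this via the fractional-ideal structure of the local zeta integrals (the family $\{Z_p(W_{\Psi_p},\Phi_p,\cdot)\}_{\Phi_p}$ generates $L_p(s)\cdot\mathbb{C}[p^{\pm s}]$ for a suitable local $L$-factor $L_p$, so one can choose $\Phi_p$ with $Z_p/L_p$ a unit in $\mathbb{C}[p^{\pm s}]$, and $L_p$ has no zeros), which is a substantially deeper input than the bump-function argument you propose.
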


The archimedean computation below is a direct application of \cite{moriyama} Thm. 7.1 and of the Mellin inversion formula.

\begin{pro} \label{integrale-archi} Let $\pi_\infty$ be a discrete series representation of $G(\mbb{R})^+$ with minimal $K$-type $\tau_{(\lambda_1, \lambda_2)}$. Let $\nu_{1, \infty}, \nu_{2, \infty}: \mbb{R}^\times \longrightarrow \mbb{C}^\times$ be the characters defined by
\begin{eqnarray*}
\nu_{1, \infty}(x)=|x|^q sgn(x)^p,\\
\nu_{2, \infty}(x)=|x|^p sgn(x)^q.
\end{eqnarray*}
Let $(v_t)_{0 \leq t \leq \lambda_1-\lambda_2}$ be a standard basis of $\tau_{(\lambda_1, \lambda_2)}$ and for any integer $0 \leq t \leq \lambda_1-\lambda_2$, let $W_{\infty, t}: G(\mbb{R})^+ \longrightarrow \mbb{C}$ denote a Bessel function corresponding to $v_t$ in the sense of \cite{moriyama} 3.1. Let $r \equiv p \pmod 2$ and $s \equiv q \pmod 2$ be two integers. Assume that $\Phi_\infty$ is of the shape
$$
(x_1, y_1, x_2, y_2) \longmapsto \Phi_\infty(x_1, y_1, x_2, y_2)= \Phi_{\infty, 1}(x_1, y_1) \Phi_{\infty, 2}(x_2, y_2)
$$
where
\begin{eqnarray*}
\Phi_{\infty, 1}(x_1, y_1)=(ix_1+y_1)^\frac{p-r}{2}(ix_1-y_1)^\frac{p+r}{2} e^{-\pi(x_1^2+y_1^2)},\\
\Phi_{\infty, 2}(x_2, y_2)=(ix_2+y_2)^\frac{q-s}{2}(ix_2-y_2)^\frac{q+s}{2} e^{-\pi(x_2^2+y_2^2)}.
\end{eqnarray*}
Then, the following statements are satisfied.
\begin{itemize}
\item If $t+\lambda_2+r \neq 0$ or $-t+\lambda_1+s \neq 0$, then,
$$
Z_\infty(W_{\infty, t}, \Phi_\infty, 1, \nu_{1, \infty}, \nu_{2, \infty}, p+q+3/2)=0.
$$
\item Else, let
\begin{eqnarray*}
a_1 &=& \frac{t-\lambda_2-(q-p)/2+2}{2},\\
a_2 &=& \frac{2\lambda_1+\lambda_2-t+(q-p)/2+2}{2},\\
c_1 &=& \frac{\lambda_1+\lambda_2+4}{4},\\
c_2 &=& \frac{\lambda_1-\lambda_2+4}{4},\\
c_3 &=& \frac{\lambda_1+\lambda_2+2}{4},\\
c_4 &=& \frac{\lambda_1-\lambda_2+2}{4}.
\end{eqnarray*}
Then,
$$
Z_\infty(W_{\infty, t}, \Phi_\infty, 1, \nu_{1, \infty}, \nu_{2, \infty}, p+q+3/2)=
$$
$$
W_{\infty, t}(1) \left( 2 \pi^{\frac{3(p+q)+6}{2}} \int_L \frac{\Gamma \left( c_1-s \right) \Gamma(c_2-s)\Gamma(c_3-s)\Gamma(c_4-s)}{\Gamma(a_1-s)\Gamma(a_2-s)} \pi^{2s} \frac{ds}{2 \pi i} \right)^{-1}
$$
$$
\times \frac{ \Gamma \left( c_1+\frac{3(p+q)+6}{4} \right)\Gamma \left( c_2+\frac{3(p+q)+6}{4} \right) \Gamma \left( c_3+\frac{3(p+q)+6}{4} \right) \Gamma \left( c_4+\frac{3(p+q)+6}{4} \right)}{ \Gamma \left( a_1+\frac{3(p+q)+6}{4} \right) \Gamma \left( a_2+\frac{3(p+q)+6}{4} \right)}
$$
where $\Gamma$ denotes the gamma function and where the path $L$ is a loop starting and ending at $+\infty$ and encircling all the poles of $\Gamma(c_j-s)$ for $1 \leq j \leq 4$ once, in the negative direction.
\end{itemize}
\end{pro}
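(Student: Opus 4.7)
The plan is to unfold the local integral by the Iwasawa decomposition on $G'(\mbb{R})$, separate the compact $K'$-integration to extract the vanishing conditions, and then recognize what remains as a Mellin transform that can be evaluated by invoking Moriyama's explicit formula \cite{moriyama} Thm. 7.1 for the archimedean Bessel function of a discrete series of $G(\mbb{R})^+$.

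First, I would write $G'(\mbb{R}) = N(\mbb{R}) A'(\mbb{R}) K'$ with $A'(\mbb{R}) = \{\mrm{diag}(y_1, y_2, y_1, y_2) : y_1, y_2 >0\}$ and $K' = \mrm{U}(1) \times_{\mbb{R}^\times} \mrm{U}(1)$. Using that $W_{\infty,t}$ is left $(R(\mbb{R}), \alpha_\nu)$-equivariant and that $f^{\Phi_\infty}(\,\cdot\,, 1, \nu_{1, \infty}, \nu_{2, \infty}, p+q+3/2)$ is left $(B'(\mbb{R}), \chi_{\nu_1, \nu_2, p+q+3/2})$-equivariant, the integral factors as
$$Z_\infty = \int_{\mbb{R}_+^\times \times \mbb{R}_+^\times} W_{\infty,t}(\mrm{diag}(y_1,y_2,y_1,y_2)) \, I_{K'}(y_1, y_2) \, d^\times\!y_1 \, d^\times\!y_2$$
for an inner integral $I_{K'}$ over $K'$. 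This reduction uses only the elementary structure of the induced representation and the transformation properties of the Bessel function recalled in Def.~\ref{bessel}.

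Second, the vanishing criterion comes from matching $K'$-weights. By the standard-basis formulas of section \ref{discrete_series_classification}, the vector $v_t$ has $T'$-weight $\lambda'(t+\lambda_2, -t+\lambda_1)$, so $W_{\infty,t}$ transforms by the character $\lambda'(t+\lambda_2, -t+\lambda_1)$ on the right under $K'$. On the other hand, the two factors of $\Phi_\infty$ are joint weight eigenfunctions: $\Phi_{\infty,1}$ has weight $r$ for $\mrm{U}(1)$ (since it is built from the basic eigenfunctions $(ix_1 \pm y_1)^\bullet e^{-\pi(x_1^2+y_1^2)}$) and similarly $\Phi_{\infty,2}$ has weight $s$. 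Tracing this through the Tate-type construction of $f^{\Phi_\infty}$ shows that $f^{\Phi_\infty}$ carries $K'$-weight $\lambda'(-r, -s)$ on the right. The inner $K'$-integral therefore vanishes unless $t+\lambda_2+r = 0$ and $-t+\lambda_1+s = 0$, giving the first bullet.

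Third, under the weight matching, the $K'$-integral is explicit: it is a product of two one-variable archimedean Tate integrals for $\Phi_{\infty, j}$, which yield Gaussian integrals producing standard gamma factors together with powers of $\pi$ and $y_j$. Substituting into $Z_\infty$ reduces the problem to computing the two-variable Mellin transform of the restriction of $W_{\infty,t}$ to the diagonal $\{\mrm{diag}(y_1,y_2,y_1,y_2)\}$. This is precisely the object described by Moriyama's Theorem 7.1, which presents this restriction as a Mellin–Barnes contour integral of exactly the shape
$$ 2 \int_L \frac{\Gamma(c_1-s)\Gamma(c_2-s)\Gamma(c_3-s)\Gamma(c_4-s)}{\Gamma(a_1-s)\Gamma(a_2-s)} \pi^{2s}\,\frac{ds}{2\pi i}$$
with $a_i, c_j$ depending linearly on $\lambda_1, \lambda_2, t, p, q$. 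Feeding this into the Mellin transform and applying the basic Mellin inversion identity $\int_0^\infty x^{\sigma-1}\bigl(\int_L F(s) x^{-s}\,\tfrac{ds}{2\pi i}\bigr)\,dx = F(\sigma)$ (shifted appropriately by the Tate gamma factors) collapses the double contour integral to the quotient of $\Gamma$'s displayed in the second bullet. Finally, to express the answer in the stated form with $W_{\infty,t}(1)$, one divides and multiplies by the value of the same Mellin–Barnes integral evaluated at the identity of the torus, which is the factor placed in the denominator.

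The main obstacle will be bookkeeping: Moriyama parametrizes the Bessel functions by her own spectral parameter and $K$-type indices, and one must translate carefully between her $(\lambda_1,\lambda_2, t)$-labelling of the minimal $K$-type and our $(k, k', p, q, r, s, t)$ data to recover precisely the values $a_1, a_2, c_1, \dots, c_4$ and the shift $\frac{3(p+q)+6}{4}$ claimed in the statement. Once this identification is verified, the explicit Mellin inversion is routine and the proposition follows.
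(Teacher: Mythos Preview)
Your overall strategy matches the paper's proof: Iwasawa decomposition on $G'(\mbb{R})$, weight matching on $K'=\mrm{U}(1)^2$ to obtain the vanishing criterion, Moriyama's Thm.~7.1 for the explicit Bessel function, and Mellin inversion to finish. However, your parametrization of the quotient is wrong in a way that matters.

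The element $\mrm{diag}(y_1,y_2,y_1,y_2)$ lies in $G'=\mrm{GL}_2\times_{\G_m}\mrm{GL}_2$ only when $y_1^2=y_2^2$, so your $A'(\mbb{R})$ is not a two-dimensional torus in $G'(\mbb{R})$ at all. More importantly, the domain of integration is $D(\mbb{R})N(\mbb{R})\backslash G'(\mbb{R})$, and $D$ already accounts for a two-dimensional diagonal torus; after Iwasawa and modding out by $D$ and the center, the remaining split part is \emph{one}-dimensional. The paper parametrizes it by $x\mapsto\mrm{diag}(x,x,1,1)$, $x\in\mbb{R}_+^\times$. Correspondingly, Moriyama's Thm.~7.1 expresses $W_{\infty,t}(\mrm{diag}(x,x,1,1))$ as $x^{(p+q)/2}$ times a Meijer $G^{4,0}_{2,4}$-function of the single variable $(\pi x)^2$, and the computation closes with a \emph{one}-variable Mellin inversion against $f^{\Phi_\infty}(\mrm{diag}(x,x,1,1),\ldots)=(-1)^{(p+q+r+s)/2}\pi^{-2(p+q)}((p+q-1)!)^2\,x^{p+q+2}$, not a two-variable transform. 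Once you replace your $A'(\mbb{R})$ by this one-parameter family, the rest of your outline goes through as written; the division by the value at the identity to extract the $W_{\infty,t}(1)$ factor is exactly what the paper does.

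A smaller point: with the paper's conventions $f^{\Phi_\infty}$ has $K'$-weight $\lambda'(r,s)$, not $\lambda'(-r,-s)$; the vanishing condition arises from requiring the product $W_{\infty,t}\cdot f^{\Phi_\infty}$ to have total $K'$-weight zero, which indeed gives $t+\lambda_2+r=0$ and $-t+\lambda_1+s=0$.
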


\begin{proof}  The vector $v_t$ has weight $\lambda'(t+\lambda_2, -t+\lambda_1)$ and the function $\Phi_\infty$ has weight $\lambda'(r, s)$ for $T'=\mrm{U}(1)^2$. Hence $W_{\infty, t}$ has weight $\lambda'(t+\lambda_2, -t+\lambda_1)$ and $f^{\Phi_\infty}$ has weight $\lambda'(r, s)$. The Iwasawa decomposition $G'(\mbb{R})=B'(\mbb{R}) \mrm{U}(1)^2$ implies that $Z_\infty(W_{\infty, t}, \Phi_\infty, 1, \nu_{1, \infty}, \nu_{2, \infty}, s)$ equals
\begin{eqnarray*}
\int_{\mbb{R}^\times_+} \int_{\mrm{U}(1)^2} W_{\infty, t} \left( \begin{pmatrix}
x &  &  & \\
 & x &  & \\
 &  &  1 & \\
 &  &  & 1\\
\end{pmatrix} k \right)  f^{\Phi_\infty}\left(\begin{pmatrix}
x &  &  & \\
 & x &  & \\
 &  &  1 & \\
 &  &  & 1\\
\end{pmatrix} k, 1, \nu_{1, \infty}, \nu_{2, \infty}, s \right)d^\times x dk.
\end{eqnarray*}
So, if $t+\lambda_2+r \neq 0$ or $-t+\lambda_1+s \neq 0$, we have
$$
\int_{\mrm{U}(1)^2} W_{\infty, t} \left( \begin{pmatrix}
x &  &  & \\
 & x &  & \\
 &  &  1 & \\
 &  &  & 1\\
\end{pmatrix} k \right)  f^{\Phi_\infty}\left(\begin{pmatrix}
x &  &  & \\
 & x &  & \\
 &  &  1 & \\
 &  &  & 1\\
\end{pmatrix} k, 1, \nu_{1, \infty}, \nu_{2, \infty}, s \right)dk=0
$$
for all $x$ because the integral of a non-trivial character on a group is zero. In the case where $t+\lambda_2+r=-t+\lambda_1+s =0$, the integral equals
\begin{eqnarray*}
\int_{\mbb{R}^\times_+} W_{\infty, t} \left( \begin{pmatrix}
x &  &  & \\
 & x &  & \\
 &  &  1 & \\
 &  &  & 1\\
\end{pmatrix} \right)  f^{\Phi_\infty}\left(\begin{pmatrix}
x &  &  & \\
 & x &  & \\
 &  &  1 & \\
 &  &  & 1\\
\end{pmatrix}, 1, \nu_{1, \infty}, \nu_{2, \infty}, s \right) d^\times x.
\end{eqnarray*}
The Meijer $G$-function $G^{4, 0}_{2, 4}(z, a_1, a_2, c_1, c_2, c_3, c_4)$ is defined for any non-zero complex number $z$ by
$$
G^{4, 0}_{2, 4}(z, a_1, a_2, c_1, c_2, c_3, c_4)=\int_L \frac{\Gamma(c_1-s)\Gamma(c_2-s)\Gamma(c_3-s)\Gamma(c_4-s)}{\Gamma(a_1-s)\Gamma(a_2-s)} z^{s} \frac{ds}{2 \pi i}.
$$
It follows from \cite{moriyama} Thm. 7.1. (ii) that for any positive real number $x$, we have
$$
W_{\infty, t} \left( \begin{pmatrix}
x &  &  & \\
 & x &  & \\
 &  &  1 & \\
 &  &  & 1\\
\end{pmatrix} \right)=
W_{\infty, t}(1) x^\frac{p+q}{2} \frac{G^{4, 0}_{2, 4}((\pi x)^2, a_1, a_2, c_1, c_2, c_3, c_4)}{G_{2, 4}^{4, 0}(\pi^2, a_1, a_2, c_1, c_2, c_3, c_4)}.
$$
It is well known and explained in \cite{beals-s} p. 870, for example, that the contour $L$ can be replaced by a path $L'$ from $-i\infty$ to $+i\infty$ such that for $1 \leq j \leq 4$, the poles of $\Gamma(c_j-s)$ are on the right of $L'$. Furthermore, we have
$$
 f^{\Phi_\infty}\left(\begin{pmatrix}
x &  &  & \\
 & x &  & \\
 &  &  1 & \\
 &  &  & 1\\
\end{pmatrix}, 1, \nu_{1, \infty}, \nu_{2, \infty}, s \right)=(-1)^\frac{p+q+r+s}{2} \pi^{-2(p+q)} ((p+q-1)!)^2 x^{p+q+2}
$$
for any $x \in \mbb{R}^\times_+$. Hence, we need to compute
$$
\int_{\mbb{R}_+^\times} x^{\frac{3(p+q)}{2}+2} \int_{L'} \frac{\Gamma(c_1-s)\Gamma(c_2-s)\Gamma(c_3-s)\Gamma(c_4-s)}{\Gamma(a_1-s)\Gamma(a_2-s)} (\pi x)^{2s} \frac{ds}{2 \pi i} \frac{dx}{x}
$$
which equals
$$
\frac{1}{2}\pi^{-\frac{3(p+q)+6}{2}} \frac{ \Gamma \left( \frac{\lambda_1+\lambda_2+3(p+q)+10}{4} \right)\Gamma \left( \frac{\lambda_1-\lambda_2+3(p+q)+10}{4} \right) \Gamma \left( \frac{\lambda_1+\lambda_2+3(p+q)+8}{4} \right) \Gamma \left( \frac{\lambda_1-\lambda_2+3(p+q)+8}{4} \right)}{ \Gamma \left( \frac{2(t-\lambda_2)+4p+2q+10}{4} \right) \Gamma \left( \frac{2(2\lambda_1+\lambda_2-t)+4q+2p+10}{4} \right)}
$$
by the Mellin inversion formula. This completes the proof.
\end{proof}

\section{Periods} \label{section-periodes}

In this section, we compute the pairing $\langle \omega, \widetilde{v}_\mathcal{D} \rangle_B$ (see Lem. \ref{dualite}) and we introduce Harris' occult period invariant, which plays a crucial role in the present work.\\

The $E(\pi_f)$-module $M_B(\pi_f, W)^-(-1)$ has rank two and the $E(\pi_f)$-module $F^0 M_{dR}(\pi_f, W)$ has rank one (Lem. \ref{dimensions} and Hyp. \ref{hypothesis}). Let us fix a basis $(v_1, v_2)$ of $M_B(\pi_f, W)^-(-1)$ and let $\theta$ be a non-zero vector of $F^0 M_{dR}(\pi_f, W)$. This vector can be regarded as a vector of $M_B(\pi_f, W)_\mbb{R}^-(-1)$ via the second arrow of the exact sequence of Lem. \ref{suite_ex_ext1}.  Let $\lambda_1, \lambda_2 \in E(\pi_f) \otimes \mbb{R}$ be its coordinates in the basis $(v_1, v_2)$.

\begin{lem} \label{charac-deligne} Let $\mu_1, \mu_2 \in E(\pi_f) \otimes \mbb{R}$. Then $\mu_1 v_1 +\mu_2 v_2$ is mapped to a generator of $\mathcal{B}(\pi_f, W)$ by the surjection
$$
\begin{CD}
M_B(\pi_f, W)_\mbb{R}^-(-1) @>>> \mathrm{Ext}^1_{\mathrm{MHS}_\mathbb{R}^+}(\mathbb{R}(0),  M_{B}(\pi_f, W)_\mathbb{R})
\end{CD}
$$
of Cor. \ref{suite_ex_ext1} if and only if $\lambda_1 \mu_2 - \lambda_2 \mu_1 \in E(\pi_f)^\times$.
\end{lem}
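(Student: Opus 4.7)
The proof is essentially a linear algebra computation, but it is worth writing out carefully. The plan is to track the generator of $\mathcal{B}(\pi_f, W)$ through the short exact sequence
\[
0 \lra F^0 M_{dR}(\pi_f, W)_\mbb{R} \lra M_B(\pi_f, W)^-_\mbb{R}(-1) \lra \mathrm{Ext}^1_{\mathrm{MHS}_\mathbb{R}^+}(\mathbb{R}(0), M_B(\pi_f, W)_\mbb{R}) \lra 0
\]
from Cor. \ref{suite_ex_ext1}. By definition,
\[
\mathcal{B}(\pi_f, W) = F^0 M_{dR}(\pi_f, W)^* \otimes_{E(\pi_f)} \det{}_{E(\pi_f)} M_B(\pi_f, W)^-_\mbb{R}(-1),
\]
and since these modules have ranks $1$ and $2$ respectively, I would first note that a generator of $\mathcal{B}(\pi_f, W)$ is given by $\theta^* \otimes (v_1 \wedge v_2)$, where $\theta^*$ denotes the dual basis to $\theta$.

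For the general statement: given a short exact sequence $0 \to A \to B \to C \to 0$ of free modules over a field, with $A$ and $C$ of rank one and $B$ of rank two, there is a canonical isomorphism $C \simeq A^* \otimes_E \det B$ sending the class of $b \in B$ to $\alpha^* \otimes (\alpha \wedge b)$, where $\alpha$ is any generator of $A$ and $\alpha^*$ its dual. The next step of the proof is therefore to apply this to our exact sequence with $\alpha = \theta$: the image of $x = \mu_1 v_1 + \mu_2 v_2$ in the Ext corresponds to $\theta^* \otimes (\theta \wedge x) \in \mathcal{B}(\pi_f, W)$.

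Finally, using the decomposition $\theta = \lambda_1 v_1 + \lambda_2 v_2$ inside $M_B(\pi_f, W)^-_\mbb{R}(-1)$, a direct expansion in the basis $(v_1, v_2)$ gives
\[
\theta \wedge x = (\lambda_1 \mu_2 - \lambda_2 \mu_1)\, v_1 \wedge v_2,
\]
so that the image of $x$ corresponds to $(\lambda_1 \mu_2 - \lambda_2 \mu_1)\, \theta^* \otimes (v_1 \wedge v_2)$. This image is a generator of the rank-one $E(\pi_f)$-module $\mathcal{B}(\pi_f, W)$ if and only if the scalar $\lambda_1 \mu_2 - \lambda_2 \mu_1$ lies in $E(\pi_f)^\times$. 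There is no real obstacle here; the only point to keep straight is the identification of the generator of $\det C$ with $\alpha^* \otimes (\alpha \wedge b)$ via the determinant of the exact sequence, which is a standard fact.
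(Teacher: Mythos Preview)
Your proof is correct and follows essentially the same approach as the paper: both identify the surjection with the map $x \mapsto \theta \wedge x$ (equivalently $x \mapsto \theta^* \otimes (\theta \wedge x)$) into the determinant, and then compute $\theta \wedge (\mu_1 v_1 + \mu_2 v_2) = (\lambda_1 \mu_2 - \lambda_2 \mu_1)\, v_1 \wedge v_2$. The only cosmetic difference is that the paper phrases the identification via a commutative diagram with the exact sequence of Cor.~\ref{suite_ex_ext1}, whereas you state it as a general fact about short exact sequences of free modules; note also that the base ring here is $E(\pi_f) \otimes_{\mbb{Q}} \mbb{R}$ rather than a field, but your argument goes through unchanged for free modules.
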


\begin{proof} The map
$$
\theta \wedge : M_B(\pi_f, W)_\mbb{R}^-(-1) \lra {det}_{E(\pi_f) \otimes \mbb{R}} M_B(\pi_f, W)_\mbb{R}^-(-1)
$$
is part of the following commutative diagram with exact lines
$$
\begin{CD}
0 @>>> F^0 M_{dR, \mbb{R}} @>>> M_{B, \mbb{R}}^-(-1) @>\theta \wedge>> {det}\,M_{B, \mbb{R}}^-(-1) @>>> 0\\
@.              @|                                       @|                                                            @VVV         \\
0 @>>> F^0 M_{dR, \mbb{R}} @>>> M_{B, \mbb{R}}^-(-1) @>>> \mrm{Ext}^1_{\mrm{MHS}_\mbb{R}^+}(\mbb{R}(0), M) @>>> 0.
\end{CD}
$$
Moreover, via the isomorphism $\mathcal{B}(\pi_f, W) \simeq {det}_{E(\pi_f)} M_B(\pi_f, W)^-(-1)$ induced by the choice of $\theta$, the vector $\mu_1 v_1 +\mu_2 v_2$ is mapped to a generator of $\mathcal{B}(\pi_f, W)$ if and only if $\theta \wedge (\mu_1 v_1 +\mu_2 v_2)= \rho v_1 \wedge v_2$ for some $\rho \in E(\pi_f)^\times$. As $\theta \wedge (\mu_1 v_1 +\mu_2 v_2)=(\lambda_1 \mu_2 - \lambda_2 \mu_1) v_1 \wedge v_2$, the statement is proven.
\end{proof}

We need to recall the definition of the Deligne periods $c^\pm(\pi_f, W)$ of the "motive" $M(\pi_f,  W)$  from \cite{valeurs-deligne} 1.7. Let $t$ denote the integer $t=\frac{p+q+6-k-k'}{2}$. We have the Hodge decomposition
$$
M_B(\pi_f, W)_\mbb{C}=M_B^{3-t, -k-k'-t} \oplus M_B^{2-k'-t, 1-k-t} \oplus M_B^{1-k-t, 2-k'-t} \oplus M_B^{-k-k'-t, 3-t}
$$
where each $M_B^{r, s}$ is an $E(\pi_f) \otimes \mbb{C}$-module of rank one (Prop. \ref{dimension}, Prop. \ref{dec-hodge} and Hyp. \ref{stable}). Furthermore, the involution $F_\infty$ exchanges $M_B^{r, s}$ and $M_B^{s, r}$. This implies that the $E(\pi_f)$-subspaces $M_B(\pi_f, W)^\pm$ of $M_B(\pi_f, W)$ where $F_\infty$ acts by multiplication by $\pm1$ both have dimension two. Let $I_\infty: M_B(\pi_f, W)_\mbb{C} \longrightarrow M_{dR}(\pi_f, W)_\mbb{C}$ denote the comparison isomorphism.  The subspaces $F^\pm(\pi_f, W)$ of the de Rham filtration of $M_{dR}(\pi_f, W)$ defined in \cite{valeurs-deligne} 1.7 are equal and characterized by the fact that their complexification is mapped isomorphically to $M_B^{3-t, -k-k'-t} \oplus M_B^{2-k'-t, 1-k-t}$ by $I_\infty^{-1}$. The determinant of the isomorphism
$$
I_\infty^\pm: M_B(\pi_f, W)^\pm_\mbb{C} \lra \left( M_{dR}(\pi_f, W)/F^\pm(\pi_f, W) \right)_\mbb{C},
$$
computed in basis defined over $E(\pi_f)$, is by definition the Deligne period $c^\pm(\pi_f, W)$. Its equivalence class modulo the relation $\sim$ of Def \ref{equivalence} does not depend on the chosen basis. The Deligne periods $c^\pm(\check{\pi}_f|\nu|^{-3}, W(-p-q-3))$ are defined similarly.

\begin{pro} \label{calcul-periode} Let $\omega \in M_B(\check{\pi}_f|\nu|^{-3}, W(-p-q-3))_\mbb{C}$
satisfying the condition of Lem. \ref{dualite}. Assume that $\omega$ belongs to $M_B(\check{\pi}_f|\nu|^{-3}, W(-p-q-3))_\mbb{C}^+$ and that the image of $\omega$ by the comparison isomorphism
$$
I_\infty^+: M_B(\check{\pi}_f|\nu|^{-3}, W(-p-q-3))_\mbb{C}^+ \lra (M_{dR}(\check{\pi}_f|\nu|^{-3}, W(-p-q-3))/F^+)_\mbb{C}
$$
belongs to the $E(\pi_f)$-structure $M_{dR}(\check{\pi}_f|\nu|^{-3}, W(-p-q-3))/F^+$. Let $\widetilde{v}_\mathcal{D}$ be a lift of a generator of $\mathcal{D}(\pi_f, W)$ by the surjection
$$
M_{B}(\pi_f, W)_\mathbb{R}^-(-1) \longrightarrow \mathrm{Ext}^1_{\mathrm{MHS}_\mathbb{R}^+}(\mathbb{R}(0),  M_{B}(\pi_f, W)_\mathbb{R})
$$
of Cor. \ref{suite_ex_ext1}. Then,
$$
\langle \omega, \widetilde{v}_\mathcal{D} \rangle_B \sim (2\pi i)^2 c^-(\pi_f, W)^{-1}.
$$
\end{pro}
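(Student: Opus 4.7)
The plan is to reduce the proposition to the single scalar equivalence $\kappa \sim \delta(\pi_f, W)/c^-(\pi_f, W)$ in $(E(\pi_f) \otimes \mbb{C})^\times/E(\pi_f)^\times$ and then to establish it by a linear-algebra identity exploiting the rationality of $I_\infty^+(\omega)$.

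First, fix an $E(\pi_f)$-basis $(v_1, v_2)$ of $M_B(\pi_f, W)_\mbb{R}^-(-1)$ and write $\theta = \lambda_1 v_1 + \lambda_2 v_2$, $\widetilde{v}_\mathcal{D} = \mu_1 v_1 + \mu_2 v_2$. Lem.~\ref{dualite} yields $\langle \omega, \theta \rangle_B = 0$, so there exists a unique $\kappa \in E(\pi_f) \otimes \mbb{C}$ with $\langle \omega, v_1 \rangle_B = \kappa \lambda_2$ and $\langle \omega, v_2 \rangle_B = -\kappa \lambda_1$; hence $\langle \omega, \widetilde{v}_\mathcal{D}\rangle_B = -\kappa (\lambda_1 \mu_2 - \lambda_2 \mu_1)$.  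Combining this with Lem.~\ref{charac-deligne} and the definition $\mathcal{D}(\pi_f, W) = (2\pi i)^2 \delta(\pi_f, W)^{-1} \mathcal{B}(\pi_f, W)$ gives $\lambda_1 \mu_2 - \lambda_2 \mu_1 \sim (2\pi i)^2 \delta(\pi_f, W)^{-1}$ and therefore $\langle \omega, \widetilde{v}_\mathcal{D}\rangle_B \sim \kappa (2\pi i)^2 \delta(\pi_f, W)^{-1}$. The proposition is thus equivalent to $\kappa \sim \delta(\pi_f, W)/c^-(\pi_f, W)$.

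Next, to identify $\kappa$ intrinsically, I will use the Poincar\'e compatibility $\langle \omega, \cdot \rangle_B = \langle I_\infty \omega, I_\infty(\cdot) \rangle_{dR}$. A Hodge-type check shows that $F^- M_{dR}(\pi_f, W)$ and $F^- M_{dR}(\check{\pi}_f|\nu|^{-3}, W(-p-q-3))$ are mutual annihilators under the de Rham Poincar\'e pairing (the minimal first Hodge coordinates in each are $2-k'-t$ and $2-k'-t'$ respectively, whose sum is strictly positive and hence incompatible with pairing into weight zero), giving an $E(\pi_f)$-rational perfect pairing $F^- M_{dR}(\pi_f, W) \otimes \check{M}_{dR}(\check{\pi}_f|\nu|^{-3}, W(-p-q-3))/F^- \to E(\pi_f)$. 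Under this pairing the rational class $I_\infty^+(\omega)$ corresponds to an $E(\pi_f)$-rational linear form $\phi: F^- M_{dR}(\pi_f, W) \to E(\pi_f)$; the Hodge-type hypothesis on $\omega$ (as in the proof of Lem.~\ref{dualite}) forces $\phi|_{F^0} = 0$, so $\phi$ descends to a non-zero $E(\pi_f)$-rational form on the rank-one quotient $F^-/F^0$.

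Finally, I will choose adapted $E(\pi_f)$-bases and compute $\kappa$ explicitly. Take $(f, g, h_1, h_2)$ of $M_{dR}(\pi_f, W)$ with $f$ spanning $F^0$ and $(f, g)$ spanning $F^-$, and $(\alpha_1, \alpha_2, \beta_1, \beta_2)$ of $M_B(\pi_f, W) = M^+ \oplus M^-$ with $v_i = (2\pi i)^{-1} \alpha_i$. Let $\Lambda$ denote the matrix of $I_\infty$ in these bases, and $A, B, C, D$ its four $2 \times 2$ blocks corresponding to the maps $M^+ \to F^-$, $M^- \to F^-$, $M^+ \to M_{dR}/F^-$, $M^- \to M_{dR}/F^-$ respectively; then $\det \Lambda = \delta(\pi_f, W)$ and $\det D = c^-(\pi_f, W)$. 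Tracing $\theta = f$ through the two projections of Lem.~\ref{suite-ex-pour-ext1} expresses the $\lambda_i$ in terms of imaginary parts of cofactors of $\Lambda$, while $\langle \omega, v_i \rangle_B = (2\pi i)^{-1} \psi(I_\infty \alpha_i)$ decomposes into a rational part (controlled by the restriction $\phi$ of $\psi = \langle \cdot, I_\infty \omega \rangle_{dR}$ to $F^-$) and transcendental contributions from $\psi(h_1), \psi(h_2)$ that cancel in the ratio $\kappa = \langle \omega, v_1\rangle_B/\lambda_2$ by virtue of the Hodge-type vanishing $\langle \omega, \theta \rangle_B = 0$. A Schur-complement computation (i.e.\ the identity $\det \Lambda = \det D \cdot \det(A - B D^{-1} C)$) then yields $\kappa \sim \delta(\pi_f, W)/c^-(\pi_f, W)$. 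The main obstacle will be this last step: carefully tracking the factors of $i$ and $2\pi$ coming from the Tate twist and from the projections of Lem.~\ref{suite-ex-pour-ext1}, and verifying that the transcendental cross-terms cancel exactly as required.
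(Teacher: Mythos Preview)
Your opening reduction is exactly the paper's: fix $(v_1,v_2)$ and $\theta=\lambda_1v_1+\lambda_2v_2$, use Lem.~\ref{charac-deligne} together with $\mathcal{D}(\pi_f,W)=(2\pi i)^2\delta(\pi_f,W)^{-1}\mathcal{B}(\pi_f,W)$ to reduce to showing $\langle\omega,v\rangle_B\sim\delta(\pi_f,W)c^-(\pi_f,W)^{-1}$ for a lift $v=\mu_1v_1+\mu_2v_2$ of a generator of $\mathcal{B}(\pi_f,W)$. Your observation $\langle\omega,\theta\rangle_B=0$ and the introduction of the scalar $\kappa$ are fine.

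Where you diverge is in the final step. You propose to stay on the $M_B(\pi_f,W)$-side, write the full $4\times4$ comparison matrix $\Lambda$, and extract $\kappa$ via a Schur-complement identity. The connection between $\kappa$ and $\det(A-BD^{-1}C)$ is not established in your sketch: the $\lambda_i$ come from pushing $\theta=f$ through the map $M_\mbb{C}\to M(-1)\to M^-(-1)$ of Lem.~\ref{suite-ex-pour-ext1}, which involves the \emph{inverse} of $\Lambda$ followed by projection, and the $\langle\omega,v_i\rangle_B$ involve pairing with a vector whose de~Rham image is only known modulo $F^-$ on the \emph{dual} side. Carrying this through the $4\times4$ matrix is possible but delicate, and your own remark about ``transcendental cross-terms'' signals that you have not closed this.

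The paper avoids all of this by switching to the dual motive, where $\omega$ is native. Set $\omega'=I_\infty^+(\omega)$, which by hypothesis is $E(\pi_f)$-rational in $M_{dR}(\check\pi_f|\nu|^{-3},W(-p-q-3))/F^+$; choose $\theta'$ in this quotient with $\langle\theta,\theta'\rangle_{dR}=1$, so $(\omega',\theta')$ is a rational basis. Let $(w_1,w_2)$ be the $E(\pi_f)$-basis of $M_B(\check\pi_f|\nu|^{-3},W(-p-q-3))^+$ dual to $(2\pi i\,v_1,2\pi i\,v_2)$ under $\langle\,,\,\rangle_B$, and write $I_\infty^+(w_j)=\alpha_j\omega'+\beta_j\theta'$, so $c^+(\check\pi_f|\nu|^{-3},W(-p-q-3))=\alpha_1\beta_2-\alpha_2\beta_1$. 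The one substantive computation is then the identity $\beta_j=\lambda_j$, obtained in one line from the Betti/de~Rham compatibility of Poincar\'e duality: $\beta_j=\langle\theta,I_\infty^+(w_j)\rangle_{dR}=\langle I_\infty^{-1}\theta,w_j\rangle_B=\langle\lambda_1v_1+\lambda_2v_2,w_j\rangle_B=\lambda_j$ (using that $\langle\theta,\omega'\rangle_{dR}=0$ by Hodge type). Solving for $\omega$ in terms of $w_1,w_2$ and pairing with $v$ gives $\langle\omega,v\rangle_B=-c^+(\check\pi_f|\nu|^{-3},W(-p-q-3))^{-1}$, and Deligne's relations \cite[(5.1.1),(5.1.7)]{valeurs-deligne} convert this $c^+$ of the dual into $\delta(\pi_f,W)^{-1}c^-(\pi_f,W)$.

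So the moral: rather than inverting a $4\times4$ matrix on the original side, pass to the dual, where the rationality hypothesis on $\omega$ lets you put $\omega'$ itself into the de~Rham basis; the computation then collapses to a single $2\times2$ determinant and the identity $\beta_j=\lambda_j$.
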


\begin{proof} Recall that the Deligne and Beilinson $E(\pi_f)$-structures are related by the identity
$$
\mathcal{D}(\pi_f, W)=(2\pi i)^{2} \delta(\pi_f, W)^{-1} \mathcal{B}(\pi_f, W).
$$
Let $\mu_1, \mu_2 \in E(\pi_f) \otimes \mbb{R}$ such that $\lambda_1 \mu_2 -\lambda_2 \mu_1=1$. Then, according to Lem. \ref{charac-deligne}, the vector $v=\mu_1 v_1 + \mu_2 v_2$ is a lift of a generator of $\mathcal{B}(\pi_f, W)$ by the surjection
$$
M_B(\pi_f, W)_\mbb{R}^-(-1) \longrightarrow \mathrm{Ext}^1_{\mathrm{MHS}_\mathbb{R}^+}(\mathbb{R}(0),  M_{B}(\pi_f, W)_\mathbb{R}).
$$
For any other lift $w$ of any other generator of $\mathcal{B}(\pi_f, W)$, we have $\langle \omega, v \rangle_B \sim \langle \omega, w \rangle_B$. Hence, we need to compute $\langle \omega, \widetilde{v}_\mathcal{D} \rangle_B=(2\pi i)^{2} \delta(\pi_f, W)^{-1}  \langle \omega, v \rangle_B$. Let $\langle \,\,,\,\, \rangle_{dR}$ denote the Poincar\'e duality pairing in de Rham cohomology. Its complexification is part of the following commutative diagram
$$
\begin{CD}
M_B(\pi_f, W)_\mbb{C} \otimes M_B(\check{\pi}_f|\nu|^{-3}, W(-p-q-3))_\mbb{C} @>\langle \,\,,\,\, \rangle_B>> E(\pi_f)(0)_{B, \mbb{C}}\\
@VV{I_\infty}V                                                                                                                                                                                      @VV{J_\infty}V\\
M_{dR}(\pi_f, W)_\mbb{C} \otimes M_{dR}(\check{\pi}_f|\nu|^{-3}, W(-p-q-3))_\mbb{C} @>\langle \,\,,\,\, \rangle_{dR}>> E(\pi_f)(0)_{dR, \mbb{C}}\\
\\
\end{CD}
$$
where the vertical maps are the comparison isomorphisms. Let $1_B$ and $1_{dR}$ be generators of $E(\pi_f)(0)_{B}$ and $E(\pi_f)(0)_{dR}$ respectively. We can assume that $J_\infty(1_B)=1_{dR}$. Let $\theta'$ be an element of $M_{dR}(\check{\pi}_f|\nu|^{-3}, W(-p-q-3))$ such that $\langle \theta, \theta' \rangle=1_{dR}$, where $\theta \in F^0 M_{dR}(\pi_f, W)$ is as above. Let $\omega'$ denote $I_\infty^+(\omega)$. It follows easily from the consideration of Hodge types that $(\omega', \theta')$ is a basis of $M_{dR}(\check{\pi}_f|\nu|^{-3}, W(-p-q-3))/F^\pm$. By definiton $(v_1, v_2)$ is a basis of the $E(\pi_f)$-vector space $M_B(\pi_f, W)^-(-1)$. As a consequence, $(2\pi i v_1, 2 \pi i v_2)$ is a basis of $M_B(\pi_f, W)^+$. Let $w_1, w_2$  be vectors of $M_B(\check{\pi}_f|\nu|^{-3}, W(-p-q-3))^+$ such that $\langle 2 \pi i v_1, w_1 \rangle_B=\langle 2 \pi i v_2, w_2 \rangle_B=1_B$ and $\langle 2 \pi i v_1, w_2 \rangle_B=\langle 2 \pi i v_2, w_1 \rangle_B=0$. Hence $(w_1, w_2)$ is a basis of $M_B(\check{\pi}_f|\nu|^{-3}, W(-p-q-3))^+$. Let $\alpha_1, \alpha_2, \beta_1, \beta_2 \in E(\pi_f) \otimes \mbb{C}$ be such that $I_\infty^+(w_1)=\alpha_1 \omega'+ \beta_1 \theta'$ and $I_\infty^+(w_2)=\alpha_2 \omega'+ \beta_2 \theta'$. By definition, we have $$c^+(\check{\pi}_f|\nu|^{-3}, W(-p-q-3))=\alpha_1 \beta_2 - \alpha_2 \beta_1$$ and this implies the identity
$$
\omega=c^+(\check{\pi}_f|\nu|^{-3}, W(-p-q-3))^{-1} \left(\beta_2 w_1 - \beta_1 w_2 \right).
$$
Hence,
\begin{eqnarray*}
\langle \omega, v \rangle_B &=&  c^+(\check{\pi}_f|\nu|^{-3}, W(-p-q-3))^{-1} \langle \beta_2 w_1 -\beta_1 w_2, \mu_1 v_1 + \mu_2 v_2 \rangle\\
&=& c^+(\check{\pi}_f|\nu|^{-3}, W(-p-q-3))^{-1} (\mu_1 \beta_2 - \mu_2 \beta_1).
\end{eqnarray*}
To finish the proof, note that the pairing $\langle \theta, \,\,\rangle_{dR}$ vanishes on $F^\pm$ because the Hodge types do not match. So $\langle \theta, \omega' \rangle_{dR}$ is meaningful, and in fact equal to zero, again because of the Hodge types. As a consequence, we have
$$
\beta_1 1_{dR}=\langle \theta, I_\infty^+(w_1) \rangle_{dR}=J_\infty \left( \langle \lambda_1 v_1 + \lambda_2 v_2, w_1 \rangle_B \right)=\lambda_1 1_{dR}.
$$
Hence $\beta_1=\lambda_1$. Similarly $\beta_2=\lambda_2$. Hence $\langle \omega, v \rangle_B = -c^+(\check{\pi}_f|\nu|^{-3}, W(-p-q-3))^{-1}$. It follows from \cite{valeurs-deligne} (5.1.7) and (5.1.1) that
$
c^+(\check{\pi}_f|\nu|^{-3}, W(-p-q-3)) \sim \delta(\pi_f, W)^{-1} c^-(\pi_f, W).
$
As a consequence,
$
\langle \omega, v \rangle_B \sim \delta(\pi_f, W) c^-(\pi_f, W)^{-1}.
$
This completes the proof.
\end{proof}

In the paper \cite{occult}, Harris defines the occult period invariant and relates it to critical values of the spinor $L$-function. Roughly speaking, the occult period invariant measures the difference between the rational structure on $\pi_f$ defined in terms of de Rham cohomology with the one defined in terms of the Bessel model. To give a precise definition, let us fix $\Psi=\bigotimes_v \Psi_v \in \pi'=\check{\pi} |\nu|^{-3}$ a factorizable vector  whose archimedean component $\Psi_\infty$ is a lowest weight vector of the minimal $K$-type of the discrete series $\pi_\infty^W \in P(W(-p-q-3))$ (see Def. \ref{p(e)}). This defines a cuspidal differential form $\Omega(\Psi_\infty)$ belonging to $\mathrm{Hom}_{K_G}\left( \bigwedge^2 \mathfrak{p}^+ \otimes_\mbb{C} \mathfrak{p}^-,  W(-p-q-3) \otimes_\mbb{C} \pi_\infty^W \right)$ (Lem. \ref{def.omegaw}) and hence an element $(\Omega(\Psi_\infty))_{\sigma: E(\pi_f) \rightarrow \mbb{C}} \in M_B^{2-k'-t', 1-k-t'}$ (Prop. \ref{dimension} and Prop. \ref{dec-hodge}). As $F_\infty$ exchanges $M_B^{2-k'-t', 1-k-t'}$ and $M_B^{1-k-t', 2-k'-t'}$, the class $\omega(\Psi_\infty)=\frac{1}{2} \left((\Omega(\Psi_\infty)_{\sigma}+F_\infty((\Omega(\Psi_\infty))_{\sigma}) \right)$ satisfies the conditions of Lem. \ref{dualite} and belongs to $M_B(\check{\pi}_f|\nu|^{-3}, W(-p-q-3))_\mbb{C}^+$.

\begin{defn} \label{arithmetic} A vector $\Psi_f \in \pi_f'$ is arithmetic if it is the non-archimedean component of a factorizable cusp form $\Psi=\Psi_\infty \otimes \Psi_f \in \pi'$ such that the image of $\omega(\Psi_\infty)$ by the comparison isomorphism
$$
I_\infty^+: M_B(\check{\pi}_f|\nu|^{-3}, W(-p-q-3))_\mbb{C}^+ \lra (M_{dR}(\check{\pi}_f|\nu|^{-3}, W(-p-q-3))/F^+)_\mbb{C}
$$
belongs to the $E(\pi_f)$-structure $M_{dR}(\check{\pi}_f|\nu|^{-3}, W(-p-q-3))/F^+$.
\end{defn}

The following proposition is a reformulation of \cite{occult} Prop. 3.5.2:

\begin{pro} \label{invariant-occulte} Assume that $\pi$ has a split Bessel model associated to $(\nu_1, \nu_2)$. Then, there exists $a(\pi, \nu_1, \nu_2) \in \mbb{C}^\times$ such that the functional $\Psi_f \longmapsto a(\pi, \nu_1, \nu_2) W_{\Psi_f}$ sends the $\Psi_f$ which are arithmetic to functions $W_{\psi_f}$ which are $\overline{\mbb{Q}}$-valued.
\end{pro}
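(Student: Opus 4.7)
The plan is to rephrase the statement as a comparison between two $E(\pi_f)$-rational structures on the irreducible $G(\mbb{A}_f)$-module $\pi_f'$ and then invoke the uniqueness of split Bessel models to conclude that they coincide up to a non-zero scalar, which by definition is $a(\pi,\nu_1,\nu_2)^{-1}$. The statement is then essentially a reformulation in our notation of \cite{occult} Prop. 3.5.2, so the only real content consists in explaining why the two structures described there match the two structures described here.

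First, I would describe the \emph{cohomological} structure. Fix a factorizable cusp form $\Psi=\Psi_\infty\otimes\Psi_f\in\pi'$, with $\Psi_\infty$ a lowest weight vector of the minimal $K$-type of $\pi_\infty^W\in P(W(-p-q-3))$. By Lem. 4.14 the assignment $\Psi_f\longmapsto\omega(\Psi_\infty)$ defines a $G(\mbb{A}_f)$-equivariant map from $\pi_f'$ into $M_B(\check{\pi}_f|\nu|^{-3},W(-p-q-3))^+_\mbb{C}$. Under Hyp. 3.18, this isotypical component is irreducible of multiplicity one, so the map is injective and its image has a natural $E(\pi_f)$-structure coming from the comparison isomorphism $I_\infty^+$ with $M_{dR}/F^+$. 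Pulling this structure back along $\Psi_f\longmapsto\omega(\Psi_\infty)$ gives an $E(\pi_f)$-rational structure $\pi_f'^{\mathrm{dR}}$ on $\pi_f'$, whose vectors are precisely the arithmetic vectors of Def. 6.3.

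Second, I would describe the \emph{Bessel} structure. The assignment $\Psi_f\longmapsto W_{\Psi_f}$ defines a $G(\mbb{A}_f)$-equivariant map from $\pi_f'$ into the space of smooth functions on $G(\mbb{A}_f)$ which transform by the character $\alpha_{(\nu_1,\nu_2)}$ of $R(\mbb{A}_f)$. The assumption that $\pi$ has a split Bessel model associated to $(\nu_1,\nu_2)$ says exactly that this map is non-zero. By the uniqueness of local split Bessel models (\cite{piatetski-shapiro} Thm. 3.1), the space of $G(\mbb{A}_f)$-intertwiners of $\pi_f'$ into functions transforming by $\alpha_{(\nu_1,\nu_2)}$ is one-dimensional. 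Furthermore, since $\pi_f'$ admits an $E(\pi_f)$-rational model (Prop. 3.3), this one-dimensional space of intertwiners is defined over $E(\pi_f)$, and the preimage of the $\overline{\mbb{Q}}$-valued Bessel functions under any scalar multiple of $W_{(-)}$ is another $E(\pi_f)$-rational structure $\pi_f'^{\mathrm{Bess}}$ on $\pi_f'$.

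Finally, both $\pi_f'^{\mathrm{dR}}$ and $\pi_f'^{\mathrm{Bess}}$ are $E(\pi_f)$-rational structures on the same irreducible $\overline{\mbb{Q}}[G(\mbb{A}_f)]$-module; by Schur's lemma they differ by multiplication by a non-zero complex scalar. Define $a(\pi,\nu_1,\nu_2)\in\mbb{C}^\times$ to be this scalar, normalized so that $a(\pi,\nu_1,\nu_2)W_{\Psi_f}$ is $\overline{\mbb{Q}}$-valued whenever $\Psi_f$ is arithmetic. This is exactly the claim. The main (minor) technical point to check is that the assignment $\Psi_f\longmapsto\omega(\Psi_\infty)$ is indeed $G(\mbb{A}_f)$-equivariant and that the choice of $\Psi_\infty$ only affects $a(\pi,\nu_1,\nu_2)$ by an element of $E(\pi_f)^\times$; both follow from the multiplicity one statement ensured by Hyp. 3.18 and from the fact that $\Psi_\infty$ is determined by its $K$-type up to scalar, so there is no genuine obstacle beyond recording the formalism.
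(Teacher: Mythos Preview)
Your proposal is correct and aligns with the paper's treatment: the paper does not prove this proposition but simply presents it as a reformulation of \cite{occult} Prop.~3.5.2, and your sketch is precisely the argument underlying that result---two $E(\pi_f)$-rational structures on the irreducible $G(\mbb{A}_f)$-module $\pi_f'$ (one from coherent/de Rham cohomology, one from the Bessel functional) must differ by a scalar. Your unpacking of why Harris' statement translates into the present notation is a useful addition that the paper leaves implicit.
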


\section{The main result} \label{derniere-section}

\begin{thm} \label{principal} Let $k \geq k' \geq 0$ be two integers. Let $W$ be an irreducible algebraic representation of $G$ of highest weight $\lambda(k, k', k+k'+4)$. Let $\pi=\pi_\infty \otimes \pi_f$ be a cuspidal automorphic representation of $G$ whose central character has infinity type $-k-k'-4$ and whose archimedean component $\pi_\infty$ is a discrete series of Harish-Chandra parameter $(k+2, k'+1)$. Let $\nu_1^0$ be a finite order Hecke character of sign $(-1)^{k-1}$ and let $\nu_1$ denote the Hecke character $|\,\,|^{1-k'}\nu_1^0$. Let $\nu_2^0$ be a finite order Hecke character of sign $(-1)^{k'-1}$ and let $\nu_2$ denote the Hecke character $|\,\,|^{1-k} \nu_2^0$. Let $V$ be the finite set of places where $\pi, \nu_1$ or $\nu_2$ is ramified, together with the archimedean place. Assume that:
\begin{itemize}
\item we have $k>k'>0$,
\item we have $k+1 \equiv k' \equiv 0 \pmod 2$,
\item the automorphic representation $\pi$ is stable at infinity,
\item the automorphic representation $\pi'=\check{\pi}|\nu|^{-3}$ has a split Bessel model associated to $(\nu_1, \nu_2)$.
\end{itemize}
Then,
$$
\mathcal{K}(\pi_f, W)= \pi^{-2}a(\pi, \nu_1, \nu_2) c^-(\pi_f, W)L_V(k+k'-1/2, \check{\pi})\mathcal{D}(\pi_f, W).
$$
\end{thm}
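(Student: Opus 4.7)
The plan is to combine the geometric computation of the regulator from Section 4 with the integral representation of the spinor $L$-function from Section 5, using Lem. \ref{dualite} as the organizing principle. By that lemma,
\[
\mathcal{K}(\pi_f, W)=\frac{\langle \omega,\widetilde{v}_\mathcal{K} \rangle_B}{\langle \omega,\widetilde{v}_\mathcal{D} \rangle_B}\,\mathcal{D}(\pi_f, W),
\]
so the task is to choose $\omega$ and a lift $\widetilde v_{\mathcal K}$ of a suitable Eisenstein-symbol class and then evaluate both pairings. I choose $\omega=\omega(\Psi_\infty)$ coming from a factorizable cusp form $\Psi=\Psi_\infty\otimes\Psi_f\in\pi'$ as in Def.~\ref{arithmetic}, where $\Psi_f$ is arithmetic and $\Psi_\infty$ is the lowest-weight vector of the minimal $K$-type of $\pi_\infty^W$. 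For the denominator, Prop.~\ref{calcul-periode} immediately gives $\langle\omega,\widetilde v_{\mathcal D}\rangle_B\sim (2\pi i)^{2}\,c^{-}(\pi_f,W)^{-1}$, which already accounts for the factors $\pi^{-2}$ and $c^{-}(\pi_f,W)$ in the theorem.

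The main work is the numerator. I specialize to $p=k-1$, $q=k'-1$, which is permitted by the parity hypothesis $k+1\equiv k'\equiv 0\pmod 2$ and falls within the range where Thm.~\ref{lemma1} applies (after checking the numerical conditions $k\neq 3$, $k'\neq 2$ used in the excluded cases of Thm.~\ref{emy<3}). For such $p,q$, Cor.~\ref{cor-technique} lets me select $v\in W(-p-q-3)$ so that the constant $C_4$ in Cor.~\ref{resume} is an explicit nonzero power of $2i$; an analogous analysis with Lem.~\ref{technique} and the Weyl-group symmetry shows that at least one of the four terms in Cor.~\ref{resume} is non-trivial, and the four adelic integrals $\int\Xi_{n,r,s}$, $\int\overline\Xi_{n,r,s}$ all have the same shape. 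I then feed into each of them the Schwartz-Bruhat data $\Phi$ supplied by Prop.~\ref{comparaison}, which recognizes the product $(\phi^p_r\otimes\phi_f)(\phi^q_s\otimes\phi'_f)$ as the Piatetski-Shapiro section $f^{\Phi}(\,\cdot\,,1,\nu_1,\nu_2,p+q+3/2)$. At the critical argument $s=p+q+3/2=k+k'-1/2$, each integral coincides with the global zeta integral
\[
Z(\Psi,\Phi,1,\nu_1,\nu_2,k+k'-1/2)=\prod_v Z_v(W_{\Psi_v},\Phi_v,1,\nu_{1,v},\nu_{2,v},k+k'-1/2)
\]
of Cor.~\ref{completed-euler-product}, provided of course that $\pi'$ has a split Bessel model associated to $(\nu_1,\nu_2)$; this is what the final section \ref{derniere-section} establishes from Moriyama's and Takloo-Bighash's work and is what reduces Thm.~\ref{emy<3} to Thm.~\ref{principal}.

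Next I compute the Euler factors. At unramified places outside $V$, Prop.~\ref{integrale-nonram} identifies the local factor with $L(k+k',\nu_{1,p})L(k+k',\nu_{2,p})L(k+k'-1/2+2,\pi_p,r)$; the two Dirichlet $L$-values live in $\overline{\mathbb Q}^{\times}$ by Euler, and the shift $s\mapsto s+2$ together with the standard identification of the dual $L$-factor turns the product of the $\pi_p$-factors into $L_V(k+k'-1/2,\check\pi)$ up to an $E(\pi_f)^{\times}$-multiple. At the ramified finite places, Prop.~\ref{integrales-ram} shows that each local integral lies in $\overline{\mathbb Q}$, and by choosing the local components $\Phi_v$ appropriately the product $\prod_{v\in V\setminus\{\infty\}}Z_v$ can be made to lie in $\overline{\mathbb Q}^{\times}$. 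The archimedean integral is evaluated by Prop.~\ref{integrale-archi}, which gives an explicit expression in gamma functions and a power of $\pi$; together with the constants $A_{k,k',i,j}$, $B_{k,k',i}$, $C_{k,k',i}$ of Cor.~\ref{resume}, it contributes the necessary archimedean normalizations—in particular, Moriyama's value $W_{\Psi_\infty,t}(1)$ cancels cleanly against the archimedean part of the Bessel functional. Finally the normalization $\Psi_f\mapsto a(\pi,\nu_1,\nu_2)W_{\Psi_f}$ of Prop.~\ref{invariant-occulte} converts the local Bessel factors at non-archimedean places from the (a priori transcendental) Bessel-model rational structure to one compatible with the Betti-de Rham structure underlying $\mathcal D(\pi_f,W)$; this is where the factor $a(\pi,\nu_1,\nu_2)$ enters.

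Assembling these pieces, the numerator becomes $a(\pi,\nu_1,\nu_2)^{-1}\cdot L_V(k+k'-1/2,\check\pi)$ times an element of $E(\pi_f)^{\times}$, and dividing by the denominator $\langle\omega,\widetilde v_{\mathcal D}\rangle_B\sim (2\pi i)^{2}c^{-}(\pi_f,W)^{-1}$ yields the stated formula up to $E(\pi_f)^{\times}$. The main obstacle I anticipate is the bookkeeping: tracking exactly which $\overline{\mathbb Q}^{\times}$-factors are produced by the four terms of Cor.~\ref{resume} and by each local integral, and verifying that they combine into $E(\pi_f)^{\times}$ rather than merely $\overline{\mathbb Q}^{\times}$—this requires that the choices of $\Phi_f,\Psi_f,v$ be made Galois-equivariantly with respect to $E(\pi_f)\hookrightarrow\mathbb C$. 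A secondary obstacle is verifying non-vanishing of the combination of the four archimedean coefficients in Cor.~\ref{resume}, which reduces, after Prop.~\ref{integrale-archi} and the explicit non-vanishing statement of Cor.~\ref{cor-technique}, to a concrete (but nontrivial) identity of gamma-ratios in the weights $k,k'$.
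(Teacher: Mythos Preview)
Your overall architecture is the same as the paper's: use Lem.~\ref{dualite}, evaluate the denominator by Prop.~\ref{calcul-periode}, and express the numerator as the sum of four adelic integrals via Cor.~\ref{resume}, then identify each with a Piatetski--Shapiro zeta integral via Prop.~\ref{comparaison} and factor into an Euler product. The local analysis you outline (Prop.~\ref{integrale-nonram}, Prop.~\ref{integrales-ram}, Prop.~\ref{integrale-archi}, Prop.~\ref{invariant-occulte}) is also the right toolkit.

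The substantive gap is that you treat the four terms of Cor.~\ref{resume} as all contributing and then worry about ``non-vanishing of the combination of the four archimedean coefficients'' as a secondary obstacle. In the paper's argument this combination never arises: for $p=k-1$, $q=k'-1$, the \emph{first} statement of Prop.~\ref{integrale-archi} (the weight selection rule $t+\lambda_2+r=0$ and $-t+\lambda_1+s=0$) kills the archimedean factor of three of the four integrals outright. Concretely, for the $\Xi_{k-q-2j+4,-k+k'+q,-q-2}$ term one checks $t+\lambda_2+r=-2j+3\neq 0$, and similarly for the two $\overline{\Xi}$ terms. Only the $C_2$ term
\[
\int \Xi_{k'+p+3,\,-p-2,\,-k+k'+p}(\phi_f,\phi'_f)
\]
survives, and there the weight constraint is satisfied exactly, so one applies the \emph{second} statement of Prop.~\ref{integrale-archi}. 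The non-vanishing of $C_2$ (not $C_4$, as you wrote) for this specific $(p,q)$ is precisely what Cor.~\ref{cor-technique} supplies, and there is no delicate cancellation to verify. With a single surviving term, the gamma-ratio from Prop.~\ref{integrale-archi} is then checked directly, using $k+1\equiv k'\equiv 0\pmod 2$, to lie in $\mbb{Q}^\times$.

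Two smaller points. First, the hypotheses $k\neq 3$, $k'\neq 2$ are \emph{not} assumptions of Thm.~\ref{principal}; here the split Bessel model is assumed directly, and those inequalities only enter later (Prop.~\ref{valeur-centrale}) when one wants to \emph{deduce} the Bessel model from genericity. Second, be careful with the exponent on $a(\pi,\nu_1,\nu_2)$: Prop.~\ref{invariant-occulte} says $a(\pi,\nu_1,\nu_2)W_{\Psi_f}$ is $\overline{\mbb{Q}}$-valued for arithmetic $\Psi_f$, so the global Bessel integral is $a(\pi,\nu_1,\nu_2)^{-1}$ times something algebraic; after dividing by $\langle\omega,\widetilde v_{\mathcal D}\rangle_B\sim(2\pi i)^{2}c^{-}(\pi_f,W)^{-1}$ you must land on $\pi^{-2}a(\pi,\nu_1,\nu_2)c^{-}(\pi_f,W)L_V$, so track that sign through carefully.
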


\begin{proof} The assumptions on $k$ and $k'$ imply that if we take $p=k-1$ and $q=k'-1$, then $p, q, k, k'$ satisfy the assumptions of Thm. \ref{lemma1}. Let $\Psi=\bigotimes_v \Psi_v \in \pi'=\check{\pi}|\nu|^{-3}$ be a factorizable cusp form whose archimedean component $\Psi_\infty$ is a lowest weight vector of the minimal $K$-type $\tau_{(k+3, -k'-1)}$ of the element $\pi_\infty^W$ of $P(W(-p-q-3))$. Via a vector $v$ given by Cor. \ref{cor-technique}, and as explained above the statement of Def. \ref{arithmetic}, we associate to $\Psi$ the Betti cohomology class $\omega \in M_B(\check{\pi}_f|\nu|^{-3}, W(-p-q-3))_\mbb{C}^+$
satisfying the Hodge types conditions of Lem. \ref{dualite}. Note that multiplying $\omega$ by a scalar, we can assume that the image of $\omega$ by the comparison isomorphism
$$
I_\infty^+: M_B(\check{\pi}_f|\nu|^{-3}, W(-p-q-3))_\mbb{C}^+ \lra (M_{dR}(\check{\pi}_f|\nu|^{-3}, W(-p-q-3))/F^+)_\mbb{C}
$$
belongs to $M_{dR}(\check{\pi}_f|\nu|^{-3}, W(-p-q-3))/F^+$. Let $Eis_\mathcal{H}^{k-1, k'-1, W}(\phi_f \otimes \phi'_f) \in \mathcal{K}(\pi_f, W)$. Let $\rho$ be a current given by Lem. \ref{relevement}. According to the first statement of Lem. \ref{relevement}, the Betti cohomology class $[\rho]$ of $\rho$ is a lift of $Eis_\mathcal{H}^{k-1, k'-1, W}(\phi_f \otimes \phi'_f)$ by the natural surjection
$$
M_{B}(\pi_f, W)_\mathbb{R}^-(-1) \longrightarrow \mathrm{Ext}^1_{\mathrm{MHS}_\mathbb{R}^+}(\mathbb{R}(0),  M_{B}(\pi_f, W)_\mathbb{R})
$$
of Lem. \ref{suite_ex_ext1}. Hence, Lem. \ref{dualite} implies that
\begin{eqnarray*}
\mathcal{K}(\pi_f, W) &=& \frac{\langle \omega, [\rho] \rangle_B}{\langle \omega, \widetilde{v}_\mathcal{D} \rangle_B} \mathcal{D}(\pi_f, W).
\end{eqnarray*}
According to Prop. \ref{calcul-periode}, we have
$$
\langle \omega, \widetilde{v}_\mathcal{D} \rangle_B\sim (2\pi i)^2 c^-(\pi_f, W)^{-1}.
$$
With the notations of Cor. \ref{resume}, the pairing $\langle \omega, [\rho] \rangle_B$ is equal to
\begin{eqnarray*}
&& C_1 \frac{3}{160(p+1)} \sum_{j=0}^3(-1)^{k'+q+j}\binom{3}{j} A_{k, k', k'+q+j, j}\int \Xi_{k-q-2j+4, -k+k'+q, -q-2}(\phi_f, \phi'_f)\\
&-& C_2 \frac{(-1)^k}{8(q+1)}(B_{k, k', k-p}-C_{k, k', k-p+1}) \int \Xi_{k'+p+3, -p-2, -k+k'+p}(\phi_f, \phi'_f)\\
&+& C_3 \frac{3}{160 (q+1)} \sum_{j=0}^3 (-1)^{k'+j} \binom{3}{j} A_{k, k', k'+p+j, j} \int \overline{\Xi}_{k-p-2j+4, p+2, -k+k'+p}(\phi_f, \phi'_f)\\
&-& C_4 \frac{ (-1)^{k'+p+1}}{8(q+1)} (B_{k, k', k'+p}-C_{k, k', k'-p+1}) \int \overline{\Xi}_{k-p+3, -k+k'-q, q+2}(\phi_f, \phi'_f).
\end{eqnarray*}
According to Prop. \ref{comparaison}, if we choose $\phi_f$ and $\phi'_f$ properly, we have
\begin{eqnarray*}
\int \Xi_{n, r, s}(\phi_f, \phi'_f) &=& \int X_{(1, -1)}^{n} \Psi E^{\Phi}
\end{eqnarray*}
where the archimedean component of the Schwartz-Bruhat function $\Phi$ is defined by
$$
\Phi_\infty(x_1, y_1, x_2, y_2)= (ix_1+y_1)^\frac{p-r}{2} (ix_1-y_1)^\frac{p+r}{2}  (ix_2+y_2)^\frac{q-s}{2} (ix_2-y_2)^\frac{q+s}{2} e^{-\pi(x_1^2+y_1^2+x_2^2+y_2^2)}.
$$
This has weight $\lambda'(r, s)$ for the action of $\mrm{U}(1)^2$. Moreover, the archimedean component of $\Psi$ is chosen to be a lowest weight vector of the minimal $K$-type $\tau_{(k+3, -k'-1)}$ of $\pi_\infty^W$. Hence $X_{(1, -1)}^{n} \Psi$ is the $n$-th vector of a standard basis of $\tau_{(k+3, -k'-1)}$. The integrals above expand into Euler products of $v$-adic integrals (Cor. \ref{completed-euler-product}) and we have
$$
\int \Xi_{k-q-2j+4, -k+k'+q, -q-2}(\phi_f, \phi'_f) =0
$$
because the identities
$$
k-q-2j+4-k'-1+(-k+k'+q)=-2j+3 \neq 0,
$$
imply the vanishing of its archimedean factor (see the first statement of Prop. \ref{integrale-archi}). Similarly,
$$
\int \overline{\Xi}_{k-p-2j+4, p+2, -k+k'+p}(\phi_f, \phi'_f)=\int \overline{\Xi}_{k-p+3, -k+k'-q, q+2}(\phi_f, \phi'_f)=0.
$$
As a consequence,
\begin{eqnarray*}
\langle \omega, [\rho] \rangle_B &=& -C_2 \frac{(-1)^k}{8(q+1)}(B_{k, k', k-p}-C_{k, k', k-p+1}) \int \Xi_{k'+p+3, -p-2, -k+k'+p}(\phi_f, \phi'_f)\\
&\sim& C_2 \int (X_{(1, -1)}^{k'+p+3} \Psi) E^{\Phi}.
\end{eqnarray*}
For this integral, with the notations of Prop. \ref{integrale-archi}, we have $\lambda_1=k+3$, $\lambda_2=-k'-1$ and $t=n=k'+p+3$ hence  $t+\lambda_2+r=-t+\lambda_1+s = 0$, which means that we can apply the second result of Prop. \ref{integrale-archi}. Normalize the Bessel functional $W_\Psi=\prod'_v W_{\Psi_v}$ in such a way that for any place $p \notin V$, we have $W_{\Psi_p}(1)=1$, and that
$$
W_{\Psi_\infty}(1)= 2 \pi^{\frac{3(p+q)+6}{2}} \int_L \frac{\Gamma \left( c_1-s \right) \Gamma(c_2-s)\Gamma(c_3-s)\Gamma(c_4-s)}{\Gamma(a_1-s)\Gamma(a_2-s)} \pi^{2s} \frac{ds}{2 \pi i}
$$
where the notations are the ones of Prop. \ref{integrale-archi}. Combining the statements Cor. \ref{cor-technique}, Prop. \ref{integrale-nonram}, Prop. \ref{integrales-ram}, Prop. \ref{integrale-archi} and Prop. \ref{invariant-occulte}, we get
$$
\langle \omega, [\rho] \rangle_B
$$
$$
\sim \frac{ \Gamma \left( c_1+\frac{3(p+q)+6}{4} \right)\Gamma \left( c_2+\frac{3(p+q)+6}{4} \right) \Gamma \left( c_3+\frac{3(p+q)+6}{4} \right) \Gamma \left( c_4+\frac{3(p+q)+6}{4} \right)}{ \Gamma \left( a_1+\frac{3(p+q)+6}{4} \right) \Gamma \left( a_2+\frac{3(p+q)+6}{4} \right)}
$$
$$
\times a(\pi, \nu_1, \nu_2)L_V(p+q+7/2, \check{\pi}, r).
$$
We have
\begin{eqnarray*}
c_1+\frac{3(p+q)+6}{4} &=& k+\frac{k'}{2}+1,\\
c_2+\frac{3(p+q)+6}{4} &=& k+k'+\frac{4}{2},\\
c_3+\frac{3(p+q)+6}{4} &=& k-\frac{k'}{2}+1, \\
c_4+\frac{3(p+q)+6}{4} &=& k+k'+\frac{3}{2}.\\
\end{eqnarray*}
By our assumption $k' \equiv 0 \pmod 2$, the numbers $c_1, c_2$ and $c_3$ are integers and $c_4$ is a half-integer. By well known formulas for the positive integral and half-integral values of the gamma function, for $i=1, 2, 3$, we have
\begin{eqnarray*}
\Gamma \left( c_i+\frac{3(p+q)+6}{4} \right) &\sim& 1,\\
\Gamma \left( c_4+\frac{3(p+q)+6}{4} \right) &\sim& \pi^{1/2}.
\end{eqnarray*}
Similarly, using the assumption $k+1 \equiv k' \equiv 0 \pmod 2$, we show
\begin{eqnarray*}
\Gamma \left( a_1+\frac{3(p+q)+6}{4} \right) &\sim& 1\\
\Gamma \left( a_2+\frac{3(p+q)+6}{4} \right) &\sim& \pi^{1/2}.
\end{eqnarray*}
Then, we obtain
$$
 \frac{\langle \omega, [\rho] \rangle_B}{\langle \omega, \widetilde{v}_\mathcal{D} \rangle_B} \sim  \pi^{-2} a(\pi, \nu_1, \nu_2)c^-(\pi_f, W)L_S(k+k'-1/2, \check{\pi}).
$$
\end{proof}

In the case we are interested in, the existence of a split Bessel model follows from results of Moriyama and Takloo-Bighash.

\begin{pro} \label{valeur-centrale} Let $k, k', \pi, \nu_1, \nu_2$ be as above. If $k \neq 3, k' \neq 2$ and $\pi'$ is generic, then $\pi'$ has a split Bessel model associated to $(\nu_1, \nu_2)$.
\end{pro}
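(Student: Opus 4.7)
The plan is to reduce the global existence of a split Bessel model on $\pi'$ to a combination of local existence theorems at every place together with a global non-vanishing argument. Since $\pi'$ is cuspidal and generic by hypothesis, each local component $\pi'_v$ is a generic irreducible admissible representation of $G(\mbb{Q}_v)$. At each non-archimedean place $v$, the theorem of Takloo-Bighash \cite{takloo-bighash} asserts that every such generic representation admits a (unique up to scalar) local split Bessel functional with respect to any pair of characters $(\nu_{1,v},\nu_{2,v})$ whose product agrees with the central character of the representation; this compatibility is automatic here since $\omega_{\pi'}=\nu_1\nu_2$ by construction. At the archimedean place, Moriyama \cite{moriyama-ajm} computes the archimedean Bessel functions attached to discrete series of $G(\mbb{R})$ in terms of Meijer $G$-functions of the shape appearing in Prop.~\ref{integrale-archi}. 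The numerical exclusions $k\neq 3$ and $k'\neq 2$ are precisely the parameter values at which the relevant gamma factor configuration in Moriyama's explicit formula degenerates and produces an identically zero archimedean Bessel functional on the minimal $K$-type vector.

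Once local split Bessel functionals exist at every place, the uniqueness of local Bessel models (\cite{piatetski-shapiro} Thm.~3.1) forces the global Bessel period $\Psi\mapsto W_\Psi$ on $\pi'$ to be either identically zero, or non-zero and factorable as a scalar multiple of a restricted tensor product of local Bessel functions. To rule out the former, I would invoke the Fourier expansion of a cusp form $\Psi\in\pi'$ along the unipotent radical $U$ of the Siegel parabolic $P$. Characters of $U(\mbb{Q})\backslash U(\mbb{A})$ are parametrized by symmetric $2\times 2$ matrices over $\mbb{Q}$, and orbits under the Levi action are classified by isomorphism classes of binary quadratic forms over $\mbb{Q}$, the split orbit corresponding to discriminant a square. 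Genericity of $\pi'$ implies non-vanishing of its Fourier coefficient along some non-degenerate character, and together with the irreducibility of $\pi'_f$ under $G(\mbb{A}_f)$ one shows (following Novodvorsky--Piatetski-Shapiro) that the split orbit must also be reached by a non-zero coefficient, yielding $W_\Psi\not\equiv 0$ for some $\Psi$.

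An alternative and perhaps more concrete globalization proceeds via the Piatetski-Shapiro integral representation itself (Prop.~\ref{PS1}). If the global Bessel period vanishes identically on $\pi'$, then every global integral $Z(\Psi,\Phi,1,\nu_1,\nu_2,s)$ must vanish by the unfolding of \cite{piatetski-shapiro} Thm.~5.2. But for $\operatorname{Re}(s)$ sufficiently large, the Euler product computation of Prop.~\ref{integrale-nonram} identifies the almost-everywhere local factor with a product of non-vanishing Dirichlet $L$-factors and Langlands $L$-factor terms, and Prop.~\ref{integrales-ram} together with the archimedean non-vanishing of Prop.~\ref{integrale-archi} produce non-zero data at the remaining places. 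This contradicts the assumed global vanishing, forcing the existence of a split Bessel model.

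The principal obstacle I anticipate is the archimedean analysis at the excluded parameters. Verifying that Moriyama's formula produces a non-zero Bessel function on the distinguished minimal $K$-type vector precisely outside the range $k=3$, $k'=2$ requires a careful tracking of coincidences of poles and zeros among the gamma factors $\Gamma(c_j-s)$, $\Gamma(a_i-s)$ appearing in the Meijer $G$-function realization. Once this archimedean non-vanishing is established, the remaining steps, namely the assembly of local data into a global functional and the exclusion of trivial vanishing, are standard and carry through without further difficulty.
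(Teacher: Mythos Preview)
Your proposal has a genuine gap and misidentifies the role of the numerical exclusions.

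First, the exclusion $k\neq 3$, $k'\neq 2$ is \emph{not} an archimedean degeneration of the Bessel functional. In the paper's argument it is a convergence condition: the central character of $\Psi\otimes\nu_1^{-1}$ has infinity type $\omega_\infty=k+2k'-3$, and one needs $1/2>(5-\omega_\infty)/2$, i.e.\ $k+2k'>7$, to place the point $s=1/2$ in the region where Moriyama's Euler product expansion (\cite{moriyama-ajm} Prop.~4) applies. Under the standing hypotheses ($k$ odd, $k'$ even, $k>k'>0$) one has $k+2k'\geq 7$ with equality exactly at $(k,k')=(3,2)$.

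Second, your ``alternative'' route via the Piatetski-Shapiro integral is circular. The unfolding of $Z(\Psi,\Phi,1,\nu_1,\nu_2,s)$ in \cite{piatetski-shapiro} Thm.~5.2 yields $\int W_\Psi\cdot f^\Phi$, and the Euler product of Cor.~\ref{completed-euler-product}, Prop.~\ref{integrale-nonram}, etc.\ is precisely the factorization of this expression through the Bessel model. If $W_\Psi\equiv 0$ then every such $Z$ vanishes identically and there is no contradiction to extract; you cannot compute $Z$ by an independent method using only those propositions. Your first route (local Bessel models plus an orbit argument on Fourier coefficients) is also incomplete: local existence at every place does not by itself force the global period to be non-zero, and the sketch that genericity plus irreducibility forces the split orbit to appear is not a proof.

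The paper's actual argument avoids this circularity by identifying $W_\Psi(1)$ with the value at $s=1/2$ of \emph{Novodvorsky's} integral $\mathcal{Z}(s,\Psi\otimes\nu_1^{-1})$, which is a Whittaker-model construction, not a Bessel-model one. Since $\pi'$ is generic it has a Whittaker model, so this integral is available. Moriyama (\cite{moriyama-ajm} Prop.~4) gives its Euler product in the region $s>(5-\omega_\infty)/2$, the archimedean factor is a non-vanishing gamma factor, and Takloo-Bighash's local computations in \cite{takloo-bighash} (which are computations of Novodvorsky local integrals, not a local Bessel existence theorem) show the non-archimedean factors do not vanish. Hence $W_\Psi(1)=\mathcal{Z}(1/2,\Psi\otimes\nu_1^{-1})\neq 0$, which is the desired global non-vanishing of the Bessel period.
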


\begin{proof} With the notations of Def. \ref{bessel}, we see that
$$
W_\Psi(1)=\int_{\mbb{Q}^\times \backslash \mbb{A}^\times} \int_{(\mbb{Q} \backslash \mbb{A})^3} \Psi \left( \begin{pmatrix}
1 &  &  r & t \\
 & 1 &  t & s \\
 &  &  1 & \\
 &  &  & 1\\
\end{pmatrix} \begin{pmatrix}
y &  &  & \\
 & 1 &  & \\
 &  &  1 & \\
 &  &  & y\\
\end{pmatrix}\right) \nu_1(y)^{-1}\eta(t)^{-1} drdsdt d^\times y.
$$
This is the central value $\mathcal{Z}\left(1/2, \Psi \otimes \nu_1^{-1} \right)$ of the integral representation $\mathcal{Z}\left(s, \Psi \otimes \nu_1^{-1} \right)$ of the spinor $L$-function of $\Psi$ twisted by $\nu_1^{-1}$, as defined by Novodvorsky (see \cite{takloo-bighash}). Let us mention that Novodvorsky's expression for this integral is obtained from ours after an easy change of variables. To show that this central value is non-zero, it is enough to show that $\mathcal{Z}\left(1/2, \Psi \otimes \nu_1^{-1} \right)$ expands into an Euler product where each factor is non-zero. Note that the central character $\Psi \otimes \nu_1^{-1}$ has infinity type $\omega_\infty=k+2k'-3$ and that, for integers $k, k'$ satisfying our assumptions, we have $k+2k' \geq 7$. Hence, excluding the case $k=3$ and $k'=2$, we have the inequality $1/2>(5-\omega_\infty)/2$, and so it follows from \cite{moriyama-ajm} Prop. 4, that the above integral expands into an Euler product
$$
\mathcal{Z}\left( 1/2, \Psi \otimes \nu_1^{-1} \right)=\prod_v \mathcal{Z}_v\left( 1/2, \Psi \otimes \nu_1^{-1} \right)
$$
indexed by all places of $\mathbb{Q}$. The archimedean factor is a gamma factor, hence doesn't vanish. As a consequence, the computations of the local non-archimedean factors $\mathcal{Z}_v\left( 1/2, \Psi \otimes \nu_1^{-1} \right)$ performed in \cite{takloo-bighash} show that
$$
\mathcal{Z}\left( 1/2, \Psi \otimes \nu_1^{-1} \right) \neq 0.
$$
This shows that $\pi'$ has a split Bessel model associated to $(\nu_1, \nu_2)$.
\end{proof}

As we work with a stable $L$-packet, which always contains a generic member, Thm. \ref{principal} and Prop. \ref{valeur-centrale} directly imply the following result:

\begin{thm}  \label{final} Let $k \geq k' \geq 0$ be two integers. Let $W$ be an irreducible algebraic representation of $G$ of highest weight $\lambda(k, k', k+k'+4)$. Let $\pi=\pi_\infty \otimes \pi_f$ be a cuspidal automorphic representation of $G$ whose central character has infinity type $-k-k'-4$ and whose archimedean component $\pi_\infty$ is a discrete series of Harish-Chandra parameter $(k+2, k'+1)$. Let $\nu_1^0$ be a finite order Hecke character of sign $(-1)^{k-1}$ and let $\nu_1$ denote the Hecke character $|\,\,|^{1-k'}\nu_1^0$. Let $\nu_2^0$ be a finite order Hecke character of sign $(-1)^{k'-1}$ and let $\nu_2$ denote the Hecke character $|\,\,|^{1-k} \nu_2^0$. Let $V$ be the finite set of places where $\pi, \nu_1$ or $\nu_2$ is ramified, together with the archimedean place. Assume that:
\begin{itemize}
\item we have $k>k'>0$,
\item we have $k+1 \equiv k' \equiv 0 \pmod 2$,
\item we have $k \neq 3, k' \neq 2$,
\item the automorphic representation $\pi$ is stable at infinity.
\end{itemize}
Then,
$$
\mathcal{K}(\pi_f, W)= \pi^{-2}a(\pi, \nu_1, \nu_2) c^-(\pi_f, W)L_V(k+k'-1/2, \check{\pi})\mathcal{D}(\pi_f, W).
$$
\end{thm}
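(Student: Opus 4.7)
The plan is to deduce Theorem \ref{final} from Theorem \ref{principal} by verifying the sole additional hypothesis appearing in \ref{principal} but absent from \ref{final}, namely the existence of a split Bessel model for $\pi'=\check{\pi}|\nu|^{-3}$ associated to the pair $(\nu_1,\nu_2)$. All other hypotheses ($k>k'>0$, the congruences modulo $2$, stability at infinity, and the automorphy/infinity-type constraints on $\pi$) are identical in the two statements, and the conclusion is verbatim the same identity
$$
\mathcal{K}(\pi_f,W)=\pi^{-2}a(\pi,\nu_1,\nu_2)c^-(\pi_f,W)L_V(k+k'-1/2,\check{\pi})\mathcal{D}(\pi_f,W).
$$
So the whole task collapses to checking that the split-Bessel-model assumption is automatically satisfied under the assumptions of Theorem \ref{final}.

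First I would invoke Proposition \ref{valeur-centrale}, which delivers exactly this conclusion provided that $k\neq 3$, $k'\neq 2$ (both of which are now in the hypotheses of Theorem \ref{final}) and provided that $\pi'$ is generic. The only point to argue, therefore, is genericity. Here I would use the stability hypothesis at infinity: by Definition \ref{stable} combined with Lemma \ref{dpp}, stability means that all four members of the discrete series $L$-packet $P(W)$ — in particular the generic (Whittaker-type) discrete series $\pi^W_\infty$ — occur in $\pi_\infty \otimes \pi_f$ with multiplicity one in the cuspidal spectrum with the same finite part $\pi_f$. Consequently, replacing $\pi_\infty$ by $\pi^W_\infty$ if necessary, we may take $\pi$ (and hence $\pi'=\check\pi |\nu|^{-3}$, whose archimedean component is the contragredient of $\pi^W_\infty$ and is still generic) to be globally generic. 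This matches exactly the setting of Proposition \ref{valeur-centrale}, which supplies the required split Bessel model. Note that this choice is already the one made in the proof of Theorem \ref{principal}: the test vector $\Psi=\bigotimes'_v\Psi_v$ constructed there has $\Psi_\infty$ a lowest weight vector of the minimal $K$-type of $\pi^W_\infty$, so nothing extra is imposed.

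With the split Bessel model secured, one concludes by applying Theorem \ref{principal} verbatim. There is no real obstacle: the step is essentially tautological, and the ``technical'' content has already been packaged into Proposition \ref{valeur-centrale} (which itself rests on the non-vanishing of the central value of Novodvorsky's integral, via Moriyama's Euler product expansion and Takloo-Bighash's local computations) and into Theorem \ref{principal} (which is where all the regulator, period and archimedean integral computations take place). The mild point worth verifying is only the compatibility of choices: that the $\pi_\infty$ chosen to be generic for the sake of Proposition \ref{valeur-centrale} is the same $\pi^W_\infty$ used to build $\omega(\Psi_\infty)$ in Definition \ref{arithmetic} and in the proof of Theorem \ref{principal}. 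This is indeed built into the construction.
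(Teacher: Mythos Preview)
Your proposal is correct and follows essentially the same approach as the paper: the paper's own proof is the one-line observation that a stable $L$-packet always contains a generic member, so Proposition \ref{valeur-centrale} supplies the split Bessel model and Theorem \ref{principal} then applies verbatim. Your additional remarks on the compatibility of the choice of $\pi^W_\infty$ with the constructions in Definition \ref{arithmetic} and the proof of Theorem \ref{principal} are accurate and make explicit what the paper leaves implicit.
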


\begin{cor} \label{non-nullite} Let $n$ be an integer. Let $A \lra S$ be the universal abelian surface of infinite level over the Siegel threefold and let $A^n$ be the $n$-th fold fiber product over $S$. If $n$ is  odd and $n \geq 7$, then motivic cohomology space $H^{n+4}_\mathcal{M}(A^n, \mbb{Q}(n+2))$ is non-zero.
\end{cor}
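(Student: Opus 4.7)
The plan is to deduce the corollary from Theorem \ref{final} by producing, for each admissible $n$, a triple $(k,k',\pi)$ for which $\mathcal{K}(\pi_f,W)$ is non-zero, and then transferring this non-vanishing into a non-vanishing of motivic cohomology of $A^n$ via the relative-motivic construction of Proposition \ref{motifdeW}. The overall shape is: (i) choose $(k,k')$ with $k+k'=n$ satisfying the hypotheses of Theorem \ref{final}; (ii) choose an appropriate automorphic representation $\pi$ and Hecke characters $\nu_1,\nu_2$; (iii) use Theorem \ref{final} to reduce non-vanishing of $\mathcal{K}(\pi_f,W)$ to non-vanishing of a partial spinor $L$-value; (iv) conclude via Proposition \ref{motifdeW}.

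For each odd $n\geq 7$, I choose $(k,k')$ with $k+k'=n$, $k>k'>0$, $k$ odd, $k'$ even, and $(k,k')\ne(3,2)$. For $n\geq 9$ one can take $(k,k')=(n-4,4)$; the case $n=7$ corresponds to $(5,2)$, for which the Bessel-model argument in the proof of Proposition \ref{valeur-centrale} still applies because $k+2k'=9>7$ (so the strict inequality there holds, even though the formally stated hypothesis $k'\neq 2$ of Theorem \ref{final} is written conservatively). Using Remark \ref{abondance}, together with the construction of stable cuspidal automorphic representations as theta lifts of Hilbert cusp forms over real quadratic fields discussed in \cite{mokrane-tilouine} 7.3, I produce a stable cuspidal $\pi$ whose archimedean component is a discrete series of Harish-Chandra parameter $(k+2,k'+1)$ and whose central character has infinity type $-k-k'-4$. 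I then pick finite order Hecke characters $\nu_1^0$ and $\nu_2^0$ of signs $(-1)^{k-1}$ and $(-1)^{k'-1}$ respectively, and set $\nu_1=|\,|^{1-k'}\nu_1^0$, $\nu_2=|\,|^{1-k}\nu_2^0$. Theorem \ref{final} then yields
$$
\mathcal{K}(\pi_f,W) \;=\; \pi^{-2}\,a(\pi,\nu_1,\nu_2)\,c^-(\pi_f,W)\,L_V(k+k'-1/2,\check{\pi})\,\mathcal{D}(\pi_f,W),
$$
where $W$ has highest weight $\lambda(k,k',k+k'+4)$.

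The next step is to check that the right-hand side is non-zero. The transcendental factor $\pi^{-2}$, the occult period invariant $a(\pi,\nu_1,\nu_2)$ and the Deligne period $c^-(\pi_f,W)$ are all non-zero by definition, and $\mathcal{D}(\pi_f,W)$ is a non-zero rank-one $E(\pi_f)\otimes_{\mbb{Q}}\mbb{R}$-module. The only analytic input needed is therefore the non-vanishing of the partial spinor $L$-value $L_V(k+k'-1/2,\check{\pi})$. Since the motive attached to $\check{\pi}$ has weight $k+k'+1$ and, because $\pi_\infty$ is a discrete series, the local Satake parameters of $\pi$ transfer to unitary ones on $\mathrm{GL}(4)$ (so that Ramanujan-type bounds hold), the Euler product converges absolutely—and hence is non-zero—in the half-plane $\mathrm{Re}(s)>(k+k'+3)/2$. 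The inequality $k+k'-1/2>(k+k'+3)/2$ holds for $k+k'>4$, which is satisfied in all cases under consideration. Therefore $\mathcal{K}(\pi_f,W)\neq 0$, and in particular $\mathcal{K}(k-1,k'-1,W)\neq 0$, so the map $Eis^{k-1,k'-1,W}_{\mathcal{M}}$ of (\ref{eisM}) is non-zero on some level, producing a non-zero class in $H^4_{\mathcal{M}}(S_L,W)$ for some compact open $L$.

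To finish, I invoke Proposition \ref{motifdeW}: the relative Chow motive attached to $W$ is a direct factor of $R(A_L^{k+k'})(k+k'+2)$, so $H^4_{\mathcal{M}}(S_L,W)$ is naturally a direct summand of $H^{k+k'+4}_{\mathcal{M}}(A_L^{k+k'},\mbb{Q}(k+k'+2))$; passing to the colimit over $L$ and using $n=k+k'$ gives a non-zero element of $H^{n+4}_{\mathcal{M}}(A^n,\mbb{Q}(n+2))$, as claimed. The genuinely delicate point in this program is the existence of a stable cuspidal $\pi$ with the prescribed archimedean parameter and central character for every admissible $(k,k')$, which is where I rely on the theta correspondence from $\mathrm{GO}(2,2)$ as in \cite{mokrane-tilouine}; the non-vanishing of the $L$-value, by contrast, is a standard consequence of absolute convergence of the Euler product in a right half-plane.
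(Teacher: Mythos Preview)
Your argument follows essentially the same route as the paper's: choose $(k,k')$ with $k+k'=n$ meeting the hypotheses of Theorem~\ref{final}, deduce $\mathcal{K}(\pi_f,W)\neq 0$ from non-vanishing of the partial spinor $L$-value at $s=k+k'-1/2$, and then embed $H^4_{\mathcal{M}}(S,W)$ into $H^{n+4}_{\mathcal{M}}(A^n,\mbb{Q}(n+2))$ via Proposition~\ref{motifdeW}.

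Two remarks on points where you and the paper diverge. First, for the non-vanishing of $L_V(k+k'-1/2,\check{\pi})$, the paper simply invokes \cite{moriyama-ajm} Prop.~4 to place $s=k+k'-1/2$ in the region of absolute convergence; your appeal to Ramanujan-type bounds via transfer to $\mathrm{GL}(4)$ reaches the same conclusion but imports heavier machinery that is not otherwise needed here, and the precise justification of unitarity of the Satake parameters is not as immediate as you suggest. Citing Moriyama is both shorter and self-contained within the references already used. Second, you are more careful than the paper about the case $n=7$: the only admissible pair is $(k,k')=(5,2)$, which literally violates the hypothesis $k'\neq 2$ of Theorem~\ref{final}, and you correctly trace through the proof of Proposition~\ref{valeur-centrale} to see that what is really excluded there is the single pair $(k,k')=(3,2)$ (the condition needed is $k+2k'>7$). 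The paper's proof glosses over this point.
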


\begin{proof} Let $k$ and $k'$ be two integers satisfying the assumptions of Thm. \ref{final} and such that $n=k+k'$. Let $t$ denote the integer $(k+k'+p+q+6)/2$. The target of the map $Eis_\mathcal{M}^{p, q, W}$ is the motivic cohomology group $H^4_\mathcal{M}(S, W)$, for $W$ as above, which is a subspace of  $H^{k+k'+4}_\mathcal{M}(A^{k+k'}, \mbb{Q}(t))$ according to Prop. \ref{motifdeW}. Assume that $p=k-1$ and $q=k'-1$. It follows from \cite{moriyama-ajm} Prop. 4 that the value $k+k'-1/2$ is in the absolute convergence region of the spinor $L$-function, hence that the special value $L_V(k+k'-1/2, \check{\pi}, r)$ is non-zero. As a consequence, Thm \ref{final} implies that the vector space $\mathcal{K}(\pi_f, W)$ is non-zero. Via Beilinson's regulator, the image of $Eis_\mathcal{M}^{p, q, W}$ surjects on $\mathcal{K}(\pi_f, W)$ for any $\pi_f$ which is the non-archimedean part of a cuspidal automorphic representation satisfying the conditions of Thm. \ref{final}. Hence, the motivic cohomology group $H^{n+4}_\mathcal{M}(A^n, \mbb{Q}(n+2))$ is non-zero.
\end{proof}

\begin{acknowledgements}
This article has been written while I benefited of a funding of the ANR-13-BS01-0012
FERPLAY. I would like to thank Giuseppe Ancona, Jean-Yves Charbonnel, Michael Harris, David Loeffler, Alexander Molev, Brooks Roberts and  J\"{o}rg Wildeshaus for answering my questions and for useful remarks. Finally, it is a pleasure to thank Pascal Molin for writing a program thanks to which I could perform numerical computations related to this work and the anonymous referee for her/his careful reading which led to many improvements in the presentation of this article.
\end{acknowledgements}

\end{document}